\documentclass[10pt]{article}

\usepackage{color}
\usepackage{amsmath}
\usepackage{graphicx}
\usepackage{footnote}
\usepackage{subscript}
\usepackage{amsfonts,color}
\usepackage{amsmath,amssymb}
\usepackage{epsfig}
\usepackage{amssymb} 
\usepackage{mathtools}
\usepackage{amsthm,wasysym}
\usepackage{framed}
\usepackage{cancel}

\usepackage[english]{babel}
\usepackage[utf8]{inputenc}
\makeatletter
\usepackage{float}
\usepackage{wrapfig}

\makeatletter
\let\@fnsymbol\@arabic
\makeatother

\usepackage{bbm}
\newcommand{\id}{{\boldsymbol{\mathbbm{1}}}}
\usepackage{multicol}

\usepackage{nicefrac,enumitem}

\newcommand{\tr}{{\rm tr}}
\newcommand{\dev}{{\rm dev}}
\newcommand{\sym}{{\rm sym}}
\newcommand{\skw}{{\rm skew}}

\newcommand{\Curl}{{\rm Curl}}

\newcommand{\norm}[1]{\|#1\|}

\setcounter{tocdepth}{2}

\def\dd{\displaystyle}

\setlength{\textheight}{23cm}
\setlength{\textwidth}{17cm}
\setlength{\topmargin}{-1cm}
\setlength{\oddsidemargin}{-0cm}
\setlength{\evensidemargin}{-1cm}

\setlength\arraycolsep{2pt}

\newtheorem{theorem}{Theorem}[section]
\newtheorem{lemma}[theorem]{Lemma}
\newtheorem{remark}[theorem]{Remark}
\newtheorem{proposition}[theorem]{Proposition}

\newtheorem{definition}[theorem]{Definition}

\theoremstyle{definition}
\newtheorem{example}{Example}[section]

\def\barr{\begin{array}}
\setcounter{secnumdepth}{4}
\setcounter{tocdepth}{4}
\usepackage{lscape}

	\def\earr{\end{array}}
\def\bec#1{\begin{equation}\label{#1}}
\def\becn{\begin{equation*}}
\def\endec{\end{equation}}
\def\endecn{\end{equation*}}
\def\dd{\displaystyle}
\def\bfm#1{\mbox{\boldmat}}

\allowdisplaybreaks
\begin{document}
	
	\title{A constrained Cosserat shell model up to order $O(h^5)$: Modelling, existence of minimizers, relations to classical shell models and scaling invariance of the bending tensor.}
\author{  Ionel-Dumitrel Ghiba\thanks{Corresponding author: Ionel-Dumitrel Ghiba,  \ Department of Mathematics,  Alexandru Ioan Cuza University of Ia\c si,  Blvd.
		Carol I, no. 11, 700506 Ia\c si,
		Romania; and  Octav Mayer Institute of Mathematics of the
		Romanian Academy, Ia\c si Branch,  700505 Ia\c si, email:  dumitrel.ghiba@uaic.ro} \quad and \quad Mircea B\^irsan\thanks{Mircea B\^irsan, \ \  Lehrstuhl f\"{u}r Nichtlineare Analysis und Modellierung, Fakult\"{a}t f\"{u}r Mathematik,
		Universit\"{a}t Duisburg-Essen, Thea-Leymann Str. 9, 45127 Essen, Germany; and Department of Mathematics,  Alexandru Ioan Cuza University of Ia\c si,  Blvd.
		Carol I, no. 11, 700506 Ia\c si,
		Romania;  email: mircea.birsan@uni-due.de} \quad
	and  \quad     Peter Lewintan\thanks{Peter Lewintan,  \ \  Lehrstuhl f\"{u}r Nichtlineare Analysis und Modellierung, Fakult\"{a}t f\"{u}r
		Mathematik, Universit\"{a}t Duisburg-Essen,  Thea-Leymann Str. 9, 45127 Essen, Germany, email: peter.lewintan@uni-due.de} \\
	and  \quad      Patrizio Neff\,\thanks{Patrizio Neff,  \ \ Head of Lehrstuhl f\"{u}r Nichtlineare Analysis und Modellierung, Fakult\"{a}t f\"{u}r
		Mathematik, Universit\"{a}t Duisburg-Essen,  Thea-Leymann Str. 9, 45127 Essen, Germany, email: patrizio.neff@uni-due.de}
}

\maketitle
\begin{abstract}

We consider  a   recently introduced  geometrically nonlinear elastic Cosserat shell model incorporating effects up to order $O(h^5)$ in the  shell thickness $h$. We develop the corresponding geometrically nonlinear  constrained Cosserat shell model, we show the existence of minimizers for the $O(h^5)$ and $O(h^3)$ case and we draw some connections to existing models and classical shell strain measures. Notably, the role of the appearing new bending tensor is highlighted and investigated with respect to an  invariance condition of Acharya [Int.  J. Solids and Struct., 2000] which will be further strengthened.
  \medskip
  
  \noindent\textbf{Keywords:}
  geometrically nonlinear Cosserat shell, 6-parameter resultant shell, in-plane drill
  rotations,   constrained Cosserat elasticity, isotropy, existence of minimizers, scaling invariance, bending tensor
\end{abstract}

\begin{footnotesize}\renewcommand{\contentsname}{}
	\tableofcontents
\end{footnotesize}

\addtocontents{toc}{\protect\setcounter{tocdepth}{2}}

\section{Introduction}
Recently \cite{GhibaNeffPartI}
  we have established a novel geometrically nonlinear Cosserat shell model including terms up to order $O(h^5)$ in the shell-thickness $h$,  by extending the techniques from  \cite{Neff_plate04_cmt,Neff_membrane_plate03,Neff_membrane_existence03,Neff_plate07_m3as,Neff_membrane_Weinberg07}. The dimensional descent was obtained starting with a 3D-parent Cosserat model and assuming an appropriate 8-parameter ansatz for the shell-deformation through the thickness. This is the derivation approach and it has allowed us to arrive at specific novel strain and curvature measures.  See also \cite{birsan2020derivation} for an alternative derivation of a $O(h^3)$ Cosserat shell model.  In this way, we obtain a kinematical model which is similar, but it does not coincide, with the kinematical model of 6-parameter shells. The theory of 6-parameter shells was developed for shell-like bodies made of Cauchy materials, see the monographs \cite{Libai98,Pietraszkiewicz-book04} or the papers \cite{Eremeyev06,Pietraszkiewicz11}. We have remarked that even if we restrict our model to order $O(h^3)$, the obtained minimization problem is not the same as that previously considered in the literature, since  the influence of the curved initial shell configuration appears explicitly  in the expression of the coefficients  of the energies for the reduced two-dimensional variational problem and additional bending-curvature and curvature terms are present.

In order to improve our understanding of the new Cosserat shell model, it is useful to consider certain extreme limit cases. The investigated limit case considered in this paper, i.e., letting  the Cosserat couple modulus $\mu_{\rm c}\to \infty$, is similar to that considered in the case of the Cosserat plate model \cite{Neff_plate04_cmt} and is naturally suggested by the situation for the three dimensional Cosserat model, which will now be explained. 

The underlying nonlinear elastic 3D-problem is the two-field Cosserat variational  problem 
\begin{equation}\label{minprob}
I(\varphi_\xi,F_\xi,\overline{R}_\xi, \alpha_\xi)=\dd\int_{\Omega_\xi}\left[W_{\rm{mp}}(\overline U _\xi)+
W_{\rm{curv}}(\alpha_\xi)\right]dV(\xi)
- \Pi(\varphi_\xi,\overline{R}_\xi)\quad 
{\to}
\textrm{\ \ min.} \quad  {\rm   w.r.t. }\quad (\varphi_\xi,\overline{R}_\xi)\, ,
\end{equation}
where
\begin{align}
F_\xi\coloneqq\,&\nabla_\xi\varphi_\xi=\!\!\!\!\!\!\!\!\!\!\!\!\!\!\!\!\!\!\!\!\underbrace{{\rm polar}(F)}_{\quad \quad \quad \quad \quad \neq \overline{R}_\xi\ \text{in general}}\,\!\!\!\!\!\!\!\!\!\!\!\!\!\!\!\!\!\!\!\! \sqrt{F_\xi^TF_\xi}\in\mathbb{R}^{3\times3}\ \ \ \  \textrm{\it the deformation gradient}\,,  \notag\\
\overline{R}_\xi\in\  &\, {\rm SO}(3)\, \qquad \qquad \qquad \qquad \qquad \qquad\quad \textrm{\it microrotation tensor (independent tensor field)\,},  \notag\\
\overline U _\xi\coloneqq\,&\dd\overline{R}^T_\xi F_\xi\in\mathbb{R}^{3\times3} \ \ \  \ \, \, \, \,  \qquad \qquad \qquad \qquad \textrm{\it the non-symmetric Biot-type stretch tensor\,},  \notag\\
\alpha_\xi\coloneqq\,&\overline{R}_\xi^T\, \Curl_\xi \,\overline{R}_\xi\in\mathbb{R}^{3\times 3} \,  \qquad \qquad\qquad  \quad  \textrm{{\it the second order  dislocation density tensor }\cite{Neff_curl08}}\, ,  \\
\dd W_{\rm{mp}}(\overline U _\xi)\coloneqq\,&\dd\mu\,\lVert \dev\,\text{sym}(\overline U _\xi-\id_3)\rVert^2+\mu_{\rm c}\,\lVert \text{skew}(\overline U _\xi-\id_3)\rVert^2+
\dd\frac{\kappa}{2}\,[{\rm tr}(\text{sym}(\overline U _\xi-\id_3))]^2\ \ \ \textrm{\it physically linear}\, ,  \notag\\
\dd W_{\rm{curv}}( \alpha_\xi)\coloneqq\,&\mu\,{L}_{\rm c}^2\left( b_1\,\lVert \dev\,\text{sym}\, \alpha_\xi\rVert^2+b_2\,\lVert \text{skew}\, \alpha_\xi\rVert^2+  b_3\,
[{\rm tr}(\alpha_\xi)]^2\right)\qquad \qquad\qquad\qquad\quad \textrm{\it curvature energy\,},\notag\\
 &\Pi(\varphi_\xi,\overline{R}_\xi) \qquad \qquad\qquad\ \,\,\, \qquad\qquad\quad \textrm{\it represents the external loading potential\,}. \notag
\end{align}  
Here,  $\Omega_\xi\subset\mathbb{R}^3$ is a three-dimensional domain, see Figure \ref{Fig1}. The elastic material constituting the body is assumed to be homogeneous and isotropic and the reference configuration $\Omega_\xi$ is assumed to be a natural state.  All the body configurations are referred to a  fixed right Cartesian coordinate frame with unit vectors $
e_i$ along the axes $Ox_i$. A generic point of $\Omega_\xi$ will be denoted by $(\xi_1,\xi_2,\xi_3)$. 
The deformation of the body occupying the domain $\Omega_\xi$ is described by a vector map $\varphi_\xi:\Omega_\xi\subset\mathbb{R}^3\rightarrow\mathbb{R}^3$ (\textit{called deformation}) and by a \textit{microrotation}  tensor
$
\overline{R}_\xi:\Omega_\xi\subset\mathbb{R}^3\rightarrow {\rm SO}(3)\, 
$ attached at each point. 
We denote the current configuration (deformed configuration) by $\Omega_c\coloneqq \varphi_\xi(\Omega_\xi)\subset\mathbb{R}^3$. 
Moreover, $\mu>0$ denotes the shear modulus, $\kappa>0$ is the bulk modulus and $\mu_{\rm c}\geq0$ is the Cosserat couple modulus.

 For Cosserat couple modulus $\mu_{\rm c}\to \infty$ and characteristic length $L_{
\rm c}\to 0$ the previous Cosserat model approximates the classical (Cauchy-elastic) Biot variational problem
\begin{equation}\label{minprobB}
I(\varphi_\xi,F_\xi)=\dd\int_{\Omega_\xi} W_{\rm{mp}}( U _\xi)\,dV(\xi)
- \Pi(\varphi_\xi)\quad 
{\to}
\textrm{\ \ min.} \quad  {\rm   w.r.t. }\quad \varphi_\xi\, ,
\end{equation}
where
\begin{align}
F_\xi&\coloneqq {\nabla_\xi\varphi_\xi}={\rm polar}(F_\xi)\, \sqrt{F_\xi^TF_\xi}\in\mathbb{R}^{3\times3} &&\textrm{\it the deformation gradient},  \notag\\
{R}_\xi&\coloneqq {\rm polar}(F_
\xi)\in\, {\rm SO}(3) &&\textrm{\it continuum rotation (dependent tensor field)},  \notag\\
 U _\xi&\coloneqq\sqrt{F_\xi^T F_\xi}\in{\rm Sym}^+(3) &&\textrm{\it the positive definite symmetric Biot stretch tensor},  \notag\\
\dd W_{\rm{mp}}(\overline U _\xi)&\coloneqq\dd\mu\,\lVert \dev\,( U _\xi-\id_3)\rVert^2+
\dd\frac{\kappa}{2}\,[{\rm tr}( U _\xi-\id_3)]^2&& \textrm{\it physically linear isotropic response}\,, \notag
\notag\\
&\quad\  \Pi(\varphi_\xi) &&\textrm{\it represents the external loading potential\,}. \notag
\end{align}

\begin{figure}[h!]
	\begin{center}
		\includegraphics[scale=1.6]{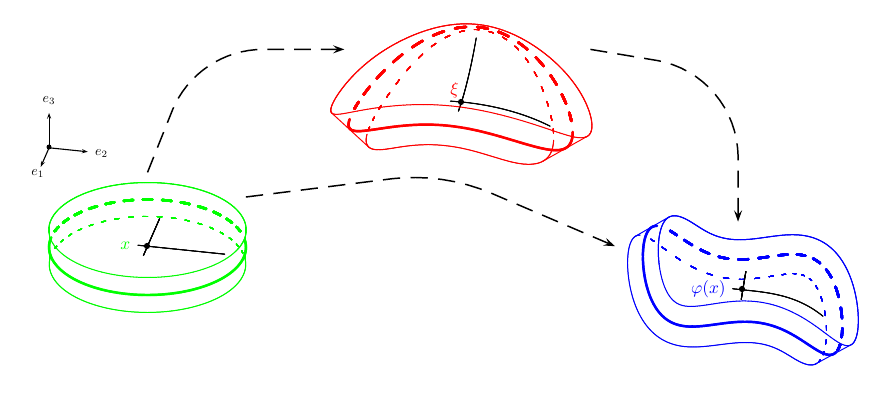}
		\put(-320,165){\footnotesize{$\Theta  ,  Q_0={\rm polar}(\nabla \Theta)$}} 
		\put(-275,82){\footnotesize{$\varphi  , \overline{ R}$}}
		\put(-275,72){\footnotesize{$ \mu_{\rm c}\to \infty:\ \overline{ R}\to R={\rm polar}(\nabla \varphi)$}}
		\put(-65,125){\footnotesize{$\varphi_\xi  ,   \overline{ R}_\xi$}}
		\put(-110,110){\footnotesize{$ \mu_{\rm c}\to \infty:\ \ \  \overline{ R}_\xi\to { R}_\xi= {\rm polar}(\nabla_\xi \varphi_\xi)$}}
			\put(-400,50){\footnotesize{$\Omega_h$}}  
			\put(-5,30){\footnotesize{$\Omega_c$}}  
				\put(-180,175){\footnotesize{$\Omega_\xi$}}  
		\caption{\footnotesize Kinematics of the 3D-Cosserat model and the 3D-constrained Cosserat model (${\boldsymbol{\mu_{\rm c}\to \infty}}$).}
		\label{Fig1}       
	\end{center}
\end{figure}

Now, we consider  $\Omega_\xi\subset\mathbb{R}^3$ to be a three-dimensional curved {\it shell-like thin domain}, see Figure \ref{Fig1}. We take  the \textit{fictitious Cartesian (planar) configuration} of the body $\Omega_h $. This parameter domain $\Omega_h\subset\mathbb{R}^3$ is a right cylinder of the form
$$\Omega_h=\left\{ (x_1,x_2,x_3) \,\Big|\,\, (x_1,x_2)\in\omega, \,\,\,-\dfrac{h}{2}\,< x_3<\, \dfrac{h}{2}\, \right\} =\,\,\dd\omega\,\times\left(-\frac{h}{2},\,\frac{h}{2}\right),$$
where  $\omega\subset\mathbb{R}^2$ is a bounded domain with Lipschitz boundary
$\partial \omega$ and the constant length $h>0$ is the \textit{thickness of the shell}.
For shell--like bodies we consider   the  domain $\Omega_h $ to be {thin}, i.e., the thickness $h$ is {small}.  
The diffeomorphism $\Theta:\mathbb{R}^3\rightarrow\mathbb{R}^3$ describing the reference configuration (i.e., the curved surface of the shell),  will be chosen in the specific form
\begin{equation}\label{defTheta}
\Theta(x_1,x_2,x_3)\,=\,y_0(x_1,x_2)+x_3\ n_0(x_1,x_2), \ \ \ \ \ \qquad  n_0\,=\,\dd\frac{\partial_{x_1}y_0\times \partial_{x_2}y_0}{\lVert \partial_{x_1}y_0\times \partial_{x_2}y_0\rVert}\, ,
\end{equation}
where $y_0:\omega\to \mathbb{R}^3$ is a function of class $C^2(\omega)$.  If not otherwise indicated, by $\nabla\Theta$ we denote
$\nabla\Theta(x_1,x_2,0)$, so that $\nabla\Theta=(\nabla y_0\,|\,n_0)$.  We use the polar decomposition \cite{neff2013grioli} of $\nabla \Theta$  and write 
\begin{equation}\label{dec}
\nabla \Theta\,=\,{Q}_0 \,U_0\, ,\qquad 
{Q}_0\,=\,{\rm polar}{(\nabla \Theta)}\in {\rm SO}(3 ),\qquad   U _0\in \rm{Sym}^+(3).
\end{equation} 

Further, let us  define the map
$
\varphi:\Omega_h\rightarrow \Omega_c,\  \varphi(x_1,x_2,x_3)=\varphi_\xi( \Theta(x_1,x_2,x_3)).
$
We view $\varphi$ as a function which maps the fictitious  planar reference configuration $\Omega_h$ into the deformed configuration $\Omega_c$.
We also consider the \textit{elastic microrotation}
$
\overline{Q}_{e,s}:\Omega_h\rightarrow{\rm SO}(3),\  \overline{Q}_{e,s}(x_1,x_2,x_3)\coloneqq \overline{R}_\xi(\Theta(x_1,x_2,x_3))\,.
$

  In \cite{GhibaNeffPartI}, by   assuming that  the elastic microrotation is constant through the thickness, i.e., 
$
\overline{Q}_{e,s}(x_1,x_2,x_3)=\overline{Q}_{e,s}(x_1,x_2), 
$
and  considering an \textit{8-parameter quadratic ansatz} in the thickness direction for the reconstructed total deformation $\varphi_s:\Omega_h\subset \mathbb{R}^3\rightarrow \mathbb{R}^3$ of the shell-like body, i.e., 
\begin{align}\label{ansatz}
\varphi_s(x_1,x_2,x_3)\,=\,&m(x_1,x_2)+\bigg(x_3\varrho_m(x_1,x_2)+\dd\frac{x_3^2}{2}\varrho_b(x_1,x_2)\bigg)\overline{Q}_{e,s}(x_1,x_2)\nabla\Theta.e_3\, ,
\end{align}
where $m:\omega\subset\mathbb{R}^2\to\mathbb{R}^3$ represents the total
deformation of the midsurface,  $\varrho_m,\,\varrho_b:\omega\subset\mathbb{R}^2\to \mathbb{R}$ allow in principal for symmetric thickness stretch  ($\varrho_m\neq1$) and asymmetric thickness stretch ($\varrho_b\neq 0$) about the midsurface\footnote{They may be written in terms of the  Poisson ratio of the isotropic and homogeneous material  $ \nu=\frac{\lambda}{2\,(\lambda+\mu)}
	$.} \begin{align}\label{final_rho}
\varrho_m\,=\,&1-\frac{\lambda}{\lambda+2\,\mu}[\bigl\langle  \overline{Q}_{e,s}^T(\nabla m|0)[\nabla\Theta ]^{-1},\id_3 \bigr\rangle -2]\;,\\
\dd\varrho_b\,=\,&-\frac{\lambda}{\lambda+2\,\mu}\bigl\langle  \overline{Q}_{e,s}^T(\nabla (\,\overline{Q}_{e,s}\nabla\Theta .e_3)|0)[\nabla\Theta ]^{-1},\id_3 \bigr\rangle   +\frac{\lambda}{\lambda+2\,\mu}\bigl\langle  \overline{Q}_{e,s}^T(\nabla m|0)[\nabla\Theta ]^{-1}(\nabla n_0|0)[\nabla\Theta ]^{-1},\id_3 \bigr\rangle ,\notag
\end{align}
we have obtained a completely two-dimensional minimization problem, see Figure \ref{Fig2}, in which the energy density is expressed in terms of the  following tensor fields (the same strain measures are also considered in \cite{Libai98,Pietraszkiewicz-book04,Eremeyev06,NeffBirsan13,Birsan-Neff-L54-2014} but with different motivations) on the surface $\omega\,$  
\begin{align}\label{e55}
\mathcal{E}_{m,s} & \coloneqq\,    \overline{Q}_{e,s}^T  (\nabla  m|\overline{Q}_{e,s}\nabla\Theta .e_3)[\nabla\Theta ]^{-1}-\id_3\not\in {\rm Sym}(3),\qquad \qquad\qquad \ \   \text{{\it elastic shell strain tensor}} ,  \\
\mathcal{K}_{e,s} & \coloneqq\,  \Big(\mathrm{axl}(\overline{Q}_{e,s}^T\,\partial_{x_1} \overline{Q}_{e,s})\,|\, \mathrm{axl}(\overline{Q}_{e,s}^T\,\partial_{x_2} \overline{Q}_{e,s})\,|0\Big)[\nabla\Theta ]^{-1}\not\in {\rm Sym}(3) \quad \text{\it\!  elastic shell bending--curvature tensor}.\notag
\end{align}

\begin{figure}[h!]
	\hspace*{-1cm}
	\begin{center}
		\includegraphics[scale=1.6]{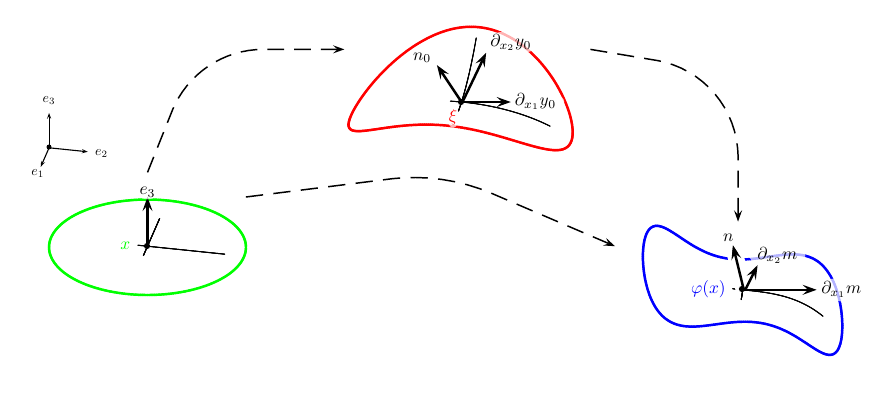}
		\put(-320,165){\footnotesize{$\Theta  ,  Q_0={\rm polar}(\nabla \Theta)$}} 
			\put(-320,140){\footnotesize{$y_0$}} 
		\put(-285,82){\footnotesize{$\varphi  , \overline{ R}$}}
		\put(-285,72){\footnotesize{$\mu_{\rm c}\to \infty:\ \overline{ R}\to R_\infty={\rm polar}((\nabla m\,|\,n))$}}
			\put(-250,102){\footnotesize{$m$}}
		\put(-65,125){\footnotesize{$\varphi_\xi,\overline{ Q}_e $}}
		\put(-110,110){\footnotesize{$\mu_{\rm c}\to \infty:\ \ \  \overline{ Q}_e\to Q_\infty={\rm polar}((\nabla m\,|\,n)[\nabla \Theta]^{-1})$}}
		\put(-390,50){\footnotesize{$\omega$}}  
		\put(-15,30){\footnotesize{$\omega_c$}}  
		\put(-180,175){\footnotesize{$\omega_\xi=y_0(\omega)$}}  
		\caption{\footnotesize Kinematics of the 2D-constrained Cosserat shell model (${\boldsymbol{\mu_{\rm c}\to \infty}}$).  Here,  ${Q}_{ \infty }  $ is the elastic rotation field, ${Q}_{0}$ is the  initial rotation from the fictitious planar Cartesian reference $\omega$ configuration to the initial  configuration $\omega_\xi$, and $R_{\infty}$ is the total rotation field from the fictitious planar Cartesian reference configuration $\omega$ to the deformed configuration $\omega_c$.}
		\label{Fig2}       
	\end{center}
\end{figure}
In the constrained Cosserat shell model, i.e.,  letting $\mu_{\rm c}\to \infty$, the elastic microrotation $\overline{Q}_{e,s}$ is not any more an independent tensor field (trièdre cach\'e, Cosserat \cite{Cosserat09}). A direct consequence of the assumption $\mu_{\rm c}\to \infty$ is that  the elastic microrotation $\overline{Q}_{e,s}$  is coupled to the midsurface displacement vector field $m$, through 
\begin{align}
\overline{Q}_{e,s}\to{Q}_{ \infty }\coloneqq {\rm polar}\big((\nabla  m|n) [\nabla\Theta ]^{-1}\big)=(\nabla m|n)[\nabla\Theta ]^{-1}\,\sqrt{[\nabla\Theta ]\,\widehat {\rm I}_{m}^{-1}\,[\nabla\Theta ]^{T}}\in {\rm SO}(3).
\end{align}


\begin{footnotesize}
	\begin{figure}
		\setlength{\unitlength}{1mm}
		\begin{center}
			\begin{picture}(0,176)
			\thicklines

			\put(-85,175){ \fbox{\begin{minipage}{5cm}
					\begin{center}\footnotesize $3D$-Cosserat elasticity\\
					$W(F,\overline{R})\sim\dd\mu\,\lVert \text{sym}( \overline{R}^TF-\id_3)\rVert^2$\qquad~ \\\qquad\quad\,$+\mu_{\rm c}\,\lVert \text{skew}( \overline{R}^TF-\id_3)\rVert^2$\\ $\quad\qquad\quad +L_{\rm c}^2\lVert\Curl \overline{R}\rVert^2$,~ $\overline{R}\in {\rm SO}(3)$\\
					Existence for:\\
					$\mu_{\rm c}>0, L_{\rm c}>0$
					\end{center}
					\end{minipage}}}
			\put(48,149.5){\vector(0,-1){25.5}}
			\put(7,140){\tiny  $\mu_{\rm c}\to \infty$}
			\put(-2,137){\tiny  $Q_\infty={\rm polar}\big((\nabla m|n)[\nabla \Theta]^{-1}\big)\in {\rm SO}(3)$}

			\put(49,140){\tiny $\mu_{\rm c}\to \infty$}
			
			\put(49,137){\tiny  $\overline{A}_{\infty}=\skw\big( (\nabla v|\delta n)[\nabla \Theta]^{-1}\big)\in\!\mathfrak{s\!o}(3)$}
			
			\put(6,86){\tiny $L_{\rm c}\to 0$}
			\put(6,84)	{\tiny (no curvature energy)}
			\put(-29,162){\tiny  dimensional }
			\put(-29,160){\tiny   reduction}
			\put(-29,158){\tiny  engineering }
			\put(-29,156){\tiny   ansatz}
				\put(-29,153){\tiny  $\frac h2\kappa<1 $}
			\put(-31.3,175){\vector(4,-3){19}}
					\put(-31.3,175){\vector(4,1){71}}
					\put(0,187){\tiny line{a}rization}
					\put(0,180){\tiny $\overline{R}=\id_3+\overline{A}_\vartheta+\textrm{h.o.t.}$}
					\put(-1.75,176){\tiny $\overline{A}_\vartheta={\rm Anti} (\vartheta)\in\! \mathfrak{s\!o}(3)$, $\vartheta\in \mathbb{R}^3$}
				
			
			\put(-65,140){\tiny $\mu_{\rm c}\to \infty$}
			\put(-65,137){\tiny $\overline{R}={\rm polar}(F)\in {\rm SO}(3)$}
			
			\put(-12,158.3){\fbox{\begin{minipage}{3cm}\begin{center}\footnotesize 
					$2D$-Cosserat shell model\\
					Existence for: $\frac{h}{2}\, \kappa< 0.48$ \\
					$\mu_{\rm c}>0, L_{\rm c}>0$
					\end{center}	\end{minipage}}}
			\put(20.2,157){\vector(1,0){19.5}}
			\put(22,159){\tiny linearization}
			\put(5,149.5){\vector(0,-1){18}}

				\put(40,190){\fbox{\begin{minipage}{5.1cm}	\begin{center}\footnotesize linearized $3D$-Cosserat elasticity\\
						$W^{\rm lin}(\nabla u,\overline{A}_\vartheta)\sim\dd\mu\,\lVert \text{sym}\nabla u\rVert^2 \qquad\qquad $\\ $\qquad\qquad\qquad+\mu_{\rm c}\,\lVert \text{skew}(\nabla u-\overline{A}_\vartheta)\rVert^2$\\ $\qquad\ \ +L_{\rm c}^2\lVert\Curl \overline{A}_\vartheta\rVert^2$\\
						Existence for: \\
						$\mu_{\rm c}>0, L_{\rm c}>0$
						\end{center}	\end{minipage}}}
			\put(48,180){\vector(0,-1){15.5}}
			\put(50,174){\tiny dimensional}
			\put(50,172){\tiny reduction}
			
			\put(40,156){\fbox{\begin{minipage}{4cm}\begin{center}\footnotesize 
					linearized	$2D$-Cosserat  \\
					shell model \cite{GhibaNeffPartIV}\\
					Existence for:\\
					$\mu_{\rm c}>0, L_{\rm c}>0$
					\end{center}	\end{minipage}}}
			\put(-66,164){\vector(0,-1){38}}

			\put(-85,110){\fbox{\begin{minipage}{4.5cm}\begin{center}\footnotesize Constrained 3D Cosserat elasticity\\
					(Toupin couple stress model)\\
					constraint: \ $\overline{R}={\rm polar}(F)$\\
					\hspace{-1.3cm}$W(F,{\rm polar}(F))\sim$ \\ \hspace{1cm}$\dd\mu\,\lVert  [{\rm polar}(F)]^TF-\id_3\rVert^2$ \\ \hspace{0.5cm}$\quad\, +L_{\rm c}^2\lVert\Curl \, {\rm polar}(F)\rVert^2$\\
					Existence for: $ L_{\rm c}>0$\\
					includes curvature energy\\
					\end{center}
					\end{minipage}}}
			\put(-66,96.8){\vector(0,-1){28}}
			\put(-65,86){\tiny $L_{\rm c}\to 0$}
			\put(-65,84){\tiny (no curvature energy)}
			
			\put(-37.7,110){\vector(1,0){17}}
			\put(-36,115){\tiny direct }
			\put(-36,113){\tiny dimensional }
			\put(-36,111){\tiny  reduction}
			\put(-36,107){\tiny  not yet done}

			\put(-85,60){\fbox{\begin{minipage}{4cm}
					\begin{center}\footnotesize  $3D$-Biot elasticity\\
					$W(F)\sim \mu\,\lVert \sqrt{F^TF}-\id_3\rVert^2$
					\\
					{\bf not well-posed}, since non-elliptic in $F$\end{center}
					\end{minipage}}}
			\put(-66,53.5){\vector(0,-1){11}}
			\put(-80,48){\tiny linearization}
			\put(-66,15.5){\vector(0,1){12}}
			\put(-80,21){\tiny linearization}
			
			\put(-85,34){\fbox{\begin{minipage}{4cm}
					\begin{center}\footnotesize   Classical linear elasticity
					\\ $ W_{\rm lin}(\varepsilon)\sim\mu\, \lVert \varepsilon\rVert^2$\\ $  \varepsilon=\sym \nabla u=\sym (F-\id_3)$\\
					well-posed\end{center}
					\end{minipage}}}
			
			\put(-42.5,29){\vector(4,1){82}}
			
			\put(-10,35){\tiny dimensional reduction }
			\put(-10,33){\tiny under explicit Kirchhoff-Love assumptions }
			\put(-10,30){\tiny  $\frac h2\,\kappa<1 $}
			\put(-39,67){\tiny direct}
			\put(-39,65){\tiny dimensional  }
			\put(-39,63){\tiny reduction }
			\put(-39,58){\tiny   not yet done}
			\put(-42.7,62){\vector(1,0){22}}
			
			\put(-20,110){\fbox{\begin{minipage}{4cm}\begin{center}\footnotesize Constrained Cosserat shell model
					\\
					quadratic in terms of the differences\\
					{	\tiny	$\sqrt{[\nabla\Theta ]^{-T}\,{\rm I}_m^{\flat }\,[\nabla\Theta ]^{-1}}$
						\quad -$\sqrt{[\nabla\Theta ]^{-T}\,{\rm I}_{y_0}^{\flat }\,[\nabla\Theta ]^{-1}}$ and $\sqrt{[\nabla\Theta ]^{-T}\,\widehat{\rm I}_m\,[\nabla\Theta ]^{-1}}$
						\quad    $\times[\nabla\Theta ]\Big({\rm L}_{y_0}^\flat - {\rm L}_m^\flat\Big)[\nabla\Theta ]^{-1}$}\\
					mixed terms\\
					includes curvature energy\\
					well-posed for $L_c>0$\end{center}
					\end{minipage}}}
			\put(5,91){\vector(0,-1){9}}

			\put(-20,64){\fbox{\begin{minipage}{4cm}\begin{center}\footnotesize Koiter-type shell model
					\\
					quadratic in terms of the differences\\
					{	\tiny	$\sqrt{[\nabla\Theta ]^{-T}\,{\rm I}_m^{\flat }\,[\nabla\Theta ]^{-1}}$
						$-\sqrt{[\nabla\Theta ]^{-T}\,{\rm I}_{y_0}^{\flat }\,[\nabla\Theta ]^{-1}}$ and $\sqrt{[\nabla\Theta ]^{-T}\,\widehat{\rm I}_m\,[\nabla\Theta ]^{-1}}$
						\qquad$\times[\nabla\Theta ]\Big({\rm L}_{y_0}^\flat - {\rm L}_m^\flat\Big)[\nabla\Theta ]^{-1}$}\\
					mixed terms \\
					 {\bf not well-posed}\end{center}
					\end{minipage}}}
			\put(22.5,60){\vector(1,0){17.5}}
			\put(23,62){\tiny  linearization}
			
			\put(22.5,110){\vector(1,0){17.5}}
			\put(25,112){\tiny  linearization}
			\put(40,110){\fbox{\begin{minipage}{4.7cm}
					\begin{center}\footnotesize  linearized constrained Cosserat shell model,
					quadratic in terms of 
					$\mathcal{G}_{\rm{Koiter}}^{\rm{lin}}$ and \\
					$\mathcal{R}_{\rm{AL}}^{\rm{lin}}=\mathcal{R}_{\rm{Koiter}}^{\rm{lin}} -{\bf 2}\, \sym[\,\mathcal{G}_{\rm{Koiter}}^{\rm{lin}} \,{\rm L}_{y_0}]$ \\
					mixed terms
					\\
					includes curvature energy\\ well-posed 
					(not yet shown)\end{center}
					\end{minipage}}}
			\put(48,98){\vector(0,-1){25.5}}
			\put(50,86){\tiny  $L_{\rm c}\to 0$}
				\put(50,84){\tiny (no curvature energy)}

			\put(40,60){\fbox{\begin{minipage}{4.7cm}
					\begin{center}\footnotesize   Anicic-L\`eger linear Kirchhoff-Love  shell model \cite{anicic1999formulation},
					quadratic in terms of 
					$\mathcal{G}_{\rm{Koiter}}^{\rm{lin}}$ and \\
					$\mathcal{R}_{\rm{AL}}^{\rm{lin}}=\mathcal{R}_{\rm{Koiter}}^{\rm{lin}} -{\bf 2}\, \sym[\,\mathcal{G}_{\rm{Koiter}}^{\rm{lin}} \,{\rm L}_{y_0}]$ \\
					mixed terms \\ well-posed \cite{anicic1999formulation}
					\end{center}
					\end{minipage}}}
			
			\put(48,50){\vector(0,-1){31}}
			\put(50,39.7){\tiny exclude all mixed energies}
			\put(50,36.7){\tiny in terms of	$\mathcal{G}_{\rm{Koiter}}^{\rm{lin}}$} \put(50,33.7){\tiny and $\mathcal{R}_{\rm{Koiter}}^{\rm{lin}}$,}
			\put(50,30.7){\tiny exclude dependence}
			\put(50,27.7){\tiny of coefficients}
			\put(50,24.7){\tiny on ${\rm H}$ and ${\rm K}$}
			
			\put(-85,7){\fbox{\begin{minipage}{4cm}
					\begin{center}\footnotesize $3D$-SVK model \\
					$W(F)\sim \frac{\mu}{4}\lVert F^TF-\id_3\rVert^2$
					\\
					{\bf not well-posed}, since non-elliptic in $F$\end{center}
					\end{minipage}}}
			\put(-42.7,10){\vector(1,0){22}}
			\put(-41,14){\tiny dimensional }
			\put(-41,11){\tiny reduction}
			
			\put(-41.5,6){\tiny + ad hoc}
			\put(-41.75,3){\tiny  assumptions \cite{Steigmann13}}
			\put(-41.5,0){\tiny  $\frac{h}{2}\,\kappa \ll 1$}
			
			\put(-20,10){\fbox{\begin{minipage}{4cm}
					\begin{center}\footnotesize classical finite-strain Koiter shell model \cite{Steigmann13}
					\\
					quadratic in terms of the differences\\
					$[\nabla\Theta ]^{-T}({\rm I}_m^\flat-{\rm I}_{y_0}^\flat) [\nabla\Theta ]^{-1}$ and $[\nabla\Theta ]^{-T}({\rm II}_m^\flat-{\rm II}_{y_0}^\flat)[\nabla\Theta ]^{-1}$\\
					{\bf not well-posed} \end{center}
					\end{minipage}}}
			\put(22.5,10){\vector(1,0){17.5}}
			\put(23,12){\tiny  linearization}
			\put(40,10){\fbox{\begin{minipage}{4.9cm}
					\begin{center}\footnotesize classical linearized Koiter model \cite{Ciarlet00},
					quadratic in terms of \\
					$\mathcal{G}_{\rm{Koiter}}^{\rm{lin}}$ and 
					$\mathcal{R}_{\rm{Koiter}}^{\rm{lin}}$ \\ well-posed \end{center}
					\end{minipage}}}
			
			\end{picture}
		\end{center}
		\caption{\footnotesize A schematic representation of the appearing models. Here, $\varphi=x+u(x)$ is the 3D-deformation, $u$ is the three-dimensional displacement,  $F=\nabla \varphi$ is the deformation gradient, $\overline{R}\in{\rm SO}(3)$ represents the Cosserat microrotation, $m=y_0+v(x)$ is the midsurface deformation, $v$ is the midsurface displacement,  $\overline{A}_\vartheta\in \mathfrak{so}(3)$ is the infinitesimal microrotation, ${\rm I}_{m}\coloneqq [{\nabla  m}]^T\,{\nabla  m}\in \mathbb{R}^{2\times 2}$ and  ${\rm II}_{m}\coloneqq\,-[{\nabla  m}]^T\,{\nabla  n}\in \mathbb{R}^{2\times 2}$ are  the matrix representations of the {\it first fundamental form (metric)} and the  {\it  second fundamental form}  {on $m(\omega)$}, respectively, $n\,=\,\dd\frac{\partial_{x_1}m\times \partial_{x_2}m}{\lVert \partial_{x_1}m\times \partial_{x_2}m\rVert} $,  {and with} ${\rm L}_{m}$  {we identify} the {\it Weingarten map (or shape operator)}  {on $m(\omega)$ with its associated matrix} defined by 
			$
			{\rm L}_{m}\,=\, {\rm I}_{m}^{-1} {\rm II}_{m}\in \mathbb{R}^{2\times 2}
			$, with similar definitions for ${\rm I}_{y_0}$, ${\rm II}_{y_0}$, ${n_0}$ and ${\rm L}_{y_0}$  {on the surface $y_0(\omega)$}. The tensor  \ $\mathcal{G}_{\rm{Koiter}}^{\rm{lin}} =\frac{1}{2}\big[{\rm I}_m - {\rm I}_{y_0}\big]^{\rm{lin}}
			= \sym\big[ (\nabla y_0)^{T}(\nabla v)\big]$ represents the infinitesimal change of metric, $\mathcal{R}_{\rm{Koiter}}^{\rm{lin}}=\big[{\rm II}_m - {\rm II}_{y_0}\big]^{\rm{lin}} $ is the infinitesimal change of curvature,  	
			and we have used the linear approximation of the normal $n=n_0+\delta n+{\rm h.o.t.}$, where $\delta n=\frac{1}{\sqrt{\det {\rm I}_{y_0}}}\left(\partial_{x_1} y_0\times \partial_{x_2} v+\partial_{x_1} v\times \partial_{x_2} y_0\right) -\tr({\rm I}_{y_0}^{-1}\, \sym ((\nabla y_0)^T\nabla v) )\,n_0$ is the increment of the normal when $y_0\to y_0+v(x)$.  The dimensional reduction is undertaken in the sense of an engineering ansatz and $\kappa$ denotes a typical principal curvature. The relations concerning the third column of this table will be made explicit in \cite{GhibaNeffPartIV}.}\label{diagram}
	\end{figure}
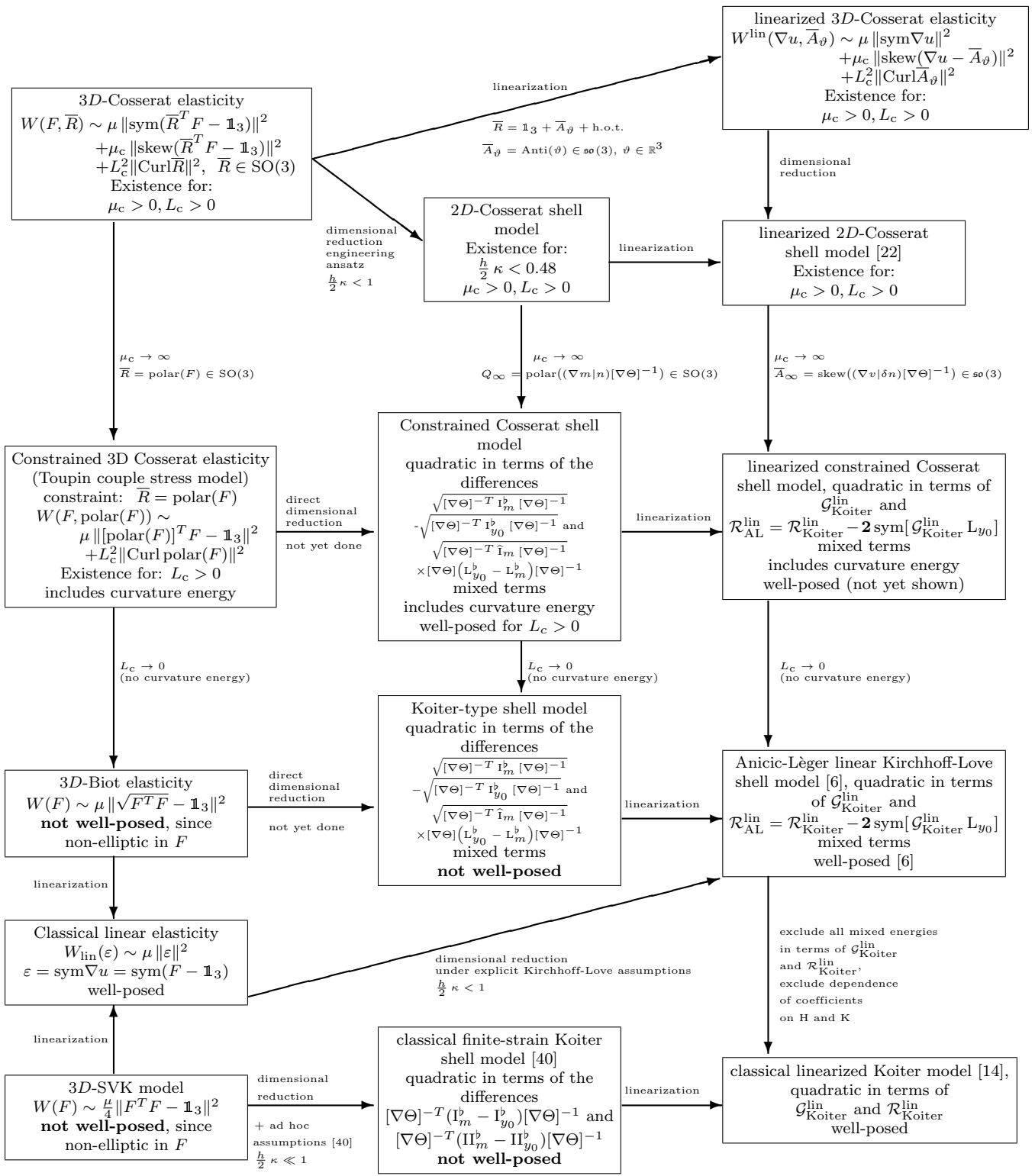
\end{footnotesize}

Considering Figure \ref{diagram}, in this paper we determine the precise form of the Koiter-type limit problem appearing for $\mu_{\rm c}\to \infty$. We recall that (see Appendix \ref{AppendixKoiter}), in matrix format and for a nonlinear elastic shell, the  variational problem for the classical isotropic Koiter shell model  is to find a deformation of the midsurface
$m:\omega\subset\mathbb{R}^2\to\mathbb{R}^3$  minimizing:
\begin{align}\label{Ap7matrix}
\dd\int_\omega& \bigg\{h\bigg(
\mu\rVert    [\nabla\Theta]^{-T} \,\frac{1}{2}\big({\rm I}_m^\flat-{\rm I}_{y_0}^\flat\big) \, [\nabla\Theta]^{-1}\rVert^2  +\dfrac{\,\lambda\,\mu}{\lambda+2\,\mu} \, \mathrm{tr} \Big[ [\nabla\Theta]^{-T} \,\big({\rm I}_m^\flat-{\rm I}_{y_0}^\flat\big) \, [\nabla\Theta]^{-1}\Big]^2\bigg) \vspace{6pt} \\
&+\dd\frac{h^3}{12}\bigg(
\mu\rVert    [\nabla\Theta]^{-T} \big({\rm II}_m^\flat-{\rm II}_{y_0}^\flat\big)  [\nabla\Theta]^{-1}\rVert^2  +\dfrac{\,\lambda\,\mu}{\lambda+2\,\mu} \, \mathrm{tr} \Big[ [\nabla\Theta]^{-T} \big({\rm II}_m^\flat-{\rm II}_{y_0}^\flat\big)  [\nabla\Theta]^{-1}\Big]^2\bigg)\bigg\}\,{\rm det}\nabla \Theta \, da.
\notag\end{align}
Here  ${\rm I}_{m}\coloneqq [{\nabla  m}]^T\,{\nabla  m}\in \mathbb{R}^{2\times 2}$ and  ${\rm II}_{m}\coloneqq\,-[{\nabla  m}]^T\,{\nabla  n}\in \mathbb{R}^{2\times 2}$ are  the matrix representations of the {\it first fundamental form (metric)} and the  {\it  second fundamental form}  {on $m(\omega)$}, respectively, $n\,=\,\dd\frac{\partial_{x_1}m\times \partial_{x_2}m}{\lVert \partial_{x_1}m\times \partial_{x_2}m\rVert} $,  {and with} ${\rm L}_{m}$  {we identify} the {\it Weingarten map (or shape operator)}  {on $m(\omega)$ with its associated matrix}  defined by 
$
{\rm L}_{m}\,=\, {\rm I}_{m}^{-1} {\rm II}_{m}\in \mathbb{R}^{2\times 2}
$, with similar definitions for ${\rm I}_{y_0}$, ${\rm II}_{y_0}$, ${n_0}$ and ${\rm L}_{y_0}$  {on the surface $y_0(\omega)$}, see Section \ref{Intro} for further notations.

In comparison with this nonlinear Koiter model, in the obtained  new constrained elastic Cosserat shell model, the pure membrane energy is expressed in terms of the difference 
\begin{align}\sqrt{[\nabla\Theta ]^{-T}\,{\rm I}_m^{\flat }\,[\nabla\Theta ]^{-1}}-
\sqrt{[\nabla\Theta ]^{-T}\,{\rm I}_{y_0}^{\flat }\,[\nabla\Theta ]^{-1}}\end{align} and not in terms of \begin{align}[\nabla\Theta]^{-T} \,\frac{1}{2}\,\big({\rm I}_m^\flat-{\rm I}_{y_0}^\flat\big)[\nabla\Theta ]^{-1}=[\nabla\Theta ]^{-T}\mathcal{G}_{\rm{Koiter}} \, [\nabla\Theta ]^{-1},\end{align} 
as it is the case in the classical Koiter shell model. This is a consequence of the fact that in the parent three-dimensional energy, we are starting from the non-symmetric  Biot-type stretch tensor (similar to the right stretch tensor $U=\sqrt{F^TF}$), while the Koiter shell model is typically constructed  considering a quadratic form in terms of the right  Cauchy-Green deformation tensor $C=U^2=F^TF$. 
We notice that in our constrained Cosserat shell model there does  not exist a pure bending energy, the bending terms (those involving the second fundamental form) are always coupled with membrane terms (those involving the first fundamental form). The presence of energies depending on the difference of the square roots of the first fundamental forms is consistent with new estimates of the distance between two surfaces 
\cite{ciarlet2015nonlinear,ciarlet2019new} obtained in  \cite{malin2018nonlinear}. There,  it is shown that the difference $v=m-y_0$ is completely controlled by  
\begin{align}\lVert\sqrt{[\nabla\Theta ]^{-T}\,{\rm I}_m^{\flat }\,[\nabla\Theta ]^{-1}}-
\sqrt{[\nabla\Theta ]^{-T}\,{\rm I}_{y_0}^{\flat }\,[\nabla\Theta ]^{-1}}\rVert
\end{align} and 
\begin{align}
\lVert\dd\sqrt{  [\nabla\Theta ]\,\widehat{\rm I}_{m}^{-1}[\nabla\Theta ]^{T}}[\nabla\Theta ]^{-T}\, {\rm II}_{m}^{\flat }[\nabla\Theta ]^{-1}-\sqrt{  [\nabla\Theta ]\,\widehat{\rm I}_{y_0}^{-1}[\nabla\Theta ]^{T}}[\nabla\Theta ]^{-T}\, {\rm II}_{y_0}^{\flat }[\nabla\Theta ]^{-1}\rVert.
\end{align} 

The sum of the two expressions appears in the constrained Cosserat plate model ($\nabla\Theta=\id$), too, see Eq. \eqref{minplate}, while an additional term is present in our membrane-bending energy which has a format that cannot be guessed: it is a quadratic form in terms of the change of curvature tensor  \begin{align} \label{eq:gleichunten}\sqrt{[\nabla\Theta ]^{-T}\,\widehat{\rm I}_m\,[\nabla\Theta ]^{-1}}&\,  [\nabla\Theta ]\Big( {\rm L}_m^\flat-{\rm L}_{y_0}^\flat \big)[\nabla\Theta ]^{-1}\notag\\&=
\sqrt{[\nabla\Theta ]^{-T}\,\widehat{\rm I}_m\,[\nabla\Theta ]^{-1}}\,  [\nabla\Theta ]\Big( \widehat{\rm I}_{m}^{-1}{\rm II}_{m}^{\flat }-\widehat{\rm I}_{y_0}^{-1}{\rm II}_{y_0}^{\flat }\Big)[\nabla\Theta ]^{-1}\,\id_3\\&=\sqrt{[\nabla\Theta ]^{-T}\,\widehat{\rm I}_m\,[\nabla\Theta ]^{-1}}\,  [\nabla\Theta ]\Big( \widehat{\rm I}_{m}^{-1}{\rm II}_{m}^{\flat }-\widehat{\rm I}_{y_0}^{-1}{\rm II}_{y_0}^{\flat }\Big)[\nabla\Theta ]^{-1}\sqrt{[\nabla\Theta ]^{-T}\,\widehat{\rm I}_{y_0}[\nabla\Theta ]^{-1}}.\notag
\end{align}
Therefore, our particular cases follow the recent trends  of considering new shell models which are similar (but not equivalent) to the Koiter shell model with the aim to lead  to improved modelling results, especially for not so thin shells. Of course, for in-extensional deformations
${\rm I}_m={\rm I}_{y_0}$ (pure bending mode, flexure), our change of curvature tensor \eqref {eq:gleichunten} turns into
\newpage
\begin{align}
&\sqrt{[\nabla\Theta ]^{-T}\,\widehat{\rm I}_{y_0}\,[\nabla\Theta ]^{-1}} [\nabla\Theta ]\Big( \widehat{\rm I}_{y_0}^{-1}{\rm II}_{m}^{\flat }-\widehat{\rm I}_{y_0}^{-1}{\rm II}_{y_0}^{\flat }\Big)[\nabla\Theta ]^{-1}\notag\\
~&=\sqrt{  [\nabla\Theta ]\,\widehat{\rm I}_{y_0}^{-1}[\nabla\Theta ]^{T}}[\nabla\Theta ]^{-T}\Big( {\rm II}_{m}^{\flat }-{\rm II}_{y_0}^{\flat }\Big)[\nabla\Theta ]^{-1}=[\nabla\Theta ]^{-T}\Big( {\rm II}_{m}^{\flat }-{\rm II}_{y_0}^{\flat }\Big)\, [\nabla\Theta ]^{-1}=[\nabla\Theta ]^{-T}\mathcal{R}_{\rm{Koiter}} \, [\nabla\Theta ]^{-1}\notag
\end{align}
and coincides with the curvature tensor considered  in the Koiter shell modell \eqref{Ap7matrix}, cf.~\cite{malin2018nonlinear}.

In a forthcoming paper \cite{GhibaNeffPartIV} we will see that the linearization of the strain measures of our constrained Cosserat shell model naturally leads to the same strain measures that are   preferred  in the later works by Sanders and Budiansky \cite{budiansky1962best,budiansky1963best} and by Koiter and Simmonds \cite{koiter1973foundations}, who called the resulting theory  the ``best first-order linear elastic shell theory''. 

In \cite{GhibaNeffPartI} the   geometrically nonlinear constrained Cosserat shell model including terms up to order $O(h^5)$  is constructed under the assumption
\begin{align}\label{ch5i}
 h\,\max \{\sup_{(x_1,x_2)\in {\omega}}|{\kappa_1}|,\sup_{(x_1,x_2)\in {\omega}}|{\kappa_2}|\}<\frac{1}{2},\end{align}
where  $\kappa_1,\kappa_2$ are the  principal curvatures. Condition \eqref{ch5i} guarantees that $
\det \nabla \Theta(x_3)=1-2\,{\rm H}\, x_3+{\rm K}\, x_3^2\neq 0
\  \text{ for all} \  x_3\in \left[-\frac{h}{2},\frac{h}{2}\right]$, i.e.,  it excludes self-intersection of the initially curved shell parametrized by $\Theta$, see  \cite[Proposition A.2.]{GhibaNeffPartI}. However, condition \eqref{ch5i} can be weakened since the classical condition \eqref{ch5in}
\begin{figure}[h!]
	\begin{minipage}{7cm}
	\centering\includegraphics[scale=1]{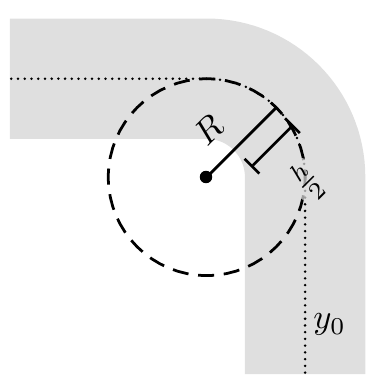}
	\end{minipage}
	\begin{minipage}{9.6cm}
			\begin{align}\label{ch5in}
		h\,\max \{\sup_{(x_1,x_2)\in {\omega}}|{\kappa_1}|,\sup_{(x_1,x_2)\in {\omega}}|{\kappa_2}|\}<2\end{align}
	\end{minipage}
	\caption{\footnotesize Left: $\frac{h}{2}<R$. Here, $R=\frac1\kappa$ denotes a typical radius of principal curvature. Right: The classical condition ensuring just injectivity of the parametrization.}	\label{PfigR}
\end{figure}
is necessary and  sufficient (see the Appendix \ref{invertAppendix})
to assure that $
\det \nabla \Theta(x_3)=1-2\,{\rm H}\, x_3+{\rm K}\, x_3^2\neq 0
\  \text{ for all} \  x_3\in \left[-\frac{h}{2},\frac{h}{2}\right]$. Therefore, without further remarks and computations, the model presented in \cite{GhibaNeffPartI} is valid under weakened conditions \eqref{ch5in} on the thickness $h$. 
Clearly, in terms of the principal radii of curvature $R_1=\frac{1}{{|\kappa_1|}}$, $R_2=\frac{1}{{|\kappa_2|}}$, see Figure \ref{PfigR}, the  condition \eqref{ch5in} is equivalent to 
\begin{align}\label{relaxedthick}
h <2\, R_1,\quad  h <2\,R_2\quad \text{in}\quad \omega \quad 
\Leftrightarrow\quad h<2\,\min \{\inf_{(x_1,x_2)\in {\omega}}{R_1},\inf_{(x_1,x_2)\in {\omega}}{R_2}\}.
\end{align}
(and not $\dd h\ll2\,\min \{\inf_{(x_1,x_2)\in {\omega}} R_1,\inf_{(x_1,x_2)\in {\omega}} R_2\}$ as is the modelling thin shell assumption for the classical Koiter model). For models which coincide to leading order with the classical Koiter model for small enough thickness \cite{anicic2018polyconvexity,anicic2019existence}, the existence of the solution is proven  under the conditions \eqref{relaxedthick}.

For the   geometrically nonlinear Cosserat shell model including terms up to order $O(h^5)$ \cite{GhibaNeffPartI},  we have  shown  the existence of the solution \cite{GhibaNeffPartII} for the theory including $O(h^5)$ terms, as well as the existence of the solution for the theory including  terms up to order $O(h^3)$.  In Appendix \ref{Appendixrelaxh} we show that condition \eqref{ch5i} on the thickness, under which the existence result presented in \cite{GhibaNeffPartII} was shown, can be weakened, but the new condition still remains more restrictive than  \eqref{ch5in}.   We noted that, in order to prove the existence of the solution,  while in the theory including $O(h^5)$ the condition on the thickness $h$ is similar to that originally considered in the  modelling process, in the sense that it is independent of the constitutive parameters,   in the $O(h^3)$-case the coercivity is proven under more restrictive conditions on the thickness $h$  which are depending on the constitutive parameters. This  remains also true  when the problem of the existence of solution in  the nonlinear constrained Cosserat shell model  is considered, as we show in Sections \ref{4.1} and \ref{4.2}.
 
Even if the condition \eqref{ch5i}  suggest that the existence of the solution is still valid for $L_c\to 0$, this turns out to be false, since the  presence of the extra Cosserat curvature energy, i.e., the condition $L_{c}>0$, is essential in our proof. The missing extra curvature energy, which in classical shell models is not present, is responsible for the typical non-well-posedness of the classical models in the nonlinear case. Note that this also applies to Naghdi-type shell models with one independent director.

This paper is now structured as follows. After fixing our notation we briefly recapitulate the unconstrained Cosserat shell model. Then in Section \ref{fin.1} we turn our attention to the constraint Cosserat shell model which introduces several symmetry constraints in the model. These symmetry constraints are put into perspective with the underlying modeling of the three-dimensional problem. We prove conditional existence results since after all the problem may now be over-constrained. This motivates to introduce a modified model in Section \ref{sec:modified}, in which certain symmetry requirements are waived. Unconditional existence theorems are then presented. In Section \ref{SmC} we express the strain measures in the Cosserat shell model in terms of classical quantities and finally in Section \ref{sec:invariance} we discuss the new invariance condition for bending tensors.

 \section{The geometrically nonlinear Cosserat shell model up to $O(h^5)$}\setcounter{equation}{0}\label{Intro}

 \subsection{Notation}
   In this paper, 
 for $a,b\in\mathbb{R}^n$ we let $\bigl\langle {a},{b} \bigr\rangle _{\mathbb{R}^n}$  denote the scalar product on $\mathbb{R}^n$ with
 associated  {(squared)} vector norm $\lVert a\rVert _{\mathbb{R}^n}^2=\bigl\langle {a},{a} \bigr\rangle _{\mathbb{R}^n}$.
 The standard Euclidean scalar product on  the set of real $n\times  {m}$ second order tensors $\mathbb{R}^{n\times  {m}}$ is given by
 $\bigl\langle  {X},{Y} \bigr\rangle _{\mathbb{R}^{n\times  {m}}}={\rm tr}(X\, Y^T)$, and thus the  {(squared)} Frobenius tensor norm is
 $\lVert {X}\rVert ^2_{\mathbb{R}^{n\times  {m}}}=\bigl\langle  {X},{X} \bigr\rangle _{\mathbb{R}^{n\times  {m}}}$. The identity tensor on $\mathbb{R}^{n \times n}$ will be denoted by $\id_n$, so that
 ${\rm tr}({X})=\bigl\langle {X},{\id}_n \bigr\rangle $, and the zero matrix is denoted by $0_n$. We let ${\rm Sym}(n)$ and ${\rm Sym}^+(n)$ denote the symmetric and positive definite symmetric tensors, respectively.  We adopt the usual abbreviations of Lie-group theory, i.e., 
 ${\rm GL}(n)=\{X\in\mathbb{R}^{n\times n}\;|\det({X})\neq 0\}$ the general linear group {,} ${\rm SO}(n)=\{X\in {\rm GL}(n)| X^TX=\id_n,\,\det({X})=1\}$ with
 corresponding Lie-algebras $\mathfrak{so}(n)=\{X\in\mathbb{R}^{n\times n}\;|X^T=-X\}$ of skew symmetric tensors
 and $\mathfrak{sl}(n)=\{X\in\mathbb{R}^{n\times n}\;| \,\tr({X})=0\}$ of traceless tensors. For all $X\in\mathbb{R}^{n\times n}$ we set ${\rm sym}\, X\,=\frac{1}{2}(X^T+X)\in{\rm Sym}(n)$, $\skw\,X\,=\frac{1}{2}(X-X^T)\in \mathfrak{so}(n)$ and the deviatoric part $\dev \,X\,=X-\frac{1}{n}\;\,\tr(X)\cdot\id_n\in \mathfrak{sl}(n)$  and we have
 the orthogonal Cartan-decomposition  of the Lie-algebra
 $
 \mathfrak{gl}(n)=\{\mathfrak{sl}(n)\cap {\rm Sym}(n)\}\oplus\mathfrak{so}(n) \oplus\mathbb{R}\!\cdot\! \id_n,$ $
 X=\dev\, \sym \,X\,+ \skw\,X\,+\frac{1}{n}\,\tr(X)\, \id_n\,.
 $ For vectors $\xi,\eta\in\mathbb{R}^n$, we have the tensor product
 $(\xi\otimes\eta)_{ij}=\xi_i\,\eta_j$. A matrix having the  three  column vectors $A_1,A_2, A_3$ will be written as 
 $
 (A_1\,|\, A_2\,|\,A_3).
 $ 
 For a given matrix $M\in \mathbb{R}^{2\times 2}$ we define the {\it 3D-lifted quantities}
 \begin{align}
 \widehat{M} =\begin{footnotesize}\begin{pmatrix}
 M_{11}& M_{12}&0 \\
 M_{21}&M_{22}&0 \\
 0&0&1
 \end{pmatrix}\in \mathbb{R}^{3\times 3}
 \qquad \text{and} \qquad
 \end{footnotesize}M^\flat =\begin{footnotesize}\begin{pmatrix}
 M_{11}& M_{12}&0 \\
 M_{21}&M_{22}&0 \\
 0&0&0
 \end{pmatrix}  \end{footnotesize}
\in \mathbb{R}^{3\times 3}, \qquad M^\flat =\widehat{M}\, \id_2^\flat.
 \end{align}
  We make use of the operator $\mathrm{axl}: \mathfrak{so}(3)\to\mathbb{R}^3$ associating with a skew-symmetric matrix $A\in \mathfrak{so}(3)$ the vector $\mathrm{axl}({A})\coloneqq(-A_{23},A_{13},-A_{12})^T$.  The corresponding inverse operator will be denoted by ${\rm Anti}: \mathbb{R}^3\to \mathfrak{so}(3)$.

  For  an open domain  $\Omega\subseteq\mathbb{R}^{3}$,
 the usual Lebesgue spaces of square integrable functions, vector or tensor fields on $\Omega$ with values in $\mathbb{R}$, $\mathbb{R}^3$, $\mathbb{R}^{3\times 3}$ or ${\rm SO}(3)$, respectively will be denoted by ${\rm L}^2(\Omega;\mathbb{R})$, ${\rm L}^2(\Omega;\mathbb{R}^3)$, ${\rm L}^2(\Omega; \mathbb{R}^{3\times 3})$ and ${\rm L}^2(\Omega; {\rm SO}(3))$, respectively. Moreover, we use the standard Sobolev spaces ${\rm H}^{1}(\Omega; \mathbb{R})$ \cite{Adams75,Raviart79,Leis86}
 of functions $u$.  For vector fields $u=\left(    u_1, u_2, u_3\right)^T$ with  $u_i\in {\rm H}^{1}(\Omega)$, $i=1,2,3$,
 we define
 $
 \nabla \,u\coloneqq \left(
 \nabla\,  u_1\,|\,
 \nabla\, u_2\,|\,
 \nabla\, u_3
 \right)^T.
 $
 The corresponding Sobolev-space will be denoted by
 $
 {\rm H}^1(\Omega; \mathbb{R}^{3})$. A tensor $Q:\Omega\to {\rm SO}(3)$ having the components in ${\rm H}^1(\Omega; \mathbb{R})$ belongs to ${\rm H}^1(\Omega; {\rm SO}(3))$. For tensor fields $P$ with rows in ${\rm H}({\rm curl}\,; \Omega)$, i.e., 
 $
 P=\begin{footnotesize}\begin{pmatrix}
 P^T.e_1\,|\,
 P^T.e_2\,|\,
 P^T.e_3
 \end{pmatrix}\end{footnotesize}^T$ with $(P^T.e_i)^T\in {\rm H}({\rm curl}\,; \Omega)$, $i=1,2,3$,
 we define
 $
 {\rm Curl}\,P\coloneqq \begin{footnotesize}\begin{pmatrix}
 {\rm curl}\, (P^T.e_1)^T\,|\,
 {\rm curl}\, (P^T.e_2)^T\,|\,
 {\rm curl}\, (P^T.e_3)^T
 \end{pmatrix}\end{footnotesize}^T
 .
 $
 The corresponding Sobolev-space will be denoted by
   ${\rm H}(\Curl;\Omega)$.

 In writing the norm in the corresponding  Sobolev-space we will specify the space. The space will be omitted only when the Frobenius norm or scalar product is considered. 
 In the formulation of the minimization problem we  have considered the  {\it Weingarten map (or shape operator)}  {on $y_0(\omega)$}  defined by  {its associated matrix}
$
{\rm L}_{y_0}\,=\, {\rm I}_{y_0}^{-1} {\rm II}_{y_0}\in \mathbb{R}^{2\times 2},
$
where ${\rm I}_{y_0}\coloneqq [{\nabla  y_0}]^T\,{\nabla  y_0}\in \mathbb{R}^{2\times 2}$ and  ${\rm II}_{y_0}\coloneqq\,-[{\nabla  y_0}]^T\,{\nabla  n_0}\in \mathbb{R}^{2\times 2}$ are  the matrix representations of the {\it first fundamental form (metric)} and the  {\it  second fundamental form} of the surface  {$y_0(\omega)$}, respectively.  
Then, the {\it Gau{\ss} curvature} ${\rm K}$ of the surface  {$y_0(\omega)$} is determined by
$
{\rm K} \coloneqq \,{\rm det}{({\rm L}_{y_0})}\, 
$
and the {\it mean curvature} $\,{\rm H}\,$ through
$
2\,{\rm H}\, \coloneqq {\rm tr}({{\rm L}_{y_0}}).
$  We have also used  the  tensors defined by
\begin{align}\label{AB}
{\rm A}_{y_0}&\coloneqq (\nabla y_0|0)\,\,[\nabla\Theta ]^{-1}\in\mathbb{R}^{3\times 3}, \qquad \qquad 
{\rm B}_{y_0}\coloneqq -(\nabla n_0|0)\,\,[\nabla\Theta ]^{-1}\in\mathbb{R}^{3\times 3},
\end{align}
and the so-called \textit{{alternator tensor}} ${\rm C}_{y_0}$ of the surface \cite{Zhilin06}
\begin{align}
{\rm C}_{y_0}\coloneqq \det(\nabla\Theta )\, [	\nabla\Theta ]^{-T}\,\begin{footnotesize}\begin{pmatrix}
0&1&0 \\
-1&0&0 \\
0&0&0
\end{pmatrix}\end{footnotesize}\,  [	\nabla\Theta ]^{-1}.
\end{align}

 \subsection{The unconstrained Cosserat shell model}\label{sec:unCoss}
 In   \cite{GhibaNeffPartI}, we have obtained the following two-dimensional minimization problem   for the
 deformation of the midsurface $m:\omega
 \,{\to}\,
 \mathbb{R}^3$ and the microrotation of the shell
 $\overline{Q}_{e,s}:\omega
 \,{\to}\,
 \textrm{SO}(3)$ solving on $\omega
 \,\subset\mathbb{R}^2
 $: minimize with respect to $ (m,\overline{Q}_{e,s}) $ the  functional
 \begin{equation}\label{e89}
 I(m,\overline{Q}_{e,s})\!=\!\! \int_{\omega}   \!\!\Big[  \,
 W_{\mathrm{memb}}\big(  \mathcal{E}_{m,s}  \big) +W_{\mathrm{memb,bend}}\big(  \mathcal{E}_{m,s} ,\,  \mathcal{K}_{e,s} \big)   +
 W_{\mathrm{bend,curv}}\big(  \mathcal{K}_{e,s}    \big)
 \Big] \,{\rm det}\nabla\Theta        \, d a - \overline{\Pi}(m,\overline{Q}_{e,s})\,,
 \end{equation}
 where the  membrane part $\,W_{\mathrm{memb}}\big(  \mathcal{E}_{m,s} \big) \,$, the membrane--bending part $\,W_{\mathrm{memb,bend}}\big(  \mathcal{E}_{m,s} ,\,  \mathcal{K}_{e,s} \big) \,$ and the bending--curvature part $\,W_{\mathrm{bend,curv}}\big(  \mathcal{K}_{e,s}    \big)\,$ of the shell energy density are given by
 \begin{align}\label{e90}
 W_{\mathrm{memb}}\big(  \mathcal{E}_{m,s} \big)=& \, \Big(h+{\rm K}\,\dfrac{h^3}{12}\Big)\,
 W_{\mathrm{shell}}\big(    \mathcal{E}_{m,s} \big),\vspace{2.5mm}\notag\\    
 W_{\mathrm{memb,bend}}\big(  \mathcal{E}_{m,s} ,\,  \mathcal{K}_{e,s} \big)=& \,   \Big(\dfrac{h^3}{12}\,-{\rm K}\,\dfrac{h^5}{80}\Big)\,
 W_{\mathrm{shell}}  \big(   \mathcal{E}_{m,s} \, {\rm B}_{y_0} +   {\rm C}_{y_0} \mathcal{K}_{e,s} \big)  \\&
 -\dfrac{h^3}{3} \mathrm{ H}\,\mathcal{W}_{\mathrm{shell}}  \big(  \mathcal{E}_{m,s} ,
 \mathcal{E}_{m,s}{\rm B}_{y_0}+{\rm C}_{y_0}\, \mathcal{K}_{e,s} \big)+
 \dfrac{h^3}{6}\, \mathcal{W}_{\mathrm{shell}}  \big(  \mathcal{E}_{m,s} ,
 ( \mathcal{E}_{m,s}{\rm B}_{y_0}+{\rm C}_{y_0}\, \mathcal{K}_{e,s}){\rm B}_{y_0} \big)\vspace{2.5mm}\notag\\&+ \,\dfrac{h^5}{80}\,\,
 W_{\mathrm{mp}} \big((  \mathcal{E}_{m,s} \, {\rm B}_{y_0} +  {\rm C}_{y_0} \mathcal{K}_{e,s} )   {\rm B}_{y_0} \,\big),  \vspace{2.5mm}\notag\\
 W_{\mathrm{bend,curv}}\big(  \mathcal{K}_{e,s}    \big) = &\,  \,\Big(h-{\rm K}\,\dfrac{h^3}{12}\Big)\,
 W_{\mathrm{curv}}\big(  \mathcal{K}_{e,s} \big)    +  \Big(\dfrac{h^3}{12}\,-{\rm K}\,\dfrac{h^5}{80}\Big)\,
 W_{\mathrm{curv}}\big(  \mathcal{K}_{e,s}   {\rm B}_{y_0} \,  \big)  + \,\dfrac{h^5}{80}\,\,
 W_{\mathrm{curv}}\big(  \mathcal{K}_{e,s}   {\rm B}_{y_0}^2  \big),\notag
 \end{align}
 with
 \begin{align}\label{quadraticforms}
 W_{\mathrm{shell}}( X) & =   \mu\,\lVert  \mathrm{sym}\,X\rVert ^2 +  \mu_{\rm c}\lVert \mathrm{skew}\,X\rVert ^2 +\dfrac{\lambda\,\mu}{\lambda+2\,\mu}\,\big[ \mathrm{tr}   (X)\big]^2\notag\\
 &=  \mu\, \lVert  \mathrm{  dev \,sym} \,X\rVert ^2  +  \mu_{\rm c} \lVert  \mathrm{skew}   \,X\rVert ^2 +\,\dfrac{2\,\mu\,(2\,\lambda+\mu)}{3(\lambda+2\,\mu)}\,[\mathrm{tr}  (X)]^2,\\
 \mathcal{W}_{\mathrm{shell}}(  X,  Y)& =   \mu\,\bigl\langle  \mathrm{sym}\,X,\,\mathrm{sym}\,   \,Y \bigr\rangle   +  \mu_{\rm c}\bigl\langle \mathrm{skew}\,X,\,\mathrm{skew}\,   \,Y \bigr\rangle   +\,\dfrac{\lambda\,\mu}{\lambda+2\,\mu}\,\mathrm{tr}   (X)\,\mathrm{tr}   (Y),  \notag\vspace{2.5mm}\\
 W_{\mathrm{mp}}(  X)&= \mu\,\lVert \mathrm{sym}\,X\rVert ^2+  \mu_{\rm c}\lVert \mathrm{skew}\,X\rVert ^2 +\,\dfrac{\lambda}{2}\,\big[  \tr(X)\,\big]^2=
  {W}_{\mathrm{shell}}(  X)+ \,\dfrac{\lambda^2}{2\,(\lambda+2\,\mu)}\,[\mathrm{tr} (X)]^2,\notag\vspace{2.5mm}\\
 W_{\mathrm{curv}}(  X )&=\mu\, L_{\rm c}^2 \left( b_1\,\lVert  \dev\,\text{sym} \,X\rVert ^2+b_2\,\lVert \text{skew}\,X\rVert ^2+b_3\,
 [\tr (X)]^2\right), \qquad\qquad \forall\, X,Y\in \mathbb{R}^{3\times 3}.\notag
 \end{align}
 
 Contrary to other 6-parameter theory of shells  \cite{Pietraszkiewicz79,Pietraszkiewicz-book04,Eremeyev06,Pietraszkiewicz10} the membrane-bending energy is expressed in terms of the specific  tensor $\mathcal{E}_{m,s} \, {\rm B}_{y_0} +  {\rm C}_{y_0} \mathcal{K}_{e,s}$. This tensor represents a nonlinear change of curvature tensor, since in the linear constrained Cosserat model \cite{GhibaNeffPartIV} it reduces to the change of curvature tensor considered by Anicic and L\'{e}ger \cite{anicic1999formulation} and more recently by {\v{S}}ilhav{\`y} \cite {vsilhavycurvature}.

 The parameters $\mu$ and $\lambda$ are the \textit{Lam\'e constants}
 of classical isotropic elasticity, $\kappa=\frac{2\,\mu+3\,\lambda}{3}$ is the \textit{infinitesimal bulk modulus}, $b_1, b_2, b_3$ are \textit{non-dimensional constitutive curvature coefficients (weights)}, $\mu_{\rm c}\geq 0$ is called the \textit{{Cosserat couple modulus}} and ${L}_{\rm c}>0$ introduces an \textit{{internal length} } which is {characteristic} for the material, e.g., related to the grain size in a polycrystal. The
 internal length ${L}_{\rm c}>0$ is responsible for \textit{size effects} in the sense that thinner samples are relatively stiffer than
 thicker samples. If not stated otherwise, we assume that $\mu>0$, $\kappa>0$, $\mu_{\rm c}>0$, $b_1>0$, $b_2>0$, $b_3> 0$. All the constitutive coefficients  are now deduced from the three-dimensional formulation, without using any a posteriori fitting of some two-dimensional constitutive coefficients.

 The potential of applied external loads $ \overline{\Pi}(m,\overline{Q}_{e,s}) $ appearing in \eqref{e89} is expressed by 
 \begin{align}\label{e4o}
 \overline{\Pi}(m,\overline{Q}_{e,s})\,=\,& \, \Pi_\omega(m,\overline{Q}_{e,s}) + \Pi_{\gamma_t}(m,\overline{Q}_{e,s})\,,\qquad\textrm{with}   \\
 \Pi_\omega(m,\overline{Q}_{e,s}) \,=\,& \dd\int_{\omega}\bigl\langle  {f}, u \bigr\rangle \, da + \Lambda_\omega(\overline{Q}_{e,s})\qquad \text{and}\qquad
 \Pi_{\gamma_t}(m,\overline{Q}_{e,s})\,=\, \dd\int_{\gamma_t}\bigl\langle  {t},  u \bigr\rangle \, ds + \Lambda_{\gamma_t}(\overline{Q}_{e,s})\,,\notag
 \end{align}
 where $ u(x_1,x_2) \,=\, m(x_1,x_2)-y_0(x_1,x_2) $ is the displacement vector of the midsurface,  $\Pi_\omega(m,\overline{Q}_{e,s})$ is the potential of the external surface loads $f$, while  $\Pi_{\gamma_t}(m,\overline{Q}_{e,s})$ is the potential of the external boundary loads $t$.  The functions $\Lambda_\omega\,, \Lambda_{\gamma_t} : {\rm L}^2 (\omega, \textrm{SO}(3))\rightarrow\mathbb{R} $ are expressed in terms of the loads from the three-dimensional parental variational problem \cite{GhibaNeffPartI} and they are assumed to be continuous and bounded operators. Here, $ \gamma_t $ and $ \gamma_d $ are nonempty subsets of the boundary of $ \omega $ such that $   \gamma_t \cup \gamma_d= \partial\omega $ and $ \gamma_t \cap \gamma_d= \emptyset $\,. On $ \gamma_t $ we have considered traction boundary conditions, while on $ \gamma_d $ we have the Dirichlet-type boundary conditions: \begin{align}\label{boundary}
 m\big|_{\gamma_d}&=m^* \qquad \text{simply supported (fixed, welded)}, \qquad \qquad \qquad 
 \overline{Q}_{e,s}\big|_{\gamma_d}=\overline{Q}_{e,s}^*,\ \  \ \ \ \ \ \ \text{clamped},
 \end{align}
 where the boundary conditions are to be understood in the sense of traces.

    In our model the total energy is not simply the sum of  energies coupling the pure membrane and the pure bending effect, respectively.  Two mixed coupling energies are still present after the dimensional reduction of the variational problem from the geometrically nonlinear three-dimensional Cosserat elasticity.  
 	
Considering   materials for  which $\,\mu_{\rm c}>0$ and the Poisson ratio  $\nu=\frac{\lambda}{2(\lambda+\mu)}$ and Young's modulus $ E=\frac{\mu(3\,\lambda+2\,\mu)}{\lambda+\mu}$ are such that\footnote{These conditions are equivalent to $\mu>0$ and $2\,\lambda+\mu> 0$, which assure the positive definiteness of the quadratic form $ W_{\mathrm{shell}}( X)  =   \mu\,\lVert  \mathrm{sym}\,X\rVert ^2 +  \mu_{\rm c}\lVert \mathrm{skew}\,X\rVert ^2 +\dfrac{\lambda\,\mu}{\lambda+2\,\mu}\,\big[ \mathrm{tr}   (X)\big]^2
	=  \mu\, \lVert  \mathrm{  dev \,sym} \,X\rVert ^2  +  \mu_{\rm c} \lVert  \mathrm{skew}   \,X\rVert ^2 +\,\dfrac{2\,\mu\,(2\,\lambda+\mu)}{3(\lambda+2\,\mu)}\,[\mathrm{tr}  (X)]^2$.}
$
-\frac{1}{2}<\nu<\frac{1}{2}$ \text{and} $E>0\, 
$ the Cosserat shell model admits global minimizers \cite{GhibaNeffPartII}. 
Under these assumptions on the constitutive coefficients, together with the positivity of $\mu$, $\mu_{\rm c}$, $b_1$, $b_2$ and $b_3$, and the orthogonal Cartan-decomposition  of the Lie-algebra
$
\mathfrak{gl}(3)$ and with the definition
\begin{align}\label{e78}
{W}_{\mathrm{shell}}( X)
\coloneqq &\, {W}_{\mathrm{shell}}^{\infty}( \sym \,X) +  \mu_{\rm c} \lVert  \mathrm{skew}   \,X\rVert ^2 \quad \ \forall \, X\in\mathbb{R}^{3\times 3},\\  {W}_{\mathrm{shell}}^{\infty}( S)\coloneqq &\,  \mu\, \lVert  S\rVert ^2   +\,\dfrac{\lambda\,\mu}{\lambda+2\,\mu}\,\big[ \mathrm{tr}   (S)\big]^2\qquad \quad \, \forall \, S\,\in{\rm Sym}(3),\notag
\end{align}
it follows that there exists positive constants  $c_1^+, c_2^+, C_1^+$ and $C_2^+$  such that for all $X\in \mathbb{R}^{3\times 3}$ the following inequalities hold
\begin{align}\label{pozitivdef}
C_1^+ \lVert S\rVert ^2&\geq\, {W}_{\mathrm{shell}}^{\infty}( S) \geq\, c_1^+ \lVert  S\rVert ^2 \qquad \qquad \qquad \qquad \qquad \qquad\ \forall \, S\,\in{\rm Sym}(3),\notag\\
C_1^+ \lVert \sym\,X\rVert ^2+\mu_{\rm c}\,\lVert \skw\,X\rVert ^2&\geq\, W_{\mathrm{shell}}(  X) \geq\, c_1^+ \lVert  \sym\,X\rVert ^2+\mu_{\rm c}\,\lVert \skw\,X\rVert ^2 \quad \qquad\forall \, X\in\mathbb{R}^{3\times 3},\\
C_2^+ \lVert X \rVert ^2 &\geq\, W_{\mathrm{curv}}(  X )
\geq\,  c_2^+ \lVert X \rVert ^2\qquad \qquad \qquad \qquad \qquad \qquad  \forall \, X\in\mathbb{R}^{3\times 3}.\notag
\end{align}
Here,  $c_1^+$ and $C_1^+$ denote the  smallest and the largest eigenvalues, respectively, of the quadratic form ${W}_{\mathrm{shell}}^{\infty}( X)$. Hence, they are independent of $\mu_{\rm c}$.

\section{The limit problem for infinite Cosserat couple  modulus $\mu_{\rm c}\to \infty$}\label{fin.1}\setcounter{equation}{0}

In this section we consider the case $\mu_c\rightarrow\infty$, since then the constraint $\overline{R}_\xi={\rm polar}(F_\xi)$ is enforced in the starting three-dimensional variational problem. In that case, the parental three-dimensional model turns into the Toupin couple stress model \cite[Eq. 11.8]{Toupin62}.
\subsection{Constrained elastic Cosserat shell models}\label{constcon}
 Let us see what is happening to the dimensionally reduced model under the same circumstances. The following lemma give{s} us information on the constitutive restrictions under which the energy density remains bounded. \begin{lemma}\label{propcoerh5}  For sufficiently small values of the thickness $h$ such that  	\begin{align}\label{rcondh5new}
 	h\max\{\sup_{x\in\omega}|\kappa_1|, \sup_{x\in\omega}|\kappa_2|\}<\alpha \qquad \text{with}\qquad  \alpha<\sqrt{\frac{2}{3}(29-\sqrt{761})}\simeq 0.97083
 	\end{align}  
	and for constitutive coefficients  satisfying  $\mu>0, \,\mu_{\rm c}>0$, $2\,\lambda+\mu> 0$, $b_1>0$, $b_2>0$ and $b_3>0$,   the  energy density 
	\begin{align}W(\mathcal{E}_{m,s}, \mathcal{K}_{e,s})=W_{\mathrm{memb}}\big(  \mathcal{E}_{m,s} \big)+W_{\mathrm{memb,bend}}\big(  \mathcal{E}_{m,s} ,\,  \mathcal{K}_{e,s} \big)+W_{\mathrm{bend,curv}}\big(  \mathcal{K}_{e,s}    \big)
	\end{align}
	satisfies the estimate
	\begin{align}\label{26bis}
	W(\mathcal{E}_{m,s}, \mathcal{K}_{e,s})
	\geq&\,h\,\dfrac{7}{48}\,  {W}_{\mathrm{shell}}  \big(  \mathcal{E}_{m,s}\big)+\dfrac{h^3}{12}\,\dfrac{37}{80}\, {W}_{\mathrm{shell}}  \big(
	\mathcal{E}_{m,s}{\rm B}_{y_0}+{\rm C}_{y_0}\, \mathcal{K}_{e,s}  \big)\\&+\dfrac{h^5}{80}\,\frac{1}{6}
	W_{\mathrm{shell}} \big((  \mathcal{E}_{m,s} \, {\rm B}_{y_0} +  {\rm C}_{y_0} \mathcal{K}_{e,s} )   {\rm B}_{y_0} \,\big)+h\,\dfrac{47}{48}\,
	W_{\mathrm{curv}}\big(  \mathcal{K}_{e,s} \big).\notag
	\end{align} 
\end{lemma}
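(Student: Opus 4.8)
The strategy is to expand the three energy contributions $W_{\mathrm{memb}}$, $W_{\mathrm{memb,bend}}$, $W_{\mathrm{bend,curv}}$ from \eqref{e90}, collect the terms according to which of the four ``building blocks'' $\mathcal{E}_{m,s}$, $\mathcal{E}_{m,s}{\rm B}_{y_0}+{\rm C}_{y_0}\mathcal{K}_{e,s}$, $(\mathcal{E}_{m,s}{\rm B}_{y_0}+{\rm C}_{y_0}\mathcal{K}_{e,s}){\rm B}_{y_0}$ and the curvature blocks $\mathcal{K}_{e,s}$, $\mathcal{K}_{e,s}{\rm B}_{y_0}$, $\mathcal{K}_{e,s}{\rm B}_{y_0}^2$ they involve, and then bound below the resulting (indefinite, because of the mixed $\mathcal{W}_{\mathrm{shell}}$ and the $\mathrm{H}$-dependent terms and the signs of ${\rm K}\,h^3/12$, ${\rm K}\,h^5/80$) quadratic form by a positive-definite one. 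The key analytic input is that under \eqref{rcondh5new} the scalar coefficients $\mathrm{K}$, $\mathrm{H}$ are controlled: since $|\mathrm K|\le\max\{\kappa_1^2,\kappa_2^2\}$ and $2|\mathrm H|\le 2\max\{|\kappa_1|,|\kappa_2|\}$, the bound $h\max\{|\kappa_1|,|\kappa_2|\}<\alpha$ gives $|\mathrm{K}|\,h^2<\alpha^2$ and $|\mathrm H|\,h<\alpha$, so all the ``correction'' coefficients $\mathrm{K}\,h^3/12$, $\mathrm{K}\,h^5/80$, $\mathrm{H}\,h^3/3$, $\mathrm{H}\,h^3/6$ are quantitatively small compared with the leading $h$, $h^3/12$, $h^5/80$ coefficients. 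The specific numerical threshold $\alpha<\sqrt{\tfrac23(29-\sqrt{761})}$ is exactly the value making the worst-case combination of these corrections leave a strictly positive residue; the fractions $7/48$, $37/80$, $1/6$, $47/48$ are the residual weights at that threshold.

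Concretely, I would proceed in the following steps. First, using \eqref{e78} and the orthogonal Cartan decomposition, write each occurrence of $W_{\mathrm{shell}}$, $\mathcal{W}_{\mathrm{shell}}$, $W_{\mathrm{mp}}$ via its positive semidefinite $W_{\mathrm{shell}}^\infty$-part plus the $\mu_{\rm c}$-skew part; note $W_{\mathrm{mp}}(X)\ge W_{\mathrm{shell}}(X)$ since $\lambda^2/(2(\lambda+2\mu))\,[\mathrm{tr}X]^2\ge 0$, which lets us discard the $W_{\mathrm{mp}}$–$W_{\mathrm{shell}}$ gap in the $h^5/80$ membrane-bending term and keep only $W_{\mathrm{shell}}\big((\mathcal{E}_{m,s}{\rm B}_{y_0}+{\rm C}_{y_0}\mathcal{K}_{e,s}){\rm B}_{y_0}\big)$. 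Second, handle the two indefinite mixed terms $-\tfrac{h^3}{3}\mathrm H\,\mathcal{W}_{\mathrm{shell}}(\mathcal{E}_{m,s},\mathcal{E}_{m,s}{\rm B}_{y_0}+{\rm C}_{y_0}\mathcal{K}_{e,s})$ and $\tfrac{h^3}{6}\mathcal{W}_{\mathrm{shell}}(\mathcal{E}_{m,s},(\mathcal{E}_{m,s}{\rm B}_{y_0}+{\rm C}_{y_0}\mathcal{K}_{e,s}){\rm B}_{y_0})$ by Cauchy–Schwarz for the bilinear form $\mathcal{W}_{\mathrm{shell}}$ followed by Young's inequality $2|\mathcal{W}_{\mathrm{shell}}(X,Y)|\le \varepsilon\,W_{\mathrm{shell}}(X)+\varepsilon^{-1}W_{\mathrm{shell}}(Y)$, absorbing one factor into the $h\,W_{\mathrm{shell}}(\mathcal{E}_{m,s})$ term and the other into the $\tfrac{h^3}{12}W_{\mathrm{shell}}(\mathcal{E}_{m,s}{\rm B}_{y_0}+{\rm C}_{y_0}\mathcal{K}_{e,s})$ term respectively; similarly estimate $W_{\mathrm{shell}}(\mathcal{E}_{m,s}{\rm B}_{y_0}+{\rm C}_{y_0}\mathcal{K}_{e,s})$ appearing with the ${\rm B}_{y_0}$-weighted curvature contributions. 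Third, for the ${\rm B}_{y_0}$-weighted terms $W_{\mathrm{curv}}(\mathcal{K}_{e,s}{\rm B}_{y_0})$, $W_{\mathrm{curv}}(\mathcal{K}_{e,s}{\rm B}_{y_0}^2)$, $W_{\mathrm{shell}}((\cdots){\rm B}_{y_0})$, use that these are nonnegative (positive semidefinite quadratic forms) so they may simply be dropped where convenient, except we retain the $h^5/80$ term with its residual weight $1/6$; and for the terms carrying the scalar factor $h-\mathrm K\,h^3/12$ or $h+\mathrm K\,h^3/12$ or $h^3/12-\mathrm K\,h^5/80$, bound that scalar below by the worst case using $|\mathrm K|h^2<\alpha^2$, e.g. $h+\mathrm K\,h^3/12\ge h(1-\alpha^2/12)$ and $h^3/12-\mathrm K\,h^5/80\ge \tfrac{h^3}{12}(1-\tfrac{\alpha^2}{80/12}\cdots)$ — tracking these factors carefully is where the precise constant $\alpha$ enters. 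Finally, collect all lower bounds and check that the accumulated coefficients are at least $\tfrac{7}{48}h$, $\tfrac{37}{80}\cdot\tfrac{h^3}{12}$, $\tfrac16\cdot\tfrac{h^5}{80}$, $\tfrac{47}{48}h$ on the four respective blocks, which is a finite rational/radical arithmetic verification at $\alpha=\sqrt{\tfrac23(29-\sqrt{761})}$.

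The main obstacle is bookkeeping: choosing the Young-inequality splitting parameters $\varepsilon$ optimally (or at least admissibly) so that, after absorbing the indefinite cross terms and the sign-ambiguous $\mathrm K$-corrections, all four residual coefficients stay strictly positive simultaneously, and doing so in a way that pins down $\alpha$ to precisely $\sqrt{\tfrac23(29-\sqrt{761})}$ rather than some cruder bound. I expect this reduces, after the structural reductions above, to analyzing a low-dimensional quadratic form in the norms $\|\mathcal{E}_{m,s}\|$, $\|\mathcal{E}_{m,s}{\rm B}_{y_0}+{\rm C}_{y_0}\mathcal{K}_{e,s}\|$, $\|(\mathcal{E}_{m,s}{\rm B}_{y_0}+{\rm C}_{y_0}\mathcal{K}_{e,s}){\rm B}_{y_0}\|$, $\|\mathcal{K}_{e,s}\|$ (separately for the $W_{\mathrm{shell}}^\infty$, skew and curvature channels, which decouple), whose positivity condition is a polynomial inequality in $\alpha^2$ whose relevant root is $\tfrac23(29-\sqrt{761})$. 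Everything else — expansion, Cauchy–Schwarz, Young, discarding nonnegative terms — is routine once the structure from \eqref{e78}–\eqref{pozitivdef} is in hand.
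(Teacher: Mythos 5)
Your plan matches the paper's strategy almost step for step: the paper's Appendix B (Proposition A.2) derives exactly the Young-type estimate you describe, writing $-\tfrac{h^3}{3}\mathrm{H}\,\mathcal{W}_{\mathrm{shell}}(\cdot,\cdot)$ and $\tfrac{h^3}{6}\,\mathcal{W}_{\mathrm{shell}}(\cdot,\cdot)$ with two auxiliary parameters $\varepsilon,\delta>0$, using $|\mathrm{K}|\,h^2<\alpha^2$ and $|\mathrm{H}|\,h<\alpha$ to reduce everything to coefficients $\tfrac{1}{12}(12-\delta-\alpha^2-2\varepsilon\alpha)$, $(1-\tfrac{3}{20}\alpha^2-\tfrac{\alpha}{3\varepsilon})$, $(1-\tfrac{20}{3\delta})$, $(1-\tfrac{\alpha^2}{12})$, then requiring the simultaneous positivity of all four — which is precisely the polynomial inequality in $\alpha^2$ whose relevant root is $\tfrac23(29-\sqrt{761})$ that you predicted. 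The substeps you name (discarding the $W_{\mathrm{mp}}-W_{\mathrm{shell}}$ gap via nonnegativity of $\tfrac{\lambda^2}{2(\lambda+2\mu)}[\mathrm{tr}X]^2$, Cauchy--Schwarz for $\mathcal W_{\mathrm{shell}}$ which is legitimate because $W_{\mathrm{shell}}$ is positive definite under $\mu>0,\,\mu_{\rm c}>0,\,2\lambda+\mu>0$, dropping the remaining semidefinite ${\rm B}_{y_0}$-weighted curvature terms) are exactly what the paper does.

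One small inaccuracy: you describe $7/48$, $37/80$, $1/6$, $47/48$ as ``the residual weights at that threshold'' $\alpha=\sqrt{\tfrac23(29-\sqrt{761})}$. At that threshold the optimal residual weights collapse to zero by construction; the concrete rationals in the lemma come instead from evaluating the bound at a comfortably smaller $\alpha$ and fixed $(\varepsilon,\delta)$ — note that $1-\tfrac{\alpha^2}{12}=\tfrac{47}{48}$ forces $\alpha=\tfrac12$, and the remaining three follow from particular admissible $(\varepsilon,\delta)$ there. This is a bookkeeping detail rather than a gap: your strategy is the paper's strategy.
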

\begin{proof}
	The proof is  similar to the proof of Theorem 4.1 from \cite{GhibaNeffPartII} and the proof given in Appendix \ref{coerh5Appendix}. 
\end{proof}

In the remainder of this paper, the strain tensors corresponding to the constrained elastic Cosserat shell models will be denoted with the subscript  $\cdot_{ \infty }$, i.e., ${Q}_{ \infty }$, $\mathcal{E}_{ \infty }$, $\mathcal{K}_{ \infty }$ etc.

Therefore, since the membrane energy and the membrane-bending energy have to remain finite when \linebreak $\mu_c\rightarrow\infty$, in view of Lemma \ref{propcoerh5}  and \eqref{pozitivdef} we  have to impose \cite{Toupin62,Toupin64} that \begin{align}
\label{restsk}\lVert{\rm skew}( \mathcal{E}_{ \infty })\rVert=0
,
\qquad\quad 
\lVert{\rm skew}(\mathcal{E}_{ \infty } \, {\rm B}_{y_0} +  {\rm C}_{y_0} \mathcal{K}_{ \infty } )\rVert=0,\quad \qquad 
\lVert{\rm skew}((\mathcal{E}_{ \infty } \, {\rm B}_{y_0} +  {\rm C}_{y_0} \mathcal{K}_{ \infty } )   {\rm B}_{y_0})\rVert=0.
\end{align}
The
assumption $\mathcal{E}_{ \infty }\in {\rm Sym}(3)$ implies that 
$
{Q}_{ \infty }Q_0={\rm polar}(\nabla m|{n})\in\textrm{SO}(3),
$ 
where $n=\frac{\partial_{x_1} m\times \partial_{x_2} m}{\lVert\partial_{x_1} m\times \partial_{x_2} m\rVert}$ is the unit normal vector to the deformed midsurface, and ${\rm polar}(X)\in\textrm{SO}(3)$  denotes the orthogonal part of the invertible matrix  $X\in {\rm GL}^+(3)$ in the polar decomposition \cite{neff2013grioli}, i.e., $X={\rm polar}(X)\, U$ with $U\in {\rm Sym}^+(3)$. 
Indeed, we have
\begin{align}\label{lcmc}
{\rm skew}(\mathcal{E}_{ \infty }) =&\,\,0\notag\\
\Leftrightarrow  \quad  &{Q}_{ \infty }^T(\nabla  m| \,{Q}_{ \infty } \nabla\Theta .e_3)[\nabla\Theta ]^{-1}\in {\rm Sym}(3)\notag \\
\Leftrightarrow  \quad  &{Q}_{ \infty }^T(\nabla  m| \,{Q}_{ \infty } \nabla\Theta .e_3)[\nabla\Theta ]^{-1} \, = \,[\nabla\Theta ]^{-T}(\nabla  m| \,{Q}_{ \infty } \nabla\Theta .e_3)^T{Q}_{ \infty }\notag \\
\Leftrightarrow   \quad  & Q_0^T{Q}_{ \infty }^T(\nabla  m| \,{Q}_{ \infty } \nabla\Theta .e_3)U_0^{-1} \, =  \,U_0^{-1}(\nabla  m| \,{Q}_{ \infty } \nabla\Theta .e_3)^T{Q}_{ \infty }Q_0\notag \\
\Leftrightarrow  \quad  & U_0 Q_0^T{Q}_{ \infty }^T(\nabla  m| \,{Q}_{ \infty } \nabla\Theta .e_3)U_0^{-1} \, = \, (\nabla  m| \,{Q}_{ \infty } \nabla\Theta .e_3)^T{Q}_{ \infty }Q_0\\
\Leftrightarrow  \quad  & \begin{footnotesize}\begin{pmatrix}
*&*&0 \\
*&*&0 \\
0&0&1 \\
\end{pmatrix}\end{footnotesize} Q_0^T{Q}_{ \infty }^T(\nabla  m| \,{Q}_{ \infty } \nabla\Theta .e_3)\begin{footnotesize}\begin{pmatrix}
*&*&0 \\
*&*&0 \\
0&0&1 \\
\end{pmatrix}\end{footnotesize}=(\nabla  m| \,{Q}_{ \infty } \nabla\Theta .e_3)^T{Q}_{ \infty }Q_0\notag \\
\Leftrightarrow  \quad  & \begin{footnotesize}\begin{pmatrix}
*&*&0 \\
*&*&0 \\
0&0&1 \\
\end{pmatrix}\end{footnotesize} \begin{footnotesize}\begin{pmatrix}
*&*&0 \\
*&*&0 \\
*&*&1 \\
\end{pmatrix}\end{footnotesize}\begin{footnotesize}\begin{pmatrix}
*&*&0 \\
*&*&0 \\
0&0&1 \\
\end{pmatrix}\end{footnotesize}=(\nabla  m| \,{Q}_{ \infty } \nabla\Theta .e_3)^T{Q}_{ \infty }Q_0\notag \\
\Leftrightarrow  \quad  & \begin{footnotesize}\begin{pmatrix}
*&*&0 \\
*&*&0 \\
*&*&1 \\
\end{pmatrix}\end{footnotesize}\begin{footnotesize}\begin{pmatrix}
*&*&0 \\
*&*&0 \\
0&0&1 \\
\end{pmatrix}\end{footnotesize} =(\nabla  m| \,{Q}_{ \infty } \nabla\Theta .e_3)^T{Q}_{ \infty }Q_0\quad 
\Leftrightarrow  \quad   \begin{footnotesize}\begin{pmatrix}
*&*&0 \\
*&*&0 \\
*&*&1 \\
\end{pmatrix}\end{footnotesize} =\begin{footnotesize}\begin{pmatrix}
*&*&\bigl\langle   \partial_{x_1}m,{Q}_{ \infty }Q_0.e_3\bigr\rangle   \\
*&*&\bigl\langle   \partial_{x_2}m,{Q}_{ \infty }Q_0.e_3\bigr\rangle   \\
0&0&1
\end{pmatrix}\end{footnotesize}\, .\notag
\end{align}
Thus, the vector ${Q}_{ \infty }Q_0.e_3$ is collinear with $n$\,. Moreover, since  ${Q}_{ \infty }Q_0\in {\rm SO}(3)$, we have ${Q}_{ \infty }Q_0.e_3=n$.

Hence, $
\mathcal{E}_{ \infty } ={Q}_{ \infty }^T (\nabla  m|{Q}_{ \infty }\nabla\Theta .e_3)[\nabla\Theta ]^{-1}-\id_3\in{\rm Sym}(3) $ implies
$
{Q}_{ \infty }^T(\nabla  m|n)[\nabla\Theta ]^{-1}\in {\rm Sym}(3),
$
and therefore \begin{align}
{Q}_{ \infty }={\rm polar}\big((\nabla  m|n) [\nabla\Theta ]^{-1}\big)=(\nabla m|n)[\nabla\Theta ]^{-1}\,\widetilde{U}^{-1},
\end{align}
where $\widetilde{U}\in {\rm Sym}^+(3)$ is defined by
\begin{align}
(\nabla m|n)[\nabla\Theta ]^{-1}=[{\rm polar}\big((\nabla  m|n) [\nabla\Theta ]^{-1}\big)] \, \widetilde{U}.
\end{align}

The constraint ${Q}_{ \infty }={\rm polar}\big((\nabla  m|n) [\nabla\Theta ]^{-1}\big)$ not only adjusts the ``trièdre mobile'' \cite{Cosserat09} to be tangential to the surface (equivalent to ${Q}_{ \infty } Q_0.e_3=n$, Kirchhoff-Love normality assumption), see Figure \ref{Pfig5}
\begin{figure}[h!]
	\centering\includegraphics[scale=1]{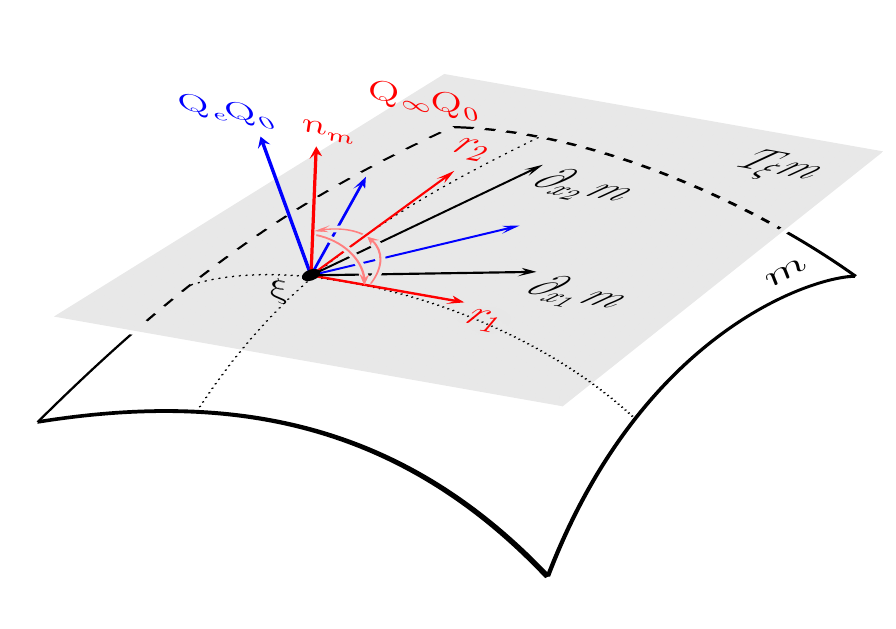}
	\caption{\footnotesize  In the limit $\boldsymbol{\mu_{\rm c}\to 
		\infty}$, the blue $3$-frame (trièdre mobile) is replaced by the red $3$-frame (trièdre caché) being tangent to the surface and prescribing a specific in-plane drill rotation.}
\label{Pfig5}\end{figure}
such that the third column coincides with the normal to the surface, but also chooses a specific {\bf in-plane drill rotation}, coming from the planar polar decomposition.

Using 
$
\widetilde{U}^2=\widetilde{U}^T\, \widetilde{U}=\big((\nabla m|n)[\nabla\Theta ]^{-1}\big)^T (\nabla m|n)[\nabla\Theta ]^{-1}
=[\nabla\Theta ]^{-T}\,\widehat {\rm I}_{m}\,[\nabla\Theta ]^{-1},
$
we deduce
\begin{align}
{Q}_{ \infty }={\rm polar}\big((\nabla  m|n) [\nabla\Theta ]^{-1}\big)=(\nabla m|n)[\nabla\Theta ]^{-1}\,\sqrt{[\nabla\Theta ]\,\widehat {\rm I}_{m}^{-1}\,[\nabla\Theta ]^{T}},
\end{align}
with the lifted quantity $\widehat{\rm I}_{m} \in \mathbb{R}^{3\times 3}$   given by
$
\widehat{\rm I}_{m}\coloneqq ({\nabla  m}|n)^T({\nabla  m}|n).
$
 Moreover,    since 
$
{Q}_{ \infty }\nabla\Theta .e_3={Q}_{ \infty }Q_0.e_3=n,
$
the latter identity leads to
\begin{align}\label{miu}
\mathcal{E}_{ \infty }&= [{\rm polar}\big((\nabla  m|n) [\nabla\Theta ]^{-1}\big)]^T(\nabla m|n)[\nabla\Theta ]^{-1}-\id_3= \widetilde{U}-\id_3= \sqrt{[\nabla\Theta ]^{-T}\,\widehat{\rm I}_m[\nabla\Theta ]^{-1}}-\id_3.
\end{align}
An alternative expression of $ \mathcal{E}_{ \infty }$ containing only classical quantities can be obtained using Remark \ref{propAB} from Appendix \ref{ApropAB}, i.e., 
$
{\rm A}_{y_0}^2={\rm A}_{y_0}=(\nabla y_0|0) \,[\nabla\Theta ]^{-1}=
[\nabla\Theta ]^{-T}\,\widehat{\rm I}_{y_0}\,\id_2^{\flat }\,[\nabla\Theta ]^{-1}=
[\nabla\Theta ]^{-T}\,{\rm I}_{y_0}^{\flat }\,[\nabla\Theta ]^{-1}.
$

In the following, we proceed to compute $\mathcal{E}_{ \infty } \, {\rm B}_{y_0} +  {\rm C}_{y_0} \mathcal{K}_{ \infty }$ which is appearing in the expression of the membrane-bending energy.  
From Remark \ref{propAB} we note the identity   \begin{align}
{Q}_{ \infty }^T\,(\nabla [{Q}_{ \infty }\nabla \Theta (0).e_3]\,|\,0)\,[\nabla\Theta ]^{-1}={\rm C}_{y_0} \mathcal{K}_{ \infty }-{\rm B}_{y_0},
\end{align}
 which, together with the other identities from   Remark \ref{propAB}, leads to
\begin{align}\label{CK}
{\rm C}_{y_0} \mathcal{K}_{ \infty } = &\,{Q}_{ \infty }^T\,(\nabla [{Q}_{ \infty }\nabla \Theta (0).e_3]\,|\,0)\,[\nabla\Theta ]^{-1}+{\rm B}_{y_0}
={Q}_{ \infty }^T(\nabla[ {Q}_{ \infty }Q_0.e_3]|0)[\nabla\Theta ]^{-1}+{\rm B}_{y_0}\notag\vspace{2.5mm}\\
=&\,{Q}_{ \infty }^T(\nabla n|0)[\nabla\Theta ]^{-1}+{\rm B}_{y_0}=[{\rm polar}\big((\nabla  m|n) [\nabla\Theta ]^{-1}\big)]^T (\nabla {n}|0)[\nabla\Theta ]^{-1}+{\rm B}_{y_0}\notag\vspace{2.5mm}\\
=&\,\Big[\,(\nabla m|n)[\nabla\Theta ]^{-1} \sqrt{[\nabla\Theta ]\,\widehat{\rm I}_m^{-1}[\nabla\Theta ]^{T}}\Big]^T (\nabla {n}|0)[\nabla\Theta ]^{-1}+{\rm B}_{y_0}\vspace{2.5mm}\\
=&\,-\sqrt{[\nabla\Theta ]\,\widehat{\rm I}_m^{-1}[\nabla\Theta ]^{T}}\,[\nabla\Theta ]^{-T} {\rm II}_m^\flat[\nabla\Theta ]^{-1} +\sqrt{[\nabla\Theta ]\,\widehat{\rm I}_{y_0}^{-1}[\nabla\Theta ]^{T}}[\nabla\Theta ]^{-T}{\rm II}_{y_0}^\flat [\nabla\Theta ]^{-1}\notag\notag
.\notag
\end{align}

Because ~ $
\sqrt{[\nabla\Theta ]^{-T}\,\widehat{\rm I}_m\,[\nabla\Theta ]^{-1}\,[\nabla\Theta ]\,\widehat{\rm I}_m^{-1}[\nabla\Theta ]^{T}}=\,\sqrt{[\nabla\Theta ]\,\widehat{\rm I}_m^{-1}[\nabla\Theta ]^{T}\,[\nabla\Theta ]^{-T}\,\widehat{\rm I}_m\,[\nabla\Theta ]^{-1}}$, ~
it follows \\ $\sqrt{[\nabla\Theta ]^{-T}\,\widehat{\rm I}_m\,[\nabla\Theta ]^{-1}}\,[\nabla\Theta ]\,\widehat{\rm I}_m^{-1}[\nabla\Theta ]^{T}=\sqrt{[\nabla\Theta ]\,\widehat{\rm I}_m^{-1}[\nabla\Theta ]^{T}}$.
Using also the two formulae ~
$\widehat{\rm I}_{y_0}^{-1}{\rm II}_{y_0}^\flat ={\rm L}_{y_0}^\flat$,\\ $  \widehat{\rm I}_{m}^{-1}{\rm II}_{m}^\flat ={\rm L}_{m}^\flat$, we may write
\begingroup
\allowdisplaybreaks
\begin{align}
\mathcal{E}_{ \infty } \, {\rm B}_{y_0} +  {\rm C}_{y_0} \mathcal{K}_{ \infty }=&\,\Big(\sqrt{[\nabla\Theta ]^{-T}\,\widehat{\rm I}_m\,[\nabla\Theta ]^{-1}}-\id_3\Big)  [\nabla\Theta ]^{-T}{\rm II}_{y_0}^\flat [\nabla\Theta ]^{-1}\notag\\
&\,-\sqrt{[\nabla\Theta ]\,\widehat{\rm I}_m^{-1}[\nabla\Theta ]^{T}}\,[\nabla\Theta ]^{-T} {\rm II}_m^\flat[\nabla\Theta ]^{-1}+[\nabla\Theta ]^{-T}{\rm II}_{y_0}^\flat [\nabla\Theta ]^{-1}\notag\\
=&\,\sqrt{[\nabla\Theta ]^{-T}\,\widehat{\rm I}_m\,[\nabla\Theta ]^{-1}}\,  [\nabla\Theta ]^{-T}{\rm II}_{y_0}^\flat [\nabla\Theta ]^{-1}-\sqrt{[\nabla\Theta ]\,\widehat{\rm I}_m^{-1}[\nabla\Theta ]^{T}}\,[\nabla\Theta ]^{-T} {\rm II}_m^\flat[\nabla\Theta ]^{-1}\notag
\\
=&\,\sqrt{[\nabla\Theta ]^{-T}\,\widehat{\rm I}_m\,[\nabla\Theta ]^{-1}}\,  [\nabla\Theta ]^{-T}{\rm II}_{y_0}^\flat [\nabla\Theta ]^{-1}\\&\,\quad-\sqrt{[\nabla\Theta ]^{-T}\,\widehat{\rm I}_m\,[\nabla\Theta ]^{-1}}[\nabla\Theta ]\,\widehat{\rm I}_m^{-1}\,[\nabla\Theta ]^{T}\,[\nabla\Theta ]^{-T} {\rm II}_m^\flat[\nabla\Theta ]^{-1}
\notag
\\
=&\,\sqrt{[\nabla\Theta ]^{-T}\,\widehat{\rm I}_m\,[\nabla\Theta ]^{-1}}\,  [\nabla\Theta ]\widehat{\rm I}_{y_0}^{-1}{\rm II}_{y_0}^\flat [\nabla\Theta ]^{-1}\notag-\sqrt{[\nabla\Theta ]^{-T}\,\widehat{\rm I}_m\,[\nabla\Theta ]^{-1}}[\nabla\Theta ]\,\widehat{\rm I}_m^{-1}\, {\rm II}_m^\flat[\nabla\Theta ]^{-1}
\notag
\\
=&\,\sqrt{[\nabla\Theta ]^{-T}\,\widehat{\rm I}_m\,[\nabla\Theta ]^{-1}}\,  [\nabla\Theta ]\Big({\rm L}_{y_0}^\flat - {\rm L}_m^\flat\Big)[\nabla\Theta ]^{-1}.\notag
\end{align}
\endgroup
In view of the identity
$
\id_3=\sqrt{[\nabla\Theta ]^{-T}\,\widehat{\rm I}_{y_0}[\nabla\Theta ]^{-1}},
$
the quantity $
\mathcal{E}_{ \infty } \, {\rm B}_{y_0} +  {\rm C}_{y_0} \mathcal{K}_{ \infty }$
can  equivalently be written in the alternative form
\begin{align}\label{symetrimy}
\mathcal{E}_{ \infty } \,& {\rm B}_{y_0} +  {\rm C}_{y_0} \mathcal{K}_{ \infty }=\sqrt{[\nabla\Theta ]^{-T}\,\widehat{\rm I}_m\,[\nabla\Theta ]^{-1}}\,  [\nabla\Theta ]\Big({\rm L}_{y_0}^\flat - {\rm L}_m^\flat\Big)[\nabla\Theta ]^{-1}\sqrt{[\nabla\Theta ]^{-T}\,\widehat{\rm I}_{y_0}[\nabla\Theta ]^{-1}},
\end{align}
and therefore we see how both pairs $\widehat{\rm I}_{y_0}, {\rm L}_{y_0}^\flat$ and $\widehat{\rm I}_{m}, {\rm L}_{m}^\flat$, together, influence the constitutive quantity \linebreak $\mathcal{E}_{ \infty } \, {\rm B}_{y_0} +  {\rm C}_{y_0} \mathcal{K}_{ \infty }$.
Since $(\mathcal{E}_{ \infty } \, {\rm B}_{y_0} +  {\rm C}_{y_0} \mathcal{K}_{ \infty }){\rm B}_{y_0}$ is also an argument of an energy term defining the membrane-bending energy, i.e., 	$W_{\mathrm{shell}} \big((  \mathcal{E}_{ \infty } \, {\rm B}_{y_0} +  {\rm C}_{y_0} \mathcal{K}_{ \infty } )   {\rm B}_{y_0} \,\big)$, we  have to compute as well
\begin{align}
(\mathcal{E}_{ \infty } \, {\rm B}_{y_0} +  {\rm C}_{y_0} \mathcal{K}_{ \infty }){\rm B}_{y_0}&=\sqrt{[\nabla\Theta ]^{-T}\,\widehat{\rm I}_m\,[\nabla\Theta ]^{-1}}\,  [\nabla\Theta ]{\rm L}_{y_0}^\flat [\nabla\Theta ]^{-1}[\nabla\Theta ]^{-T} {\rm II}_{y_0}^\flat[\nabla\Theta ]^{-1}\notag\\&\quad-\sqrt{[\nabla\Theta ]^{-T}\,\widehat{\rm I}_m\,[\nabla\Theta ]^{-1}}[\nabla\Theta ]\, {\rm L}_m^\flat[\nabla\Theta ]^{-1}[\nabla\Theta ]^{-T} {\rm II}_{y_0}^\flat[\nabla\Theta ]^{-1}\notag
\\
&=\sqrt{[\nabla\Theta ]^{-T}\,\widehat{\rm I}_m\,[\nabla\Theta ]^{-1}}\,  [\nabla\Theta ]{\rm L}_{y_0}^\flat \widehat{\rm I}_{y_0}^{-1} {\rm II}_{y_0}^\flat[\nabla\Theta ]^{-1}\\&\quad-\sqrt{[\nabla\Theta ]^{-T}\,\widehat{\rm I}_m\,[\nabla\Theta ]^{-1}}[\nabla\Theta ]\, {\rm L}_m^\flat\widehat{\rm I}_{y_0}^{-1} {\rm II}_{y_0}^\flat[\nabla\Theta ]^{-1}
\notag\\
&=\sqrt{[\nabla\Theta ]^{-T}\,\widehat{\rm I}_m\,[\nabla\Theta ]^{-1}}\,  [\nabla\Theta ]\Big({\rm L}_{y_0}^\flat - {\rm L}_m^\flat\Big){\rm L}_{y_0}^\flat[\nabla\Theta ]^{-1}\notag.
\end{align}

Note that   for $
{Q}_{ \infty }={\rm polar}\big((\nabla  m|n) [\nabla\Theta ]^{-1}\big)$, which is a consequence of the limit $\mu_{\rm c}\to \infty$, it follows that $\mathcal{E}_{ \infty }\in{\rm Sym}(3)$, while  the other two requirements \eqref{restsk}$_{2,3}$,  which are also implied by the limit  $\mu_{\rm c}\to \infty$ are not automatically satisfied. The remaining additional constraints in the minimization problem are
\begin{align}\label{conds1}
\mathcal{E}_{ \infty } \, {\rm B}_{y_0} +  {\rm C}_{y_0} \mathcal{K}_{ \infty }\stackrel{!}{\in}{\rm Sym}(3) & \Leftrightarrow\ \ \sqrt{[\nabla\Theta \,]^{-T}\,\widehat{\rm I}_m\,[\nabla\Theta \,]^{-1}}\,  [\nabla\Theta \,]\Big({\rm L}_{y_0}^\flat -{\rm L}_m^\flat\Big)[\nabla\Theta \,]^{-1}\notag\\&\qquad \stackrel{!}{=}[\nabla\Theta \,]^{-T}\Big(({\rm L}_{y_0}^\flat)^T-({\rm L}_{m}^\flat)^T\Big)[\nabla\Theta \,]^{T}\sqrt{[\nabla\Theta \,]^{-T}\,\widehat{\rm I}_m\,[\nabla\Theta \,]^{-1}},
\end{align}
and
\begin{align}\label{conds2}
(  \mathcal{E}_{ \infty } \, {\rm B}_{y_0} +  {\rm C}_{y_0} \mathcal{K}_{ \infty } )   {\rm B}_{y_0}\stackrel{!}{\in}{\rm Sym}(3) &\Leftrightarrow\ \ 
\sqrt{[\nabla\Theta \,]^{-T}\,\widehat{\rm I}_m\,[\nabla\Theta \,]^{-1}}\,  [\nabla\Theta \,]\Big({\rm L}_{y_0}^\flat - {\rm L}_m^\flat\Big){\rm L}_{y_0}^\flat[\nabla\Theta \,]^{-1}\vspace{1.5mm}\\
&\qquad \stackrel{!}{=} [\nabla\Theta \,]^{-T} ({\rm L}_{y_0}^\flat)^T\Big(({\rm L}_{y_0}^\flat)^T-({\rm L}_{m}^\flat)^T\Big))[\nabla\Theta \,]^{T}\sqrt{[\nabla\Theta \,]^{-T}\,\widehat{\rm I}_m\,[\nabla\Theta \,]^{-1}}\,.\notag
\end{align}

If $L_c>0$ is finite and non-vanishing,  the bending-curvature energy is still present in the minimization problem. The latter energy is expressed in terms of  $\mathcal{K}_{ \infty }$, which for $\mu_{\rm c}\to \infty$ turns into
\begin{align}
&\mathcal{K}_{ \infty }  =  \bigg(\mathrm{axl}(\, \sqrt{[\nabla\Theta ]\,\widehat{\rm I}_m^{-T}[\nabla\Theta ]^{T}}[\nabla\Theta ]^{-T}(\nabla m|n)^T\,\partial_{x_1} \Big((\nabla m|n)[\nabla\Theta ]^{-1} \sqrt{[\nabla\Theta ]\,\widehat{\rm I}_m^{-1}[\nabla\Theta ]^{T}}\Big)\big)\notag\\
&\qquad \quad  \ \,|\, \mathrm{axl}(\, \sqrt{[\nabla\Theta ]\,\widehat{\rm I}_m^{-T}[\nabla\Theta ]^{T}}[\nabla\Theta ]^{-T}(\nabla m|n)^T\,\partial_{x_2} \Big((\nabla m|n)[\nabla\Theta ]^{-1} \sqrt{[\nabla\Theta ]\,\widehat{\rm I}_m^{-1}[\nabla\Theta ]^{T}}\Big)\big)  \,|0\bigg)[\nabla\Theta ]^{-1}.
\end{align}

In view of the constitutive restrictions imposed by the limit case $\mu_{\rm c}\to \infty$,
the  variational problem for the constrained Cosserat $O(h^5)$-shell model  is  to find a deformation of the midsurface
$m:\omega\subset\mathbb{R}^2\to\mathbb{R}^3$  minimizing on $\omega$: 
\begin{align}\label{minvarmc}
I= \int_{\omega}   \,\, \Big[ & \,\Big(h+{\rm K}\,\dfrac{h^3}{12}\Big)\,
W_{{\rm shell}}^{\infty}\big(    \sqrt{[\nabla\Theta ]^{-T}\,\widehat{\rm I}_m\,\id_2^{\flat }\,[\nabla\Theta ]^{-1}}-
\sqrt{[\nabla\Theta ]^{-T}\,\widehat{\rm I}_{y_0}\,\id_2^{\flat }\,[\nabla\Theta ]^{-1}} \big)\vspace{2.5mm}\notag\\    
& +   \Big(\dfrac{h^3}{12}\,-{\rm K}\,\dfrac{h^5}{80}\Big)\,
W_{{\rm shell}}^{\infty}  \big(   \sqrt{[\nabla\Theta ]^{-T}\,\widehat{\rm I}_m\,[\nabla\Theta ]^{-1}}\,  [\nabla\Theta ]\Big({\rm L}_{y_0}^\flat - {\rm L}_m^\flat\Big)[\nabla\Theta ]^{-1}\big) \notag \\\hspace*{-2.5cm}&
-\dfrac{h^3}{3} \mathrm{ H}\,\mathcal{W}_{{\rm shell}}^{\infty}  \big(  \sqrt{[\nabla\Theta ]^{-T}\,\widehat{\rm I}_m\,\id_2^{\flat }\,[\nabla\Theta ]^{-1}}-
\sqrt{[\nabla\Theta ]^{-T}\,\widehat{\rm I}_{y_0}\,\id_2^{\flat }\,[\nabla\Theta ]^{-1}} ,\notag\\
&\qquad \qquad \qquad\quad\   \sqrt{[\nabla\Theta ]^{-T}\,\widehat{\rm I}_m\,[\nabla\Theta ]^{-1}}\,  [\nabla\Theta ]\Big({\rm L}_{y_0}^\flat - {\rm L}_m^\flat\Big)[\nabla\Theta ]^{-1} \big)\\&+
\dfrac{h^3}{6}\, \mathcal{W}_{{\rm shell}}^{\infty}  \big(  \sqrt{[\nabla\Theta ]^{-T}\,\widehat{\rm I}_m\,\id_2^{\flat }\,[\nabla\Theta ]^{-1}}-
\sqrt{[\nabla\Theta ]^{-T}\,\widehat{\rm I}_{y_0}\,\id_2^{\flat }\,[\nabla\Theta ]^{-1}} ,\notag
\\
&\qquad \qquad \qquad\ \ \sqrt{[\nabla\Theta ]^{-T}\,\widehat{\rm I}_m\,[\nabla\Theta ]^{-1}}\,  [\nabla\Theta ]\Big({\rm L}_{y_0}^\flat - {\rm L}_m^\flat\Big){\rm L}_{y_0}^\flat[\nabla\Theta ]^{-1}\big)\vspace{2.5mm}\notag\notag\\&+ \,\dfrac{h^5}{80}\,\,
W_{\mathrm{mp}}^{\infty} \big(\sqrt{[\nabla\Theta ]^{-T}\,\widehat{\rm I}_m\,[\nabla\Theta ]^{-1}}\,  [\nabla\Theta ]\Big({\rm L}_{y_0}^\flat - {\rm L}_m^\flat\Big){\rm L}_{y_0}^\flat[\nabla\Theta ]^{-1}\,\big)\notag\vspace{2.5mm}\notag
\\
&+ \Big(h-{\rm K}\,\dfrac{h^3}{12}\Big)\,
W_{\mathrm{curv}}\big(  \mathcal{K}_{ \infty } \big)    +  \Big(\dfrac{h^3}{12}\,-{\rm K}\,\dfrac{h^5}{80}\Big)\,
W_{\mathrm{curv}}\big(  \mathcal{K}_{ \infty }   {\rm B}_{y_0} \,  \big)  + \,\dfrac{h^5}{80}\,\,
W_{\mathrm{curv}}\big(  \mathcal{K}_{ \infty }   {\rm B}_{y_0}^2  \big)
\Big] \,{\rm det}\nabla \Theta        \,\mathrm d a\notag\\&\qquad - \overline{\Pi}(m,{Q}_{ \infty }),\notag
\end{align}
such that
\begin{align}\label{contsym}
\mathcal{E}_{ \infty } \, {\rm B}_{y_0}& +  {\rm C}_{y_0} \mathcal{K}_{ \infty }\stackrel{!}{\in}{\rm Sym}(3)\ \vspace{1.5mm}\notag\\ & \Leftrightarrow\ \ \sqrt{[\nabla\Theta \,]^{-T}\,\widehat{\rm I}_m\,[\nabla\Theta \,]^{-1}}\,  [\nabla\Theta \,]\Big({\rm L}_{y_0}^\flat -{\rm L}_m^\flat\Big)[\nabla\Theta \,]^{-1}\notag\\&\qquad \stackrel{!}{=}[\nabla\Theta \,]^{-T}\Big(({\rm L}_{y_0}^\flat)^T-({\rm L}_{m}^\flat)^T\Big)[\nabla\Theta \,]^{T}\sqrt{[\nabla\Theta \,]^{-T}\,\widehat{\rm I}_m\,[\nabla\Theta \,]^{-1}},\notag\vspace{1.5mm}\notag\\
(  \mathcal{E}_{ \infty } \,& {\rm B}_{y_0} +  {\rm C}_{y_0} \mathcal{K}_{ \infty } )   {\rm B}_{y_0}\stackrel{!}{\in}{\rm Sym}(3)\vspace{1.5mm}\\ &\Leftrightarrow\ \ 
\sqrt{[\nabla\Theta \,]^{-T}\,\widehat{\rm I}_m\,[\nabla\Theta \,]^{-1}}\,  [\nabla\Theta \,]\Big({\rm L}_{y_0}^\flat - {\rm L}_m^\flat\Big){\rm L}_{y_0}^\flat[\nabla\Theta \,]^{-1}\notag\vspace{1.5mm}\notag\\
&\qquad \stackrel{!}{=}  [\nabla\Theta \,]^{-T} ({\rm L}_{y_0}^\flat)^T\Big(({\rm L}_{y_0}^\flat)^T-({\rm L}_{m}^\flat)^T\Big))[\nabla\Theta \,]^{T}\sqrt{[\nabla\Theta \,]^{-T}\,\widehat{\rm I}_m\,[\nabla\Theta \,]^{-1}}\,,\notag
\end{align}
where 
\begin{align} 
\mathcal{K}_{ \infty } & = \, \Big(\mathrm{axl}({Q}_{ \infty }^T\,\partial_{x_1} {Q}_{ \infty })\,|\, \mathrm{axl}({Q}_{ \infty }^T\,\partial_{x_2} {Q}_{ \infty })\,|0\Big)[\nabla\Theta ]^{-1}, \vspace{2.5mm}\notag\\
{Q}_{ \infty }&={\rm polar}\big((\nabla  m|n) [\nabla\Theta ]^{-1}\big)=(\nabla m|n)[\nabla\Theta ]^{-1}\,\sqrt{[\nabla\Theta ]^{-T}\,\widehat {\rm I}_{m}\,[\nabla\Theta ]^{-1}},\notag\\
W_{{\rm shell}}^{\infty}(  S)  &=   \mu\,\lVert\,   S\rVert^2  +\,\dfrac{\lambda\,\mu}{\lambda+2\mu}\,\big[ \mathrm{tr}   \, (S)\big]^2,\qquad 
\mathcal{W}_{{\rm shell}}^{\infty}(  S,  T) =   \mu\,\bigl\langle  S,   T\bigr\rangle+\,\dfrac{\lambda\,\mu}{\lambda+2\mu}\,\mathrm{tr}  (S)\,\mathrm{tr}  (T), 
\\
W_{\mathrm{mp}}^{\infty}(  S)&= \mu\,\lVert  S\rVert^2+\,\dfrac{\lambda}{2}\,\big[ \mathrm{tr}\,   (S)\big]^2 \qquad \quad\forall \ S,T\in{\rm Sym}(3), \notag\vspace{2.5mm}\\
W_{\mathrm{curv}}(  X )&=\mu\,L_c^2\left( b_1\,\lVert \dev\,\textrm{sym}\, X\rVert^2+b_2\,\lVert\text{skew} \,X\rVert^2+b_3\,
[\tr(X)]^2\right) \quad\qquad \forall\  X\in\mathbb{R}^{3\times 3}.\notag
\end{align}

\subsection{3D versus 2D symmetry requirements for $\mu_{\rm c}\to\infty$}\label{Msym}

As already mentioned in Subsection \ref{constcon}, in the starting three-dimensional variational problem \eqref{minprob}, the limit $\mu_c\rightarrow\infty$ leads to  the constraint
\begin{align}
\text{skew}(\overline U _\xi-\id_3)=0\qquad \Leftrightarrow\qquad \overline{R}_\xi={\rm polar}(F_\xi),
\end{align}  and the parental three-dimensional model turns into the Toupin couple stress model \cite[Eq. 11.8]{Toupin62}.

Motivated by modelling arguments, after obtaining a suitable  form of the relevant coefficients $\varrho_m,\,\varrho_b$,  in \cite{GhibaNeffPartI} we have  based the
expansion of the three-dimensional elastic Cosserat energy on a further simplified expression for the (reconstructed) deformation gradient  $F_\xi$, namely 
\begin{align}\label{red2}
&F_{s,\xi}\,=\,\nabla_x\varphi_s(x_1,x_2,x_3)[\nabla_x \Theta(x_1,x_2,x_3)]^{-1}  \\\nonumber
&\cong \widetilde{F}_{e,s}\coloneqq\,(\nabla  m|\, \varrho_m^e\,\overline{Q}_{e,s}(x_1,x_2)\nabla_x\Theta(x_1,x_2,0)\, e_3)[\nabla_x \Theta(x_1,x_2,x_3)]^{-1}
\\\nonumber  &\qquad \qquad\qquad +x_3 (\nabla \left[\,\overline{Q}_{e,s}(x_1,x_2)\nabla_x\Theta(x_1,x_2,0)\, e_3\right]|\varrho_b^e\,
\overline{Q}_{e,s}(x_1,x_2)\nabla_x\Theta(x_1,x_2,0)\, e_3)[\nabla_x \Theta(x_1,x_2,x_3)]^{-1}.
\end{align}
Corresponding to this ansatz of the deformation gradient, the tensor $\widetilde{\mathcal{E}}_\xi=\overline U _\xi-\id_3$ is approximated by 
\begin{align}\label{defEes}
\widetilde{\mathcal{E}}_{s}&\coloneqq\,\overline{U}_{e,s}-\id
{_3}
\,=\,\overline{Q}_{e,s}^T\, \widetilde{F}_{e,s}-\id_3,
\end{align}
which admits the following expression  with the help of  the usual strain measures  in the nonlinear 6-parameter shell theory
\begin{align}
\widetilde{\mathcal{E}}_{s}   =&
\dfrac{1}{\det \nabla \Theta(x_3)}\,\Big\{  \mathcal{E}_{m,s}
+x_3\Big[\mathcal{E}_{m,s} ( {\rm B}_{y_0}-2\,{\rm H}\, {\rm A}_{y_0})\ +\,{\rm C}_{y_0} \, \mathcal{K}_{e,s} \Big]
+x_3^2\Big[\,{\rm C}_{y_0} \, \mathcal{K}_{e,s} ({\rm B}_{y_0}-2\,{\rm H}\,{\rm A}_{y_0}) \Big]\Big\}\notag\vspace{2.5mm}\\
& - \frac{\lambda}{(\lambda+2\mu)}\Big[\tr( \mathcal{E}_{m,s} ) + x_3\, {\rm tr} (\mathcal{E}_{m,s} {\rm B}_{y_0} + {\rm C}_{y_0}\mathcal{K}_{e,s} )
\Big] (0|0|n_0)\,(0|0|n_0)^T.
\end{align}
 We  substitute the expansion of the factor $ \dfrac{1}{\det \nabla \Theta(x_3)}\, $ in the form
\begin{equation}\label{e76}
\begin{array}{c}
\dfrac{1}{\det \nabla \Theta(x_3)} \, = \,  \dfrac{1}{1-2\,{\rm H}\,x_3+{\rm K}\,x_3^2}  =  1+ 2\,{\rm H}\,x_3+ (4\,{\rm H}^2\,-{\rm K})\,x_3^2+ O(x_3^3)
\end{array}
\end{equation}
and get

\begin{align}
\widetilde{\mathcal{E}}_{s}   =&
\Big[1+ 2\,{\rm H}\,x_3+ (4\,{\rm H}^2\,-{\rm K})\,x_3^2+ O(x_3^3) \Big]
\notag\vspace{2.5mm}\\
&\quad  \times
\,\Big\{  \mathcal{E}_{m,s}
+x_3\Big[\mathcal{E}_{m,s} ( {\rm B}_{y_0}-2\,{\rm H}\, {\rm A}_{y_0})\ +\,{\rm C}_{y_0} \, \mathcal{K}_{e,s} \Big]
+x_3^2\Big[\,{\rm C}_{y_0} \, \mathcal{K}_{e,s} ({\rm B}_{y_0}-2\,{\rm H}\,{\rm A}_{y_0}) \Big]\Big\}\notag\vspace{2.5mm}\\
& - \frac{\lambda}{\lambda+2\mu}\Big[\tr( \mathcal{E}_{m,s} ) + x_3\, {\rm tr}  (\mathcal{E}_{m,s} {\rm B}_{y_0} + {\rm C}_{y_0}\mathcal{K}_{e,s} )
\Big] (0|0|n_0)\,(0|0|n_0)^T.
\end{align}
If we multiply out all  terms and use the relation $ {\rm B}_{y_0}^2 = 2\,{\rm H}\,{\rm B}_{y_0} - {\rm K}\, {\rm A}_{y_0} $, then we obtain
\begin{align}\label{extE}
\widetilde{\mathcal{E}}_{s}  \; =\; &\,\quad \,\,
1\,\Big[  \mathcal{E}_{m,s} - \frac{\lambda}{\lambda+2\mu}\,\tr( \mathcal{E}_{m,s} )\; (0|0|n_0)\, (0|0|n_0)^T  \Big]
\notag\vspace{2.5mm}\\
& 
+x_3\Big[ (\mathcal{E}_{m,s} \, {\rm B}_{y_0} +  {\rm C}_{y_0} \mathcal{K}_{e,s}) -
\frac{\lambda}{(\lambda+2\mu)}\, {\rm tr}  (\mathcal{E}_{m,s} {\rm B}_{y_0} + {\rm C}_{y_0}\mathcal{K}_{e,s} )\; (0|0|n_0)\,  (0|0|n_0)^T  \Big]
\vspace{2.5mm}\\
& 
+x_3^2\Big[\,(\mathcal{E}_{m,s} \, {\rm B}_{y_0} +  {\rm C}_{y_0} \mathcal{K}_{e,s}) {\rm B}_{y_0} \Big]\;+\; O(x_3^3).\notag
\end{align}
Since $\{1,x_3,x_3^2\}$ are linear independent, from the last relation we see that the symmetry constraint $\widetilde{\mathcal{E}}_\xi\approx\mathcal{E}_{s}\in {\rm Sym}(3)$ imposed by the limit case $\mu_{\rm c}\to \infty$ implies the symmetry constraints on  the tensors $ \mathcal{E}_{m,s} $ , $ (\mathcal{E}_{m,s} \, {\rm B}_{y_0} +  {\rm C}_{y_0} \mathcal{K}_{e,s})  $ and 
$  (  \mathcal{E}_{m,s} \, {\rm B}_{y_0} +  {\rm C}_{y_0} \mathcal{K}_{e,s} )   {\rm B}_{y_0}  $, i.e., 
\begin{align}
\mathcal{E}_{m,s} \in {\rm Sym}(3), \qquad  (\mathcal{E}_{m,s} \, {\rm B}_{y_0} +  {\rm C}_{y_0} \mathcal{K}_{e,s})\in {\rm Sym}(3) \qquad \text{and}\qquad  
(  \mathcal{E}_{m,s} \, {\rm B}_{y_0} +  {\rm C}_{y_0} \mathcal{K}_{e,s} )   {\rm B}_{y_0}\in {\rm Sym}(3).
\end{align}

This fact  has in essence the  following physical significance:
\begin{center} \fbox{\begin{minipage}{16cm}\textit{For $\boldsymbol{\mu_{\rm c}\to \infty}$, the (through the thickness reconstructed) strain tensor  $ \widetilde{\mathcal{E}}_{s} $ for the 3D shell is symmetric.}\end{minipage}} \end{center}

The terms of order $  O(x_3^3) $ are not relevant here, since we have taken a quadratic ansatz for the deformation.

Since $ \widetilde{\mathcal{E}}_{s}=\overline{Q}_{e,s}^T\, \widetilde{F}_{e,s}-\id_3$, this condition is equivalent to: \textit{the Biot-type stretch tensor $ \overline{{U}}_{e,s}=\overline{Q}_{e,s}^T\, \widetilde{F}_{e,s}\; $ for the 3D shell is symmetric.}
In conclusion, from \eqref{extE} we see that the tensor $ (\mathcal{E}_{m,s} \, {\rm B}_{y_0} +  {\rm C}_{y_0} \mathcal{K}_{e,s}) $ can be viewed as a bending tensor for shells in the sense of Anicic and L\'eger  \cite{anicic1999formulation} and {\v{S}}ilhav{\`y} \cite {vsilhavycurvature}.

\subsection{Conditional existence  for the $O(h^5)$-constrained elastic Cosserat shell model}\label{4.1}

In this subsection we give a conditional  existence result regarding the $O(h^5)$-constrained elastic Cosserat shell model. First, it is important to define the appropriate admissible set of solutions, since ${Q}_{ \infty }$ and $m$ are not any more independent:  ${Q}_{ \infty }$ is determined by ${Q}_{ \infty }={\rm polar}(\nabla  m|n)\, Q_0^T$, once $m\in{\rm H}^1(\omega, \mathbb{R}^3)$ is known. However, ${Q}_{ \infty }={\rm polar}(\nabla  m|n)\, Q_0^T$ has to belong to ${\rm H}^1(\omega, {\rm SO}(3))$, which does not directly  follow  from $m\in{\rm H}^1(\omega, \mathbb{R}^3)$. Hence, since the existence result of the solution for the unconstrained elastic Cosserat shell model assures the desired regularity for ${Q}_{ \infty }$, we follow the same method as in \cite{GhibaNeffPartII}. We notice that the existence result presented in \cite{GhibaNeffPartII} is valid in the case of no boundary conditions upon ${Q}_{ \infty }$, too\footnote{Since ${\rm SO}(3)$ is compact, a property that does not transfer to the linearized problem.}. In the definition of the admissible set of solution, we need to take into account all the compatibility conditions between ${Q}_{ \infty }\Big|_{\gamma_d}$ and the values of $m$ on ${\gamma_d}$ imposed by the limit case $\mu_{\rm c}\to \infty$.
The  set $\mathcal{A}$ of admissible functions is therefore defined by
\begin{align}\label{21}
\mathcal{A}=\Bigg\{(m,&{Q}_{ \infty })\in{\rm H}^1(\omega, \mathbb{R}^3)\times{\rm H}^1(\omega, {\rm SO}(3))\ \bigg| \  m\big|_{ \gamma_d}=m^*, \qquad{Q}_{ \infty }Q_0.e_3\big|_{ \gamma_d}=\,\dd\frac{\partial_{x_1}m^*\times \partial_{x_2}m^*}{\lVert \partial_{x_1}m^*\times \partial_{x_2}m^*\rVert }\notag\\ 
&\ U\coloneqq {Q}_{ \infty }^T (\nabla  m|{Q}_{ \infty }Q_0.e_3)[\nabla\Theta ]^{-1} \in {\rm L^2}(\omega, {\rm Sym}^+(3))\\
&\mathfrak{K}_{\rm 1}\coloneqq {Q}_{ \infty }^T (\nabla  m|{Q}_{ \infty }Q_0.e_3) [\nabla\Theta ]^{-1}{\rm B}_{y_0}+{Q}_{ \infty }^T(\nabla ({Q}_{ \infty }Q_0.e_3)|0)[\nabla\Theta ]^{-1}\in {\rm L^2}(\omega, {\rm Sym}(3))
\notag\\[-1.5ex]
&\mathfrak{K}_{\rm 2}\coloneqq {Q}_{ \infty }^T (\nabla  m|{Q}_{ \infty }Q_0.e_3) [\nabla\Theta ]^{-1}{\rm B}_{y_0}^2+{Q}_{ \infty }^T(\nabla ({Q}_{ \infty }Q_0.e_3)|0)[\nabla\Theta ]^{-1}{\rm B}_{y_0}\in {\rm L^2}(\omega, {\rm Sym}(3))
\Bigg\},\notag
\end{align}
which incorporates a weak reformulation of the symmetry constraint in \eqref{contsym}, in the sense that all the derivatives are considered now in the sense of distributions, and the boundary conditions are to be understood in the sense of traces.
\textit{However, a priori it is not clear if the  set $\mathcal{A}$ is non-empty}. In view of \eqref{contsym},  we note that if  $m\in {\rm H}^2(\omega,\mathbb{R}^3)$ is such that ${\rm L}_{y_0}={\rm L}_m$ and $m\big|_{ \gamma_d}=m^*$, then by choosing  ${Q}_{ \infty }={\rm polar}[(\nabla  m|n)[\nabla\Theta ]^{-1}]\in {\rm SO}(3)$ we obtain $ (m,{Q}_{ \infty })\in \mathcal{A}$. In general, $\mathcal{A}$ may be empty. Therefore we propose:

\begin{theorem}\label{th1}{\rm [Conditional existence result for the theory including terms up to order $O(h^5)$]}\\
	Assume that  the admissible set $\mathcal{A}$ is non-empty and the external loads satisfy the conditions 
${f}\in\textrm{\rm L}^2(\omega,\mathbb{R}^3)$, \linebreak $t\in \textrm{\rm L}^2(\gamma_t,\mathbb{R}^3)$,
and the boundary data satisfy the conditions ${m}^*\in{\rm H}^1(\omega ,\mathbb{R}^3)$ and ${\rm polar}(\nabla {m}^*\,|\,n^*) \in{\rm H}^1(\omega, {\rm SO}(3))$.
	Assume that the following conditions concerning the initial configuration are satisfied\footnote{For shells with little initial regularity. Classical shell models typically need to assume that $y_0\in {\rm C}^3(\overline{\omega},\mathbb{R}^3)$.}: $\,y_0:\omega\subset\mathbb{R}^2\rightarrow\mathbb{R}^3$ is a continuous injective mapping and
	\begin{align}\label{26}
	{y}_0&\in{\rm H}^1(\omega ,\mathbb{R}^3),\quad   {Q}_{0}={\rm polar}(\nabla\Theta)\in{\rm H}^1(\omega, {\rm SO}(3)),\quad
	\nabla\Theta \in {\rm L}^\infty(\omega ,\mathbb{R}^{3\times 3}),\quad {\rm det}\nabla \Theta  \geq\, a_0 >0\,,
	\end{align}
	where $a_0$ is a positive constant.
	Then, for sufficiently small values of the thickness $h$ satisfying 	\begin{align}\label{rcondh5c}
	h\max\{\sup_{x\in\omega}|\kappa_1|, \sup_{x\in\omega}|\kappa_2|\}<\alpha \qquad \text{with}\qquad  \alpha<\sqrt{\frac{2}{3}(29-\sqrt{761})}\simeq 0.97083
	\end{align} 
	and for constitutive coefficients  such that $\mu>0$, $2\,\lambda+\mu> 0$, $b_1>0$, $b_2>0$, $b_3>0$ and $L_{\rm c}>0$, the minimization problem \eqref{minvarmc} admits at least one minimizing solution $m$ such that 
	$(m,{Q}_{ \infty })\in  \mathcal{A}$.
\end{theorem}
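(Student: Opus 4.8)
My plan is to apply the direct method of the calculus of variations directly on the admissible set $\mathcal{A}$, after first observing that on $\mathcal{A}$ the constrained functional \eqref{minvarmc} is nothing but the unconstrained shell energy. Indeed, on $\mathcal{A}$ the tensors $\mathcal{E}_\infty$, $\mathcal{E}_\infty{\rm B}_{y_0}+{\rm C}_{y_0}\mathcal{K}_\infty$ and $(\mathcal{E}_\infty{\rm B}_{y_0}+{\rm C}_{y_0}\mathcal{K}_\infty){\rm B}_{y_0}$ are symmetric, so every skew contribution in \eqref{e90}--\eqref{quadraticforms} vanishes, $W_{\mathrm{shell}}$ reduces to $W_{\mathrm{shell}}^\infty$ on these arguments, and the integrand of \eqref{minvarmc} equals $W(\mathcal{E}_\infty,\mathcal{K}_\infty)\,{\rm det}\nabla\Theta$ in the notation of Lemma \ref{propcoerh5}. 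Since $\mathcal{A}\neq\emptyset$ by assumption and the density is nonnegative by Lemma \ref{propcoerh5} and \eqref{pozitivdef}, the infimum is finite and I pick a minimizing sequence $(m_k,Q_{\infty,k})\subset\mathcal{A}$. Lemma \ref{propcoerh5} together with \eqref{pozitivdef} then controls $\lVert\mathcal{E}_{\infty,k}\rVert_{{\rm L}^2}$ and, because $L_{\rm c}>0$, $\lVert\mathcal{K}_{\infty,k}\rVert_{{\rm L}^2}$ (the potential $\overline{\Pi}$ is absorbed since it has at most linear growth in $\lVert m_k\rVert_{{\rm H}^1}$, using $f\in{\rm L}^2(\omega)$, $t\in{\rm L}^2(\gamma_t)$ and boundedness of $\Lambda_\omega,\Lambda_{\gamma_t}$). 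Using $\nabla\Theta\in{\rm L}^\infty$, ${\rm det}\nabla\Theta\geq a_0>0$ --- hence $[\nabla\Theta]^{-1}$ and (thanks to the curvature bound in \eqref{rcondh5c}) ${\rm A}_{y_0},{\rm B}_{y_0},{\rm C}_{y_0}$ are in ${\rm L}^\infty$ --- the identity $\mathcal{K}_{\infty,k}=(\mathrm{axl}(Q_{\infty,k}^T\partial_{x_1}Q_{\infty,k})\,|\,\mathrm{axl}(Q_{\infty,k}^T\partial_{x_2}Q_{\infty,k})\,|\,0)[\nabla\Theta]^{-1}$ and $|Q_{\infty,k}|\leq\sqrt3$ bound $\lVert\nabla Q_{\infty,k}\rVert_{{\rm L}^2}$, hence $\lVert Q_{\infty,k}\rVert_{{\rm H}^1}$; and $\mathcal{E}_{\infty,k}+\id_3=Q_{\infty,k}^T(\nabla m_k\,|\,Q_{\infty,k}Q_0.e_3)[\nabla\Theta]^{-1}$ together with $m_k|_{\gamma_d}=m^*$ and Poincar\'e bounds $\lVert m_k\rVert_{{\rm H}^1}$.

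Passing to a subsequence, $m_k\rightharpoonup m$ in ${\rm H}^1(\omega,\mathbb{R}^3)$, $Q_{\infty,k}\rightharpoonup Q_\infty$ in ${\rm H}^1(\omega,\mathbb{R}^{3\times3})$, and by Rellich's theorem $m_k\to m$, $Q_{\infty,k}\to Q_\infty$ in ${\rm L}^2$ and a.e.; in particular $Q_\infty\in{\rm SO}(3)$ a.e.\ (closed pointwise condition) and $Q_{\infty,k}\to Q_\infty$ in every ${\rm L}^p(\omega)$, $p<\infty$, by dominated convergence. I then verify that $\mathcal{A}$ is weakly closed along the sequence. The boundary conditions $m|_{\gamma_d}=m^*$ and $Q_\infty Q_0.e_3|_{\gamma_d}$ pass to the limit by trace continuity under weak ${\rm H}^1$-convergence. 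For the distributional symmetry constraints I use that a product of an ${\rm L}^\infty$-bounded, strongly ${\rm L}^2$-convergent factor with a weakly ${\rm L}^2$-convergent factor converges weakly in ${\rm L}^2$: writing $Q_{\infty,k}^T\nabla m_k=Q_\infty^T\nabla m_k+(Q_{\infty,k}-Q_\infty)^T\nabla m_k$ and using $\lVert(Q_{\infty,k}-Q_\infty)\nabla m_k\rVert_{{\rm L}^1}\leq\lVert Q_{\infty,k}-Q_\infty\rVert_{{\rm L}^2}\lVert\nabla m_k\rVert_{{\rm L}^2}\to0$ plus the uniform ${\rm L}^2$-bound gives $Q_{\infty,k}^T\nabla m_k\rightharpoonup Q_\infty^T\nabla m$; the same estimate applied to $\partial_{x_a}Q_{\infty,k}$ and to $\partial_{x_a}(Q_{\infty,k}Q_0.e_3)$ gives the corresponding weak limits. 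Hence $U_k\rightharpoonup U$, $\mathfrak{K}_{1,k}\rightharpoonup\mathfrak{K}_1$, $\mathfrak{K}_{2,k}\rightharpoonup\mathfrak{K}_2$ weakly in ${\rm L}^2$; since ${\rm Sym}(3)$ is a closed subspace these limits are symmetric a.e., and combining symmetry of $U$ with $Q_\infty\in{\rm SO}(3)$ exactly as in \eqref{lcmc}--\eqref{miu} forces $Q_\infty={\rm polar}((\nabla m\,|\,n)[\nabla\Theta]^{-1})$ with $U=\sqrt{[\nabla\Theta]^{-T}\widehat{\rm I}_m[\nabla\Theta]^{-1}}\in{\rm Sym}^+(3)$, so $(m,Q_\infty)\in\mathcal{A}$. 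A byproduct of these convergences is $\mathcal{E}_{\infty,k}\rightharpoonup\mathcal{E}_\infty$ and $\mathcal{K}_{\infty,k}\rightharpoonup\mathcal{K}_\infty$ weakly in ${\rm L}^2$.

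For lower semicontinuity I view the integrand of \eqref{minvarmc} as a function of $(\mathcal{E}_\infty,\mathcal{K}_\infty)$ for a.e.\ $x$: each block $W_{\mathrm{shell}}^\infty$, $W_{\mathrm{mp}}^\infty$, $W_{\mathrm{curv}}$ is a convex quadratic form composed with a linear map of $(\mathcal{E}_\infty,\mathcal{K}_\infty)$ (right-multiplication by the bounded fields ${\rm B}_{y_0},{\rm C}_{y_0}$), and the only a priori sign-indefinite contributions are the ${\rm H}$-weighted and bilinear cross terms; but Lemma \ref{propcoerh5}, read as an algebraic inequality between quadratic forms in two free matrix arguments, says exactly that the full density minus a positive-definite convex quadratic form in $(\mathcal{E}_\infty,\mathcal{K}_\infty)$ is pointwise nonnegative, hence positive-semidefinite --- thus convex --- so the density is a nonnegative convex integrand in $(\mathcal{E}_\infty,\mathcal{K}_\infty)$ and $\int_\omega W\,{\rm det}\nabla\Theta\,da$ is sequentially weakly lower semicontinuous along the sequence. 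The potential $\overline{\Pi}$ passes to the limit: $\int_\omega\langle f,m_k-y_0\rangle\,da$ by strong ${\rm L}^2$-convergence, $\int_{\gamma_t}\langle t,m_k-y_0\rangle\,ds$ by the compact trace embedding ${\rm H}^1(\omega)\hookrightarrow{\rm L}^2(\gamma_t)$, and $\Lambda_\omega(Q_{\infty,k}),\Lambda_{\gamma_t}(Q_{\infty,k})$ converge since these operators are continuous on ${\rm L}^2(\omega,{\rm SO}(3))$. Thus $I(m,Q_\infty)\leq\liminf_k I(m_k,Q_{\infty,k})=\inf_{\mathcal{A}}I$, and $(m,Q_\infty)\in\mathcal{A}$, so this $m$ is the claimed minimizer.

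The step I expect to be the main obstacle is the weak closedness of the \emph{nonlinear} admissible set $\mathcal{A}$. In the unconstrained model \cite{GhibaNeffPartII} the midsurface deformation and the microrotation are independent, so this issue does not arise; here $Q_\infty$ is slaved to $m$ through a polar decomposition, and one must simultaneously (i) obtain the a priori ${\rm H}^1$-bound on $Q_{\infty,k}$, which is available \emph{only because} $L_{\rm c}>0$ keeps the Cosserat curvature energy $W_{\mathrm{curv}}(\mathcal{K}_\infty)$ in the functional --- precisely why the statement fails for $L_{\rm c}\to0$ --- and (ii) show that the pointwise symmetry \emph{and} positive-definiteness requirements encoded distributionally in $\mathcal{A}$ survive weak limits. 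The symmetry part is handled by the weak$\,\times\,$strong product argument above; the positive-definiteness $U\in{\rm Sym}^+(3)$, equivalently the rank-two property of $\nabla m$, is only a semi-closed condition, and its inheritance by the weak limit has to be extracted from the algebraic identity $U=[{\rm polar}((\nabla m|n)[\nabla\Theta]^{-1})]^T(\nabla m|n)[\nabla\Theta]^{-1}$ once $Q_\infty\in{\rm SO}(3)$ and the symmetry of $U$ are in hand.
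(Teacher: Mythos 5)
Your proposal follows the same route as the paper: coercivity from Lemma \ref{propcoerh5} specialized to $\mu_{\rm c}\to\infty$, a priori $\mathrm{H}^1$-bounds on the minimizing sequence, extraction of a weakly convergent subsequence, weak closedness of $\mathcal{A}$ by the closedness of ${\rm Sym}(3)$ under weak ${\rm L}^2$-limits, and lower semicontinuity by convexity in $(\mathcal{E}_\infty,\mathcal{K}_\infty)$. The paper delegates several of your explicit steps --- the weak-$\times$-strong product argument establishing $\mathcal{E}_\infty^{(k)}\rightharpoonup\widehat{\mathcal{E}}_\infty$, $\mathcal{K}_\infty^{(k)}\rightharpoonup\widehat{\mathcal{K}}_\infty$, and the $\mathrm{H}^1$-bound on $Q_{\infty,k}$ via the curvature energy --- to the unconstrained existence proof in \cite{GhibaNeffPartII}, but the substance is identical. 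You are also right to flag the positive-definiteness requirement $U\in{\rm Sym}^+(3)$ as the delicate point: the paper itself only records the limit $U\in{\rm L}^2(\omega,{\rm Sym}(3))$ in \eqref{Uk} and then asserts $\widehat{\overline{Q}}_\infty={\rm polar}[(\nabla\widehat{m}|\widehat{n})[\nabla\Theta]^{-1}]$, which implicitly restores positive definiteness via the algebraic identity $U=\sqrt{[\nabla\Theta]^{-T}\widehat{\rm I}_m[\nabla\Theta]^{-1}}$ exactly as you propose; neither proof fully addresses what happens if $\nabla\widehat m$ degenerates on a set of positive measure, so you have not introduced a gap that is not already present in the paper.
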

\begin{proof}
	The proof follows similar steps as  in \cite{GhibaNeffPartII} for the proof of the existence of solution for the unconstrained  $O(h^5)$ elastic Cosserat shell model and is similar to  the proof of Proposition \ref{propcoerh5} from Appendix \ref{Appendixrelaxh}. Here, we provide only certain milestones, to guide the reader and explain those details  that differ in comparison to \cite{GhibaNeffPartII}. First, it follows that for sufficiently small values of the thickness $h$ such that   \eqref{rcondh5c} is satisfied 
	and for constitutive coefficients  satisfying  $\mu>0$, $2\,\lambda+\mu> 0$, $b_1>0$, $b_2>0$ and $b_3>0$,   the  energy density
	\begin{align}W^{\infty}(\mathcal{E}_{ \infty }, \mathcal{K}_{ \infty })=W_{\mathrm{memb}}^{\infty}\big(  \mathcal{E}_{ \infty } \big)+W_{\mathrm{memb,bend}}^{\infty}\big(  \mathcal{E}_{ \infty } ,\,  \mathcal{K}_{ \infty } \big)+W_{\mathrm{bend,curv}}\big(  \mathcal{K}_{ \infty }    \big)
	\end{align}
	is coercive, where  $W_{\mathrm{memb}}^{\infty}$ and  $W_{\mathrm{memb,bend}}^{\infty}$ are the membrane energy and the membrane-bending energy corresponding to the limit case  $\mu_{\rm c}\to \infty$. This means  that  there exists a constant   $a_1^+>0$  such that
	\begin{equation}\label{26bis}
	W^{\infty}(\mathcal{E}_{ \infty }, \mathcal{K}_{ \infty })\,\geq\, a_1^+\, \big( \lVert \mathcal{E}_{ \infty }\rVert ^2 + \lVert \mathcal{K}_{ \infty }\rVert ^2\,\big)  \quad \forall\, (m,{Q}_{ \infty })\in \mathcal{A},
	\end{equation}
	where
	$a_1^+$ depends on the constitutive coefficients (but not on the Cosserat couple modulus $\mu_{\rm c}$). Indeed, all the steps used in the proof of  Proposition \ref{propcoerh5} from Appendix \ref{Appendixrelaxh} remain valid by considering $ \mu_{\rm c}\to \infty$ since for all $(m,{Q}_{ \infty })\in \mathcal{A}$ it follows that $\mathcal{E}_{ \infty }\in {\rm Sym}(3)$ and $ 	\mathcal{E}_{ \infty }{\rm B}_{y_0}+{\rm C}_{y_0}\, \mathcal{K}_{ \infty }\in {\rm Sym}(3)$.

	Moreover,   under  the same hypothesis and using the same arguments as above, it follows that  for $\mu_{\rm c}\to \infty$, too, the  energy density
	$W^{\infty}(\mathcal{E}_{ \infty }, \mathcal{K}_{ \infty })
	$
	is uniformly convex in $(\mathcal{E}_{ \infty }, \mathcal{K}_{ \infty })$ for all $(m,{Q}_{ \infty })\in \mathcal{A}$, i.e., there exists a constant $a_1^+>0$ such that
	\begin{align}
	D^2W^{\infty}(\mathcal{E}_{ \infty },\mathcal{K}_{ \infty }).\,[(H_1,H_2),(H_1,H_2)]\geq a_1^+(\lVert H_1\rVert^2+\lVert H_2\rVert^2) \quad \forall\, H_1,H_2\in \mathbb{R}^{3\times 3}.
	\end{align}
	
The assumptions on the  external loads and the boundedness of $\,\,\Pi_{S^0}\,$ and $ \,\Pi_{\partial S^0_f}\,$ imply that there exists a constant $c>0$ such that\footnote{ {By $c$ and $c_i$, $i\in\mathbb{N}$, we will denote (positive) constants that may vary from estimate to estimate but will remain independent of $m$, $\nabla m$ and ${Q}_{ \infty }$.}}
\begin{equation}\label{27}
|\,\overline{\Pi}(m,{Q}_{ \infty })\,|\,\leq\, \,\,C\,\big(\,\lVert m\rVert_{{\rm H}^1(\omega)}+1\big),\quad\forall\,(m,{Q}_{ \infty })\in  {\rm H}^1(\omega, \mathbb{R}^3)\times{\rm H}^1(\omega, {\rm SO}(3)).
\end{equation}
Considering
\begin{equation}
\overline{R}_{ \infty }(x_1,x_2) ={Q}_{ \infty }(x_1,x_2)\,Q_0(x_1,x_2,0)\in{\rm SO}(3),
\end{equation}
we write
\begin{align}
\mathcal{E}_{ \infty }  \,=& \,   Q_0  [\overline{R}_{ \infty }^T (\nabla  m|{Q}_{ \infty }\nabla\Theta .e_3)- Q_0^T (\nabla  y_0|n_0)][\nabla\Theta ]^{-1}=\,Q_0( \overline{R}_{ \infty }^T\,\nabla m -Q_0^T\nabla y_0 |0)[\nabla\Theta ]^{-1}.
\end{align}
Using this expression for $\mathcal{E}_{ \infty }$ we obtain
\begin{align}\label{Elambda}
\lVert \mathcal{E}_{ \infty }\rVert ^2& 
\geq\, \lambda_0^2\,\lVert ( \overline{R}_{ \infty }^T\,\nabla m -Q_0^T\nabla y_0 |0)\rVert ^2\geq\,
\lambda_0^2\lVert \nabla m\rVert ^2_{{\rm L}^2(\omega)}- c_1\,\lVert \nabla m \rVert_{{\rm L}^2(\omega)}+ c_2,
\end{align}
where $\lambda_0$ is the smallest eigenvalue of the positive definite matrix $\widehat{\rm I}^{-1}_{y_0}$ and  ${c}_1>0$, ${c}_2>0$  are some positive constants. 
Similarly, we deduce that
\begin{align}\lVert \mathcal{K}_{ \infty } \rVert ^2
&\geq \, \lambda_0^2\, \lVert (\mathrm{axl}({Q}_{ \infty }^T\,\partial_{x_1} {Q}_{ \infty })\,|\, \mathrm{axl}({Q}_{ \infty }^T\,\partial_{x_2} {Q}_{ \infty }))\rVert ^2.
\end{align}

In view  of the coercivity of the internal energy and \eqref{26}, \eqref{27} and \eqref{Elambda}, and after using the Poincar\'e--inequality we deduce that  the functional $I(m,{Q}_{ \infty })$   is bounded from below on $\mathcal{A}$, i.e., 
\begin{align}\label{36}
I(m,{Q}_{ \infty })&\geq\, c_1\lVert  m-m^*\rVert_{{\rm H}^1(\omega)}^2 +c_2
\, \quad\quad \quad \forall \, (m,{Q}_{ \infty })\in  {\rm H}^1(\omega, \mathbb{R}^3)\times{\rm H}^1(\omega, {\rm SO}(3)),
\end{align}
where $c_1>0$ and $c {_2}\in\mathbb{R}$.
Hence, there exists an infimizing sequence $\big\{(m_k,\overline{Q}_{k})\big\}_{k=1}^\infty$ in $  \mathcal{A}$, such that
\begin{equation}\label{37}
\lim_{k\rightarrow \infty} I(m_k,\overline{Q}_{k}) = \,\inf\, \big\{I(m,{Q}_{ \infty })\, \big|\,  (m,{Q}_{ \infty })\in  \mathcal{A}\big\}.
\end{equation}
Since we have $I(m^*,{Q}_{ \infty }^*)<\infty$,  in view of the conditions on the boundary data, the infimizing sequence $\big\{(m_k,\overline{Q}_{k})\big\}_{k=1}^\infty$ can be chosen such that
\begin{equation}\label{38}
I(m_k,\overline{Q}_{k})\,\leq \,I(m^*,{Q}_{ \infty }^*)\,< \infty\,, \qquad \forall\,k\geq\, 1.
\end{equation}
Using  \eqref{36} and \eqref{38} we remark that the sequence $\big\{m_k \big\}_{k=1}^\infty$ is bounded in ${\rm H}^1(\omega,\mathbb{R}^3)$. Hence, we can consider a subsequence of $\big\{m_k \big\}_{k=1}^\infty$  (not relabeled) which converges weakly in  ${\rm H}^1(\omega,\mathbb{R}^3)$. According to Rellich's selection principle, it converges strongly in ${\rm L}^2(\omega,\mathbb{R}^{3})$, i.e., ~there exists an element $\widehat{m}\in{\rm H}^1(\omega,\mathbb{R}^3)$ such that
\begin{equation}\label{39}
m_k  \rightharpoonup \widehat{m} \quad\mathrm{in}\quad {\rm H}^1(\omega, \mathbb{R}^3),\qquad \mathrm{and}\qquad m_k \rightarrow\widehat{ m} \quad\mathrm{in}\quad {\rm L}^2(\omega, \mathbb{R}^3).
\end{equation}

We skip further details, since they mimic the proof of existence theorem  from \cite{GhibaNeffPartII}, and we use only the fact that there exists a subsequence of $ \big\{ \overline{Q}_{k}\big\}_{k=1}^\infty\in {\rm H}^1(\omega,{\rm SO}(3))$ (not relabeled) and an element $\widehat{\overline{Q}}_{ \infty }\in {\rm H}^1(\omega,{\rm SO}(3))$ with
\begin{equation}\label{40}
\overline{Q}_{k} \rightharpoonup       \widehat{\overline{Q}}_{ \infty }    \quad\mathrm{in}\quad {\rm H}^1(\omega, \mathbb{R}^{3\times3}) , \qquad\mathrm{and}\qquad         \overline{Q}_{k}  \rightarrow   \widehat{\overline{Q}}_{ \infty } \quad\mathrm{in}\quad {\rm L}^2(\omega, \mathbb{R}^{3\times3}).
\end{equation}
Let us next construct the limit strain and curvature measures
\begin{align}\label{e552}
\widehat{\mathcal{E}}_{ \infty }  :&=\,    \widehat{\overline{Q}}_{ \infty }^T  (\nabla   {\widehat{m}}|\widehat{\overline{Q}}_{ \infty }\nabla\Theta .e_3)[\nabla\Theta ]^{-1}-\id_3=Q_0( \widehat{\overline{R}}_{ \infty }^T\,\nabla  {\widehat{m}} -Q_0^T\nabla y_0 |0)[\nabla\Theta ]^{-1} ,  \\
\widehat{\mathcal{K}}_{ \infty }  :&=\,  \Big(\mathrm{axl}(\widehat{\overline{Q}}_{ \infty }^T\,\partial_{x_1} \widehat{\overline{Q}}_{ \infty })\,|\, \mathrm{axl}(\widehat{\overline{Q}}_{ \infty }^T\,\partial_{x_2} \widehat{\overline{Q}}_{ \infty })\,|0\Big)[\nabla\Theta ]^{-1}.\notag
\end{align}

Note that using \
\begin{align}\label{curvfuraxl}
{Q}_{ \infty }^T\,\partial_{x_i} {Q}_{ \infty }\,=\,Q_0\,\overline{R}_{ \infty }^T\,\partial_{x_i} (\overline{R}_{ \infty }\,Q_0^T)\,=\,Q_0 (\overline{R}^T\,\partial_{x_i} \overline{R}_{ \infty })\,Q_0^T-Q_0(Q_0^T\partial_{x_i} Q_0)\,Q_0^T,\quad i\,=\,1,2,3
\end{align}
and the identity ~ $\mathrm{axl}(Q\, A\, Q^T)\,=\,Q\,\mathrm{axl}( A)$ ~ for all $ Q\in {\rm SO}(3)$ and for all $A\in \mathfrak{so}(3)$,
we obtain 
\begin{align}
\widehat{\mathcal{K}}_{ \infty } &= Q_0\Big(\,\text{axl}(\widehat{\overline{R}}_{ \infty }^T\partial_{x_1} \widehat{\overline{R}}_{ \infty })\!-\!\text{axl}(Q^{T}_0\partial_{x_1} Q_0) \,\,\big|\,\, \text{axl}(\widehat{\overline{R}}_{ \infty }^T\partial_{x_2} \widehat{\overline{R}}_{ \infty })\! -\!\text{axl}(Q^{T}_0\partial_{x_2} Q_0)\,\,\big|\, \,0\,\Big)[\nabla\Theta ]^{-1}.
\end{align}
In the next steps of the proof, all the arguments from  \cite{GhibaNeffPartII}  hold true and we  conclude that \begin{align}\label{convE}
\mathcal{E}_{ \infty }^{(k)}  :&=Q_0( \overline{R}_k^T\,\nabla m_k -Q_0^T\nabla y_0 |0)[\nabla\Theta ]^{-1}\rightharpoonup\widehat{\mathcal{E}}_{ \infty }
\end{align} in ${\rm L}^2(\omega,\mathbb{R}^{3\times3})$, where
~ $\widehat{\overline{R}}_k(x_1,x_2) \coloneqq \widehat{\overline{Q}}_k(x_1,x_2)\,Q_0(x_1,x_2,0)\in{\rm SO}(3)$, as well as
\begin{align}\label{convK}
\mathcal{K}_{ \infty }^{(k)}  :&= Q_0\Big(\,\text{axl}({\overline{R}}_k^T\partial_{x_1} {\overline{R}}_k)\!-\!\text{axl}(Q^{T}_0\partial_{x_1} Q_0) \,\,\big|\,\, \text{axl}({\overline{R}}_k^T\partial_{x_2}{\overline{R}}_k)\! -\!\text{axl}(Q^{T}_0\partial_{x_2} Q_0)\,\,\big|\, \,0\,\Big)[\nabla\Theta ]^{-1} \rightharpoonup\widehat{\mathcal{K}}_{ \infty }
\end{align}
in ${\rm L}^2(\omega,\mathbb{R}^{3\times3})$. The admissible set $\mathcal{A}$ defined by \eqref{21} is closed under weak convergence. Indeed, since the set of symmetric matrices is closed under weak convergence, we find 
\begin{equation}\label{Uk}
\begin{array}{rll}
U_k\coloneqq &\mathcal{E}_{ \infty }^{(k)}  +\id_3 \rightharpoonup U\coloneqq \widehat{\mathcal{E}}_{ \infty }+\id_3&\quad\text{in}\quad  {\rm L^2}(\omega, {\rm Sym}(3)),\vspace{2mm}\\
\mathfrak{K}_{\rm 1}^{(k)}\coloneqq &\mathcal{E}_{ \infty }^{(k)}\, {\rm B}_{y_0} +  {\rm C}_{y_0} \mathcal{K}_{ \infty }^{(k)} \rightharpoonup \mathfrak{K}_{\rm 1}\coloneqq \widehat{\mathcal{E}}_{ \infty } \, {\rm B}_{y_0} +  {\rm C}_{y_0} \widehat{\mathcal{K}}_{ \infty }& \quad\text{in}\quad {\rm L^1}(\omega, {\rm Sym}(3)),\vspace{2mm}\\ 
\mathfrak{K}_{\rm 2}^{(k)}\coloneqq &\mathcal{E}_{ \infty }^{(k)} \, {\rm B}_{y_0}^2 +  {\rm C}_{y_0} \mathcal{K}_{ \infty }^{(k)}  {\rm B}_{y_0}\rightharpoonup \mathfrak{K}_{\rm 2}\coloneqq \widehat{\mathcal{E}}_{ \infty } \, {\rm B}_{y_0}^2 +  {\rm C}_{y_0} \widehat{\mathcal{K}}_{ \infty }{\rm B}_{y_0}&\quad\text{in}\quad\,  {\rm L^1}(\omega, {\rm Sym}(3)),
\end{array}
\end{equation} whenever  the sequence $\big\{(m_k,\overline{Q}_{k})\big\}_{k=1}^\infty\subseteq \mathcal{A}$ is weakly convergent to  $(\widehat{{m}},\widehat{\overline{Q}}_{ \infty })\in{\rm H}^1(\omega, \mathbb{R}^3)\times{\rm H}^1(\omega, {\rm SO}(3))$. Moreover, by virtue of the relations  $(m_k,\overline{Q}_{k})\in \mathcal{A}$ and \eqref{39}, \eqref{40},   we derive that $\widehat{m}={m}^*$  and $\widehat{\overline{Q}}_{ \infty }Q_0.e_3=\,\dd\frac{\partial_{x_1}m^*\times \partial_{x_2}m^*}{\lVert \partial_{x_1}m^*\times \partial_{x_2}m^*\rVert }$ on $\gamma_d\,$ in the sense of traces. The second boundary condition  is satisfied, since from \eqref{Uk} it follows that $\widehat{\overline{Q}}_{ \infty }Q_0.e_3=\widehat{n}\coloneqq \dd\frac{\partial_{x_1}\widehat{m}\times \partial_{x_2}\widehat{m}}{\lVert \partial_{x_1}\widehat{m}\times \partial_{x_2}\widehat{m}\rVert }$ and $\widehat{\overline{Q}}_{ \infty }={\rm polar}[(\nabla  \widehat{m}|\widehat{n})[\nabla\Theta ]^{-1}]$.
Hence, we obtain that the limit pair satisfies $(\widehat{m},\widehat{\overline{Q}}_{ \infty })\in \mathcal{A}$ and, 
in view of convexity in the chosen strain and curvature measures \eqref{convE}, \eqref{convK} and \eqref{Uk}, we also have
\begin{equation}\label{47}
\int_\omega W^\infty(\widehat{\mathcal{E}}_{ \infty },\widehat{\mathcal{K}}_{ \infty })\,{\rm det}\nabla \Theta  \, {{\rm d}a}\,\leq \, \liminf_{n\to\infty}  \int_\omega W^\infty(\mathcal{E}_{ \infty }^{(k)},\mathcal{K}_{ \infty }^{(k)})\,{\rm det}\nabla \Theta  \, {{\rm d}a}.
\end{equation}
 Taking into account the assumptions on the  external loads, the continuity of the load potential functions,    and the convergence relations \eqref{39}$_2$ and \eqref{40}$_2\,$, we deduce
\begin{equation}\label{48}
{\overline{\Pi}}(\widehat{{m}}, \widehat{\overline{Q}}_{ \infty })=  \lim_{n\to\infty}  {\overline{\Pi}}(m_k, \overline{Q}_k).
\end{equation}
From \eqref{47} and \eqref{48} we get
\begin{equation}\label{49}
I(\widehat{{m}},\widehat{\overline{Q}}_{ \infty })\,\leq\,  \liminf_{n\to\infty} \, I(m_k,\overline{Q}_k)\,.
\end{equation}
Finally, the relations \eqref{37} and \eqref{49} show that ~
$ I(\widehat{{m}},\widehat{\overline{Q}}_{ \infty })\,=\,
\,\inf\, \big\{I({m},{Q}_{ \infty })\, \big|\,  ({m},{Q}_{ \infty })\in \mathcal{A}\big\}.
$
Since $(\widehat{{m}},\widehat{\overline{Q}}_{ \infty })\in\mathcal{A}$, we conclude that $(\widehat{{m}},\widehat{\overline{Q}}_{ \infty })$ is a minimizing solution pair of our minimization problem, in which $\widehat{\overline{Q}}_{ \infty }={\rm polar}\big((\nabla  \widehat{{m}}|\widehat{{n}}) [\nabla\Theta ]^{-1}\big)$, $\widehat{\mathcal{E}}_{ \infty } \, {\rm B}_{y_0} +  {\rm C}_{y_0} \widehat{\mathcal{K}}_{ \infty }\in {\rm Sym}(3)$ and $  (\widehat{\mathcal{E}}_{ \infty } \, {\rm B}_{y_0} +  {\rm C}_{y_0} \widehat{\mathcal{K}}_{ \infty }){\rm B}_{y_0} \in {\rm Sym}(3).$
\end{proof}
\subsection{Conditional existence   for the $O(h^3)$-constrained elastic Cosserat shell model }\label{4.2}

In the $O(h^5)$-shell model and for $\mu_{\rm c}\to \infty$ the assumptions
\begin{align}
\label{restskh3}\lVert{\rm skew}( \mathcal{E}_{m,s})\rVert=0
,
\qquad\qquad\qquad
\lVert{\rm skew}(\mathcal{E}_{m,s} \, {\rm B}_{y_0} +  {\rm C}_{y_0} \mathcal{K}_{e,s} )\rVert=0,
\end{align}
are not sufficient to ensure that the energy density  is finite, we need to require additionally 
\begin{align}\label{extracond}
\lVert{\rm skew}[(\mathcal{E}_{m,s} \, {\rm B}_{y_0} +  {\rm C}_{y_0} \mathcal{K}_{e,s} ){\rm B}_{y_0}] \rVert=0.
\end{align}
This condition  is necessary since the energy term $\frac{h^5}{80}\,\frac{1}{6}
W_{\mathrm{shell}} \big((  \mathcal{E}_{m,s} \, {\rm B}_{y_0} +  {\rm C}_{y_0} \mathcal{K}_{e,s} )   {\rm B}_{y_0} \,\big)$  must remain finite for  $\mu_{\rm c}\to \infty$. 
However, in the $O(h^3)$-shell model the latter  energy term is not anymore present and there is no need to impose  the extra symmetry condition \eqref{extracond}. Indeed, if we restrict our model to order $O(h^3)$, then the internal energy density reads \begin{align}\label{h3energy} W^{(h^3)}(\mathcal{E}_{m,s}, \mathcal{K}_{e,s})=&\,  \Big(h+{\rm K}\,\dfrac{h^3}{12}\Big)\,
W_{\mathrm{shell}}\big(    \mathcal{E}_{m,s} \big)+  \dfrac{h^3}{12}\,
W_{\mathrm{shell}}  \big(   \mathcal{E}_{m,s} \, {\rm B}_{y_0} +   {\rm C}_{y_0} \mathcal{K}_{e,s} \big) \notag \\&
-\dfrac{h^3}{3} \mathrm{ H}\,\mathcal{W}_{\mathrm{shell}}  \big(  \mathcal{E}_{m,s} ,
\mathcal{E}_{m,s}{\rm B}_{y_0}+{\rm C}_{y_0}\, \mathcal{K}_{e,s} \big)+
\dfrac{h^3}{6}\, \mathcal{W}_{\mathrm{shell}}  \big(  \mathcal{E}_{m,s} ,
( \mathcal{E}_{m,s}{\rm B}_{y_0}+{\rm C}_{y_0}\, \mathcal{K}_{e,s}){\rm B}_{y_0} \big)\notag\vspace{2.5mm}\\
&+  \Big(h-{\rm K}\,\dfrac{h^3}{12}\Big)\,
W_{\mathrm{curv}}\big(  \mathcal{K}_{e,s} \big)    +  \dfrac{h^3}{12}
W_{\mathrm{curv}}\big(  \mathcal{K}_{e,s}   {\rm B}_{y_0} \,  \big),
\end{align}
which, under the restrictions \eqref{restskh3}, can be expressed as
\begin{align}\label{h3energyconstr} W^{(h^3)}_{\infty}(\mathcal{E}_{m,s}, \mathcal{K}_{e,s})=&\,  \Big(h+{\rm K}\,\dfrac{h^3}{12}\Big)\,
W_{\mathrm{shell}}\big(   {\rm sym} \,\mathcal{E}_{m,s} \big)+  \dfrac{h^3}{12}\,
W_{\mathrm{shell}}  \big(   {\rm sym}(\mathcal{E}_{m,s} \, {\rm B}_{y_0} +   {\rm C}_{y_0} \mathcal{K}_{e,s}) \big) \notag \\&
-\dfrac{h^3}{3} \mathrm{ H}\,\mathcal{W}_{\mathrm{shell}}  \big( {\rm sym}\, \mathcal{E}_{m,s} ,
{\rm sym}(\mathcal{E}_{m,s}{\rm B}_{y_0}+{\rm C}_{y_0}\, \mathcal{K}_{e,s})\big) \\&+
\dfrac{h^3}{6}\, \mathcal{W}_{\mathrm{shell}}  \big(  {\rm sym}\,\mathcal{E}_{m,s} ,
{\rm sym}[( \mathcal{E}_{m,s}{\rm B}_{y_0}+{\rm C}_{y_0}\, \mathcal{K}_{e,s}){\rm B}_{y_0} ]\big)\notag\vspace{2.5mm}\\
&+  \Big(h-{\rm K}\,\dfrac{h^3}{12}\Big)\,
W_{\mathrm{curv}}\big(  \mathcal{K}_{e,s} \big)    +  \dfrac{h^3}{12}
W_{\mathrm{curv}}\big(  \mathcal{K}_{e,s}   {\rm B}_{y_0} \,  \big)\notag
\end{align}
and which remains finite for $\mu_{\rm c}\to \infty$.

In view of the above constitutive restrictions \eqref{restskh3} imposed by the limit case,
the  variational problem for the constrained Cosserat $O(h^3)$-shell model  is  to find a deformation of the midsurface
$m:\omega\subset\mathbb{R}^2\to\mathbb{R}^3$  minimizing on $\omega$:
\begin{align}\label{minvarmch3} 
I= \int_{\omega}   \,\, \Big[  &\Big(h+{\rm K}\,\dfrac{h^3}{12}\Big)\,
W_{{\rm shell}}^{\infty}\big(    \sqrt{\![\nabla\Theta ]^{-T}\,\widehat{\rm I}_m\,\id_2^{\flat }\,[\nabla\Theta ]^{-1}}\!-\!
\sqrt{\![\nabla\Theta ]^{-T}\,\widehat{\rm I}_{y_0}\,\id_2^{\flat }\,[\nabla\Theta ]^{-1}} \big),\vspace{2.5mm}\notag\\    
&+   \dfrac{h^3}{12}\,
W_{{\rm shell}}^{\infty}  \big(   \sqrt{[\nabla\Theta ]^{-T}\,\widehat{\rm I}_m\,[\nabla\Theta ]^{-1}}\,  [\nabla\Theta ]\Big({\rm L}_{y_0}^\flat - {\rm L}_m^\flat\Big)[\nabla\Theta ]^{-1}\big) \notag \\&
-\dfrac{h^3}{3} \mathrm{ H}\,\mathcal{W}_{{\rm shell}}^{\infty}  \big(  \sqrt{[\nabla\Theta ]^{-T}\,\widehat{\rm I}_m\,\id_2^{\flat }\,[\nabla\Theta ]^{-1}}-
\sqrt{[\nabla\Theta ]^{-T}\,\widehat{\rm I}_{y_0}\,\id_2^{\flat }\,[\nabla\Theta ]^{-1}} ,\notag\\
&\qquad \qquad \qquad\ \ \sqrt{[\nabla\Theta ]^{-T}\,\widehat{\rm I}_m\,[\nabla\Theta ]^{-1}}\,  [\nabla\Theta ]\Big({\rm L}_{y_0}^\flat - {\rm L}_m^\flat\Big)[\nabla\Theta ]^{-1} \big)\\&+
\dfrac{h^3}{6}\, \mathcal{W}_{{\rm shell}}^{\infty}  \big(  \sqrt{[\nabla\Theta ]^{-T}\,\widehat{\rm I}_m\,\id_2^{\flat }\,[\nabla\Theta ]^{-1}}-
\sqrt{[\nabla\Theta ]^{-T}\,\widehat{\rm I}_{y_0}\,\id_2^{\flat }\,[\nabla\Theta ]^{-1}}, 
\notag\\
&\qquad \qquad \qquad\ \ \sqrt{[\nabla\Theta ]^{-T}\,\widehat{\rm I}_m\,[\nabla\Theta ]^{-1}}\,  [\nabla\Theta ]\Big({\rm L}_{y_0}^\flat - {\rm L}_m^\flat\Big){\rm L}_{y_0}^\flat[\nabla\Theta ]^{-1}\big)\vspace{2.5mm}\notag\\& + \Big(h-{\rm K}\,\dfrac{h^3}{12}\Big)\,
W_{\mathrm{curv}}\big(  \mathcal{K}_{ \infty } \big)    +  \dfrac{h^3}{12}\,
W_{\mathrm{curv}}\big(  \mathcal{K}_{ \infty }   {\rm B}_{y_0} \,  \big)
\Big] \,{\rm det}\nabla \Theta        \,\mathrm d a - \overline{\Pi}(m,{Q}_{ \infty }),\notag
\end{align}
such that
\begin{align}\label{contsymh3}
&\mathcal{E}_{ \infty } \, {\rm B}_{y_0} +  {\rm C}_{y_0} \mathcal{K}_{ \infty }\stackrel{!}{\in}{\rm Sym}(3)\ \ \Leftrightarrow\vspace{1.5mm}\notag\\ &  \sqrt{[\nabla\Theta ]^{-T}\,\widehat{\rm I}_m\,[\nabla\Theta ]^{-1}}\,  [\nabla\Theta ]\Big({\rm L}_{y_0}^\flat \!\!-\!{\rm L}_m^\flat\Big)[\nabla\Theta ]^{-1}\stackrel{!}{=}[\nabla\Theta ]^{-T}\Big(({\rm L}_{y_0}^\flat)^T\!\!-({\rm L}_{m}^\flat)^T\Big)[\nabla\Theta ]^{T}\sqrt{[\nabla\Theta ]^{-T}\,\widehat{\rm I}_m\,[\nabla\Theta ]^{-1}},\notag
\end{align}
where 
\begin{align}
\mathcal{K}_{ \infty } & = \, (\mathrm{axl}({Q}_{ \infty }^T\,\partial_{x_1} {Q}_{ \infty })\,|\, \mathrm{axl}({Q}_{ \infty }^T\,\partial_{x_2} {Q}_{ \infty })\,|0)[\nabla\Theta ]^{-1}, \vspace{2.5mm}\notag\\
{Q}_{ \infty }&={\rm polar}\big((\nabla  m|n) [\nabla\Theta ]^{-1}\big)=(\nabla m|n)[\nabla\Theta ]^{-1}\,\sqrt{[\nabla\Theta ]^{-T}\,\widehat {\rm I}_{m}\,[\nabla\Theta ]^{-1}},\\ 
W_{{\rm shell}}^{\infty}(  S)  &=   \mu\,\lVert\,   S\rVert^2  +\,\dfrac{\lambda\,\mu}{\lambda+2\mu}\,\big[ \mathrm{tr}   \, (S)\big]^2,\qquad 
\mathcal{W}_{{\rm shell}}^{\infty}(  S,  T) =   \mu\,\bigl\langle  S, T\bigr\rangle+\,\dfrac{\lambda\,\mu}{\lambda+2\mu}\,\mathrm{tr}  (S)\,\mathrm{tr}  (T), \notag\vspace{2.5mm}\\
W_{\mathrm{mp}}^{\infty}(  S)&= \mu\,\lVert  S\rVert^2+\,\dfrac{\lambda}{2}\,\big[ \mathrm{tr}\,   (S)\big]^2 \qquad \ \ \forall \ S,T\in{\rm Sym}(3), \notag\vspace{2.5mm}\\
W_{\mathrm{curv}}(  X )&=\mu\,L_c^2\left( b_1\,\lVert \dev\,\textrm{sym}\, X\rVert^2+b_2\,\lVert\text{skew} \,X\rVert^2+b_3\,
[\tr(X)]^2\right) \qquad \ \ \forall\  X\in\mathbb{R}^{3\times 3}.\notag
\end{align}

We consider the  admissible set $\mathcal{A}_{(h^3)}$ of solutions to be defined by
\begin{align}\label{21h3}
\mathcal{A}_{(h^3)}=\Bigg\{(m,&{Q}_{ \infty })\in{\rm H}^1(\omega, \mathbb{R}^3)\times{\rm H}^1(\omega, {\rm SO}(3))\ \bigg| \  m\big|_{ \gamma_d}=m^*, \qquad {Q}_{ \infty }Q_0.e_3\big|_{ \gamma_d}=\,\dd\frac{\partial_{x_1}m^*\times \partial_{x_2}m^*}{\lVert \partial_{x_1}m^*\times \partial_{x_2}m^*\rVert }\notag\\ 
&\,\,U\coloneqq {Q}_{ \infty }^T (\nabla  m|n)[\nabla\Theta ]^{-1} \in {\rm L^2}(\omega, {\rm Sym}(3))\\[-1ex]
&\mathfrak{K}_{\rm 1}\coloneqq {Q}_{ \infty }^T (\nabla  m|n) [\nabla\Theta ]^{-1}{\rm B}_{y_0}+{Q}_{ \infty }^T(\nabla n|0)[\nabla\Theta ]^{-1}\in {\rm L^2}(\omega, {\rm Sym}(3))
\Bigg\},\notag
\end{align}
 where the boundary conditions are to be understood in the sense of traces. As in the case of the constrained Cosserat shell model up to $O(h^5)$, the set $\mathcal{A}_{(h^3)}$ may be empty. 
In \cite{GhibaNeffPartII}  and Appendix \ref{coerh3Appendix} we have shown that if the constitutive coefficients are  such that $\mu>0, \,\mu_{\rm c}>0$, $2\,\lambda+\mu> 0$, $b_1>0$, $b_2>0$, $b_3>0$ and $L_{\rm c}>0$ and if the thickness $h$ satisfies at least one of the following conditions:
\begin{enumerate}
\item[i)] $	h\max\{\sup_{x\in\omega}|\kappa_1|, \sup_{x\in\omega}|\kappa_2|\}<\alpha$ \quad {\bf and} \quad  $	h^2<\frac{(5-2\sqrt{6})(\alpha^2-12)^2}{4\, \alpha^2}\frac{ {c_2^+}}{\max\{C_1^+,\mu_c\}}$  \quad  \text{with}  \quad  $\quad 0<\alpha<2\sqrt{3}$;
\item[ii)] $h\max\{\sup_{x\in\omega}|\kappa_1|, \sup_{x\in\omega}|\kappa_2|\}<\frac{1}{a}$  \quad {\bf and} \quad  $a>\max\Big\{1 + \frac{\sqrt{2}}{2},\frac{1+\sqrt{1+3\frac{\max\{C_1^+,\mu_c\}}{\min\{c_1^+,\mu_c\}}}}{2}\Big\}$,
\end{enumerate}
where  $c_1^+$ and $C_1^+$ denote the  smallest and the largest eigenvalues, respectively, of the quadratic form ${W}_{\mathrm{shell}}^{\infty}( X)$, then the energy density
\begin{align}\label{h3energy} W_{(h^3)}(\mathcal{E}_{ \infty }, \mathcal{K}_{ \infty })=&\,  \Big(h+{\rm K}\,\dfrac{h^3}{12}\Big)\,
W_{\mathrm{shell}}\big(    \mathcal{E}_{ \infty } \big)+  \dfrac{h^3}{12}\,
W_{\mathrm{shell}}  \big(   \mathcal{E}_{ \infty } \, {\rm B}_{y_0} +   {\rm C}_{y_0} \mathcal{K}_{ \infty } \big) \notag \\&
-\dfrac{h^3}{3} \mathrm{ H}\,\mathcal{W}_{\mathrm{shell}}  \big(  \mathcal{E}_{ \infty } ,
\mathcal{E}_{ \infty }{\rm B}_{y_0}+{\rm C}_{y_0}\, \mathcal{K}_{ \infty } \big)+
\dfrac{h^3}{6}\, \mathcal{W}_{\mathrm{shell}}  \big(  \mathcal{E}_{ \infty } ,
( \mathcal{E}_{ \infty }{\rm B}_{y_0}+{\rm C}_{y_0}\, \mathcal{K}_{ \infty }){\rm B}_{y_0} \big)\vspace{2.5mm}\notag\\
&+  \Big(h-{\rm K}\,\dfrac{h^3}{12}\Big)\,
W_{\mathrm{curv}}\big(  \mathcal{K}_{ \infty } \big)    +  \dfrac{h^3}{12}
W_{\mathrm{curv}}\big(  \mathcal{K}_{ \infty }   {\rm B}_{y_0} \,  \big)
\end{align}
is coercive. However, this result is not suitable for the limit case $\mu_{\rm c}\to \infty$ since the above conditions imposed upon the   thickness would imply $h\to 0$ for $\mu_{\rm c}\to \infty$. We circumvent the problem by proving  the following result.

\begin{proposition}\label{coerh3}{\rm [Coercivity in the theory including terms up to order $O(h^3)$]} Assume that the constitutive coefficients are  such that $\mu>0$, $2\,\lambda+\mu> 0$, $b_1>0$, $b_2>0$, $b_3>0$ and $L_{\rm c}>0$ and let $c_2^+$  denote the smallest eigenvalue  of
	$
	W_{\mathrm{curv}}(  S ),
	$
	and $c_1^+$ and $ C_1^+>0$ denote the smallest and the largest eigenvalues of the quadratic form $W_{\mathrm{shell}}^\infty(  S)$.
	If the thickness $h$ satisfies  one of the following conditions:
	\begin{enumerate}
	\item[i)] $	h\max\{\sup_{x\in\omega}|\kappa_1|, \sup_{x\in\omega}|\kappa_2|\}<\alpha$ \quad {\bf and} \quad  $	h^2<\frac{(5-2\sqrt{6})(\alpha^2-12)^2}{4\, \alpha^2}\frac{ {c_2^+}}{C_1^+}$  \quad  \text{with}  \quad  $\quad 0<\alpha<2\sqrt{3}$;
	\item[ii)] $h\max\{\sup_{x\in\omega}|\kappa_1|, \sup_{x\in\omega}|\kappa_2|\}<\frac{1}{a}$  \quad {\bf and} \quad  $a>\max\Big\{1 + \frac{\sqrt{2}}{2},\frac{1+\sqrt{1+3\frac{C_1^+}{c_1^+}}}{2}\Big\}$,
	\end{enumerate}
	then the internal energy density
\begin{align}\label{h3energy} W_{(h^3)}^{\infty}(\mathcal{E}_{ \infty }, \mathcal{K}_{ \infty })=&\,  \Big(h+{\rm K}\,\dfrac{h^3}{12}\Big)\,
W_{\mathrm{shell}}^{\infty}\big(    \mathcal{E}_{ \infty } \big)+  \dfrac{h^3}{12}\,
W_{\mathrm{shell}}^{\infty}  \big(   \mathcal{E}_{ \infty } \, {\rm B}_{y_0} +   {\rm C}_{y_0} \mathcal{K}_{ \infty } \big) \notag \\&
-\dfrac{h^3}{3} \mathrm{ H}\,\mathcal{W}_{\mathrm{shell}}^{\infty}  \big(  \mathcal{E}_{ \infty } ,
\mathcal{E}_{ \infty }{\rm B}_{y_0}+{\rm C}_{y_0}\, \mathcal{K}_{ \infty } \big)+
\dfrac{h^3}{6}\, \mathcal{W}_{\mathrm{shell}}^{\infty}  \big(  \mathcal{E}_{ \infty } ,
( \mathcal{E}_{ \infty }{\rm B}_{y_0}+{\rm C}_{y_0}\, \mathcal{K}_{ \infty }){\rm B}_{y_0} \big)\vspace{2.5mm}\notag\\
&+  \Big(h-{\rm K}\,\dfrac{h^3}{12}\Big)\,
W_{\mathrm{curv}}\big(  \mathcal{K}_{ \infty } \big)    +  \dfrac{h^3}{12}
W_{\mathrm{curv}}\big(  \mathcal{K}_{ \infty }   {\rm B}_{y_0} \,  \big) 
\end{align}
	is coercive on $\mathcal{A}_{(h^3)}$, in the sense that  there exists   a constant $a_1^+>0$ such that
	\begin{equation}\label{26bis}
	W_{(h^3)}^{\infty}(\mathcal{E}_{ \infty }, \mathcal{K}_{ \infty })\,\geq\, a_1^+\, \big(  \lVert \mathcal{E}_{ \infty }\rVert ^2 + \lVert \mathcal{K}_{ \infty }\rVert ^2\,\big)\qquad   \forall  (m,{Q}_{ \infty })\in \mathcal{A}_{(h^3)},
	\end{equation}
	where
	$a_1^+$ depends on the constitutive coefficients but is independent of the Cosserat couple modulus $\mu_{\rm c}$.
\end{proposition}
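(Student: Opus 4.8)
The plan is to repeat, essentially verbatim, the coercivity argument carried out for the unconstrained $O(h^3)$-model in \cite{GhibaNeffPartII} (see also Appendix \ref{coerh3Appendix}), exploiting the single decisive simplification available on $\mathcal{A}_{(h^3)}$: there $\mathcal{E}_{ \infty }\in{\rm Sym}(3)$ and $\mathcal{E}_{ \infty }\,{\rm B}_{y_0}+{\rm C}_{y_0}\,\mathcal{K}_{ \infty }\in{\rm Sym}(3)$. Hence every appearance of $W_{\mathrm{shell}}^{\infty}$ in the energy $W_{(h^3)}^{\infty}$ is evaluated at a symmetric tensor, and in the mixed terms $\mathcal{W}_{\mathrm{shell}}^{\infty}(\mathcal{E}_{ \infty },\cdot)$ only senses the symmetric part of its second argument; consequently the Cosserat couple modulus never enters, and the estimates from \eqref{pozitivdef} may be used with the $\mu_{\rm c}$-free constants $c_1^+,C_1^+$ (for $W_{\mathrm{shell}}^{\infty}$ on ${\rm Sym}(3)$) and $c_2^+,C_2^+$ (for $W_{\mathrm{curv}}$), precisely where the unconstrained proof had the $\mu_{\rm c}$-dependent quantity $\max\{C_1^+,\mu_{\rm c}\}$.

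First I would record the elementary geometric bounds. With $\beta:=h\max\{\sup_{x\in\omega}|\kappa_1|,\sup_{x\in\omega}|\kappa_2|\}$ one has $h|{\rm H}|\le\beta$, $h^2|{\rm K}|\le\beta^2$, $h\|{\rm B}_{y_0}\|\le\sqrt{2}\,\beta$ (since the eigenvalues of ${\rm B}_{y_0}$ are $\kappa_1,\kappa_2,0$), and $\|{\rm C}_{y_0}\|$ is bounded in terms of the initial parametrization alone; under either (i) or (ii) we have $\beta^2<12$, so the two metric prefactors satisfy $h+{\rm K}\tfrac{h^3}{12}\ge h\bigl(1-\tfrac{\beta^2}{12}\bigr)>0$ and $h-{\rm K}\tfrac{h^3}{12}\ge h\bigl(1-\tfrac{\beta^2}{12}\bigr)>0$. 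Then, discarding the nonnegative term $\tfrac{h^3}{12}W_{\mathrm{curv}}(\mathcal{K}_{ \infty }{\rm B}_{y_0})$ and retaining $h\bigl(1-\tfrac{\beta^2}{12}\bigr)W_{\mathrm{shell}}^{\infty}(\mathcal{E}_{ \infty })$, $\tfrac{h^3}{12}W_{\mathrm{shell}}^{\infty}(\mathcal{E}_{ \infty }{\rm B}_{y_0}+{\rm C}_{y_0}\mathcal{K}_{ \infty })$ and $h\bigl(1-\tfrac{\beta^2}{12}\bigr)W_{\mathrm{curv}}(\mathcal{K}_{ \infty })$ as reservoirs, I would estimate the two mixed contributions $-\tfrac{h^3}{3}{\rm H}\,\mathcal{W}_{\mathrm{shell}}^{\infty}(\mathcal{E}_{ \infty },\mathcal{E}_{ \infty }{\rm B}_{y_0}+{\rm C}_{y_0}\mathcal{K}_{ \infty })$ and $\tfrac{h^3}{6}\mathcal{W}_{\mathrm{shell}}^{\infty}(\mathcal{E}_{ \infty },(\mathcal{E}_{ \infty }{\rm B}_{y_0}+{\rm C}_{y_0}\mathcal{K}_{ \infty }){\rm B}_{y_0})$ via the Cauchy--Schwarz inequality for the positive semidefinite form $\mathcal{W}_{\mathrm{shell}}^{\infty}$ followed by Young's inequality with a weight adapted to these reservoirs and to the geometric bounds above. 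Doing so along two different routes — one in which the $\mathcal{K}_{ \infty }$-part of the second mixed term is absorbed into $c_2^+\,h\bigl(1-\tfrac{\beta^2}{12}\bigr)\|\mathcal{K}_{ \infty }\|^2$ (this is where the ratio $c_2^+/C_1^+$ appears, yielding condition (i)), another in which $\tfrac{h^3}{12}c_1^+\|\mathcal{E}_{ \infty }{\rm B}_{y_0}+{\rm C}_{y_0}\mathcal{K}_{ \infty }\|^2$ serves as the reservoir (yielding condition (ii), which involves $C_1^+/c_1^+$) — one is left with $W_{(h^3)}^{\infty}(\mathcal{E}_{ \infty },\mathcal{K}_{ \infty })\ge c'\,h\,W_{\mathrm{shell}}^{\infty}(\mathcal{E}_{ \infty })+c''\,h\,W_{\mathrm{curv}}(\mathcal{K}_{ \infty })$ with explicit constants $c',c''>0$. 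Finally \eqref{pozitivdef} gives the claimed bound with $a_1^+:=h\min\{c'c_1^+,c''c_2^+\}$, which depends on $\mu,\lambda,b_1,b_2,b_3,L_{\rm c},h$ and the curvature bounds, but not on $\mu_{\rm c}$.

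The hard part is not conceptual but the bookkeeping needed to make the admissibility threshold come out exactly as stated: one must select the Young weights so that positive definiteness of the resulting quadratic form in the triple $\bigl(\sqrt{W_{\mathrm{shell}}^{\infty}(\mathcal{E}_{ \infty })},\ \sqrt{W_{\mathrm{shell}}^{\infty}(\mathcal{E}_{ \infty }{\rm B}_{y_0}+{\rm C}_{y_0}\mathcal{K}_{ \infty })},\ \sqrt{W_{\mathrm{curv}}(\mathcal{K}_{ \infty })}\bigr)$ translates into precisely $h^{2}<\tfrac{(5-2\sqrt{6})(\alpha^{2}-12)^{2}}{4\alpha^{2}}\tfrac{c_2^+}{C_1^+}$ in case (i) and into the stated lower bound for $a$ in case (ii); this forces one to carry the numerical factors — the $\sqrt{2}$ from $\|{\rm B}_{y_0}\|$, the $(\alpha^2-12)$ from the prefactor $1-\tfrac{\beta^2}{12}$, the $\tfrac14$ governing how the budget is split between the two mixed terms, and the $5-2\sqrt{6}=(\sqrt{3}-\sqrt{2})^{2}$ arising from the discriminant — through unchanged. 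These choices are exactly those already fixed in \cite{GhibaNeffPartII}/Appendix \ref{coerh3Appendix}; the only genuinely new ingredient is the observation above that membership in $\mathcal{A}_{(h^3)}$ makes all relevant arguments symmetric and thereby replaces $\max\{C_1^+,\mu_{\rm c}\}$ by $C_1^+$. The same computation applied to $D^2W_{(h^3)}^{\infty}$ — which coincides with $W_{(h^3)}^{\infty}$ evaluated at the increments, $W_{(h^3)}^{\infty}$ being quadratic in $(\mathcal{E}_{ \infty },\mathcal{K}_{ \infty })$ — yields in addition the uniform convexity of the internal energy density in its strain arguments on $\mathcal{A}_{(h^3)}$.
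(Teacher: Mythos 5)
Your proposal is correct and takes essentially the same route as the paper: the paper's proof is a one-sentence deferral to Proposition~\ref{coerh3r} (the $\mu_{\rm c}$-dependent coercivity estimate in the appendix, with its Young-inequality split $\delta=\gamma\varepsilon$ and the $(5-2\sqrt{6})$-discriminant bookkeeping) together with precisely the two observations you make, namely that on $\mathcal{A}_{(h^3)}$ the arguments of $W_{\mathrm{shell}}^{\infty}$ are symmetric so that \eqref{pozitivdef} applies with the $\mu_{\rm c}$-independent eigenvalues $c_1^+,\,C_1^+$, and that $\lVert \sym X\rVert\le\lVert X\rVert$ absorbs the one genuinely nonsymmetric second argument $(\mathcal{E}_{ \infty }{\rm B}_{y_0}+{\rm C}_{y_0}\mathcal{K}_{ \infty }){\rm B}_{y_0}$ in the mixed term. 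You have reconstructed the intended argument, correctly identifying the replacement of $\max\{C_1^+,\mu_{\rm c}\}$ by $C_1^+$ (and $\min\{c_1^+,\mu_{\rm c}\}$ by $c_1^+$) as the only substantive change in conditions (i) and (ii).
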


\begin{proof}The proof is similar with the proof of Proposition 4.1 from
  \cite{GhibaNeffPartII} and Proposition \ref{coerh3r} from the appendix, the only difference consists in using  $\lVert X\rVert\geq \lVert \sym \,X\rVert$, $\forall X\in \mathbb{R}^{3\times 3}$ and the positive definiteness conditions \eqref{pozitivdef}.
\end{proof}

Once the coercivity is proven, the following existence result follows easily:
\begin{theorem}\label{th11}{\rm [Conditional existence result for the constrained theory including terms up to order $O(h^3)$]}\\
	Assume that   the admissible set $\mathcal{A}_{(h^3)}$ is non-empty and 	 the external loads satisfy the conditions
${f}\in\textrm{\rm L}^2(\omega,\mathbb{R}^3)$, $t\in \textrm{\rm L}^2(\gamma_t,\mathbb{R}^3)$,
the boundary data satisfy the conditions ${m}^*\in{\rm H}^1(\omega ,\mathbb{R}^3)$  and ${\rm polar}(\nabla {m}^*\,|\,n^*)\in{\rm H}^1(\omega, {\rm SO}(3))$,
	and that the following conditions concerning the initial configuration are fulfilled: $\,y_0:\omega\subset\mathbb{R}^2\rightarrow\mathbb{R}^3$ is a continuous injective mapping and
	\begin{align}
	{y}_0&\in{\rm H}^1(\omega ,\mathbb{R}^3),\quad   {Q}_{0}={\rm polar}(\nabla \Theta)\in{\rm H}^1(\omega, {\rm SO}(3)),\quad
	\nabla\Theta \in {\rm L}^\infty(\omega ,\mathbb{R}^{3\times 3}),\quad {\rm det}\nabla \Theta  \geq\, a_0 >0\,,
	\end{align}
	where $a_0$ is a positive constant.
	Assume that the constitutive coefficients are  such that $\mu>0$, $2\,\lambda+\mu> 0$, $b_1>0$, $b_2>0$, $b_3>0$ and $L_{\rm c}>0$.
	Then, if the thickness $h$ satisfies at least one of the following conditions:
	\begin{enumerate}
		\item[i)] $	h\max\{\sup_{x\in\omega}|\kappa_1|, \sup_{x\in\omega}|\kappa_2|\}<\alpha$ \quad {\bf and} \quad  $	h^2<\frac{(5-2\sqrt{6})(\alpha^2-12)^2}{4\, \alpha^2}\frac{ {c_2^+}}{C_1^+}$  \quad  \text{with}  \quad  $\quad 0<\alpha<2\sqrt{3}$;
		\item[ii)] $h\max\{\sup_{x\in\omega}|\kappa_1|, \sup_{x\in\omega}|\kappa_2|\}<\frac{1}{a}$  \quad {\bf and} \quad  $a>\max\Big\{1 + \frac{\sqrt{2}}{2},\frac{1+\sqrt{1+3\frac{C_1^+}{c_1^+}}}{2}\Big\}$,
	\end{enumerate}
	where  $c_2^+$  denotes the smallest eigenvalue  of
	$
	W_{\mathrm{curv}}(  S ),
	$
	and $c_1^+$ and $ C_1^+>0$ denote the smallest and the biggest eigenvalues of the quadratic form $W_{\mathrm{shell}}^{\infty}(  S)$,
	the minimization problem corresponding to the energy density defined by \eqref{minvarmch3} admits at least one minimizing solution $m$ such that the pair
	$(m,{Q}_{ \infty })\in  \mathcal{A}_{(h^3)}$.
\end{theorem}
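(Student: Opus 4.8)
The plan is to follow the by-now standard direct method of the calculus of variations, exactly as in the proof of Theorem~\ref{th1}, using the coercivity already established in Proposition~\ref{coerh3} as the crucial ingredient that replaces the unconstrained coercivity result from \cite{GhibaNeffPartII}. First I would observe that, since the admissible set $\mathcal{A}_{(h^3)}$ is assumed non-empty, there is at least one competitor $(m^*,{Q}_{ \infty }^*)\in\mathcal{A}_{(h^3)}$ with $I(m^*,{Q}_{ \infty }^*)<\infty$ (using the assumed regularity of the boundary data ${m}^*\in{\rm H}^1(\omega,\mathbb{R}^3)$, ${\rm polar}(\nabla{m}^*|n^*)\in{\rm H}^1(\omega,{\rm SO}(3))$ and of the initial configuration), so the infimum is finite and an infimizing sequence $\{(m_k,\overline{Q}_k)\}_{k\geq1}\subseteq\mathcal{A}_{(h^3)}$ can be chosen with $I(m_k,\overline{Q}_k)\leq I(m^*,{Q}_{ \infty }^*)<\infty$.

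Next I would establish the a priori bounds. From the coercivity estimate \eqref{26bis} of Proposition~\ref{coerh3}, namely $W_{(h^3)}^{\infty}(\mathcal{E}_{ \infty },\mathcal{K}_{ \infty })\geq a_1^+(\lVert\mathcal{E}_{ \infty }\rVert^2+\lVert\mathcal{K}_{ \infty }\rVert^2)$ on $\mathcal{A}_{(h^3)}$, together with the lower-order bound $|\overline{\Pi}(m,{Q}_{ \infty })|\leq C(\lVert m\rVert_{{\rm H}^1(\omega)}+1)$ coming from ${f}\in{\rm L}^2(\omega,\mathbb{R}^3)$, $t\in{\rm L}^2(\gamma_t,\mathbb{R}^3)$ and boundedness of the load-potential operators, and with the estimate $\lVert\mathcal{E}_{ \infty }\rVert^2\geq\lambda_0^2\lVert\nabla m\rVert_{{\rm L}^2(\omega)}^2-c_1\lVert\nabla m\rVert_{{\rm L}^2(\omega)}+c_2$ (obtained by writing $\mathcal{E}_{ \infty }=Q_0(\overline{R}_{ \infty }^T\nabla m-Q_0^T\nabla y_0|0)[\nabla\Theta]^{-1}$ and using ${\rm det}\nabla\Theta\geq a_0>0$), one applies the Poincar\'e inequality on $\gamma_d$ to conclude $I(m,{Q}_{ \infty })\geq c_1\lVert m-m^*\rVert_{{\rm H}^1(\omega)}^2+c_2$. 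Hence $\{m_k\}$ is bounded in ${\rm H}^1(\omega,\mathbb{R}^3)$, and since $\overline{Q}_k\in{\rm SO}(3)$ pointwise is automatically bounded in ${\rm L}^\infty$ while $\lVert\mathcal{K}_{ \infty }^{(k)}\rVert^2\gtrsim\lVert(\mathrm{axl}(\overline{Q}_k^T\partial_{x_1}\overline{Q}_k)|\mathrm{axl}(\overline{Q}_k^T\partial_{x_2}\overline{Q}_k))\rVert^2$ is bounded, also $\{\overline{Q}_k\}$ is bounded in ${\rm H}^1(\omega,\mathbb{R}^{3\times3})$. By Rellich's theorem we extract a (non-relabeled) subsequence with $m_k\rightharpoonup\widehat m$ in ${\rm H}^1$, $m_k\to\widehat m$ in ${\rm L}^2$, and $\overline{Q}_k\rightharpoonup\widehat{\overline{Q}}_{ \infty }$ in ${\rm H}^1$, $\overline{Q}_k\to\widehat{\overline{Q}}_{ \infty }$ in ${\rm L}^2$, with $\widehat{\overline{Q}}_{ \infty }\in{\rm H}^1(\omega,{\rm SO}(3))$.

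It then remains to pass to the limit. I would show that the strain and curvature measures converge weakly in ${\rm L}^2(\omega,\mathbb{R}^{3\times3})$, $\mathcal{E}_{ \infty }^{(k)}\rightharpoonup\widehat{\mathcal{E}}_{ \infty }$ and $\mathcal{K}_{ \infty }^{(k)}\rightharpoonup\widehat{\mathcal{K}}_{ \infty }$, using exactly the identity \eqref{curvfuraxl} and the relation $\mathrm{axl}(QAQ^T)=Q\,\mathrm{axl}(A)$ (the strong ${\rm L}^2$-convergence of $\overline{Q}_k$ combined with the weak ${\rm L}^2$-convergence of $\partial_{x_i}\overline{Q}_k$ gives weak convergence of the products, as in \cite{GhibaNeffPartII}). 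Since the energy density $W_{(h^3)}^{\infty}(\mathcal{E}_{ \infty },\mathcal{K}_{ \infty })$ is convex in $(\mathcal{E}_{ \infty },\mathcal{K}_{ \infty })$ under the stated constitutive assumptions (uniform convexity follows from \eqref{pozitivdef} and the coercivity argument, exactly as the $D^2W^\infty\geq a_1^+(\lVert H_1\rVert^2+\lVert H_2\rVert^2)$ estimate in the proof of Theorem~\ref{th1}), weak lower semicontinuity yields $\int_\omega W_{(h^3)}^{\infty}(\widehat{\mathcal{E}}_{ \infty },\widehat{\mathcal{K}}_{ \infty })\,{\rm det}\nabla\Theta\,{\rm d}a\leq\liminf_k\int_\omega W_{(h^3)}^{\infty}(\mathcal{E}_{ \infty }^{(k)},\mathcal{K}_{ \infty }^{(k)})\,{\rm det}\nabla\Theta\,{\rm d}a$, while the continuity of the load potentials gives $\overline{\Pi}(\widehat m,\widehat{\overline{Q}}_{ \infty })=\lim_k\overline{\Pi}(m_k,\overline{Q}_k)$; together these give $I(\widehat m,\widehat{\overline{Q}}_{ \infty })\leq\liminf_k I(m_k,\overline{Q}_k)=\inf_{\mathcal{A}_{(h^3)}}I$. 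The main obstacle --- and the one point that genuinely differs from the unconstrained case --- is the closedness of $\mathcal{A}_{(h^3)}$ under this weak convergence: one has to verify that the trace boundary conditions survive (using the strong ${\rm L}^2$-convergence and the compactness of the trace operator), and, more delicately, that the symmetry constraints $U=\widehat{\mathcal{E}}_{ \infty }+\id_3\in{\rm Sym}(3)$ and $\mathfrak{K}_1=\widehat{\mathcal{E}}_{ \infty }{\rm B}_{y_0}+{\rm C}_{y_0}\widehat{\mathcal{K}}_{ \infty }\in{\rm Sym}(3)$ pass to the limit; this works because the space of symmetric matrices is a closed linear (hence weakly closed) subspace, and $\mathcal{E}_{ \infty }^{(k)}\,{\rm B}_{y_0}+{\rm C}_{y_0}\mathcal{K}_{ \infty }^{(k)}\rightharpoonup\widehat{\mathcal{E}}_{ \infty }\,{\rm B}_{y_0}+{\rm C}_{y_0}\widehat{\mathcal{K}}_{ \infty }$ at least in ${\rm L}^1(\omega,{\rm Sym}(3))$ by the convergences just established (note that in the $O(h^3)$ case we do not need the third constraint \eqref{extracond} on $(\mathcal{E}_{ \infty }{\rm B}_{y_0}+{\rm C}_{y_0}\mathcal{K}_{ \infty }){\rm B}_{y_0}$, which simplifies the argument relative to Theorem~\ref{th1}); as in the $O(h^5)$ case one also checks $\widehat{\overline{Q}}_{ \infty }={\rm polar}[(\nabla\widehat m|\widehat n)[\nabla\Theta]^{-1}]$ and $\widehat{\overline{Q}}_{ \infty }Q_0.e_3=\widehat n$ from $U\in{\rm Sym}^+(3)$. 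Hence $(\widehat m,\widehat{\overline{Q}}_{ \infty })\in\mathcal{A}_{(h^3)}$ realizes the infimum, completing the proof.
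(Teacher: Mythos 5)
Your proposal is correct and takes precisely the route the paper intends: the paper itself omits a written proof of Theorem~\ref{th11}, stating only that ``once the coercivity is proven, the following existence result follows easily,'' i.e.\ one repeats the direct-method argument of Theorem~\ref{th1} with the coercivity input replaced by Proposition~\ref{coerh3} and the third symmetry constraint \eqref{extracond} dropped from the admissible set. Your account of the a priori bounds, the extraction of weakly convergent subsequences, the weak lower semicontinuity via convexity in $(\mathcal{E}_\infty,\mathcal{K}_\infty)$, and the weak closedness of $\mathcal{A}_{(h^3)}$ (now involving only $U$ and $\mathfrak{K}_1$) matches the paper's reasoning.
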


\subsection{The constrained elastic Cosserat plate model}\label{3.4}
In the case of Cosserat plates (planar shell) there is no initial curvature and  we have $\Theta(x_1,x_2,x_3)\,=\,(x_1,x_2,x_3)$ together with
\begin{align}
\nabla\Theta(x_3)&\,=\,\id_3, \qquad y_0(x_1,x_2)\,=\,(x_1,x_2,0), \qquad Q_0\,=\,\id_3,\qquad n_0\,=\,e_3, \qquad d_i^0\,=\,e_i,\\
\qquad \qquad {\rm B}_{\rm id}&\,=\,0_3, \qquad {\rm C}_{\rm id}\,=\,\begin{footnotesize}\begin{footnotesize}\begin{pmatrix}
0&1&0 \\
-1&0&0 \\
0&0&0
\end{pmatrix}\end{footnotesize}\end{footnotesize}\in \mathfrak{so}(3),\quad {\rm I}_{\rm id} \,=\,\id_2,\quad \ {\rm II}_{\rm id} \,=\,0_2\quad
{\rm L}_{\rm id} \,=\,0_2,\quad \ {\rm K} \,=\,0\, ,\quad 
{\rm H}\, \,=\,0.\notag
\end{align}

Therefore, all  $O(h^5)$-terms of our constrained elastic Cosserat shell model given in Subsection \ref{4.1} as well as all the mixed terms are automatically vanishing and we obtain that 	the variational problem of the constrained Cosserat plate model is  to find a deformation of the midsurface
$m:\omega\subset\mathbb{R}^2\to\mathbb{R}^3$  minimizing on $\omega$:
\begin{align}\label{minplate}
I= \int_{\omega}   \,\, \Big[ & h\,
{W}_{{\rm shell}}^{\infty}\big(    \sqrt{\widehat{\rm I}_m^{\flat }}-
\id_2^{\flat } \big)+ \dfrac{h^3}{12}\,
{W}_{{\rm shell}}^{\infty}  \big(   \sqrt{\widehat{\rm I}_m^{-1}} {\rm II}_m^\flat\big)  + h\,
W_{\mathrm{curv}}\big(  \mathcal{K}_{ \infty } \big) 
\Big] \,      \,\mathrm d a - \overline{\Pi}(m,{Q}_{ \infty }),
\end{align}
such that
\begin{align}
{\rm C}_{\rm id} \mathcal{K}_{ \infty }&\stackrel{!}{\in}{\rm Sym}(3)\ \ \Leftrightarrow\ \ \,\sqrt{\widehat{\rm I}_m}\,{\rm L}_m^\flat\stackrel{!}{\in}{\rm Sym}(3) \ \ \Leftrightarrow\ \ \,\sqrt{\widehat{\rm I}_m}\,\widehat{\rm I}_m^{-1} {\rm II}_m^\flat\stackrel{!}{\in}{\rm Sym}(3)\ \ \Leftrightarrow\ \ \,\sqrt{\widehat{\rm I}_m^{-1}} {\rm II}_m^\flat\stackrel{!}{\in}{\rm Sym}(3),
\shortintertext{where}
{Q}_{ \infty }&={\rm polar}(\nabla  m|n)=(\nabla m|n) \sqrt{\widehat{\rm I}_m^{-1}}, \\ 
\mathcal{K}_{ \infty } &=  \Big(\mathrm{axl}(\, \sqrt{\widehat{\rm I}_m^{-T}}(\nabla m|n)^T\,\partial_{x_1} \big((\nabla m|n) \sqrt{\widehat{\rm I}_m^{-1}}\big)\big) \,|\, \mathrm{axl}(\, \sqrt{\widehat{\rm I}_m^{-T}}(\nabla m|n)^T\,\partial_{x_2} \big((\nabla m|n) \sqrt{\widehat{\rm I}_m^{-1}}\big)\big) \,\big|0\Big)\,.\notag
\end{align}
We already observe that the bending tensor $\dd\sqrt{\widehat{\rm I}_m^{-1}} {\rm II}_m^\flat$ is invariant under $m\to \alpha \, m$, $\alpha>0$ (cf.~the discussion in Section \ref{subsec:Acharya} ). Note that the scaling $m\to \alpha \, m$, $\alpha>0$ is certainly not connected to a bending type deformation. Therefore, it does make sense that $\sqrt{\widehat{\rm I}_m^{-1}} {\rm II}_m^\flat$  remains invariant. 

The existence of minimizers  for this problem was already discussed in \cite{Neff_plate04_cmt}.  The considered 
admissible set $\mathcal{A}_{\rm plate}$ of solutions is \cite{Neff_plate04_cmt}
\begin{align}\label{21h3plate}
\mathcal{A}_{\rm plate}=\Bigg\{(m,&{Q}_{ \infty })\in{\rm H}^1(\omega, \mathbb{R}^3)\times{\rm H}^1(\omega, {\rm SO}(3))\ \bigg|\  m\big|_{ \gamma_d}=m^*, \qquad {Q}_{ \infty }Q_0.e_3\big|_{ \gamma_d}=\,\dd\frac{\partial_{x_1}m^*\times \partial_{x_2}m^*}{\lVert \partial_{x_1}m^*\times \partial_{x_2}m^*\rVert }\notag\\ 
&\,\,U\coloneqq {Q}_{ \infty }^T (\nabla  m|n) \in {\rm L^2}(\omega, {\rm Sym}(3)),\quad 
\mathfrak{K}_{\rm 1}\coloneqq {Q}_{ \infty }^T(\nabla n|0)\in {\rm L^2}(\omega, {\rm Sym}(3))
\Bigg\},\notag
\end{align}
where the boundary conditions are to be understood in the sense of traces.  The restriction $\mathfrak{K}_{\rm 1}\coloneqq {Q}_{ \infty }^T(\nabla n|0)\stackrel{!}{\in} {\rm L^2}(\omega, {\rm Sym}(3))\ \Leftrightarrow \ \sqrt{\widehat{\rm I}_m^{-1}} {\rm II}_m^\flat\stackrel{!}{\in}{\rm Sym}(3)$ is not automatically satisfied, in general. However, it is satisfied for pure bending, i.e., when no change of metric is present $(\nabla m|n)\in {\rm SO}(3)\ \Leftrightarrow \  {\rm I}_{m}={\rm I}_{y_0}$, situation when  $\mathcal{G}_\infty^\flat=0$,  $\mathcal{R}_\infty^\flat={\rm II}_m^\flat\in{\rm Sym}(3)$. 
Therefore, if there exists a solution of the pure bending problem, i.e.,  the situation $(\nabla m|n)\in {\rm SO}(3)$, when $U \in {\rm diag}$
and ${\rm II}_m\in {\rm diag}$, such that $ {\rm I}_{m}={\rm I}_{y_0}$, then the set $\mathcal{A}_{\rm plate}$ is not empty. 

In the constrained planar case, the  change of metric tensor is given by
$
\mathcal{G}_{\infty}^\flat =\sqrt{{\rm I}_m^{\flat }}-\id_2^\flat\in {\rm Sym}(3),
$
the  bending strain tensor becomes
$
\mathcal{R}_{\infty}^\flat=\sqrt{\widehat{\rm I}_m^{-1}} {\rm II}_m^\flat\stackrel{!}{\in} {\rm Sym}(3)$ (which is not automatically satisfied if ${Q}_{ \infty }={\rm polar}(\nabla  m|n)$),
the transverse shear deformation vector 
vanishes
$
\mathcal{T}_{\infty}=(0,0)
$
and the vector of drilling bending reads
$
\mathcal{N}_{\infty} \coloneqq  e_3^T\, \big(\mbox{axl}({Q}_{ \infty }^T\partial_{x_1}{Q}_{ \infty })\,|\, \mbox{axl}({Q}_{ \infty }^T\partial_{x_2}{Q}_{ \infty }) \big).
$

\section{Modified constrained Cosserat shell models}\label{sec:modified}

\subsection{A modified $O(h^5)$-constrained Cosserat shell model. Unconditional existence} \label{mcm}\setcounter{equation}{0}

As we have seen in  Section \ref{Msym}, the symmetry  restrictions 
\begin{align}
\mathcal{E}_{m,s} \in {\rm Sym}(3), \qquad  (\mathcal{E}_{m,s} \, {\rm B}_{y_0} +  {\rm C}_{y_0} \mathcal{K}_{e,s})\in {\rm Sym}(3) \qquad \text{and}\qquad  
(  \mathcal{E}_{m,s} \, {\rm B}_{y_0} +  {\rm C}_{y_0} \mathcal{K}_{e,s} )   {\rm B}_{y_0}\in {\rm Sym}(3)
\end{align}
assure that the  (through the thickness reconstructed) strain tensor  $ \widetilde{\mathcal{E}}_{s} $ for the 3D-shell is symmetric. However,   the (reconstructed) strain tensor  $ \widetilde{\mathcal{E}}_{s} $ for the 3D shell represents in itself only an approximation of the  true tensor $\widetilde{\mathcal{E}}_\xi=\overline U _\xi-\id_3$ in the limit case $\mu_{\rm c}\to \infty$, too. 

The considered ansatz for the (reconstructed) deformation gradient  $F_\xi$ given by $\widetilde{F}_{e,s}$ in \eqref{red2} does not assure, in the absence of the other two  symmetry conditions, that 
$
\mathcal{E}_{m,s} \in {\rm Sym}(3)
$ 
implies that the (reconstructed) strain tensor  $ \widetilde{\mathcal{E}}_{s} $ for the 3D-shell is symmetric. This is not surprising because in the general Cosserat model the strain  tensor  $\widetilde{\mathcal{E}}_\xi=\overline U _\xi-\id_3$ is not symmetric anyway, so the choice of an ansatz for the (reconstructed) deformation gradient  $\widetilde{F}_{e,s}$ which would lead to a symmetric strain tensor was not our purpose.

In the existence proof we have seen that the admissible set may be  empty for general boundary conditions. Therefore, the existence Theorem \ref{th1} is only conditional. With this motivation, we want to modify the resulting constrained Cosserat shell model such that the new constrained model allows for an unconditional existence proof. In order to do so, we relax the symmetry conditions and further on impose only the first symmetry condition
\begin{align}
\mathcal{E}_{m,s} \in {\rm Sym}(3).
\end{align}
In addition, looking back at \eqref{conds1}, \eqref{conds2} and \eqref{minvarmc}, we assume that the energy density depends only on the (symmetric parts)
\begin{align}
\mathcal{E}_{m,s}, \qquad \sym (\mathcal{E}_{m,s} \, {\rm B}_{y_0} +  {\rm C}_{y_0} \mathcal{K}_{e,s}) \qquad \text{and}\qquad  
\sym\big[(  \mathcal{E}_{m,s} \, {\rm B}_{y_0} +  {\rm C}_{y_0} \mathcal{K}_{e,s} )   {\rm B}_{y_0}\big],
\end{align}
instead of
\begin{align}
\mathcal{E}_{m,s}, \qquad  (\mathcal{E}_{m,s} \, {\rm B}_{y_0} +  {\rm C}_{y_0} \mathcal{K}_{e,s}) \qquad \text{and}\qquad  
(  \mathcal{E}_{m,s} \, {\rm B}_{y_0} +  {\rm C}_{y_0} \mathcal{K}_{e,s} )   {\rm B}_{y_0}.
\end{align}
For the  (reconstructed) strain tensor  $ \widetilde{\mathcal{E}}_{s} $ for the 3D shell we  would then propose the modified ansatz\footnote{It does add nothing to the shell model itself.}
\begin{align}\label{mextE}
\widetilde{\mathcal{E}}_{s}  \; =\; &\quad \,1\,\,\,
\Big[  \mathcal{E}_{m,s} - \frac{\lambda}{\lambda+2\mu}\,\tr( \mathcal{E}_{m,s} )\; (0|0|n_0)\, (0|0|n_0)^T  \Big]
\notag\vspace{2.5mm}\\
& 
+x_3\Big[ \sym(\mathcal{E}_{m,s} \, {\rm B}_{y_0} +  {\rm C}_{y_0} \mathcal{K}_{e,s}) -
\frac{\lambda}{(\lambda+2\mu)}\, {\rm tr}  (\mathcal{E}_{m,s} {\rm B}_{y_0} + {\rm C}_{y_0}\mathcal{K}_{e,s} )\; (0|0|n_0)\,  (0|0|n_0)^T  \Big]
\vspace{2.5mm}\\
& 
+x_3^2\,\,\sym\Big[\,(\mathcal{E}_{m,s} \, {\rm B}_{y_0} +  {\rm C}_{y_0} \mathcal{K}_{e,s}) {\rm B}_{y_0} \Big]\;+\; O(x_3^3),\notag
\end{align}
which is symmetric once $\mathcal{E}_{m,s}$ is symmetric.

Therefore, taking into account the modified constitutive restrictions imposed by the limit  $\mu_{\rm c}\to \infty$,
the  variational problem for the constrained Cosserat $O(h^5)$-shell model  is now  to find a deformation of the midsurface
$m:\omega\subset\mathbb{R}^2\to\mathbb{R}^3$  minimizing on $\omega$:
\begingroup
\allowdisplaybreaks
\begin{align}\label{modminvarmc}
\hspace*{-2.5cm}I= \int_{\omega}   \,\, \Big[ & \,\Big(h+{\rm K}\,\dfrac{h^3}{12}\Big)\,
W_{{\rm shell}}^{\infty}\big(    \sqrt{[\nabla\Theta ]^{-T}\,\widehat{\rm I}_m\,\id_2^{\flat }\,[\nabla\Theta ]^{-1}}-
\sqrt{[\nabla\Theta ]^{-T}\,\widehat{\rm I}_{y_0}\,\id_2^{\flat }\,[\nabla\Theta ]^{-1}} \big),\vspace{2.5mm}\notag\\ \hspace*{-2.5cm}   
& +   \Big(\dfrac{h^3}{12}\,-{\rm K}\,\dfrac{h^5}{80}\Big)\,
W_{{\rm shell}}^{\infty}  \Big(\sym \big[   \sqrt{[\nabla\Theta ]^{-T}\,\widehat{\rm I}_m\,[\nabla\Theta ]^{-1}}\,  [\nabla\Theta ]\Big({\rm L}_{y_0}^\flat - {\rm L}_m^\flat\Big)[\nabla\Theta ]^{-1}\big]\Big) \notag \\\hspace*{-2.5cm}&
-\dfrac{h^3}{3} \mathrm{ H}\,\mathcal{W}_{{\rm shell}}^{\infty}  \Big(  \sqrt{[\nabla\Theta ]^{-T}\,\widehat{\rm I}_m\,\id_2^{\flat }\,[\nabla\Theta ]^{-1}}-
\sqrt{[\nabla\Theta ]^{-T}\,\widehat{\rm I}_{y_0}\,\id_2^{\flat }\,[\nabla\Theta ]^{-1}} ,\notag\\\hspace*{-2.5cm}
&\qquad \qquad \qquad\quad\   \sym \big[   \sqrt{[\nabla\Theta ]^{-T}\,\widehat{\rm I}_m\,[\nabla\Theta ]^{-1}}\,  [\nabla\Theta ]\Big({\rm L}_{y_0}^\flat - {\rm L}_m^\flat\Big)[\nabla\Theta ]^{-1}\big]\Big)\\\hspace*{-2.5cm}&+
\dfrac{h^3}{6}\, \mathcal{W}_{{\rm shell}}^{\infty}  \Big(  \sqrt{[\nabla\Theta ]^{-T}\,\widehat{\rm I}_m\,\id_2^{\flat }\,[\nabla\Theta ]^{-1}}-
\sqrt{[\nabla\Theta ]^{-T}\,\widehat{\rm I}_{y_0}\,\id_2^{\flat }\,[\nabla\Theta ]^{-1}} ,
\notag\\\hspace*{-2.5cm}
&\qquad \qquad \qquad\ \ \sym\big[\sqrt{[\nabla\Theta ]^{-T}\,\widehat{\rm I}_m\,[\nabla\Theta ]^{-1}}\,  [\nabla\Theta ]\Big({\rm L}_{y_0}^\flat - {\rm L}_m^\flat\Big){\rm L}_{y_0}^\flat[\nabla\Theta ]^{-1}\big]\Big)\vspace{2.5mm}\notag\\
&+ \,\dfrac{h^5}{80}\,\,
W_{\mathrm{mp}}^{\infty} \Big(\sym \big[\sqrt{[\nabla\Theta ]^{-T}\,\widehat{\rm I}_m\,[\nabla\Theta ]^{-1}}\,  [\nabla\Theta ]\Big({\rm L}_{y_0}^\flat - {\rm L}_m^\flat\Big){\rm L}_{y_0}^\flat[\nabla\Theta ]^{-1}\,\big]\Big)\notag\vspace{2.5mm}\notag\\
&+ \Big(h-{\rm K}\,\dfrac{h^3}{12}\Big)\,
W_{\mathrm{curv}}\big(  \mathcal{K}_{ \infty } \big)    +  \Big(\dfrac{h^3}{12}\,-{\rm K}\,\dfrac{h^5}{80}\Big)\,
W_{\mathrm{curv}}\big(  \mathcal{K}_{ \infty }   {\rm B}_{y_0} \,  \big)  + \,\dfrac{h^5}{80}\,\,
W_{\mathrm{curv}}\big(  \mathcal{K}_{ \infty }   {\rm B}_{y_0}^2  \big)
\Big] \,{\rm det}\nabla \Theta        \,\mathrm d a\notag\\&\quad - \overline{\Pi}(m,{Q}_{ \infty }),\notag
\end{align}
\endgroup
where 
\begingroup
\allowdisplaybreaks
\begin{align} 
\mathcal{K}_{ \infty } & = \, \Big(\mathrm{axl}({Q}_{ \infty }^T\,\partial_{x_1} {Q}_{ \infty })\,|\, \mathrm{axl}({Q}_{ \infty }^T\,\partial_{x_2} {Q}_{ \infty })\,|0\Big)[\nabla\Theta ]^{-1}, \vspace{2.5mm}\notag\\
{Q}_{ \infty }&={\rm polar}\big((\nabla  m|n) [\nabla\Theta ]^{-1}\big)=(\nabla m|n)[\nabla\Theta ]^{-1}\,\sqrt{[\nabla\Theta ]^{-T}\,\widehat {\rm I}_{m}\,[\nabla\Theta ]^{-1}},\notag\\
W_{{\rm shell}}^{\infty}(  S)  &=   \mu\,\lVert\,   S\rVert^2  +\,\dfrac{\lambda\,\mu}{\lambda+2\mu}\,\big[ \mathrm{tr}   \, (S)\big]^2,\qquad 
\mathcal{W}_{{\rm shell}}^{\infty}(  S,  T) =   \mu\,\bigl\langle  S,   T\bigr\rangle+\,\dfrac{\lambda\,\mu}{\lambda+2\mu}\,\mathrm{tr}  (S)\,\mathrm{tr}  (T), \vspace{2.5mm}\\
W_{\mathrm{mp}}^{\infty}(  S)&= \mu\,\lVert  S\rVert^2+\,\dfrac{\lambda}{2}\,\big[ \mathrm{tr}\,   (S)\big]^2 \qquad \quad\forall \ S,T\in{\rm Sym}(3), \notag\vspace{2.5mm}\\
W_{\mathrm{curv}}(  X )&=\mu\,L_c^2\left( b_1\,\lVert \dev\,\textrm{sym}\, X\rVert^2+b_2\,\lVert\text{skew} \,X\rVert^2+b_3\,
[\tr(X)]^2\right) \quad\qquad \forall\  X\in\mathbb{R}^{3\times 3}.\notag
\end{align}
\endgroup

The  set $\mathcal{A}^{\rm mod}$ of admissible functions is accordingly  defined by
\begin{align}
\mathcal{A}^{\rm mod}=\Bigg\{(m,&{Q}_{ \infty })\in{\rm H}^1(\omega, \mathbb{R}^3)\times{\rm H}^1(\omega, {\rm SO}(3))\ \bigg| \  m\big|_{ \gamma_d}=m^*, \qquad{Q}_{ \infty }Q_0.e_3\big|_{ \gamma_d}=\,\dd\frac{\partial_{x_1}m^*\times \partial_{x_2}m^*}{\lVert \partial_{x_1}m^*\times \partial_{x_2}m^*\rVert }\notag\\ 
&\ U\coloneqq {Q}_{ \infty }^T (\nabla  m|{Q}_{ \infty }Q_0.e_3)[\nabla\Theta ]^{-1} \in {\rm L^2}(\omega, {\rm Sym}^+(3))
\Bigg\},\notag
\end{align}
which incorporates a weak reformulation of the imposed symmetry constraint $\mathcal{E}_{m,s} \in {\rm Sym}(3)$. 

\begin{theorem}\label{th1mod}{\rm [Unconditional existence result for the modified theory including terms up to order $O(h^5)$]}\\
	Assume that  the external loads satisfy the conditions
${f}\in\textrm{\rm L}^2(\omega,\mathbb{R}^3)$, $t\in \textrm{\rm L}^2(\gamma_t,\mathbb{R}^3)$,
and the boundary data satisfy the conditions $ {m}^*\in{\rm H}^1(\omega ,\mathbb{R}^3)$ and ${\rm polar}(\nabla {m}^*\,|\,n^*) \in{\rm H}^1(\omega, {\rm SO}(3))$.
Assume that the following conditions concerning the initial configuration are satisfied\footnote{For shells with little initial regularity. Classical shell models typically need to assume that $y_0\in {\rm C}^3(\overline{\omega},\mathbb{R}^3)$.}: $\,y_0:\omega\subset\mathbb{R}^2\rightarrow\mathbb{R}^3$ is a continuous injective mapping and
	\begin{align}\label{26}
	{y}_0&\in{\rm H}^1(\omega ,\mathbb{R}^3),\quad   {Q}_{0}={\rm polar}(\nabla\Theta)\in{\rm H}^1(\omega, {\rm SO}(3)),\quad
	\nabla\Theta \in {\rm L}^\infty(\omega ,\mathbb{R}^{3\times 3}),\quad {\rm det}\nabla \Theta  \geq\, a_0 >0\,,
	\end{align}
	where $a_0$ is a positive constant.
	Then, for sufficiently small values of the thickness $h$ such that  \begin{align}\label{rcondh5c2}
	h\max\{\sup_{x\in\omega}|\kappa_1|, \sup_{x\in\omega}|\kappa_2|\}<\alpha \qquad \text{with}\qquad  \alpha<\sqrt{\frac{2}{3}(29-\sqrt{761})}\simeq 0.97083
	\end{align} 
	and for constitutive coefficients  such that $\mu>0$, $2\,\lambda+\mu> 0$, $b_1>0$, $b_2>0$, $b_3>0$ and $L_{\rm c}>0$, the minimization problem \eqref{modminvarmc} admits at least one minimizing solution $m$ such that 
	$(m,{Q}_{ \infty })\in  \mathcal{A}^{\rm mod}$.
\end{theorem}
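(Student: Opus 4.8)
The plan is to run the direct method of the calculus of variations, adapting almost verbatim the existence proof of the unconstrained $O(h^5)$ Cosserat shell model from \cite{GhibaNeffPartII} and the proof of the conditional Theorem \ref{th1}. The key structural difference — and the source of the word \emph{unconditional} — is that the admissible set $\mathcal{A}^{\rm mod}$ retains only the single symmetry constraint $\mathcal{E}_{m,s}\in{\rm Sym}(3)$, equivalently $Q_{\infty}={\rm polar}\big((\nabla m|n)[\nabla\Theta]^{-1}\big)$, and this constraint is always realizable for admissible boundary data: since $Q_0={\rm polar}(\nabla\Theta)\in{\rm H}^1(\omega,{\rm SO}(3))\cap{\rm L}^\infty$ and ${\rm polar}(\nabla m^*|n^*)\in{\rm H}^1(\omega,{\rm SO}(3))$ by hypothesis, the pair $\big(m^*,\,{\rm polar}(\nabla m^*|n^*)\,Q_0^{\,T}\big)$ belongs to $\mathcal{A}^{\rm mod}$ (the product of two ${\rm H}^1\cap{\rm L}^\infty$ rotation fields is again in ${\rm H}^1(\omega,{\rm SO}(3))$, its third column equals $n^*$, and the associated stretch is symmetric positive definite). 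Thus $\mathcal{A}^{\rm mod}\neq\emptyset$, in contrast with the set $\mathcal{A}$ of Theorem \ref{th1}, where the two further constraints $\mathfrak{K}_{1},\mathfrak{K}_{2}\in{\rm Sym}(3)$ need not be compatible with prescribed Dirichlet data.

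\textbf{Step 1 (coercivity and uniform convexity).} First I would prove that, for thickness $h$ satisfying \eqref{rcondh5c2} and for $\mu>0$, $2\lambda+\mu>0$, $b_1,b_2,b_3>0$, $L_{\rm c}>0$, the stored energy density $W^{\rm mod}$ of problem \eqref{modminvarmc} satisfies
\[
W^{\rm mod}(\mathcal{E}_{\infty},\mathcal{K}_{\infty})\ \geq\ a_1^{+}\big(\lVert\mathcal{E}_{\infty}\rVert^2+\lVert\mathcal{K}_{\infty}\rVert^2\big)\qquad\text{for all}\quad(m,Q_{\infty})\in\mathcal{A}^{\rm mod},
\]
with $a_1^{+}>0$ depending only on the constitutive constants (not on $\mu_{\rm c}$). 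The computation copies that of Lemma \ref{propcoerh5} and of the coercivity step in \cite{GhibaNeffPartII}: one collects the coefficients of the $h$-, $h^3$- and $h^5$-terms together with their ${\rm H}$- and ${\rm K}$-corrections and absorbs the negative mixed contributions $-\tfrac{h^3}{3}{\rm H}\,\mathcal{W}_{\rm shell}^{\infty}$, $+\tfrac{h^3}{6}\mathcal{W}_{\rm shell}^{\infty}$ by Young's inequality, the bound \eqref{rcondh5c2} ensuring that a strictly positive combination of $W_{\rm shell}^{\infty}(\mathcal{E}_{\infty})$, $W_{\rm shell}^{\infty}\big(\sym(\,\cdots\,)\big)$ and $W_{\rm curv}(\mathcal{K}_{\infty})$ survives. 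Replacing the full (non-symmetric) arguments by their symmetric parts, as the modified model prescribes, leaves these estimates unchanged, because the relevant polarization and Cauchy--Schwarz inequalities for $W_{\rm shell}^{\infty}$ and $\mathcal{W}_{\rm shell}^{\infty}$ are the same and $\lVert\sym X\rVert\leq\lVert X\rVert$. On $\mathcal{A}^{\rm mod}$ one has $\mathcal{E}_{\infty}\in{\rm Sym}(3)$, hence $W_{\rm shell}^{\infty}(\mathcal{E}_{\infty})\geq c_1^{+}\lVert\mathcal{E}_{\infty}\rVert^2$, while $L_{\rm c}>0$ gives $W_{\rm curv}(\mathcal{K}_{\infty})\geq c_2^{+}\lVert\mathcal{K}_{\infty}\rVert^2$, the remaining symmetrized terms being nonnegative; this yields the displayed estimate. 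Differentiating the same bound twice gives uniform convexity of $W^{\rm mod}$ in $(\mathcal{E}_{\infty},\mathcal{K}_{\infty})$.

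\textbf{Step 2 (minimization).} From here the argument is that of \cite{GhibaNeffPartII} and of Theorem \ref{th1}. The load hypotheses give $|\overline{\Pi}(m,Q_{\infty})|\leq C(\lVert m\rVert_{{\rm H}^1(\omega)}+1)$, so Step 1 together with the hypotheses on $y_0,Q_0,\nabla\Theta$ and the Poincar\'e inequality yields $I(m,Q_{\infty})\geq c_1\lVert m-m^*\rVert^2_{{\rm H}^1(\omega)}+c_2$; thus $I$ is bounded below on $\mathcal{A}^{\rm mod}$ and, since $I(m^*,Q_{\infty}^*)<\infty$, an infimizing sequence $\{(m_k,\overline{Q}_k)\}\subset\mathcal{A}^{\rm mod}$ with $I(m_k,\overline{Q}_k)\leq I(m^*,Q_{\infty}^*)$ exists. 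Then $\{m_k\}$ is bounded in ${\rm H}^1(\omega,\mathbb{R}^3)$ and, using pointwise boundedness of $\overline{Q}_k$ in ${\rm SO}(3)$ together with the ${\rm L}^2$-control of $\mathcal{K}_{\infty}^{(k)}$, $\{\overline{Q}_k\}$ is bounded in ${\rm H}^1(\omega,{\rm SO}(3))$; extract $m_k\rightharpoonup\widehat m$ in ${\rm H}^1$, $m_k\to\widehat m$ in ${\rm L}^2$, $\overline{Q}_k\rightharpoonup\widehat{\overline{Q}}_{\infty}$ in ${\rm H}^1$, $\overline{Q}_k\to\widehat{\overline{Q}}_{\infty}$ in ${\rm L}^2$. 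The weak--strong convergence structure of the strain measures (as in \cite{GhibaNeffPartII}) gives $\mathcal{E}_{\infty}^{(k)}\rightharpoonup\widehat{\mathcal{E}}_{\infty}$ and $\mathcal{K}_{\infty}^{(k)}\rightharpoonup\widehat{\mathcal{K}}_{\infty}$ in ${\rm L}^2(\omega,\mathbb{R}^{3\times3})$; closedness of ${\rm Sym}(3)$ under weak convergence gives $\widehat{\mathcal{E}}_{\infty}\in{\rm Sym}(3)$, and passing the boundary conditions to the trace limit shows $(\widehat m,\widehat{\overline{Q}}_{\infty})\in\mathcal{A}^{\rm mod}$ with $\widehat{\overline{Q}}_{\infty}={\rm polar}\big((\nabla\widehat m|\widehat n)[\nabla\Theta]^{-1}\big)$. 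Finally, uniform convexity of $W^{\rm mod}$ in the strain and curvature measures gives weak lower semicontinuity of the internal energy, while continuity of the load functionals gives $\overline{\Pi}(m_k,\overline{Q}_k)\to\overline{\Pi}(\widehat m,\widehat{\overline{Q}}_{\infty})$; hence $I(\widehat m,\widehat{\overline{Q}}_{\infty})\leq\liminf_k I(m_k,\overline{Q}_k)=\inf_{\mathcal{A}^{\rm mod}}I$, so $(\widehat m,\widehat{\overline{Q}}_{\infty})$ is a minimizer in $\mathcal{A}^{\rm mod}$.

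I expect the only genuinely delicate points to be the sign book-keeping in Step 1 — checking that the symmetrization built into the modified model does not spoil the strictly positive combination produced by the thickness bound \eqref{rcondh5c2} — and, in Step 2, the weak continuity of $\mathcal{E}_{\infty}^{(k)}$, $\mathcal{K}_{\infty}^{(k)}$ together with the closedness of $\mathcal{A}^{\rm mod}$ under weak ${\rm H}^1$-convergence; both are handled exactly as in \cite{GhibaNeffPartII}. The conceptual gain over Theorem \ref{th1} lies entirely in the first paragraph: discarding the constraints $\mathfrak{K}_{1},\mathfrak{K}_{2}\in{\rm Sym}(3)$ makes $\mathcal{A}^{\rm mod}$ non-empty for arbitrary admissible boundary data, removing the only hypothesis that made Theorem \ref{th1} conditional.
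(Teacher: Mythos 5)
Your proposal follows exactly the paper's route: existence for the modified constrained model is obtained by the direct method, reusing the coercivity estimate behind Lemma~\ref{propcoerh5} and the compactness/lower-semicontinuity argument of Theorem~\ref{th1} (itself inherited from \cite{GhibaNeffPartII}), with the observation that dropping the constraints $\mathfrak{K}_{1},\mathfrak{K}_{2}\in{\rm Sym}(3)$ is what makes $\mathcal{A}^{\rm mod}$ non-empty and thereby removes the conditionality. That is precisely the structure of the paper's short proof, which simply points back to Theorem~\ref{th1} and supplies the non-emptiness remark.

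There is, however, a genuine error in the explicit witness you offer for non-emptiness. You claim $(m^*,\,{\rm polar}(\nabla m^*|n^*)\,Q_0^T)\in\mathcal{A}^{\rm mod}$, asserting that the associated stretch is symmetric positive definite. But with $Q_{\infty}={\rm polar}(\nabla m^*|n^*)\,Q_0^T$ one computes
\[
U \;=\; Q_{\infty}^T(\nabla m^*|n^*)[\nabla\Theta]^{-1} \;=\; Q_0\,\big[{\rm polar}(\nabla m^*|n^*)\big]^T(\nabla m^*|n^*)\,U_0^{-1}Q_0^T \;=\; Q_0\,U^*\,U_0^{-1}\,Q_0^T,
\]
where $U^*=\sqrt{(\nabla m^*|n^*)^T(\nabla m^*|n^*)}$ and $U_0=\sqrt{[\nabla\Theta]^T\nabla\Theta}$. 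This is symmetric if and only if $U^*$ commutes with $U_0$, which fails in general. The slip comes from implicitly using ${\rm polar}(AB)={\rm polar}(A)\,{\rm polar}(B)$, which only holds when the right factor is orthogonal, and $[\nabla\Theta]^{-1}=U_0^{-1}Q_0^T$ is not. The correct witness is the paper's choice $Q_{\infty}={\rm polar}\big((\nabla m^*|n^*)[\nabla\Theta]^{-1}\big)$: then $U$ is by definition the symmetric positive-definite factor in the polar decomposition of $(\nabla m^*|n^*)[\nabla\Theta]^{-1}$, and the third-column boundary condition $Q_{\infty}Q_0 e_3=n^*$ still holds because $U_0 e_3=e_3$ (so $(\nabla m^*|n^*)U_0^{-1}$ has third column $n^*$ and ${\rm polar}$ preserves it). With this replacement, your Steps~1 and~2 go through exactly as written.
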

\begin{proof}
	The proof is similar to the proof of Theorem \ref{th1}, excluding the discussion of the other two symmetry constraints appearing in the definition of the admissible set in Theorem \ref{th1}. The essential difference is that the admissible set $\mathcal{A}^{\rm mod}$ is not empty since, e.g., for $m\in {\rm H}^2(\omega,\mathbb{R}^3)$ and ${Q}_{ \infty }={\rm polar} [(\nabla  m|n)[\nabla\Theta ]^{-1}]$ we have $(m,{Q}_{ \infty })\in \mathcal{A}^{\rm mod}$.
\end{proof}

\subsection{A modified $O(h^3)$-constrained Cosserat shell model. Unconditional existence}

Another particularity of the new modified constrained Cosserat shell model is that the admissible set is the same  for both the modified constrained theory including terms up to order $O(h^5)$ and  the modified constrained theory including terms up to order $O(h^3)$.

Indeed, by considering the new
 variational problem for the constrained Cosserat $O(h^3)$-shell model, i.e.,    to find a deformation of the midsurface
$m:\omega\subset\mathbb{R}^2\to\mathbb{R}^3$  minimizing on $\omega$: 
\begingroup
\allowdisplaybreaks
\begin{align}\label{modminvarmch3}
\hspace*{-2.5cm}I= \int_{\omega}   \,\, \Big[ & \,\Big(h+{\rm K}\,\dfrac{h^3}{12}\Big)\,
W_{{\rm shell}}^{\infty}\big(    \sqrt{[\nabla\Theta ]^{-T}\,\widehat{\rm I}_m\,\id_2^{\flat }\,[\nabla\Theta ]^{-1}}-
\sqrt{[\nabla\Theta ]^{-T}\,\widehat{\rm I}_{y_0}\,\id_2^{\flat }\,[\nabla\Theta ]^{-1}} \big),\vspace{2.5mm}\notag\\ \hspace*{-2.5cm}   
& +   \Big(\dfrac{h^3}{12}\,-{\rm K}\,\dfrac{h^5}{80}\Big)\,
W_{{\rm shell}}^{\infty}  \Big(\sym \big[   \sqrt{[\nabla\Theta ]^{-T}\,\widehat{\rm I}_m\,[\nabla\Theta ]^{-1}}\,  [\nabla\Theta ]\Big({\rm L}_{y_0}^\flat - {\rm L}_m^\flat\Big)[\nabla\Theta ]^{-1}\big]\Big) \notag \\\hspace*{-2.5cm}&
-\dfrac{h^3}{3} \mathrm{ H}\,\mathcal{W}_{{\rm shell}}^{\infty}  \Big(  \sqrt{[\nabla\Theta ]^{-T}\,\widehat{\rm I}_m\,\id_2^{\flat }\,[\nabla\Theta ]^{-1}}-
\sqrt{[\nabla\Theta ]^{-T}\,\widehat{\rm I}_{y_0}\,\id_2^{\flat }\,[\nabla\Theta ]^{-1}} ,\notag\\\hspace*{-2.5cm}
&\qquad \qquad \qquad\quad\   \sym \big[   \sqrt{[\nabla\Theta ]^{-T}\,\widehat{\rm I}_m\,[\nabla\Theta ]^{-1}}\,  [\nabla\Theta ]\Big({\rm L}_{y_0}^\flat - {\rm L}_m^\flat\Big)[\nabla\Theta ]^{-1}\big]\Big)\\\hspace*{-2.5cm}&+
\dfrac{h^3}{6}\, \mathcal{W}_{{\rm shell}}^{\infty}  \Big(  \sqrt{[\nabla\Theta ]^{-T}\,\widehat{\rm I}_m\,\id_2^{\flat }\,[\nabla\Theta ]^{-1}}-
\sqrt{[\nabla\Theta ]^{-T}\,\widehat{\rm I}_{y_0}\,\id_2^{\flat }\,[\nabla\Theta ]^{-1}} ,
\notag\\\hspace*{-2.5cm}
&\qquad \qquad \qquad\ \ \sym\big[\sqrt{[\nabla\Theta ]^{-T}\,\widehat{\rm I}_m\,[\nabla\Theta ]^{-1}}\,  [\nabla\Theta ]\Big({\rm L}_{y_0}^\flat - {\rm L}_m^\flat\Big){\rm L}_{y_0}^\flat[\nabla\Theta ]^{-1}\big]\Big)\vspace{2.5mm}\notag\\
&+ \,\dfrac{h^5}{80}\,\,
W_{\mathrm{mp}}^{\infty} \Big(\sym \big[\sqrt{[\nabla\Theta ]^{-T}\,\widehat{\rm I}_m\,[\nabla\Theta ]^{-1}}\,  [\nabla\Theta ]\Big({\rm L}_{y_0}^\flat - {\rm L}_m^\flat\Big){\rm L}_{y_0}^\flat[\nabla\Theta ]^{-1}\,\big]\Big)\notag\vspace{2.5mm}\notag\\
&+ \Big(h-{\rm K}\,\dfrac{h^3}{12}\Big)\,
W_{\mathrm{curv}}\big(  \mathcal{K}_{ \infty } \big)    +  \Big(\dfrac{h^3}{12}\,-{\rm K}\,\dfrac{h^5}{80}\Big)\,
W_{\mathrm{curv}}\big(  \mathcal{K}_{ \infty }   {\rm B}_{y_0} \,  \big)  + \,\dfrac{h^5}{80}\,\,
W_{\mathrm{curv}}\big(  \mathcal{K}_{ \infty }   {\rm B}_{y_0}^2  \big)
\Big] \,{\rm det}\nabla \Theta        \,\mathrm d a\notag\\&\quad - \overline{\Pi}(m,{Q}_{ \infty }),\notag
\end{align}
\endgroup
where 
\begin{align} 
\mathcal{K}_{ \infty } & = \, \Big(\mathrm{axl}({Q}_{ \infty }^T\,\partial_{x_1} {Q}_{ \infty })\,|\, \mathrm{axl}({Q}_{ \infty }^T\,\partial_{x_2} {Q}_{ \infty })\,|0\Big)[\nabla\Theta ]^{-1}, \vspace{2.5mm}\notag\\
{Q}_{ \infty }&={\rm polar}\big((\nabla  m|n) [\nabla\Theta ]^{-1}\big)=(\nabla m|n)[\nabla\Theta ]^{-1}\,\sqrt{[\nabla\Theta ]^{-T}\,\widehat {\rm I}_{m}\,[\nabla\Theta ]^{-1}},\notag\\
W_{{\rm shell}}^{\infty}(  S)  &=   \mu\,\lVert\,   S\rVert^2  +\,\dfrac{\lambda\,\mu}{\lambda+2\mu}\,\big[ \mathrm{tr}   \, (S)\big]^2,\qquad 
\mathcal{W}_{{\rm shell}}^{\infty}(  S,  T) =   \mu\,\bigl\langle  S,   T\bigr\rangle+\,\dfrac{\lambda\,\mu}{\lambda+2\mu}\,\mathrm{tr}  (S)\,\mathrm{tr}  (T), \notag\vspace{2.5mm}\\
W_{\mathrm{mp}}^{\infty}(  S)&= \mu\,\lVert  S\rVert^2+\,\dfrac{\lambda}{2}\,\big[ \mathrm{tr}\,   (S)\big]^2 \qquad \quad\forall \ S,T\in{\rm Sym}(3), \vspace{2.5mm}\\
W_{\mathrm{curv}}(  X )&=\mu\,L_c^2\left( b_1\,\lVert \dev\,\textrm{sym}\, X\rVert^2+b_2\,\lVert\text{skew} \,X\rVert^2+b_3\,
[\tr(X)]^2\right) \quad\qquad \forall\  X\in\mathbb{R}^{3\times 3},\notag
\end{align}
the following existence results holds:

\begin{theorem}\label{th2mod}{\rm [Unconditional existence result for the modified theory including terms up to order $O(h^3)$]}\\
	Assume that 	 the external loads satisfy the conditions
${f}\in\textrm{\rm L}^2(\omega,\mathbb{R}^3)$, $t\in \textrm{\rm L}^2(\gamma_t,\mathbb{R}^3)$,
the boundary data satisfy the conditions 
${m}^*\in{\rm H}^1(\omega ,\mathbb{R}^3)$  and ${\rm polar}(\nabla {m}^*\,|\,n^*)\in{\rm H}^1(\omega, {\rm SO}(3))$,
and that the following conditions concerning the initial configuration are fulfilled: $\,y_0:\omega\subset\mathbb{R}^2\rightarrow\mathbb{R}^3$ is a continuous injective mapping and
\begin{align}
{y}_0&\in{\rm H}^1(\omega ,\mathbb{R}^3),\quad   {Q}_{0}={\rm polar}(\nabla \Theta)\in{\rm H}^1(\omega, {\rm SO}(3)),\quad
\nabla\Theta \in {\rm L}^\infty(\omega ,\mathbb{R}^{3\times 3}),\quad {\rm det}\nabla \Theta  \geq\, a_0 >0\,,
\end{align}
where $a_0$ is a positive constant.
Assume that the constitutive coefficients are  such that $\mu>0$, $2\,\lambda+\mu> 0$, $b_1>0$, $b_2>0$, $b_3>0$ and $L_{\rm c}>0$.
Then, if the thickness $h$ satisfies at least one of the following conditions:
\begin{enumerate}
	\item[i)] $	h\max\{\sup_{x\in\omega}|\kappa_1|, \sup_{x\in\omega}|\kappa_2|\}<\alpha$ \quad {\bf and} \quad  $	h^2<\frac{(5-2\sqrt{6})(\alpha^2-12)^2}{4\, \alpha^2}\frac{ {c_2^+}}{C_1^+}$  \quad  \text{with}  \quad  $\quad 0<\alpha<2\sqrt{3}$;
	\item[ii)] $h\max\{\sup_{x\in\omega}|\kappa_1|, \sup_{x\in\omega}|\kappa_2|\}<\frac{1}{a}$  \quad {\bf and} \quad  $a>\max\Big\{1 + \frac{\sqrt{2}}{2},\frac{1+\sqrt{1+3\frac{C_1^+}{c_1^+}}}{2}\Big\}$,
\end{enumerate}
where  $c_2^+$  denotes the smallest eigenvalue  of
$
W_{\mathrm{curv}}(  S ),
$
and $c_1^+$ and $ C_1^+>0$ denote the smallest and the biggest eigenvalues of the quadratic form $W_{\mathrm{shell}}^{\infty}(  S)$,
the minimization problem corresponding to the energy density defined by \eqref{modminvarmch3}admits at least one minimizing solution $m$ such that the pair
$(m,{Q}_{ \infty })\in  \mathcal{A}^{\rm mod}$.
\end{theorem}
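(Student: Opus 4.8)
The plan is to run the direct method of the calculus of variations, combining the coercivity estimate of Proposition~\ref{coerh3} with the non-emptiness of $\mathcal{A}^{\rm mod}$ that already made Theorem~\ref{th1mod} unconditional, and with the compactness and lower-semicontinuity arguments used in the proofs of Theorems~\ref{th1} and \ref{th11}.

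First I would check coercivity on $\mathcal{A}^{\rm mod}$. Every $(m,{Q}_{ \infty })\in\mathcal{A}^{\rm mod}$ has $\mathcal{E}_{ \infty }\in{\rm Sym}(3)$, and the $O(h^3)$ modified energy density in \eqref{modminvarmch3} involves the mixed and bending arguments only through $\sym(\mathcal{E}_{ \infty }\,{\rm B}_{y_0}+{\rm C}_{y_0}\mathcal{K}_{ \infty })$ and $\sym[(\mathcal{E}_{ \infty }\,{\rm B}_{y_0}+{\rm C}_{y_0}\mathcal{K}_{ \infty }){\rm B}_{y_0}]$; using $\lVert X\rVert\geq\lVert\sym X\rVert$ for all $X\in\mathbb{R}^{3\times3}$ together with the positive definiteness inequalities \eqref{pozitivdef}, the estimates of the proof of Proposition~\ref{coerh3} apply verbatim. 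Hence, under either of the thickness conditions i) or ii), there is a constant $a_1^+>0$, independent of $\mu_{\rm c}$, with $W_{(h^3)}^{\infty}(\mathcal{E}_{ \infty },\mathcal{K}_{ \infty })\geq a_1^+\big(\lVert\mathcal{E}_{ \infty }\rVert^2+\lVert\mathcal{K}_{ \infty }\rVert^2\big)$ on $\mathcal{A}^{\rm mod}$.

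Next I would note that $\mathcal{A}^{\rm mod}\neq\emptyset$: for any $m\in{\rm H}^2(\omega,\mathbb{R}^3)$ extending $m^*$ on $\gamma_d$, the choice ${Q}_{ \infty }={\rm polar}[(\nabla m|n)[\nabla\Theta]^{-1}]$ yields a pair in $\mathcal{A}^{\rm mod}$, since then $\mathcal{E}_{ \infty }\in{\rm Sym}(3)$ automatically and ${Q}_{ \infty }\in{\rm H}^1(\omega,{\rm SO}(3))$ follows from $m\in{\rm H}^2$ and the assumptions on $\nabla\Theta$; this is exactly the mechanism of Theorem~\ref{th1mod}, and nothing about it changes in the $O(h^3)$ setting. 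Writing $\overline{R}_{ \infty }={Q}_{ \infty }Q_0$ and $\mathcal{E}_{ \infty }=Q_0(\overline{R}_{ \infty }^T\nabla m-Q_0^T\nabla y_0\,|\,0)[\nabla\Theta]^{-1}$, the lower bound \eqref{Elambda} for $\lVert\mathcal{E}_{ \infty }\rVert^2$, the loading estimate \eqref{27} and Poincaré's inequality give $I(m,{Q}_{ \infty })\geq c_1\lVert m-m^*\rVert_{{\rm H}^1(\omega)}^2+c_2$, so $I$ is bounded below on $\mathcal{A}^{\rm mod}$ and the infimum is finite. I would then take a minimizing sequence $\{(m_k,\overline{Q}_k)\}\subset\mathcal{A}^{\rm mod}$ with $I(m_k,\overline{Q}_k)\leq I(m^*,{Q}_{ \infty }^*)<\infty$; the lower bound makes $\{m_k\}$ bounded in ${\rm H}^1(\omega,\mathbb{R}^3)$, and the Cosserat curvature part of the energy (here $L_{\rm c}>0$ is essential) together with compactness of ${\rm SO}(3)$ makes $\{\overline{Q}_k\}$ bounded in ${\rm H}^1(\omega,\mathbb{R}^{3\times3})$. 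Passing to subsequences, $m_k\rightharpoonup\widehat m$ in ${\rm H}^1$ and strongly in ${\rm L}^2$, $\overline{Q}_k\rightharpoonup\widehat{\overline{Q}}_{ \infty }$ in ${\rm H}^1$ and strongly in ${\rm L}^2$, with $\widehat{\overline{Q}}_{ \infty }\in{\rm H}^1(\omega,{\rm SO}(3))$. As in the proof of Theorem~\ref{th1} I would verify $\mathcal{E}_{ \infty }^{(k)}\rightharpoonup\widehat{\mathcal{E}}_{ \infty }$ and $\mathcal{K}_{ \infty }^{(k)}\rightharpoonup\widehat{\mathcal{K}}_{ \infty }$ in ${\rm L}^2(\omega,\mathbb{R}^{3\times3})$, deduce from closedness of ${\rm Sym}(3)$ under weak convergence that $\widehat{\mathcal{E}}_{ \infty }\in{\rm Sym}(3)$ and that the Dirichlet data are preserved in the sense of traces, and obtain $\widehat{\overline{Q}}_{ \infty }={\rm polar}[(\nabla\widehat m|\widehat n)[\nabla\Theta]^{-1}]$ from $U_k\rightharpoonup\widehat{\mathcal{E}}_{ \infty }+\id_3\in{\rm L}^2(\omega,{\rm Sym}^+(3))$; hence $(\widehat m,\widehat{\overline{Q}}_{ \infty })\in\mathcal{A}^{\rm mod}$. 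Convexity of the quadratic modified energy density in $(\mathcal{E}_{ \infty },\mathcal{K}_{ \infty })$ yields weak lower semicontinuity of the stored energy, continuity of $\overline{\Pi}$ together with the strong ${\rm L}^2$-convergences gives $\overline{\Pi}(m_k,\overline{Q}_k)\to\overline{\Pi}(\widehat m,\widehat{\overline{Q}}_{ \infty })$, and therefore $I(\widehat m,\widehat{\overline{Q}}_{ \infty })\leq\liminf_k I(m_k,\overline{Q}_k)=\inf_{\mathcal{A}^{\rm mod}}I$, i.e.\ $(\widehat m,\widehat{\overline{Q}}_{ \infty })$ is a minimizer.

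The step I expect to be the main obstacle is the weak convergence of the nonlinear strain measures $\mathcal{E}_{ \infty }^{(k)}$ and $\mathcal{K}_{ \infty }^{(k)}$, which involve products such as $\overline{Q}_k^T\nabla m_k$ and $\overline{Q}_k^T\partial_{x_i}\overline{Q}_k$ of factors converging only weakly in ${\rm H}^1$; one passes to the limit by exploiting the compact embedding ${\rm H}^1(\omega)\hookrightarrow{\rm L}^2(\omega)$, so that the $\overline{Q}_k$-factors converge strongly in ${\rm L}^2$ precisely because the curvature energy furnishes a uniform ${\rm H}^1$-bound. This is exactly where the hypothesis $L_{\rm c}>0$ cannot be removed, in accordance with the fact, noted earlier, that these existence results fail in the limit $L_{\rm c}\to0$.
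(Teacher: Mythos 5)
The paper gives no explicit proof of Theorem~\ref{th2mod}; it is implicitly left to follow by combining the non-emptiness argument of Theorem~\ref{th1mod}, the $O(h^3)$ coercivity of Proposition~\ref{coerh3}, and the direct-method machinery of Theorems~\ref{th1} and~\ref{th11}, which is exactly the route your proposal takes. Your reconstruction is correct; the only slight imprecision is the claim that Proposition~\ref{coerh3}'s estimates apply \emph{verbatim}, since in the modified model the bending/mixed arguments are already symmetrized so one bounds $W_{\rm shell}^{\infty}(\operatorname{sym} Z)$ from above by $C_1^+\lVert Z\rVert^2$ via $\lVert\operatorname{sym}Z\rVert\le\lVert Z\rVert$ when absorbing the cross-terms by Young's inequality---a routine adaptation, not a literal repetition, and precisely the modification the paper itself highlights in the proof of Proposition~\ref{coerh3}.
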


\subsection{A modified constrained Cosserat plate model. Unconditional existence}

In the case of the Cosserat plate theory, the modified constrained model is   to find a deformation of the midsurface
$m:\omega\subset\mathbb{R}^2\to\mathbb{R}^3$  minimizing on $\omega$: 
\begin{align}\label{modminvarmch3plate}
I= \int_{\omega}   \,\, \Big[ & h\,
{W}_{{\rm shell}}^{\infty}\big(    \sqrt{\widehat{\rm I}_m^{\flat }}-
\id_2^{\flat } \big)+ \dfrac{h^3}{12}\,
{W}_{{\rm shell}}^{\infty}  \big( \boldsymbol{\sym}(\sqrt{\widehat{\rm I}_m^{-1}} {\rm II}_m^\flat)\big)  + h\,
W_{\mathrm{curv}}\big(  \mathcal{K}_{ \infty } \big) 
\Big] \,      \,\mathrm d a - \overline{\Pi}(m,{Q}_{ \infty }),
\end{align}
where 
\begin{align}
{Q}_{ \infty }&={\rm polar}(\nabla  m|n)=(\nabla m|n) \sqrt{\widehat{\rm I}_m^{-1}}, \\ 
\mathcal{K}_{ \infty } &=  \Big(\mathrm{axl}(\, \sqrt{\widehat{\rm I}_m^{-T}}(\nabla m|n)^T\,\partial_{x_1} \big((\nabla m|n) \sqrt{\widehat{\rm I}_m^{-1}}\big)\big) \,|\, \mathrm{axl}(\, \sqrt{\widehat{\rm I}_m^{-T}}(\nabla m|n)^T\,\partial_{x_2} \big((\nabla m|n) \sqrt{\widehat{\rm I}_m^{-1}}\big)\big) \,\big|0\Big)\,.\notag
\end{align}
The admissible set is now  
\begin{align}
\mathcal{A}_{\rm plate}^{\rm mod}=\Bigg\{(m,&{Q}_{ \infty })\in{\rm H}^1(\omega, \mathbb{R}^3)\times{\rm H}^1(\omega, {\rm SO}(3))\ \bigg| \  m\big|_{ \gamma_d}=m^*, \qquad {Q}_{ \infty }Q_0.e_3\big|_{ \gamma_d}=\,\dd\frac{\partial_{x_1}m^*\times \partial_{x_2}m^*}{\lVert \partial_{x_1}m^*\times \partial_{x_2}m^*\rVert }\notag\\ 
&\,\,U\coloneqq {Q}_{ \infty }^T (\nabla  m|n) \in {\rm L^2}(\omega, {\rm Sym}(3))
\Bigg\},\notag
\end{align}
 which is non-empty.

\begin{theorem}\label{th2modplate}{\rm [Unconditional existence result for the modified constrained Cosserat plate theory]}\\
	Assume that 	 the external loads satisfy the conditions
${f}\in\textrm{\rm L}^2(\omega,\mathbb{R}^3)$, $t\in \textrm{\rm L}^2(\gamma_t,\mathbb{R}^3)$,
	the boundary data satisfy the conditions 
${m}^*\in{\rm H}^1(\omega ,\mathbb{R}^3)$ and ${\rm polar}(\nabla {m}^*\,|\,n^*)\in{\rm H}^1(\omega, {\rm SO}(3))$.
Assume that the constitutive coefficients are  such that $\mu>0$, $2\,\lambda+\mu> 0$, $b_1>0$, $b_2>0$, $b_3>0$ and $L_{\rm c}>0$.
	Then, 
	the minimization problem corresponding to the energy density defined by \eqref{modminvarmch3plate} admits at least one minimizing solution $m$ such that the pair
	$(m,{Q}_{ \infty })\in  \mathcal{A}^{\rm mod}_{\rm plate}$.
\end{theorem}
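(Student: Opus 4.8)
The plan is to follow the direct method of the calculus of variations, closely mirroring the proof of Theorem \ref{th1mod} but benefiting from the fact that the only retained constraint is $\mathcal{E}_\infty\in{\rm Sym}(3)$, which is automatically encoded in the definition of $\mathcal{A}^{\rm mod}_{\rm plate}$ via $U\coloneqq{Q}_\infty^T(\nabla m|n)\in{\rm Sym}(3)$. First I would record that $\mathcal{A}^{\rm mod}_{\rm plate}$ is non-empty: for any $m\in{\rm H}^2(\omega,\mathbb{R}^3)$ with $m|_{\gamma_d}=m^*$ one sets ${Q}_\infty={\rm polar}(\nabla m|n)$, so that $U=\sqrt{\widehat{\rm I}_m^\flat}\in{\rm Sym}^+(3)$ and the boundary trace condition on ${Q}_\infty Q_0.e_3=n$ holds; regularity ${Q}_\infty\in{\rm H}^1(\omega,{\rm SO}(3))$ follows from $m\in{\rm H}^2$.

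Second, I would establish coercivity. Since $\nabla\Theta=\id_3$, $Q_0=\id_3$, ${\rm B}_{\rm id}=0_3$, ${\rm K}={\rm H}=0$, the energy collapses to the three terms in \eqref{modminvarmch3plate}, and no restriction on $h$ relative to curvature is needed. Using the positive-definiteness inequalities \eqref{pozitivdef} (equivalently $c_1^+\lVert S\rVert^2\le W^\infty_{\rm shell}(S)$ and $c_2^+\lVert X\rVert^2\le W_{\rm curv}(X)$), together with $\lVert\sym X\rVert\le\lVert X\rVert$, one gets
\begin{align*}
\int_\omega\Big[h\,W^\infty_{\rm shell}(\sqrt{\widehat{\rm I}_m^\flat}-\id_2^\flat)+h\,W_{\rm curv}(\mathcal{K}_\infty)\Big]\,da\;\geq\;c\,\big(\lVert\mathcal{E}_\infty\rVert^2_{{\rm L}^2}+\lVert\mathcal{K}_\infty\rVert^2_{{\rm L}^2}\big),
\end{align*}
after dropping the nonnegative $h^3$-term. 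Writing $\overline{R}_\infty={Q}_\infty Q_0={Q}_\infty$ and $\mathcal{E}_\infty=(\overline{R}_\infty^T\nabla m-\nabla y_0\,|\,0)$ exactly as in \eqref{Elambda}, one bounds $\lVert\mathcal{E}_\infty\rVert^2\geq\lambda_0^2\lVert\nabla m\rVert^2-c_1\lVert\nabla m\rVert+c_2$ and $\lVert\mathcal{K}_\infty\rVert^2\geq\lambda_0^2\lVert(\mathrm{axl}({Q}_\infty^T\partial_{x_1}{Q}_\infty)|\mathrm{axl}({Q}_\infty^T\partial_{x_2}{Q}_\infty))\rVert^2$; combined with the load estimate $|\overline\Pi(m,{Q}_\infty)|\le C(\lVert m\rVert_{{\rm H}^1}+1)$ coming from ${f}\in{\rm L}^2(\omega)$, $t\in{\rm L}^2(\gamma_t)$ and continuity of $\Lambda_\omega,\Lambda_{\gamma_t}$, and Poincaré on $\omega$ (using $m|_{\gamma_d}=m^*$), this yields $I(m,{Q}_\infty)\geq c_1\lVert m-m^*\rVert_{{\rm H}^1}^2+c_2$ on $\mathcal{A}^{\rm mod}_{\rm plate}$.

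Third comes the compactness and lower-semicontinuity step. Take an infimizing sequence $(m_k,\overline{Q}_k)\in\mathcal{A}^{\rm mod}_{\rm plate}$ with $I(m_k,\overline{Q}_k)\le I(m^*,{Q}_\infty^*)<\infty$; coercivity gives $m_k$ bounded in ${\rm H}^1(\omega,\mathbb{R}^3)$ and the curvature bound gives $\overline{Q}_k$ bounded in ${\rm H}^1(\omega,{\rm SO}(3))$. Extract $m_k\rightharpoonup\widehat m$ in ${\rm H}^1$, $m_k\to\widehat m$ in ${\rm L}^2$, $\overline{Q}_k\rightharpoonup\widehat{\overline{Q}}_\infty$ in ${\rm H}^1$, $\overline{Q}_k\to\widehat{\overline{Q}}_\infty$ in ${\rm L}^2$ (hence a.e.); ${\rm SO}(3)$ being compact, $\widehat{\overline{Q}}_\infty\in{\rm H}^1(\omega,{\rm SO}(3))$. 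As in the proof of Theorem \ref{th1}, one shows $\mathcal{E}_\infty^{(k)}\rightharpoonup\widehat{\mathcal{E}}_\infty$ and $\mathcal{K}_\infty^{(k)}\rightharpoonup\widehat{\mathcal{K}}_\infty$ in ${\rm L}^2(\omega,\mathbb{R}^{3\times3})$; since ${\rm Sym}(3)$ is weakly closed, $U_k=\mathcal{E}_\infty^{(k)}+\id_3\rightharpoonup\widehat{\mathcal{E}}_\infty+\id_3\in{\rm L}^2(\omega,{\rm Sym}(3))$, and the trace relations pass to the limit, giving $\widehat m|_{\gamma_d}=m^*$ and $\widehat{\overline{Q}}_\infty Q_0.e_3=\widehat n$, so $(\widehat m,\widehat{\overline{Q}}_\infty)\in\mathcal{A}^{\rm mod}_{\rm plate}$. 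The integrand $W^{\rm mod}_{(h^3),\rm plate}$ is convex in the pair $(\mathcal{E}_\infty,\mathcal{K}_\infty)$ — being a sum of the convex quadratic forms $W^\infty_{\rm shell}$, $W_{\rm curv}$ precomposed with linear maps (for the $h^3$-term, note $\sym$ and $\widehat{\rm I}_m^{-1}{\rm II}_m^\flat$ enter linearly once $U=\sqrt{\widehat{\rm I}_m^\flat}$ is viewed as the variable) — so weak lower semicontinuity of the ${\rm L}^2$-integral functional applies, $\int_\omega W(\widehat{\mathcal{E}}_\infty,\widehat{\mathcal{K}}_\infty)\,da\le\liminf_k\int_\omega W(\mathcal{E}_\infty^{(k)},\mathcal{K}_\infty^{(k)})\,da$. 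Combined with $\overline\Pi(m_k,\overline{Q}_k)\to\overline\Pi(\widehat m,\widehat{\overline{Q}}_\infty)$ from ${\rm L}^2$-convergence and continuity of the loads, we obtain $I(\widehat m,\widehat{\overline{Q}}_\infty)\le\liminf_k I(m_k,\overline{Q}_k)=\inf_{\mathcal{A}^{\rm mod}_{\rm plate}}I$, so the infimum is attained.

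The main obstacle is the same subtle point as in Theorem \ref{th1}: although ${Q}_\infty$ is algebraically determined by $m$ through ${Q}_\infty={\rm polar}(\nabla m|n)$, it is \emph{not} automatic that this composition lies in ${\rm H}^1(\omega,{\rm SO}(3))$ when only $m\in{\rm H}^1$. The resolution — which is why we keep ${Q}_\infty$ as a formally independent field in $\mathcal{A}^{\rm mod}_{\rm plate}$ and carry the Cosserat curvature energy $W_{\rm curv}(\mathcal{K}_\infty)$ with $L_{\rm c}>0$ — is that this extra curvature term supplies the ${\rm H}^1$-bound on $\overline{Q}_k$ needed for compactness, and along the minimizing sequence the relation ${Q}_\infty={\rm polar}(\nabla m|n)$ is enforced by the constraint $U\in{\rm Sym}(3)$ (as in \eqref{lcmc}) and survives the weak limit. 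Verifying that $\widehat{\overline{Q}}_\infty={\rm polar}(\nabla\widehat m|\widehat n)$ in the limit — i.e. that the weak limit of the polar decompositions is the polar decomposition of the weak limit, which here follows from a.e.\ convergence of $\overline{Q}_k$ plus the symmetry constraint — is the one step requiring care.
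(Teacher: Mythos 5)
Your proof is correct and follows the same direct-method approach that the paper uses for Theorems \ref{th1}, \ref{th1mod} and \ref{th2mod} (the paper gives no separate proof for Theorem \ref{th2modplate}, expecting exactly this adaptation, with the further simplification that the plate setting removes all curvature-dependent coefficients so no thickness condition is needed). One small imprecision: your parenthetical justifying convexity of the $h^3$-term speaks of linearity in $U=\sqrt{\widehat{\rm I}_m^\flat}$, but $\sqrt{\widehat{\rm I}_m^{-1}}\,{\rm II}_m^\flat$ is not linear in $U$; the correct observation, following \eqref{CKA1}, is that in the plate case $\sqrt{\widehat{\rm I}_m^{-1}}\,{\rm II}_m^\flat=-{\rm C}_{\rm id}\,\mathcal{K}_\infty$, so the $h^3$-term is $W^\infty_{\rm shell}$ precomposed with the linear map $\mathcal{K}_\infty\mapsto-\sym({\rm C}_{\rm id}\,\mathcal{K}_\infty)$, and it is this linearity in $\mathcal{K}_\infty$ that makes the full integrand jointly convex in $(\mathcal{E}_\infty,\mathcal{K}_\infty)$ and yields the weak lower semicontinuity you invoke.
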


\section{Strain measures in the Cosserat shell model}\setcounter{equation}{0}\label{SmC}

Using the  Remark \eqref{eq2}, we can express the strain tensors of the unconstrained Cosserat shell model using the (referential) fundamental forms $ {\rm I}_{y_0} $, $ {\rm II}_{y_0}$, $ {\rm III}_{y_0} $ and $ {\rm L}_{y_0} $  (instead of using the matrices $ {\rm A}_{y_0}$, $ {\rm B}_{y_0}$ and  $ {\rm C}_{y_0}$), i.e., 
\begingroup
\allowdisplaybreaks
\begin{align}\label{eq5}
\mathcal{E}_{m,s}=&\quad\ \, [\nabla\Theta ]^{-T}
\begin{footnotesize}\left( \begin{array}{c|c}
(\overline{Q}_{e,s} \nabla y_0)^{T} \nabla m- {\rm I}_{y_0} & 0 \vspace{4pt}\\
(\overline{Q}_{e,s}  n_0)^{T} \nabla m & 0
\end{array} \right)\end{footnotesize} [\nabla\Theta ]^{-1} =
[\nabla\Theta ]^{-T}
\begin{footnotesize}\left( \begin{array}{c|c}
\mathcal{G} & 0 \vspace{4pt}\\
\mathcal{T}  & 0
\end{array} \right)\end{footnotesize} [\nabla\Theta ]^{-1},
\vspace{6pt}\notag\\
\mathrm{C}_{y_0} \mathcal{K}_{e,s} = &\quad\ \, [\nabla\Theta ]^{-T}
\begin{footnotesize}\left( \begin{array}{c|c}
(\overline{Q}_{e,s} \nabla y_0)^{T} \nabla (\overline{Q}_{e,s} n_0)+ {\rm II}_{y_0} & 0 \vspace{4pt}\notag\\
0 & 0
\end{array} \right)\end{footnotesize} [\nabla\Theta ]^{-1}= -[\nabla\Theta ]^{-T}
\begin{footnotesize}\left( \begin{array}{c|c}
\mathcal{R} & 0 \vspace{4pt}\notag\\
0 & 0
\end{array} \right)\end{footnotesize} [\nabla\Theta ]^{-1},\notag\\
\mathcal{E}_{m,s} {\rm B}_{y_0} = &\quad\ \,  [\nabla\Theta ]^{-T}
\begin{footnotesize}\left( \begin{array}{c|c}
\mathcal{G} \,{\rm L}_{y_0} & 0 \vspace{4pt}\notag\\
\mathcal{T} \,{\rm L}_{y_0} & 0
\end{array} \right)\end{footnotesize} [\nabla\Theta ]^{-1} ,
\vspace{10pt}\\
\mathcal{E}_{m,s} {\rm B}^2_{y_0} = &\quad\ \, [\nabla\Theta ]^{-T}
\begin{footnotesize}\left( \begin{array}{c|c}
\mathcal{G}\, {\rm L}^2_{y_0} & 0 \vspace{4pt}\\
\mathcal{T} \,{\rm L}^2_{y_0} & 0
\end{array} \right)\end{footnotesize} [\nabla\Theta ]^{-1} ,
\vspace{10pt}\\
\mathrm{C}_{y_0} \mathcal{K}_{e,s} {\rm B}_{y_0} = & \,- [\nabla\Theta ]^{-T}
\begin{footnotesize}\left( 
\mathcal{R}\, {\rm L}_{y_0}\right)^\flat\end{footnotesize} [\nabla\Theta ]^{-1},
\vspace{10pt}\notag\\\notag
\mathrm{C}_{y_0} \mathcal{K}_{e,s} {\rm B}^2_{y_0} = &\, - [\nabla\Theta ]^{-T}
\begin{footnotesize}\left( 
\mathcal{R}\, {\rm L}^2_{y_0}\right)^\flat \end{footnotesize}[\nabla\Theta ]^{-1},
\vspace{10pt}\notag\\
\mathcal{E}_{m,s} {\rm B}_{y_0}  + \mathrm{C}_{y_0} \mathcal{K}_{e,s} 
= &\, -[\nabla\Theta ]^{-T}
\begin{footnotesize}\left( \begin{array}{c|c}
\mathcal{R}-\mathcal{G} \,{\rm L}_{y_0} & 0 \vspace{4pt}\\
\mathcal{T} \,{\rm L}_{y_0} & 0
\end{array} \right)\end{footnotesize} [\nabla\Theta ]^{-1}\notag,\\
\mathcal{E}_{m,s} {\rm B}_{y_0}^2  + \mathrm{C}_{y_0} \mathcal{K}_{e,s} {\rm B}_{y_0}
= &\, -[\nabla\Theta ]^{-T}
\begin{footnotesize}\left( \begin{array}{c|c}
(\mathcal{R} -\mathcal{G} \,{\rm L}_{y_0})\,{\rm L}_{y_0}& 0 \vspace{4pt}\\
\mathcal{T} \,{\rm L}_{y_0}^2 & 0
\end{array} \right)\end{footnotesize} [\nabla\Theta ]^{-1}\notag
,
\end{align}
\endgroup
where
\begin{align}\label{eq4}
\mathcal{G} \coloneqq &\, (\overline{Q}_{e,s} \nabla y_0)^{T} \nabla m- {\rm I}_{y_0}\not\in {\rm Sym}(2)\qquad\qquad\qquad\qquad\quad\ \ \ \,\  \,\textrm{\it the non-symmetric change of metric tensor},\notag
\\
\mathcal{R} \coloneqq & \, -(\overline{Q}_{e,s} \nabla y_0)^{T} \nabla (\overline{Q}_{e,s} n_0)- {\rm II}_{y_0}\not\in {\rm Sym}(2)
\quad  \qquad\qquad \ \ \, \,\,\textrm{\it the non-symmetric bending  strain tensor},
\\
\mathcal{T}\coloneqq & \, (\overline{Q}_{e,s}  n_0)^{T} \nabla m= \, \left(\bigl\langle\overline{Q}_{e,s}  n_0, \partial_{x_1} m\bigr\rangle,\bigl\langle\overline{Q}_{e,s}  n_0, \partial_{x_2} m\bigr\rangle\right)\qquad  \textrm{\it the transverse shear deformation (row) vector}.\notag
\end{align}
The definition of $\mathcal{G}$ is related to the classical {\it change of metric }  tensor in the Koiter model
\begin{align} \mathcal{G}_{\mathrm{Koiter}} \coloneqq  \dfrac12 \big[ ( \nabla m)^{T} \nabla m- ( \nabla y_0)^{T} \nabla y_0\big]=\dfrac12\,({\rm I}_m-{\rm I}_{y_0})\in {\rm Sym}(2),\end{align}
while the bending strain tensor may be compared  with the classical {\it bending strain} tensor in the Koiter shell model  and the Naghdi-shell model  \cite[p.~11]{mardare2008derivation} with one independent director field $d:\omega\subset\mathbb{R}^2\to\mathbb{R}^3$
\begin{align} 
\mathcal{R}_{\rm{Koiter}} &\coloneqq   -[(\nabla m)^{T} \nabla n-(\nabla y_0)^{T} \nabla n_0]={\rm II}_{m}- {\rm II}_{y_0}\in {\rm Sym}(2),\\
\mathcal{R}_{\rm{Naghdi}} &\coloneqq   -[\sym((\nabla m)^{T} \nabla d)-(\nabla y_0)^{T} \nabla y_0]= -[\sym((\nabla m)^{T} \nabla d)-{\rm II}_{y_0}]\in {\rm Sym}(2).\notag
\end{align}
Since, in our constrained Cosserat shell model
\begin{align}\label{miu41A}
\mathcal{E}_{ \infty }
&=\sqrt{[\nabla\Theta ]^{-T}\,{\rm I}_m^{\flat }\,[\nabla\Theta ]^{-1}}-
\sqrt{[\nabla\Theta ]^{-T}\,{\rm I}_{y_0}^{\flat }\,[\nabla\Theta ]^{-1}}\,\in \, {\rm Sym}(3),
\end{align} using its alternative expression, see \eqref{eq5}, \begin{align}
\mathcal{E}_{ \infty }=[\nabla\Theta ]^{-T}
\begin{footnotesize}\left( \begin{array}{c|c}
\mathcal{G}_{ \infty } \ & \ 0 \vspace{4pt}\\
\mathcal{T}_{ \infty } \  & \ 0
\end{array} \right)\end{footnotesize} [\nabla\Theta ]^{-1}
\end{align}
we deduce that in the constrained Cosserat shell model it follows that
the change of metric tensor (in-plane deformation) must also be symmetric
\begin{align}
\mathcal{G}_\infty \coloneqq &\, ({Q}_{ \infty }\nabla y_0)^{T} \nabla m- {\rm I}_{y_0}{\in} \ {\rm Sym}(2),
\end{align}
the transverse shear deformation vector 
\begin{align} 
\mathcal{T}_\infty\coloneqq & \, ({Q}_{ \infty }   n_0)^{T} \nabla m=\left(\bigl\langle{Q}_{ \infty }   n_0, \partial_{x_1} m\bigr\rangle,\bigl\langle{Q}_{ \infty }   n_0, \partial_{x_2} m\bigr\rangle\right)=(0,0),
\end{align}
is zero, while
 the bending  strain tensor reads
\begin{align}
\mathcal{R}_\infty \coloneqq & \, -({Q}_{ \infty }\nabla y_0)^{T} \nabla n- {\rm II}_{y_0}\not\in {\rm Sym}(2)
\end{align}
and which remains non-symmetric, even if in the minimization problem the extra constraints (coming from $\mu_{\rm c}\to \infty$ and bounded energy)
\begin{equation}
\begin{array}{rcr}
\mathcal{E}_{\infty} {\rm B}_{y_0}  + \mathrm{C}_{y_0} \mathcal{K}_{\infty}\stackrel{!}{\in}{\rm Sym}(2)& \qquad \Leftrightarrow\qquad &
\mathcal{R}_{\infty}-\mathcal{G}_{\infty}\,{\rm L}_{y_0}\stackrel{!}{\in}{\rm Sym}(2),\vspace{2mm}\\
(\mathcal{E}_{\infty} {\rm B}_{y_0}  + \mathrm{C}_{y_0} \mathcal{K}_{\infty}) {\rm B}_{y_0}\stackrel{!}{\in}{\rm Sym}(2)
&\qquad \Leftrightarrow\qquad &
(\mathcal{R}_{\infty} -\mathcal{G}_{\infty} \,{\rm L}_{y_0})\,{\rm L}_{y_0}\stackrel{!}{\in}{\rm Sym}(2)
\end{array}
\end{equation}
are imposed\footnote{However, in the modified constrained Cosserat shell model presented in Section \ref{mcm} the energy density is expressed in terms of 
\begin{align}
\sym(\mathcal{R}_{\infty} -\mathcal{G}_{\infty} \,{\rm L}_{y_0})\in {\rm Sym}(2)\qquad \text{and}\qquad \sym[(\mathcal{R}_{\infty} -\mathcal{G}_{\infty} \,{\rm L}_{y_0})\,{\rm L}_{y_0}]\in {\rm Sym}(2).
\end{align}}.
Hence, in the constrained Cosserat shell model, we have the  following consequences of the imposed assumptions when $\mu_{\rm c}\to \infty$:
\begin{align}\label{equ13}
\mathcal{G}_\infty {\in} \ {\rm Sym}(2) \qquad &\Leftrightarrow\qquad  (\nabla y_0)^T{Q}_{ \infty }^T(\nabla m)=(\nabla m)^T{Q}_{ \infty }\, (\nabla y_0)\notag\\\qquad &\Leftrightarrow\qquad  \bigl\langle {Q}_{ \infty }\partial_{x_1}y_0, \partial_{x_1} m\bigr\rangle =\bigl\langle {Q}_{ \infty }\partial_{x_2}y_0, \partial_{x_1} m\bigr\rangle\qquad \textrm{and}\qquad \\
\mathcal{T}_\infty =(0,0)  \qquad\ \ \   &\Leftrightarrow\qquad ({Q}_{ \infty }  n_0)^{T} \nabla m =(0,0).\notag
\end{align}
It is clear that $\bigl\langle {Q}_{ \infty }  n_0, \partial_{x_\alpha} m \bigr\rangle=({Q}_{ \infty }  n_0)^{T}  \partial_{x_\alpha} m =0$  and $\bigl\langle n, \partial_{x_\alpha} m\bigr\rangle=0$ imply that ${Q}_{ \infty }  n_0$ is collinear with $n$ and since ${Q}_{ \infty } \in {\rm SO}(3)$ that
$
{Q}_{ \infty }  n_0=n.
$
The above restrictions \eqref{equ13} are natural in the constrained nonlinear Cosserat shell model, they are  also the underlying  hypotheses of the classical  Koiter model. 
Moreover,  the conditions $
\mathcal{G}_\infty\in {\rm Sym}(2)$ and $\mathcal{T}_\infty=(0,0)$, i.e., \eqref{equ13},  coincide  with the conditions imposed by Tamba\v ca\footnote{Tamba\v ca  \cite[page 4, Definition of the set $\mathcal{A}^f$]{Tambaca-19} also requires ${Q}_{ \infty } \nabla y_0=\nabla m$, in order to arrive at the pure bending shell model. In this case  $\mathcal{G}_\infty=0$ and $\mathcal{R}_\infty= \, -(\nabla m)^{T} \nabla n- {\rm II}_{y_0}={\rm II}_{m}-{\rm II}_{y_0}\in {\rm Sym}(2)$. } \cite[page 4, Definition of the set $\mathcal{A}^K$]{Tambaca-19}.

Due to the equality
\begin{align}
[\nabla\Theta ]^{-T}
\begin{footnotesize}\left( \begin{array}{c|c}
\mathcal{G}_{ \infty } \ & \ 0 \vspace{4pt}\\
0 \  & \ 0
\end{array} \right)\end{footnotesize} [\nabla\Theta ]^{-1}=\mathcal{E}_{ \infty }=\sqrt{[\nabla\Theta ]^{-T}\,{\rm I}_m^{\flat }\,[\nabla\Theta ]^{-1}}-
\sqrt{[\nabla\Theta ]^{-T}\,{\rm I}_{y_0}^{\flat }\,[\nabla\Theta ]^{-1}},
\end{align}
we find  the expression of the change of metric tensor considered in the constrained Cosserat shell model, in terms of the first fundamental form
\begin{align}
\mathcal{G}_{ \infty }^\flat =[\nabla\Theta ]^{T}\Big(\sqrt{[\nabla\Theta ]^{-T}\,{\rm I}_m^{\flat }\,[\nabla\Theta ]^{-1}}-
\sqrt{[\nabla\Theta ]^{-T}\,{\rm I}_{y_0}^{\flat }\,[\nabla\Theta ]^{-1}}\,\Big)[\nabla\Theta ]^{-1}\in{\rm Sym}(3).
\end{align}

As we may see from \eqref{e90}, \eqref{minvarmc} and \eqref{eq5}, in the constrained Cosserat shell model the energy is expressed in terms of three strain measures: the change of metric tensor $\mathcal{G}_\infty\in {\rm Sym}(2)$, the nonsymmetric quantity $\mathcal{G}_\infty \,{\rm L}_{y_0}- \mathcal{R}_\infty$ which represents {\it the change of curvature} tensor and the  elastic shell bending--curvature tensor
\begin{align}
  \mathcal{K}_{\infty} & \coloneqq\,  \Big(\mathrm{axl}({Q}_{ \infty }^T\,\partial_{x_1} {Q}_{ \infty })\,|\, \mathrm{axl}({Q}_{ \infty }^T\,\partial_{x_2} {Q}_{ \infty })\,|0\Big)[\nabla\Theta ]^{-1}\not\in {\rm Sym}(3),\notag
\end{align} where
\begin{align}
{Q}_{ \infty }={\rm polar}\big((\nabla  m|n) [\nabla\Theta ]^{-1}\big)=(\nabla m|n)[\nabla\Theta ]^{-1}\,\sqrt{[\nabla\Theta ]\,\widehat {\rm I}_{m}^{-1}\,[\nabla\Theta ]^{T}}\in {\rm SO}(3).
\end{align}

 The choice of the name {\it the change of curvature} for  the nonsymmetric quantity $\mathcal{G}_\infty \,{\rm L}_{y_0}- \mathcal{R}_\infty$  will be justified in a forthcoming paper \cite{GhibaNeffPartIV},  in the framework of the linearized theory.  The  bending  strain tensor $\mathcal{R}_\infty$ generalizes the linear Koiter-Sanders-Budiansky bending measure \cite{budiansky1963best,koiter1973foundations} which  vanishes in infinitesimal pure stretch deformation of a quadrant of a cylindrical surface \cite{acharya2000nonlinear}, while the classical {\it bending strain tensor} tensor in the Koiter model  does not have this property (cf. the invariance  discussion in Section \ref{Ach2}).

For the bending-curvature energy density $ W_{\mathrm{bend,curv}} $ we can write its tensor argument $ \mathcal{K}_{\infty}$  in terms of the tensor $ \mathrm{C}_{y_0}\, \mathcal{K}_{\infty}$ and the vector $ \mathcal{K}_{\infty}^T\,n_0$, using Remark \ref{propAB} and according to the decomposition 
\begin{align}  \label{descK}
\mathcal{K}_{\infty} = {\rm A}_{y_0} \, \mathcal{K}_{\infty} +(0|0|n_0) \,(0|0|n_0)^T \, \mathcal{K}_{\infty}= \mathrm{C}_{y_0}( - \mathrm{C}_{y_0} \mathcal{K}_{\infty}) +(0|0|n_0) \,(0|0|\mathcal{K}_{\infty}^T\,n_0)^T\, \, .
\end{align} 

We can express  ${\rm C}_{y_0} \mathcal{K}_{\infty}$  in terms of the {\it bending strain} tensor $ \mathcal{R}_{\infty} $, see  \eqref{eq4} and \eqref{CK}
\begin{align}\label{CKA1}
{\rm C}_{y_0} \mathcal{K}_{\infty} =&\,-[\nabla\Theta \,]^{-T}
\mathcal{R}_{\infty}^\flat[\nabla\Theta]^{-1}\\
=&\,-\sqrt{[\nabla\Theta ]\,\widehat{\rm I}_m^{-1}[\nabla\Theta ]^{T}}\,[\nabla\Theta ]^{-T} {\rm II}_m^\flat[\nabla\Theta ]^{-1} +\sqrt{[\nabla\Theta ]\,\widehat{\rm I}_{y_0}^{-1}[\nabla\Theta ]^{T}}[\nabla\Theta ]^{-T}{\rm II}_{y_0}^\flat [\nabla\Theta ]^{-1}\notag.
\end{align}
From here, we find that in the constrained Cosserat shell model, the non-symmetric bending strain tensor has the following expression in terms of the first and  second fundamental form
\begin{align}\,
\mathcal{R}_{\infty}^\flat=\,[\nabla\Theta \,]^{T}\Big(&\sqrt{[\nabla\Theta ]\,\widehat{\rm I}_m^{-1}[\nabla\Theta ]^{T}}\,[\nabla\Theta ]^{-T} {\rm II}_m^\flat[\nabla\Theta ]^{-1} -\sqrt{[\nabla\Theta ]\,\widehat{\rm I}_{y_0}^{-1}[\nabla\Theta ]^{T}}[\nabla\Theta ]^{-T}{\rm II}_{y_0}^\flat [\nabla\Theta ]^{-1}\Big)\nabla\Theta.
\end{align}

Moreover, for in-extensional deformations
${\rm I}_m={\rm I}_{y_0}$ (pure flexure), the bending strain tensor turns into
\begin{align}\,
\mathcal{R}_{\infty}^\flat=\,[\nabla\Theta \,]^{T}\sqrt{[\nabla\Theta ]\,\widehat{\rm I}_{y_0}^{-1}[\nabla\Theta ]^{T}}\,[\nabla\Theta ]^{-T}\big( {\rm II}_m^\flat -{\rm II}_{y_0}^\flat \big)[\nabla\Theta ]^{-1}\nabla\Theta={\rm II}_m^\flat -{\rm II}_{y_0}^\flat =\mathcal{R}_{\rm Koiter}^\flat\in{\rm Sym}(3)\notag.
\end{align}

Here, the bending strain tensor is incorporated into both the membrane-bending energy 
\begin{align}\label{mbenergy}
W_{\mathrm{memb,bend}}^\infty\big(  \mathcal{E}_\infty ,\,  \mathcal{K}_\infty \big)=& \Big(\dfrac{h^3}{12}\,-{\rm K}\,\dfrac{h^5}{80}\Big)\,
W_{{\rm shell}}^{\infty}  \big( [\nabla\Theta \,]^{-T} [\mathcal{R}_{\infty}-2\,\mathcal{G}_{\infty} \,{\rm L}_{y_0} ]^\flat
[\nabla\Theta \,]^{-1}\big) 
\vspace{2.5mm}\notag\\&+\dfrac{h^3}{3} \mathrm{ H}\,\mathcal{W}_{{\rm shell}}^{\infty}  \big(  [\nabla\Theta \,]^{-T}
(\mathcal{G}_{\infty})^\flat [\nabla\Theta \,]^{-1} , [\nabla\Theta \,]^{-T} [\mathcal{R}_{\infty}-2\,\mathcal{G}_{\infty}\,{\rm L}_{y_0} ]^\flat
[\nabla\Theta \,]^{-1} \big)\notag\\&-
\dfrac{h^3}{6}\, \mathcal{W}_{{\rm shell}}^{\infty}  \big(  [\nabla\Theta \,]^{-T}
(\mathcal{G}_{\infty})^\flat [\nabla\Theta \,]^{-1} , [\nabla\Theta \,]^{-T}
[(\mathcal{R}_{\infty}-2\,\mathcal{G}_{\infty} \,{\rm L}_{y_0})\,{\rm L}_{y_0}]^\flat [\nabla\Theta \,]^{-1}\big)\vspace{2.5mm}\notag\\&+ \,\dfrac{h^5}{80}\,\,
W_{\mathrm{mp}}^{\infty} \big([\nabla\Theta \,]^{-T}
[(\mathcal{R}_{\infty}-2\,\mathcal{G}_{\infty} \,{\rm L}_{y_0})\,{\rm L}_{y_0} ]^\flat [\nabla\Theta \,]^{-1}\big) \notag
\end{align}
and the bending-curvature energy 
\begin{align}
W_{\mathrm{bend,curv}}\big(  \mathcal{K}_\infty    \big) = &\,  \,\Big(h-{\rm K}\,\dfrac{h^3}{12}\Big)\,
W_{\mathrm{curv}}\big(  \mathcal{K}_\infty \big)    +  \Big(\dfrac{h^3}{12}\,-{\rm K}\,\dfrac{h^5}{80}\Big)\,
W_{\mathrm{curv}}\big(  \mathcal{K}_\infty   {\rm B}_{y_0} \,  \big)  + \,\dfrac{h^5}{80}\,\,
W_{\mathrm{curv}}\big(  \mathcal{K}_\infty   {\rm B}_{y_0}^2  \big).
\end{align}

The remaining part $ \mathcal{K}_\infty^T\,n_0$ from \eqref{descK} is completely characterized by the (row) vector
\begin{equation}
\label{e5d}
\mathcal{N}_\infty \coloneqq  n_0^T\, \big(\mbox{axl}({Q}_{ \infty }^T\partial_{x_1}{Q}_{ \infty })\,|\, \mbox{axl}({Q}_{ \infty }^T\partial_{x_2}{Q}_{ \infty }) \big)
\end{equation}
which is   called the vector of {\it drilling bendings} \cite{Pietraszkiewicz14}. 
The vector of drilling bendings $  \mathcal{N}_\infty $ does not vanish in general and it  is present only due to the bending-curvature energy.

The membrane-bending energy is expressed in terms of $\mathcal{G}_{\infty}$,
$(\mathcal{R}_{\infty}-2\,\mathcal{G}_{\infty} \,{\rm L}_{y_0})^\flat$ and\linebreak $[(\mathcal{R}_{\infty}-2\,\mathcal{G}_{\infty} \,{\rm L}_{y_0})\,{\rm L}_{y_0}]^\flat$ and  the non-modified constrained variational problem imposes the following additional symmetry conditions, see \eqref{contsym} and \eqref{eq5}:
\begin{align}
(\mathcal{R}_{\infty}-2\,\mathcal{G}_{\infty} \,{\rm L}_{y_0})^\flat&=[\nabla \Theta]^T \sqrt{[\nabla\Theta ]^{-T}\,\widehat{\rm I}_m\,[\nabla\Theta ]^{-1}}\,  [\nabla\Theta ]\Big({\rm L}_m^\flat-{\rm L}_{y_0}^\flat \Big) \quad\ \stackrel{!}{\in}{\rm Sym}(3)\qquad \quad \textrm{and}\notag
\\
[(\mathcal{R}_{\infty}-2\,\mathcal{G}_{\infty} \,{\rm L}_{y_0})\,{\rm L}_{y_0}]^\flat&=[\nabla \Theta]^T\sqrt{[\nabla\Theta ]^{-T}\,\widehat{\rm I}_m\,[\nabla\Theta ]^{-1}}\,  [\nabla\Theta ]\Big({\rm L}_m^\flat-{\rm L}_{y_0}^\flat \Big){\rm L}_{y_0}^\flat\stackrel{!}{\in}{\rm Sym}(3).
\end{align}
In the modified constrained Cosserat shell model presented in Section \ref{mcm} these  constraints are excluded from the variational formulation, since the energy density is then expressed in terms of 
$
\sym(\mathcal{R}_{\infty} -\mathcal{G}_{\infty} \,{\rm L}_{y_0})\in {\rm Sym}(2)$ and $ \sym[(\mathcal{R}_{\infty} -\mathcal{G}_{\infty} \,{\rm L}_{y_0})\,{\rm L}_{y_0}]\in {\rm Sym}(2).
$
It is clear that using the properties of the occurring quadratic forms, we may rewrite the membrane-bending energy only as quadratic energies having $\mathcal{G}_{\infty}$ and $\mathcal{R}_{\infty} $ as arguments, while the bending-curvature energy may be written in terms of $\mathcal{R}_{\infty} $ and $  \mathcal{N}_\infty $. However, the coercivity estimate of the total energy 	\begin{align}W^{\infty}(\mathcal{E}_{ \infty }, \mathcal{K}_{ \infty })=W_{\mathrm{memb}}^{\infty}\big(  \mathcal{E}_{ \infty } \big)+W_{\mathrm{memb,bend}}^{\infty}\big(  \mathcal{E}_{ \infty } ,\,  \mathcal{K}_{ \infty } \big)+W_{\mathrm{bend,curv}}\big(  \mathcal{K}_{ \infty }    \big)
\end{align}
i.e., 
\begin{align}
W^{\infty}(\mathcal{E}_{ \infty }, \mathcal{K}_{ \infty })\geq\,&
\dfrac{h}{12} a_1^+ \lVert \mathcal{E}_{ \infty }\rVert ^2+\dfrac{h^3}{12}\, a_2^+ \lVert
\mathcal{E}_{ \infty }{\rm B}_{y_0}+{\rm C}_{y_0}\, \mathcal{K}_{ \infty }  \rVert ^2 + a_3^+\frac{ h^3}{6}\lVert  \mathcal{K}_{ \infty }\rVert ^2, \ \ a_i^+>0,
\end{align}
or equivalently 
\begin{align}\label{coecivityl}
W^{\infty}(\mathcal{E}_{ \infty }, \mathcal{K}_{ \infty })\geq\,&
\dfrac{h}{12} a_1^+ \lVert [\nabla\Theta \,]^{-T}
(\mathcal{G}_{ \infty })^\flat [\nabla\Theta \,]^{-1} \rVert ^2\\&+\dfrac{h^3}{12}\, a_2^+ \lVert
[\nabla\Theta \,]^{-T} [\mathcal{R}_{ \infty }-2\,\mathcal{G}_{ \infty } \,{\rm L}_{y_0} ]^\flat [\nabla\Theta \,]^{-1} \rVert ^2 + a_3^+\frac{ h^3}{6}\lVert  \mathcal{K}_{ \infty }\rVert ^2, \ \ a_i^+>0,\notag
\end{align}
indicates that the total energy is controlled by the values of $\mathcal{G}_{ \infty }$, $(\mathcal{R}_{ \infty }-2\,\mathcal{G}_{ \infty } \,{\rm L}_{y_0})$ and $\mathcal{K}_{ \infty }$ individually. We have not been able to identify a similar coercivity inequality showing that the total energy is controlled by the values of $\mathcal{G}_{ \infty }$, $\mathcal{R}_{ \infty }$ and $\mathcal{K}_{ \infty }$ alone.
Moreover, the assumptions which follow from considering $\mu_{\rm c}\to \infty$ are also expressed naturally in terms of $\mathcal{R}_{ \infty }-2\,\mathcal{G}_{ \infty } \,{\rm L}_{y_0}$, i.e., the conditions $\mathcal{R}_{ \infty }-2\,\mathcal{G}_{ \infty } \,{\rm L}_{y_0}\stackrel{!}{\in}{\rm Sym}(3)$ and $(\mathcal{R}_{ \infty }-2\,\mathcal{G}_{ \infty } \,{\rm L}_{y_0})\,{\rm L}_{y_0}\stackrel{!}{\in}{\rm Sym}(3)$, see  estimate \eqref{26bis} and Section \ref{Msym}.

 This line of thought, beside some other arguments presented in the linearised framework by 
 Anicic and L\'eger \cite{anicic1999formulation}, see also \cite{anicic2001modele}, and more recently by {\v{S}}ilhav{\`y} \cite {vsilhavycurvature}, suggest  that the triple $\mathcal{G}_{ \infty }$, $\mathcal{R}_{ \infty }-2\,\mathcal{G}_{ \infty } \,{\rm L}_{y_0}$ and $\mathcal{K}_{ \infty }$ are  appropriate measures  to express the change of metric and of the curvatures ${\rm H}$ and ${\rm K}$, while the bending and drilling effects are both  additionally incorporated in the bending-curvature energy through the elastic shell bending-curvature tensor $\mathcal{K}_{\infty}$.
 
 In order to make connections with existing works in the literature on      6-parameter shell models \cite{Eremeyev06,Pietraszkiewicz-book04,Pietraszkiewicz10}, see also  \cite[Section 6]{GhibaNeffPartI}, we conclude that the membrane-bending energy $W_{\mathrm{memb,bend}}^\infty\big(  \mathcal{E}_\infty ,\,  \mathcal{K}_\infty \big)$ defined by \eqref{mbenergy}, i.e., the influence of the change of curvature tensor $\mathcal{R}_{ \infty }-2\,\mathcal{G}_{ \infty } \,{\rm L}_{y_0}$, is omitted if a constrained Cosserat shell model would be derived from other available simpler 6-parameter shell models \cite{Eremeyev06,Pietraszkiewicz-book04,Pietraszkiewicz10}, even if the bending strain tensor $\mathcal{R}_{ \infty }$ is present (through the presence of the curvature energy).

 \section{Scaling invariance of bending tensors}\label{sec:invariance}\setcounter{equation}{0}
  \subsection{Revisiting Acharya's invariance requirements for a bending strain tensor}\label{subsec:Acharya}
  This brings us to the question of how  to model the physical notion of bending: a clear understanding  of bending measures will certainly lead to a clear definition of  its work conjugate pair when the equilibrium equations are established. Alongside, it helps to  have a proper formulation of the traction boundary conditions, when needed. In this context, Acharya \cite[page 5519]{acharya2000nonlinear} has proposed  a set of modelling requirements for a bending strain tensor in any first order nonlinear shell theory:
  
  \begin{description}[style=multiline,leftmargin=3em]
  \item[AR1] \textit{``Being a strain measure, it should be a tensor that vanishes in rigid deformations".}
  	\item[AR2] \textit{``It should be based on a \textit{proper} tensorial comparison of the deformed and underformed curvature fields }[${\rm II}_{m}$ and ${\rm II}_{y_0}$]".
  	\item[AR3] \textit{``A vanishing bending strain at a point should be associated with any deformation that leaves the orientation of the unit normal field locally unaltered around that point."}
  \end{description}
  
  The first two requirements \textbf{AR1} and \textbf{AR2} are satisfied by all considered nonlinear bending tensors in the literature and both are physically intuitive, while the third requirement \textbf{AR3} suggests that a nonzero bending tensor should only be associated with a change of the orientation of tangent planes and that, for instance, a radial expansion of a cylinder should give a zero bending strain measure, since it produces no further bending deformation of the shell (but changes the curvature). 
  
 It is easy to see that   $\mathcal{R}_{\rm{Koiter}}={\rm II}_{m}-{\rm II}_{y_0}$ satisfies \textbf{AR2} and \textbf{AR1}, since rigid deformations keep the second fundamental form. But the latter is in general not the case for deformations which leave the normal field unaltered, so that $\mathcal{R}_{\rm{Koiter}}$ does not satisfy \textbf{AR3}, for calculations, see \eqref{Kn}.
  
  \begin{figure}[h!]
  	\centering\includegraphics{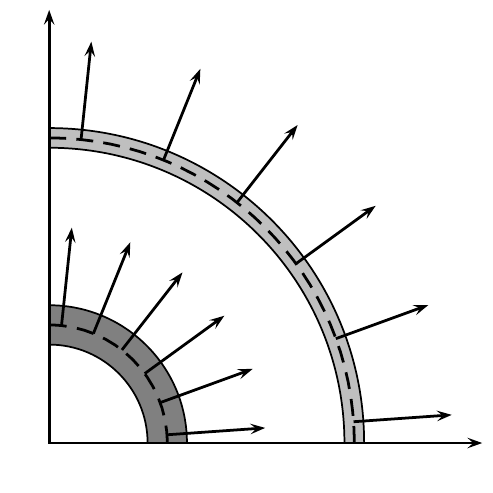}
  	\caption{\footnotesize A radial expansion of a cylinder preserves the tangent planes and therefore it should produce a zero bending strain tensor. However, the metric of the surface is changed, the curvature changes (the principal radius of curvature is changed),  but the normal is conserved. The radial expansion occurs rather because of in-plane stretch.}\label{radial}
  \end{figure}
  
  In \cite[Eq.~(8) and (10)]{acharya2000nonlinear} Acharya has proposed a bending strain tensor $\mathcal{R}_{\rm Acharya}$ for a first-order nonlinear elastic shell theory which in our notation reads (see Appendix \ref{AppAcharya})
  \begin{align}\label{nAch}
  \textrm{the first proposal:} \ \  
  &\widetilde{\mathcal{R}}_{\rm Acharya}
 \! =\!\!- \left([\nabla\Theta \,]^{-T} {\rm II}_{m}^\flat  [\nabla\Theta \,]^{-1}\!\! -\!\!
  \sqrt{[\nabla\Theta \,]^{-T}\; {\rm I}_{m}^\flat \; [\nabla\Theta \,]^{-1}}
  [\nabla\Theta \,]^{-T} {\rm II}_{y_0}^\flat \, [\nabla\Theta \,]^{-1}\right) \!\!\not\in {\rm Sym}(3),\notag \\\textrm{the second proposal:}\ \  & {\mathcal{R}_{\rm Acharya}}
  \!=\sym(\widetilde{\mathcal{R}}_{\rm Acharya})\in {\rm Sym}(3).
  \end{align}
  Acharya's bending tensors are similar to but do not coincide with the bending tensor appearing in our  nonlinear constrained Cosserat-shell model which reads
  \begin{align}\, \label{eq:unserTensor}
  \mathcal{R}_{\infty}^\flat=&\,[\nabla\Theta \,]^{T}\left(\sqrt{[\nabla\Theta ]\,\widehat{\rm I}_m^{-1}[\nabla\Theta ]^{T}}\,[\nabla\Theta ]^{-T} {\rm II}_m^\flat[\nabla\Theta ]^{-1} -[\nabla\Theta ]^{-T}{\rm II}_{y_0}^\flat [\nabla\Theta ]^{-1}\right)\nabla\Theta\\=&
  \,[\nabla\Theta \,]^{T}\sqrt{[\nabla\Theta ]\,\widehat{\rm I}_m^{-1}[\nabla\Theta ]^{T}}\left(\,[\nabla\Theta ]^{-T} {\rm II}_m^\flat[\nabla\Theta ]^{-1} -\sqrt{[\nabla\Theta ]^{-T}\,\widehat{\rm I}_m[\nabla\Theta ]^{-1}}[\nabla\Theta ]^{-T}{\rm II}_{y_0}^\flat [\nabla\Theta ]^{-1}\right)\nabla\Theta\not\in{\rm Sym}(3)\notag.
  \end{align}
  Interestingly, it is possible to express $\widetilde{\mathcal{R}}_{\rm Acharya}$ through  our nonlinear bending tensor $\mathcal{R}_{\infty}^\flat$. It holds (see Appendix \ref{AppAcharya})
  \begin{align}\label{reAca}
  \widetilde{\mathcal{R}}_{\rm Acharya}
  =&- \underbrace{\sqrt{[\nabla\Theta ]^{-T}\,{\rm I}_m^\flat[\nabla\Theta ]^{-1}}}_{\text{not invertible}}[\nabla\Theta \,]^{-T}\mathcal{R}_{\infty}^\flat[\nabla\Theta \,]^{-1}.
  \end{align}

  The nonlinear bending strain tensor $\mathcal{R}_{\rm Acharya}$ would satisfy all three requirements \textbf{AR1 - AR3} ``if locally pure stretch deformations are the only ones that leaves the orientation of tangent planes unaltered locally under deformation." 
  Incidentally, the tensor \eqref{nAch}$_2$ introduced by Acharya reduces, after linearization as well to the Koiter-Sanders-Budiansky ``best'' bending measure, see \cite{GhibaNeffPartIV} for details.
  According to Acharya, his nonlinear bending measure  should only be seen as a mathematical ``better alternative"   for modelling the physical bending process since ``the set of deformations that leave the orientation of tangent planes unaltered locally can be divided into two classes\,-\,deformations that have a pure stretch deformation gradient locally, and those that have their local rotation tensor field consisting of either [in-plane] \textit{drill} rotations or the identity tensor."  Acharya has shown that   his nonlinear  bending strain measure vanishes in pure stretch deformations that leave the normal unaltered, while the other classical bending strain measures fail to do so, but $\mathcal{R}_{\rm Acharya}$  does not necessarily vanish for deformations whose rotation tensor is a ``drill" rotation. Thus, $\mathcal{R}_{\rm Acharya}$ does not satisfy \textbf{AR3} in general. While physically appealing, condition \textbf{AR3} may, therefore, be too strict to be applicable in general. Hence we will introduce and investigate in the following a weaker invariance requirement. To this end let us introduce
  
  \newcommand{\U}{\mathrm{U_e}}
  
  \begin{definition}
  	Let $m$ be a deformation of the midsurface $y_0$. Denoting by $n$ and $n_0$ normal fields on the surface $m$ and $y_0$, respectively, we say that the midsurface deformation $m$ is obtained from a \textbf{pure elastic stretch} provided that ~$\U\coloneqq(\nabla m \,|\, n)\,(\nabla y_0\,|\,n_0)^{-1} = (\nabla m \,|\, n)\,[\nabla \Theta]^{-1}$ is symmetric and positive-definite, i.e., ~belongs to $\operatorname{Sym}^+(3)$.
  \end{definition}
  \begin{remark}
  	$\U$ is invertible by definition.
  \end{remark}
  
  With this definition,  Acharya's essential invariance requirement can be stated as
  \begin{description}[style=multiline,leftmargin=3em]
  	\item[AR3$^*$] \textit{A vanishing bending strain at a point should be associated with any deformation obtained from a pure elastic stretch that leaves the orientation of the unit normal field locally unaltered around that point.}
  \end{description}

  \begin{figure}[h!]
  	\centering\includegraphics[width=0.9\textwidth]{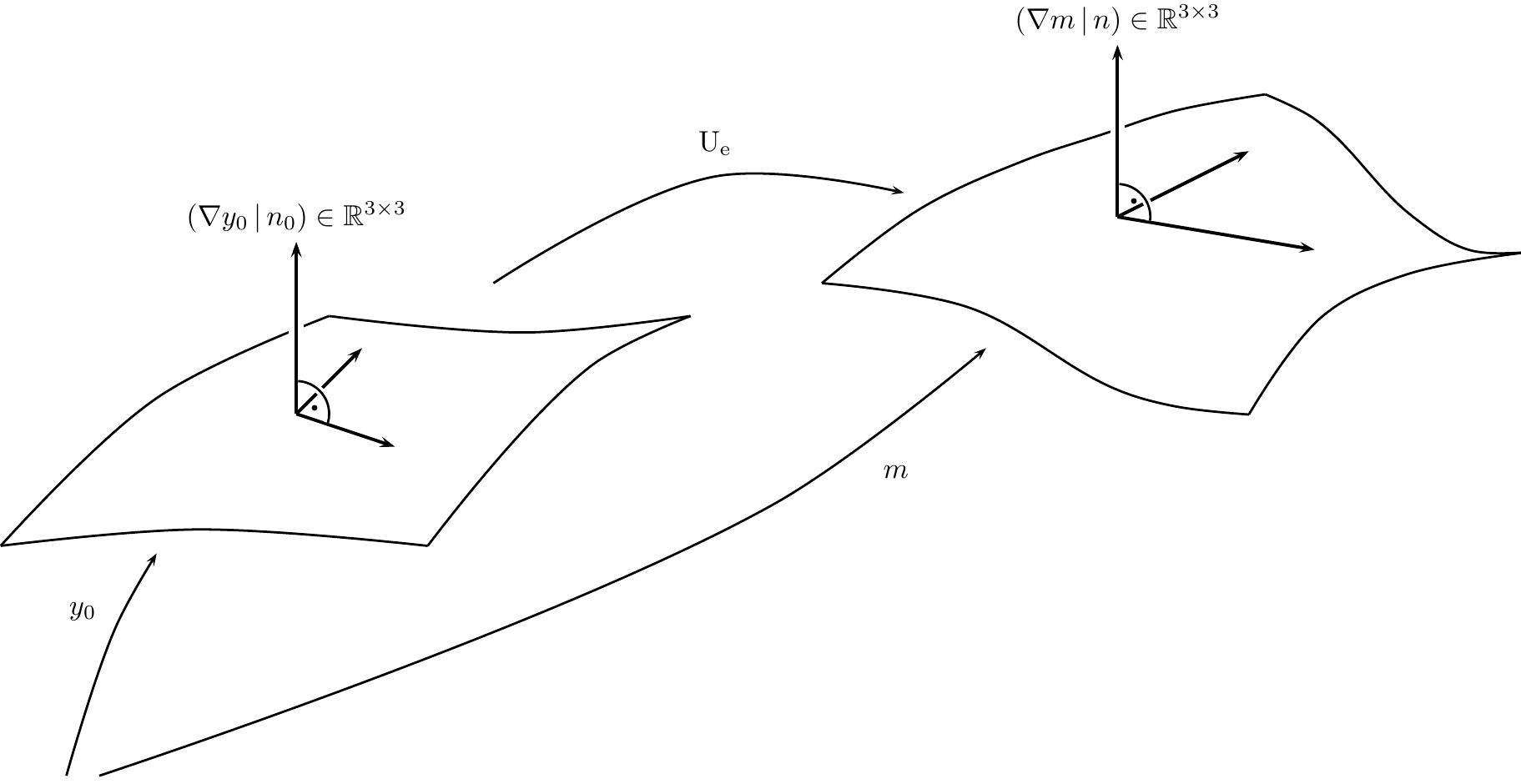}
  	\caption{\footnotesize [Multiplicative view] The initial curved midsurface  $y_0(\omega)$ and deformed midsurface $m(\omega)$, connected by a pure elastic stretch $\U\in {\rm Sym}^+(3)$. It leaves the normal unaltered if $\U\,(\nabla y_0\,|\,n_0)= (\nabla m \,|\, n_0)$, i.e., ~$n=\frac{\partial_{x_1}m\times \partial_{x_2}m}{\norm{\partial_{x_1}m\times\partial_{x_2}m}}=n_0$.}\label{stretching}
  \end{figure}
  \begin{example} Of special interest for us are deformations, which leave the unit normal field locally unaltered in a point. So, introducing locally orthogonal coordinates on the given curved initial configuration (which is always possible, cf.~e.g., ~\cite{Korn1914isoKoord})
  	\begin{equation}
  	\langle \partial_{x_1}y_0,\partial_{x_2}y_0\rangle=0, 
  	\end{equation}
  	the partial derivatives of $m$ must be linear combinations
  	\begin{subequations}
  		\begin{equation}\label{eq:linearkombi}
  		\partial_{x_1}m=\alpha\,\partial_{x_1}y_0 + \beta\,\partial_{x_2}y_0\quad \text{and} \quad  \partial_{x_2}m=a\,\partial_{x_1}y_0 + b\,\partial_{x_2}y_0 
  		\end{equation}
  		(in order to leave the unit normal field invariant), where we require additionally
  		\begin{equation}\label{eq:normalenerhaltendeBed}
  		\underbrace{\alpha\,b-\beta\,a}_{\rm orientation}>0 \quad \text{and} \quad \{\ \underset{\text{symmetry}}{\underbrace{a\,\norm{\partial_{x_1}y_0}^2=\beta\,\norm{\partial_{x_2}y_0}^2}}\quad \wedge \quad \underset{\text{positive-definiteness}}{\underbrace{\alpha\ge \beta\ge0, \ b \ge a\ge0}}
  		\}.
  		\end{equation}
  	\end{subequations}
  	The first condition \eqref{eq:normalenerhaltendeBed}$_1$ appears in the calculation of $\partial_{x_1}m \times \partial_{x_2}m$ and guarantees the existence and the orientation preservation of the normal, whereas the second condition \eqref{eq:normalenerhaltendeBed}$_2$ implies $(\nabla m\,|\,0) [\nabla\Theta \,]^{-1}$ is symmetric and positive-definite so that it is exactly the desired condition for the class of pure elastic stretches (in particular, $(\nabla m)^T\nabla y_0$ is symmetric). Indeed, we get (using orthogonal coordinates) for the inverse of $\nabla \Theta = (\nabla y_0 \,|\,n_0)$
  	\begin{equation}
  	[\nabla\Theta \,]^{-1} = \begin{pmatrix} \gamma\,(\partial_{x_1}y_0)^T \\ \delta\,(\partial_{x_2}y_0)^T \\ n_0^T \end{pmatrix}, \qquad \text{where}\quad
  	\gamma=\frac{1}{\norm{\partial_{x_1}y_0}^2} \quad  \text{ and } \quad \delta =\frac{1}{\norm{\partial_{x_2}y_0}^2}.\label{eq:coeffs}
  	\end{equation}
  	Recall, that for vectors $a,b,c,d,e,f\in\mathbb{R}^3$ it holds
  	\begin{equation}\label{eq:dyadicprod}
  	(a\,| \, b\, | \, c) \, (d\, | \, e\, | \,f)^T= a\otimes d + b\otimes e +c\otimes f
  	\end{equation}
  	so that
  	we obtain
  	\begin{align}\label{eq:nutzlich2}
  	\U&=(\nabla m|n)[\nabla\Theta \,]^{-1}  \overset{!}{=}(\alpha\,\partial_{x_1}y_0+\beta\,\partial_{x_2}y_0|a\,\partial_{x_1}y_0+b\,\partial_{x_2}y_0 |n_0)(\gamma\,\partial_{x_1}y_0|\delta\,\partial_{x_2}y_0|n_0)^T \notag\\ 
  	& = \alpha\,\gamma\,\partial_{x_1}y_0\otimes\partial_{x_1}y_0 +\beta\,\gamma\,\partial_{x_2}y_0\otimes \partial_{x_1}y_0+a\,\delta\,\partial_{x_1}y_0\otimes \partial_{x_2}y_0 +b\,\delta\,\partial_{x_2}y_0\otimes\partial_{x_2}y_0 +n_0\otimes n_0
  	\intertext{which is symmetric provided that  ~ $a\,\delta = \beta\,\gamma$, cf. \eqref{eq:normalenerhaltendeBed}$_2$ and \eqref{eq:coeffs}, since}
  	\mathrm{U}^T_{\mathrm{e}}&=\alpha\,\gamma\,\partial_{x_1}y_0\otimes\partial_{x_1}y_0 +a\,\delta\,\partial_{x_2}y_0\otimes \partial_{x_1}y_0+\beta\,\gamma\,\partial_{x_1}y_0\otimes \partial_{x_2}y_0 +b\,\delta\,\partial_{x_2}y_0\otimes\partial_{x_2}y_0 +n_0\otimes n_0\,.
  	\end{align}
  	Furthermore, $\U$ is positive-definite because
  	\begin{align}
  	\U&= (\alpha-\beta)\,\gamma\,\partial_{x_1}y_0\otimes\partial_{x_1}y_0 + \beta\,\gamma\,(\partial_{x_1}y_0\otimes\partial_{x_1}y_0 +\partial_{x_2}y_0\otimes\partial_{x_1}y_0)\notag\\
  	&\qquad + a\,\delta\,(\partial_{x_1}y_0\otimes \partial_{x_2}y_0 + \partial_{x_2}y_0\otimes \partial_{x_2}y_0 ) + (b-a)\,\delta\,\partial_{x_2}y_0\otimes \partial_{x_2}y_0 + n_0\otimes n_0 \notag\\
  	& \overset{\mathclap{a\,\delta = \beta\,\gamma}}{=}\quad (\alpha-\beta)\,\gamma\,\partial_{x_1}y_0\otimes\partial_{x_1}y_0 +  \beta\,\gamma\,(\partial_{x_1}y_0+\partial_{x_2}y_0)\otimes (\partial_{x_1}y_0+\partial_{x_2}y_0)\notag\\
  	&\quad\qquad + (b-a)\,\delta\,\partial_{x_2}y_0\otimes \partial_{x_2}y_0 + n_0\otimes n_0
    \end{align}
    is a sum of four positive (semi-)definite matrices since we required $\alpha\ge \beta\ge0$ and $\ b \ge a\ge0$. Acharya's examples \cite[sec. 6]{acharya2000nonlinear} satisfy both conditions \eqref{eq:normalenerhaltendeBed}.\footnote{Indeed, the biaxial stretching of a cylinder \cite[sec. 6.2]{acharya2000nonlinear} satisfies \eqref{eq:normalenerhaltendeBed} with $a=\beta=0$ (see \cite[eq. (34)]{acharya2000nonlinear}) and for the uniform normal deflection \cite[sec. 6.1]{acharya2000nonlinear} we have ~ $m=y_0+ c\, n_0$ ~ where $c\in\mathbb{R}$ is a fixed factor. Hence,
  		\begin{equation*}
  		\partial_{x_1}m=\partial_{x_1}y_0+c\,\partial_{x_1}n_0 \quad \text{and}\quad \partial_{x_2}m=\partial_{x_2}y_0+c\,\partial_{x_2}n_0.
  		\end{equation*}
  		So that, ~$
  		a\,\norm{\partial_{x_1}y_0}^2 \overset{\eqref{eq:linearkombi}}{=} \langle \partial_{x_2}m,\partial_{x_1}y_0\rangle = c\,\langle \partial_{x_2}n_0,\partial_{x_1}y_0\rangle \overset{(\ast)}{=} c\,\langle \partial_{x_1}n_0,\partial_{x_2}y_0\rangle = \langle \partial_{x_1}m,\partial_{x_2}y_0\rangle \overset{\eqref{eq:linearkombi}}{=}\beta\,\norm{\partial_{x_2}y_0}^2$ ~ where in $(\ast)$ we made use of the symmetry of the second fundamental form. Furthermore, since $\norm{n_0}^2=1$ implies that $\langle \partial_{x_i}n_0,n_0\rangle =0$ we have $\partial_{x_1}n_0=\theta_1\ \partial_{x_1}y_0+\theta_2\ \partial_{x_2}y_0$ and $\partial_{x_2}n_0=\vartheta_1\ \partial_{x_1}y_0+\vartheta_2\ \partial_{x_2}y_0$ so that $\partial_{x_1}m\times\partial_{x_2}m=((1+c\,\theta_1)(1+c\,\vartheta_2)-c^2\,\theta_2\vartheta_1)\,\partial_{x_1}y_0\times\partial_{x_2}y_0 $, implying that the range of the fixed value $c$ should be adjusted in such a way that the prefactor is positive and $1+c\,\theta_1\ge c\,\theta_2\ge0$ as well as $1+c\,\vartheta_2\ge c\,\vartheta_1\ge0$, which are exactly the requirements for the positive-definiteness improved in \eqref{eq:normalenerhaltendeBed}.
  	}
  \end{example}
  
  \subsection{Investigation of the invariance requirement for a bending tensor}\label{Ach2}
  
  Using \eqref{eq:dyadicprod} we have always (not only in orthogonal coordinates)
  \begin{equation}\label{eq:sehrnuetzlich}
  (\nabla y_{0}\, |\, 0)[\nabla\Theta]^{-1} = (\nabla y_{0}\,|\,n_0)[\nabla\Theta]^{-1}-(0\,|\,0\,|\,n_0)[\nabla\Theta]^{-1}\overset{\eqref{eq:dyadicprod}}{=} \id_3 - n_0\otimes n_0,
  \end{equation}
  since ~ $e_3^T [\nabla\Theta]^{-1}=n_0^T$  because $[\nabla\Theta]^{-1}=\begin{pmatrix}\cdots \\ \cdots \\ n_0^T  \end{pmatrix}$ ~ where $\nabla\Theta= (\nabla y_0 \, | \, n_0)$.
  
  We now show, that both tensors $\mathcal{R}_{\rm Acharya}$ and $\mathcal{R}_{\infty}^\flat$ satisfy the three requirements \textbf{AR1}, \textbf{AR2} and \textbf{AR3}$^*$. The expressions of both tensors \eqref{nAch} and \eqref{eq:unserTensor} fulfill condition \textbf{AR2}. Moreover, for a rigid deformation $y_0\to m =\widehat{Q}y_0$, $\widehat{Q}\in\operatorname{SO}(3)$ the fundamental forms coincide, 
  $
  {\rm I}_m = {\rm I}_{y_{0}} \quad \text{and} \quad {\rm II}_{m}={\rm II}_{y_{0}},
$
  due to the identities $\nabla m=\widehat{Q} \nabla y_0, \ \nabla n=\nabla(\widehat{Q}\, n_0)=\widehat{Q}\,\nabla\, n_0$.
  Hence,  for a rigid deformation, we obtain
  \begin{align}
  \sqrt{[\nabla\Theta \,]^{-T}\; {\rm I}_{m}^\flat \; [\nabla\Theta \,]^{-1}} &= \sqrt{[\nabla\Theta \,]^{-T}\; {\rm I}_{y_{0}}^\flat \; [\nabla\Theta \,]^{-1}} = \sqrt{[\nabla\Theta \,]^{-T}(\nabla y_{0} | 0)^T\; (\nabla y_{0} | 0)[\nabla\Theta \,]^{-1}}\notag\\
  &= \sqrt{((\nabla y_{0} | 0)[\nabla\Theta \,]^{-1})^T\; (\nabla y_{0} | 0)[\nabla\Theta \,]^{-1}} \overset{\eqref{eq:sehrnuetzlich}}{=} \sqrt{(\id_3 - n_0\otimes n_0)(\id_3 - n_0\otimes n_0)}\notag\\
  &=\id_3 - n_0\otimes n_0, \label{eq:Wurelgut}
  \end{align}
  since $\id_3 - n_0\otimes n_0$ is positive semi-definite because $\langle (\id_3 - n_0\otimes n_0)\,\xi,\xi\rangle = \norm{\xi}^2-\langle n_0,\xi\rangle^2 \ge0$ for all $\xi\in\mathbb{R}^3$.
  Keeping in mind that ~ $[\nabla\Theta \,]^{-1} n_0 = e_3$ ~ it follows that for a rigid deformation we have
  \begin{align}
  \widetilde{\mathcal{R}}_{\rm Acharya}&= - [\nabla\Theta \,]^{-T}\; {\rm II}_{m}^\flat \; [\nabla\Theta \,]^{-1}\!\! +\!\!
  \sqrt{[\nabla\Theta \,]^{-T}\; {\rm I}_{m}^\flat \; [\nabla\Theta \,]^{-1}}
  [\nabla\Theta \,]^{-T}\, {\rm II}_{y_0}^\flat \, [\nabla\Theta \,]^{-1} \\
  &\overset{\mathclap{\eqref{eq:Wurelgut}}}{=} \ -[\nabla\Theta \,]^{-T}{\rm II}_{y_{0}}^\flat[\nabla\Theta \,]^{-1}+(\id_3-n_0\otimes n_0)[\nabla\Theta \,]^{-T}{\rm II}_{y_{0}}^\flat[\nabla\Theta \,]^{-1}\notag\\
  & =-n_0\otimes n_0\,[\nabla\Theta \,]^{-T}{\rm II}_{y_{0}}^\flat[\nabla\Theta \,]^{-1}
  =n_0\otimes ([\nabla\Theta \,]^{-T}{\rm II}_{y_{0}}^\flat[\nabla\Theta \,]^{-1}n_0)=n_0\otimes ([\nabla\Theta \,]^{-T}{\rm II}_{y_{0}}^\flat e_3)= 0_3.\notag
  \end{align}
  Similarly, for a rigid deformation, we have
  \begin{align}
  \sqrt{[\nabla\Theta ]\,\widehat{\rm I}_m^{-1}[\nabla\Theta ]^{T}}  &= \{[\nabla\Theta \,]^{-T}\; \widehat{\rm I}_{m} \; [\nabla\Theta \,]^{-1}\}^{-\frac12} = \{[\nabla\Theta \,]^{-T}\;( {\rm I}_{m}^\flat +e_3\otimes e_3) \; [\nabla\Theta \,]^{-1}\}^{-\frac12}\notag\\
  &=\{[\nabla\Theta \,]^{-T}\; {\rm I}_{m}^\flat \; [\nabla\Theta \,]^{-1} + [\nabla\Theta \,]^{-T}\; e_3\otimes e_3 \; [\nabla\Theta \,]^{-1}\}^{-\frac12}\notag\\
  &\overset{\mathclap{\eqref{eq:Wurelgut}}}{=}\ \{(\id_3 - n_0\otimes n_0)^2+n_0\otimes n_0 \}^{-\frac12}= \{\id_3 -n_0\otimes n_0+n_0\otimes n_0\}^{-\frac12}= \id_3, \label{eq:Wurelgueter}
  \end{align}
  so that we conclude
  \begin{align}
  \mathcal{R}_{\infty}^\flat&=[\nabla\Theta \,]^{T}\Big(\sqrt{[\nabla\Theta ]\,\widehat{\rm I}_m^{-1}[\nabla\Theta ]^{T}}\,[\nabla\Theta ]^{-T} {\rm II}_m^\flat[\nabla\Theta ]^{-1} -[\nabla\Theta ]^{-T}{\rm II}_{y_0}^\flat [\nabla\Theta ]^{-1}\Big)\nabla\Theta\notag\\
  &{\overset{\mathclap{\eqref{eq:Wurelgueter}}}{=}}\ \  [\nabla\Theta \,]^{T}\Big(\id_3\,[\nabla\Theta ]^{-T}\,{\rm II}_{y_0}^\flat[\nabla\Theta ]^{-1} -[\nabla\Theta ]^{-T}{\rm II}_{y_0}^\flat [\nabla\Theta ]^{-1}\Big)\nabla\Theta = 0_3,
  \end{align}
  and we have shown, that both tensors vanish for rigid deformations, so that both satisfy  \textbf{AR1}.
  
  It remains to check \textbf{AR3}$^*$ for $\widetilde{\mathcal{R}}_{\rm Acharya}$ and $\mathcal{R}_{\infty}^\flat$ for pure elastic stretches that in addition leave the unit normal field unaltered in a point, i.e., ~we have
  \begin{align}
  \U&\,\,\,\coloneqq (\nabla m \,|\, n)\,(\nabla y_0\,|\,n_0)^{-1} \overset{\textcircled{\footnotesize 1}}{=} (\nabla m \,|\, n_0)\,[\nabla \Theta]^{-1} = (\nabla m \,|0)\,[\nabla \Theta]^{-1} + (0\,|\,0\,|\,n_0)\,[\nabla \Theta]^{-1}\notag\\
  &\overset{\eqref{eq:dyadicprod}}{=} (\nabla m \,|0)\,[\nabla \Theta]^{-1} + n_0\otimes n_0 \ \overset{\textcircled{\footnotesize 2}}{\in}\operatorname{Sym}^+(3), \label{eq:defU}
  \end{align}
  where \textcircled{\footnotesize 1} requires that the unit normal field is left unaltered and \textcircled{\footnotesize 2} requires $m$ to be obtained from a pure elastic stretch.
  Thus,
  \begin{align}
  \U\,n_0&= (\nabla m \,|0)\,[\nabla \Theta]^{-1}\,n_0 + (n_0\otimes n_0)\,n_0 = (\nabla m \,|0)\,e_3+n_0=n_0 \label{eq:Un}
  \end{align} and, since the matrix $\U$ is invertible, also ~$\mathrm{U}^{-1}_{\mathrm{e}}\,n_0=n_0.$ With ~ ${\rm I}_{m}^\flat=(\nabla m |0)^T(\nabla m | 0)$ ~ we obtain
  \begin{align} \label{eq:ersteWurzel}
  [\nabla\Theta \,]^{-T}\; {\rm I}_{m}^\flat \; [\nabla\Theta \,]^{-1} &= [\nabla\Theta \,]^{-T}\; (\nabla m |0)^T(\nabla m | 0)\; [\nabla\Theta \,]^{-1}\overset{\eqref{eq:defU}}{=}(\U-n_0\otimes n_0)^T\,(\U-n_0\otimes n_0)\notag\\
  &= (\U-n_0\otimes n_0)^2
  \end{align}
  since $\U$ is symmetric (by requirement \eqref{eq:defU}). Moreover, the positive definite square root of $\U$ fulfills also ~$\sqrt{\U}\,n_0=n_0 $, thus ~ $(\sqrt{\U}-n_0\otimes n_0)^2=(\sqrt{\U}-n_0\otimes n_0)(\sqrt{\U}-n_0\otimes n_0)^T=\U-n_0\otimes n_0$ ~ is positive semi-definite and we conclude
  \begin{align}
  \sqrt{[\nabla\Theta \,]^{-T}\; {\rm I}_{m}^\flat \; [\nabla\Theta \,]^{-1}}&\,[\nabla\Theta \,]^{-T}(\nabla y_0|0)^T = \sqrt{(\U-n_0\otimes n_0)^2}\,((\nabla y_{0} | 0)[\nabla\Theta]^{-1})^T\notag\\
  &\overset{\mathclap{\eqref{eq:sehrnuetzlich}}}{=} \ (\U-n_0\otimes n_0)(\id_3-n_0\otimes n_0) = \U -\U\,n_0\otimes n_0\overset{\eqref{eq:Un}}{=}\ \U -n_0\otimes n_0\,. \label{eq:ersterFaktor}
  \end{align}
  Using \eqref{eq:defU} and \eqref{eq:ersterFaktor} in the expression \eqref{nAch} it now follows that Acharya's bending tensor vanishes for pure elastic stretches that in addition leave the unit normal field unaltered. Indeed, having ~ ${\rm II}_{m}^\flat=-(\nabla m|0)^T(\nabla n|0) = -(\nabla m|0)^T(\nabla n_0|0)$, ~ so that again using the symmetry of $\U$ we obtain
  \begin{align}
  \widetilde{\mathcal{R}}_{\rm Acharya} & \overset{\eqref{nAch}}{=}[\nabla\Theta]^{-T}(\nabla m | 0)^T(\nabla n_0|0)[\nabla \Theta]^{-1}-\sqrt{[\nabla\Theta \,]^{-T}\; {\rm I}_{m}^\flat \; [\nabla\Theta \,]^{-1}}\; [\nabla\Theta \,]^{-T}(\nabla y_0|0)^T(\nabla n_0|0)[\nabla \Theta]^{-1}\notag\\
  &\overset{\eqref{eq:defU}}{\underset{\eqref{eq:ersterFaktor}}{=}} (\U -n_0\otimes n_0)^T\,(\nabla n_0|0)[\nabla \Theta]^{-1}-(\U -n_0\otimes n_0)\,(\nabla n_0|0)[\nabla \Theta]^{-1} =0_3,
  \end{align}
  all in all, we have shown, that $\widetilde{\mathcal{R}}_{\rm Acharya}$ satisfies \textbf{AR1}, \textbf{AR2} and \textbf{AR3}$^*$.
  
  Continuing, our derived bending tensor $\mathcal{R}_{\infty}^\flat$ has the same properties as Acharya's bending tensor $\mathcal{R}_{\rm Acharya}$. In order to show this, we deduce using
  \begin{equation}(\U - n_0\otimes n_0)^2=\mathrm{U}_{\mathrm{e}}^2 -\U\,n_0\otimes n_0 - n_0\otimes n_0 \,\U+  n_0\otimes n_0 \overset{\eqref{eq:Un}}{=} \mathrm{U}_{\mathrm{e}}^2-n_0\otimes n_0,\end{equation}
  that
  \begin{align}\label{eq:zweitewurzel}
  \sqrt{[\nabla\Theta ]\,\widehat{\rm I}_m^{-1}[\nabla\Theta ]^{T}}  &= \{[\nabla\Theta \,]^{-T}\; \widehat{\rm I}_{m} \; [\nabla\Theta \,]^{-1}\}^{-\frac12} =\{[\nabla\Theta \,]^{-T}\; {\rm I}_{m}^\flat \; [\nabla\Theta \,]^{-1} + [\nabla\Theta \,]^{-T}\; e_3\otimes e_3 \; [\nabla\Theta \,]^{-1}\}^{-\frac12}\notag\\
  &\overset{\mathclap{\eqref{eq:ersteWurzel}}}{=} \ \{(\U - n_0\otimes n_0)^2+ n_0\otimes n_0  \}^{-\frac12}
  = \{\mathrm{U}_{\mathrm{e}}^2-n_0\otimes n_0 + n_0\otimes n_0  \}^{-\frac12} = \mathrm{U}_{\mathrm{e}}^{-1},
  \end{align}
  where we have used that $\U$ is positive-definite. Together with the definition of the symmetric matrix $\U$ we get
  \begin{align}\label{eq:zweiterFaktor}
  \sqrt{[\nabla\Theta ]\,\widehat{\rm I}_m^{-1}[\nabla\Theta ]^{T}}\, [\nabla\Theta \,]^{-T}(\nabla m|0)^T \overset{\eqref{eq:zweitewurzel}}{\underset{\eqref{eq:defU}}{=}} \mathrm{U}_{\mathrm{e}}^{-1}\, (\U-n_0\otimes n_0) =  \id_3 - \mathrm{U}_{\mathrm{e}}^{-1}\,n_0\otimes n_0  \overset{\eqref{eq:Un}}{=}\id_3 -n_0\otimes n_0. 
  \end{align}
  Again, using the intermediate steps \eqref{eq:sehrnuetzlich} and \eqref{eq:zweiterFaktor} in the expression \eqref{eq:unserTensor} we obtain  $\mathcal{R}_{\infty}^\flat=0_3$ since
  \begin{align}
  \mathcal{R}_{\infty}^\flat & \overset{\eqref{eq:unserTensor}}{=}[\nabla\Theta \,]^{T}\Big(\sqrt{[\nabla\Theta ]\,\widehat{\rm I}_m^{-1}[\nabla\Theta ]^{T}}\,[\nabla\Theta ]^{-T} {\rm II}_m^\flat[\nabla\Theta ]^{-1} -[\nabla\Theta ]^{-T}{\rm II}_{y_0}^\flat [\nabla\Theta ]^{-1}\Big)\nabla\Theta\notag\\
  &\ \, =[\nabla\Theta \,]^{T}\Big(-\sqrt{[\nabla\Theta ]\,\widehat{\rm I}_m^{-1}[\nabla\Theta ]^{T}}\,[\nabla\Theta ]^{-T} (\nabla m|0)^T\,(\nabla n_0| 0)[\nabla\Theta ]^{-1} +[\nabla\Theta ]^{-T}(\nabla y_0|0)^T\,(\nabla n_0|0) [\nabla\Theta ]^{-1}\Big)\nabla\Theta\notag\\
  & \overset{\eqref{eq:sehrnuetzlich}}{\underset{\eqref{eq:zweiterFaktor}}{=}}[\nabla\Theta \,]^{T}\Big(-(\id_3 - n_0\otimes n_0)\,(\nabla n_0| 0)[\nabla\Theta ]^{-1} + (\id_3 - n_0\otimes n_0)\,(\nabla n_0|0) [\nabla\Theta ]^{-1}\Big)\nabla\Theta = 0_3,
  \end{align}
  so that also $\mathcal{R}_{\infty}^\flat $ satisfies \textbf{AR1}, \textbf{AR2} and  \textbf{AR3}$^*$. In fact, due to \eqref{reAca}, it is clear that a vanishing  tensor $\mathcal{R}_{\infty}^\flat $ leads to a vanishing tensor $\widetilde{\mathcal{R}}_{\rm Acharya}$. The reverse is not true, since the relation \eqref{reAca}  is not invertible. It is unclear  if there are deformations which do not preserve the normal and for which $\widetilde{\mathcal{R}}_{\rm Acharya}$ vanishes while $\mathcal{R}_{\infty}^\flat $ does not.
  
  We have already shown, that $\mathcal{R}_{\rm Koiter}= {\rm II}_{m}-{\rm II}_{y_0}$ satisfies  \textbf{AR1}, \textbf{AR2}  but it does not satisfy \textbf{AR3}$^*$ as we show presently. Consider $[\nabla\Theta ]^{-T}\,({\rm II}_{m}^\flat-{\rm II}_{y_0}^\flat)\,[\nabla\Theta ]^{-1}$. Indeed, the latter expression  does not satisfy \textbf{AR3}$^*$ since
  \begin{align}\label{Kn}
  [\nabla\Theta ]^{-T}&\,({\rm II}_{m}^\flat-{\rm II}_{y_0}^\flat)\,[\nabla\Theta ]^{-1} = -\,[\nabla\Theta ]^{-T} (\nabla m|0)^T\,(\nabla n_0| 0)[\nabla\Theta ]^{-1}+[\nabla\Theta ]^{-T}(\nabla y_0|0)^T\,(\nabla n_0|0) [\nabla\Theta ]^{-1} \\
  & \overset{\mathclap{\eqref{eq:defU}}}{\underset{\mathclap{\eqref{eq:sehrnuetzlich}}}{=}}\ -(\U - n_0\otimes n_0)\,(\nabla n_0| 0)[\nabla\Theta ]^{-1} + (\id_3 - n_0\otimes n_0)\,(\nabla n_0|0) [\nabla\Theta ]^{-1}  = -(\U-\id_3)\,(\nabla n_0|0) [\nabla\Theta ]^{-1} \notag
  \end{align}
  does not generally vanish for deformations obtained from a pure elastic stretch that leave the normal field unaltered. From \eqref{Kn} it follows that $\mathcal{R}_{\rm Koiter}^\flat= -[\nabla\Theta ]^{T}(\U-\id_3)\,(\nabla n_0|0) \neq 0$ under the same circumstances.

   Finally, let us strengthen further the additional invariance requirements on the bending tensor by postulating 
  \begin{description}[style=multiline,leftmargin=5em]
  	\item[AR3$^*_{\rm plate}$] \textit{For a planar reference geometry $({\rm II}_{y_0}\equiv0_2)$ the bending tensor should be invariant under the scaling $m\to \alpha\, m$, $\alpha>0$.}
  \end{description}
It is clear that the simple scaling $m\to \alpha\, m$, $\alpha>0$ corresponds to an additional in-plane stretch, while there is no additional bending involved. Therefore, \textbf{AR3$^*_{\rm plate}$} should be adopted as a suitable requirement for a true bending tensor expression. Furthermore, the new condition
\textbf{AR3$^*_{\rm plate}$} allows to differentiate between  Acharya's ad hoc bending tensors $\widetilde{\mathcal{R}}_{\rm Acharya}$ and ${\mathcal{R}}_{\rm Acharya}$ and our derived bending tensor $\mathcal{R}_{\infty}^\flat$. Indeed, 
  \begin{remark}
  	The bending tensor $\mathcal{R}_{\infty}^\flat$ satisfies {\rm \textbf{AR3$^*_{\rm plate}$}} while $\widetilde{\mathcal{R}}_{\rm Acharya}$ and ${\mathcal{R}}_{\rm Acharya}$ do not have this invariance property, since in the  planar referential configuration ($\nabla \Theta=\id$, ${\rm I}_{\rm id}=\id$, ${\rm II}_{\rm id}=0_2$) Acharya's bending tensors reduce to $-{\rm II}_m^\flat$, which violate {\rm \textbf{AR3$^*_{\rm plate}$}}.  Both $\mathcal{R}_{\infty}^\flat$ and  ${\mathcal{R}}_{\rm Acharya}$ reduce after linearisation {\rm \cite{GhibaNeffPartIV}} to the Sanders and Budiansky bending tensor of the  {\rm \cite{budiansky1962best,budiansky1963best,koiter1973foundations}} ``best first-order linear elastic shell theory''. 
  \end{remark}
  
  \begin{remark}
   Let us mention the explicit dependence of the bending tensors  on the two configurations $y_0$ and $m$ such as $\widetilde{\mathcal{R}}_{\rm Acharya}(m(x_1,x_2),y_0(x_1,x_2))$. Since the scaling $\alpha\,y_0$, $\alpha>0$, leaves the normals invariant we have of course $\widetilde{\mathcal{R}}_{\rm Acharya}(\alpha\, y_0,y_0)\equiv0_3$ and $\mathcal{R}_\infty(\alpha\, y_0,y_0)\equiv0_3$. Now, for a planar reference configuration $y_0(x_1,x_2)=(x_1, x_2,0)^T\eqqcolon {\rm id}_{1,2}$ we obtain the scaling
   \begin{equation}
    \widetilde{\mathcal{R}}_{\rm Acharya}(\alpha\,m,{\rm id}_{1,2}) = \mathcal{R}_{\rm Koiter}^\flat(\alpha\,m,{\rm id}_{1,2})= \alpha\,\mathcal{R}_{\rm Koiter}^\flat(m,{\rm id}_{1,2}) = \alpha\,\widetilde{\mathcal{R}}_{\rm Acharya}(m,{\rm id}_{1,2})
   \end{equation}
  but  we have the scaling invariance
 \begin{equation}
  \mathcal{R}_{\infty}^\flat (\alpha\,m,{\rm id}_{1,2}) =  \frac{\alpha}{|\alpha|}\mathcal{R}_{\infty}^\flat (m,{\rm id}_{1,2}) \overset{\alpha>0}{=} \mathcal{R}_{\infty}^\flat (m,{\rm id}_{1,2}).
 \end{equation}
  \end{remark}

\section{Conclusion}
We have thoroughly investigated a recently introduced isotropic nonlinear Cosserat shell model. We focussed on what happens  when the independent Cosserat rotation is made to coincide with the continuum rotation. This case can be steered in the model under consideration by sending the Cosserat couple modulus $\mu_{\rm c}\to \infty$. In this way we obtained a constrained Cosserat shell, which incorporates strong symmetry requirements on the appearing  strain and bending-curvature tensors. For such a class of constrained models we proposed conditional existence theorems for the resulting minimization problem: conditional, since we cannot exclude that the admissible set is empty. This led us to consider a modified shell model in which certain symmetry requirements are waived. Then unconditional existence follows easily. A conceptual advantage of the constrained Cosserat shell model is that it can be entirely expressed in ``classical'' quantities from differential geometry (first and second fundamental forms of the initial and deformed surface, respectively). 
We have provided the necessary calculations for this identification  which allowed us to compare the resulting model with other classical models, like the Koiter shell model. While the membrane part of the constrained model incorporates only the respective first fundamental forms it turns out that our constrained model incorporates a bending tensor which always couples bending and membrane effects. This was neither expected nor aimed at originally.
To our great surprise, the new bending tensor satisfies  a generalized invariance condition, previously introduced by Acharya, while the Koiter bending tensor does not. We added a new invariance condition in the same spirit which allows us to differentiate further between existing proposals for bending tensors.   By identifying a true bending tensor validated by the novel invariance requirements \textbf{AR3}$^*$ and \textbf{AR3}$^*_{\rm plate}$ we observe a clear advantage of the constrained Cosserat shell model compared to more classical approaches: the obtained bending tensor measures really only bending (change in normals) and the additional Cosserat curvature tensor measures the remaining total curvature. Classical shell approaches do not have the possibility to incorporate a curvature tensor and their proposed bending tensors are not invariant under scaling.   We also expect manifest differences in predicting the small-scale wrinkling behaviors between a theory that shows scaling invariance of the bending tensor versus a theory that does not.
Since in the development of the model we took great care to approximate the shell as exact as possible it seems to be the simplifying assumptions in the classical Koiter shell model that destroy the mentioned, physically
appealing invariance condition.  In this respect more research seems to be necessary to generally ascertain the invariance condition in any first order nonlinear shell model. In a follow up paper \cite{GhibaNeffPartIV}, we will linearize the model and compare it to available linear shell models. The obtained clear  and consistent structure of the Cosserat model together with the sound invariance conditions which are automatically satisfied make us confident that the new Cosserat shell model will find its place when isotropic thin shell theory need to be actually applied in a FEM-context. In that case, however, it will rather be the unconstrained Cosserat shell model that will be implemented in order to obtain second order equilibrium equations. 

\bigskip

\begin{footnotesize}
	\noindent{\bf Acknowledgements:}   This research has been funded by the Deutsche Forschungsgemeinschaft (DFG, German Research Foundation) -- Project no. 415894848: NE 902/8-1 (P. Neff and P. Lewintan) and
	BI 1965/2-1 (M. B\^irsan). The  work of I.D. Ghiba  was supported by a grant of the Romanian Ministry of Research
	and Innovation, CNCS--UEFISCDI, Project no.
	PN-III-P1-1.1-TE-2019-0397, within PNCDI III.

	\bibliographystyle{plain} 
	
	

	\appendix\setcounter{equation}{0}
	\section*{Appendix}\setcounter{section}{1}
	\addcontentsline{toc}{section}{Appendix}
	\subsection{Useful identities}\label{ApropAB}
	We provide some   properties of the tensors  in the variational formulation of the shell models from \cite{GhibaNeffPartI,GhibaNeffPartII}:
	\begin{remark}\label{propAB}{\rm }
		The following identities are satisfied :
		\begin{itemize}
			\item [i)] 
			
			$\tr[{\rm A}_{y_0}]\,=\,2,$ \quad	${\det}[{\rm A}_{y_0}]\,=\,0;\quad $
			$\tr[{\rm B}_{y_0}]\,=\,2\,{\rm H}\,$,\quad  ${\det}[{\rm B}_{y_0}]\,=\,0,$ \\
			${\rm A}_{y_0} = [\nabla\Theta ]^{-T}\; {\rm I}_{y_0}^\flat \; [\nabla\Theta ]^{-1}=\,\id_3-(0|0|\nabla\Theta .e_3)\,[	\nabla\Theta ]^{-1}\,=\,\id_3-(0|0|n_0)\,(0|0|n_0)^T$, \\ $
			{\rm B}_{y_0} = [\nabla\Theta ]^{-T}\; {\rm II}_{y_0}^\flat \; [\nabla\Theta ]^{-1}$
			
			\item[ii)] ${\rm B}_{y_0}$ satisfies the equation of Cayley-Hamilton type
			$
			{\rm B}_{y_0}^2-2\,{\rm H}\, {\rm B}_{y_0}+{\rm K}\, {\rm A}_{y_0}\,=\,0_3;
			$
			\item[iii)] ${\rm A}_{y_0}{\rm B}_{y_0}\,=\,{\rm B}_{y_0}{\rm A}_{y_0}\,=\,{\rm B}_{y_0}$, \quad  ${\rm A}_{y_0}^2\,=\,{\rm A}_{y_0}$, \quad  ${\rm C}_{y_0}\in \mathfrak{so}(3)$, $\quad {\rm C}_{y_0}^2\,=\,-{\rm A}_{y_0}$, \quad $\lVert {\rm C}_{y_0}\rVert ^2=2$;
			\item[iv)] $
			\overline{Q}_{e,s}^T\,(\nabla [\overline{Q}_{e,s}\nabla\Theta .e_3]\,|\,0)\,[\nabla\Theta ]^{-1}\,\,=\,\,{\rm C}_{y_0} \mathcal{K}_{e,s}-{\rm B}_{y_0};
			$
			\item[v)] ${\rm C}_{y_0} \mathcal{K}_{e,s} {\rm A}_{y_0}\,\,=\,\,{\rm C}_{y_0} \mathcal{K}_{e,s} $,\quad  $\mathcal{E}_{m,s} {\rm A}_{y_0}\,\,=\,\,\mathcal{E}_{m,s} $.
		\end{itemize}
	\end{remark}
	Further, in view of $  {\rm I}_{y_0}^{-1} {\rm II}_{y_0}= {\rm L}_{y_0} $ and considering also the third fundamental form defined by ${\rm III}_{y_0}=  {\rm II}_{y_0} {\rm L}_{y_0}$, we note the relations
	\begin{equation}\label{eq2}
	[\nabla\Theta ]^{-1}\;{\rm B}_{y_0} = {\rm L}_{y_0}^\flat \; [\nabla\Theta ]^{-1}\qquad \mbox{and}\qquad
	{\rm B}^2_{y_0} = [\nabla\Theta ]^{-T}\; {\rm III}_{y_0}^\flat \; [\nabla\Theta ]^{-1}.
	\end{equation}
\subsection{The classical nonlinear  Koiter shell model in Cartesian matrix notation}\label{AppendixKoiter}\setcounter{equation}{0}
In this subsection, we consider  the  variational problem for the geometrically nonlinear Koiter energy for a nonlinear elastic shell \cite[page 147]{Ciarlet2Diff-Geo2005} and we rewrite it in matrix format. The problem written in tensor format  \cite[page 147]{Ciarlet2Diff-Geo2005},  \cite[Eq. (1) and Eq. (101)]{Steigmann13} is to find a deformation of the midsurface
$m:\omega\subset\mathbb{R}^2\to\mathbb{R}^3$  minimizing on $\omega$:
\begin{equation}\label{Ap7}
\begin{array}{l}
\dd\frac{1}{2}\int_\omega \bigg\{h\bigl\langle   \mathbb{C}_{\rm shell}^{\rm iso}.\frac{1}{2}\big( {\rm I}_m-{\rm I}_{y_0}\big),\,\frac{1}{2}\big( {\rm I}_m-{\rm I}_{y_0}\big)\bigr\rangle  +\dd\frac{h^3}{12}\bigl\langle   \mathbb{C}_{\rm shell}^{\rm iso}.\big( {\rm II}_m-{\rm II}_{y_0}\big),\,\big( {\rm II}_m-{\rm II}_{y_0}\big)\bigr\rangle  \bigg\}\, {\rm det}\nabla\Theta \,\, {\rm d}a\quad  \mapsto\min,
\end{array}
\end{equation}
where the fourth order constitutive tensor $\mathbb{C}_{\rm shell}^{\rm iso}:{\rm Sym}(2)\to {\rm Sym}(2)$ for isotropic elastic shells in the Koiter model is given by \cite{Ciarlet00}
\begin{equation}\label{Ap1}
\mathbb{C}_{\rm shell}^{\rm iso}\,=\,
\Big[ \mu\big( a^{\alpha\gamma}a^{\beta\tau}+ a^{\alpha\tau}a^{\beta\gamma}\big)+\dfrac{2\,\lambda\,\mu}{ \,\lambda\,+ 2\, \mu} \, a^{\alpha\beta}a^{ \gamma\tau}\Big] e _\alpha\otimes e _\beta\otimes e _\gamma \otimes e _\tau\,.\end{equation}

Since our model is completely written  in matrix format, we also transform   the above classical minimization problem in matrix format. To this aim, 
let us remark that in \eqref{Ap7}, for a second order symmetric tensor $\,X \,=\,X_{\alpha\beta} e _\alpha\otimes e _\beta\,$,  we have 
\begin{align}\label{Ap2}
\bigl\langle   \mathbb{C}_{\rm shell}^{\rm iso}.X &,\,X \bigr\rangle   \,= \,
\Big[ \mu\,\big( a^{\alpha\gamma}a^{\beta\tau}+ a^{\alpha\tau}a^{\beta\gamma}\big)+\dfrac{2\,\mu\,\lambda\,}{2\,\mu+\lambda} \, a^{\alpha\beta}a^{ \gamma\tau}\Big]\,X_{\alpha\beta}X_{\gamma\tau} \\\notag
=& \,
\,\mu\big( a^{\alpha\gamma}a^{\beta\tau}X_{\alpha\beta}X_{\gamma\tau}+ a^{\alpha\tau}a^{\beta\gamma}X_{\alpha\beta}X_{\gamma\tau}\big)+\dfrac{2\,\mu\,\lambda\,}{2\,\mu+\lambda} \, \big(a^{\alpha\beta}X_{\alpha\beta}\big)\big(a^{ \gamma\tau}X_{\gamma\tau}  \big)  =
2\,\mu\big( a^{\alpha\gamma}a^{\beta\tau}X_{\alpha\beta}X_{\gamma\tau}\big)+\dfrac{2\,\mu\,\lambda\,}{2\,\mu+\lambda} \, \big(a^{\alpha\beta}X_{\alpha\beta}\big)^2.\notag
\end{align}
A little calculation shows
\begin{equation}\label{Ap4}
\begin{array}{l}
\lVert  [\nabla\Theta]^{-T} \,{X}^\flat \, [\nabla\Theta]^{-1}\rVert^2 \,=\, \lVert P^{-T}\widehat{X}\, P^{-1}\rVert^2\,=\, \lVert (a ^i\otimes e _i)\, ( X_{\alpha\beta} \, e _\alpha\otimes e _\beta)\, ( e _j\otimes a ^j)\rVert^2
=\rVert    X_{\alpha\beta} \,a ^\alpha \otimes a ^\beta\rVert^2  \vspace{6pt} \\
\qquad\qquad\,=\, \mathrm{tr} \Big[\big(X_{\alpha\beta} \,a ^\alpha \otimes a ^\beta\big)\, \big(X_{\gamma\delta} \,a ^\gamma \otimes a ^\delta\big)^T\Big]
\,=\, \mathrm{tr} \Big[X_{\alpha\beta} X_{\delta\gamma} a^{\beta\gamma} \,\big(a ^\alpha \otimes a ^\delta\big)\Big]\, \,=\, \, X_{\alpha\beta} X_{\gamma\delta}\, a^{\beta\gamma} a^{\alpha\delta}
\,=\, \,a^{\alpha\gamma} a^{\beta\tau}   X_{\alpha\beta} X_{\gamma\tau}\;,
\end{array}
\end{equation}
where $ P\,=\, \nabla\Theta \, $ ,
and similarly
\begin{align}\label{Ap5}
\mathrm{tr} \Big[ [\nabla\Theta]^{-T} \,{X}^\flat \, [\nabla\Theta]^{-1}\Big] &=\, \mathrm{tr} \Big[P^{-T}{X}^\flat\, P^{-1}\Big]\,=\, \mathrm{tr} \Big[(a ^i\otimes e _i)\, ( X_{\alpha\beta} \, e _\alpha\otimes e _\beta)\, ( e _j\otimes a ^j)\Big]
 \vspace{6pt} \\
&=\,\mathrm{tr} \Big[ X_{\alpha\beta} \,a ^\alpha \otimes a ^\beta\Big] \,=\, X_{\alpha\beta} \,\bigl\langle  a ^\alpha, a ^\beta\bigr\rangle   
\,=\, a^{\alpha\beta}\,X_{\alpha\beta} \,.\notag
\end{align}
If we substitute \eqref{Ap4} and \eqref{Ap5} into \eqref{Ap2},  we obtain
\begin{equation}\label{Ap6}
\begin{array}{l}
\bigl\langle   \mathbb{C}_{\rm shell}^{\rm iso}.X ,\,X \bigr\rangle   \,= \,
2\,\mu\rVert    [\nabla\Theta]^{-T} \,{X}^\flat \, [\nabla\Theta]^{-1}\rVert^2 +\dfrac{2\,\lambda\,\mu}{ \lambda+ 2 \,\mu} \, \mathrm{tr} \Big[ [\nabla\Theta]^{-T} \,{X}^\flat \, [\nabla\Theta]^{-1}\Big]^2,
\end{array}
\end{equation}
which holds for any symmetric tensor $\,X \,=\,X_{\alpha\beta} e _\alpha\otimes e _\beta\,$.

Writing the equation \eqref{Ap6} for the symmetric matrix $\,X\,=\, \frac{1}{2}({\rm I}_m-{\rm I}_{y_0})\,$ and respectively the matrix $\,X\,=\, {\rm II}_m-{\rm II}_{y_0}\,$, then we obtain the following relations
\begin{align}\label{Ap7A}
\bigl\langle   \mathbb{C}_{\rm shell}^{\rm iso}.\frac{1}{2}\big( {\rm I}_m-{\rm I}_{y_0}\big),\,\frac{1}{2}\big( {\rm I}_m-{\rm I}_{y_0}\big)\bigr\rangle   \,=& \,
2\,\mu\,\rVert    [\nabla\Theta]^{-T} \,\frac{1}{2}\big({\rm I}_m^\flat-{\rm I}_{y_0}^\flat\big) \, [\nabla\Theta]^{-1}\rVert^2  +\dfrac{2\,\mu\,\lambda\,}{2\,\mu+\lambda} \, \mathrm{tr} \Big[ [\nabla\Theta]^{-T} \,\frac{1}{2}\,\big({\rm I}_m^\flat-{\rm I}_{y_0}^\flat\big) \, [\nabla\Theta]^{-1}\Big]^2, \vspace{6pt} \\
\bigl\langle   \mathbb{C}_{\rm shell}^{\rm iso}.\big( {\rm II}_m-{\rm II}_{y_0}\big),\,\big( {\rm II}_m-{\rm II}_{y_0}\big)\bigr\rangle  \, =& \,
2\,\mu\,\rVert    [\nabla\Theta]^{-T} \,\big({\rm II}_m^\flat-{\rm II}_{y_0}^\flat\big) \, [\nabla\Theta]^{-1}\rVert^2  +\dfrac{2\,\mu\,\lambda\,}{2\,\mu+\lambda} \, \mathrm{tr} \Big[ [\nabla\Theta]^{-T} \,\big({\rm II}_m^\flat-{\rm II}_{y_0}^\flat\big) \, [\nabla\Theta]^{-1}\Big]^2.\notag
\end{align}

Hence, putting all together, in matrix format and for a nonlinear elastic shell, the  variational problem for the Koiter energy   is to find a deformation of the midsurface
$m:\omega\subset\mathbb{R}^2\to\mathbb{R}^3$  minimizing on $\omega$
\begin{equation}\label{Ap7matrixcon}
\begin{array}{l}
\dd\int_\omega \bigg\{h\,\bigg(
\mu\rVert    [\nabla\Theta]^{-T} \,\underbrace{\frac{1}{2}\big({\rm I}_m^\flat-{\rm I}_{y_0}^\flat\big)}_{\mathcal{G}_{\rm Koiter}} \, [\nabla\Theta]^{-1}\rVert^2  +\dfrac{\,\lambda\,\mu}{\lambda+2\,\mu} \, \mathrm{tr} \Big[ [\nabla\Theta]^{-T} \,\big({\rm I}_m^\flat-{\rm I}_{y_0}^\flat\big) \, [\nabla\Theta]^{-1}\Big]^2\bigg) \vspace{6pt} \\
\quad\quad+\dd\frac{h^3}{12}\bigg(
\mu\rVert    [\nabla\Theta]^{-T} \,\underbrace{\big({\rm II}_m^\flat-{\rm II}_{y_0}^\flat\big)}_{\mathcal{R}_{\rm Koiter}} \, [\nabla\Theta]^{-1}\rVert^2 +\dfrac{\,\lambda\,\mu}{\lambda+2\,\mu} \, \mathrm{tr} \Big[ [\nabla\Theta]^{-T} \,\big({\rm II}_m^\flat-{\rm II}_{y_0}^\flat\big) \, [\nabla\Theta]^{-1}\Big]^2\bigg)\bigg\}\,{\rm det}\nabla\Theta \,\, {\rm d}a.
\end{array}
\end{equation}

The main feature of the classical Koiter model is that it is just the sum of the correctly identified membrane term and bending terms (under inextensional deformation).

\subsection{Thickness versus invertibility and coercivity}
\subsubsection{Invertibility conditions for the parametrized initial surface $\Theta$}\label{invertAppendix}
We note that $\det\nabla\Theta(x_3)= 1-2\, H\,x_3+K\, x_3^2=(1-\kappa_1\,x_3)(1-\kappa_2\, x_3)>0$ $\forall \, x_3\in \left[-\nicefrac{h}{2},\nicefrac{h}{2}\right]$ if and only if $1-\kappa_1\,x_3$ and $1-\kappa_2\, x_3$ have the same sign. However,  $1-\kappa_1\,x_3$ cannot be negative, since for $\kappa_1<0$  this will imply that $1<\kappa_1\,x_3$ $\forall \, x_3\in \left[-\nicefrac{h}{2},\nicefrac{h}{2}\right]$ which is not true if $x_3>0$, while for $\kappa_1\geq 0$  this will imply that $1<\kappa_1\,x_3$ $\forall \, x_3\in \left[-\nicefrac{h}{2},\nicefrac{h}{2}\right]$ which is not true if $x_3<0$. Therefore, $1-2\, H\,x_3+K\, x_3^2>0$ $\forall \, x_3\in \left[-\nicefrac{h}{2},\nicefrac{h}{2}\right]$ if and only if $1>\kappa_1\,x_3$ and $1>\kappa_2\, x_3$ $\forall \, x_3\in \left[-\nicefrac{h}{2},\nicefrac{h}{2}\right]$. These conditions are equivalent with  $|\kappa_1|\, \nicefrac{h}{2}<1$ and $|\kappa_2|\, \nicefrac{h}{2}<1$, i.e., ~equivalent with  \eqref{ch5in}.

\label{Appendixrelaxh}
\subsubsection{Coercivity for the $O(h^5)$ model}\label{coerh5Appendix}

The decisive point in the proof of the existence where the condition on the thickness is used is only in the proof of the coercivity of the internal energy density. Therefore, in this appendix, we extend the result regarding the coercivity of the internal energy to the following result:
\begin{proposition}\label{propcoerh5} {\rm [Coercivity in the theory including terms up to order $O(h^5)$]} For sufficiently small values of the thickness $h$ such that  
	\begin{align}\label{rcondh5}
h\max\{\sup_{x\in\omega}|\kappa_1|, \sup_{x\in\omega}|\kappa_2|\}<\alpha \qquad \text{with}\qquad  \alpha<\sqrt{\frac{2}{3}(29-\sqrt{761})}\simeq 0.97083
\end{align} 
	and for constitutive coefficients  satisfying  $\mu>0, \,\mu_{\rm c}>0$, $2\,\lambda+\mu> 0$, $b_1>0$, $b_2>0$ and $b_3>0$,   the  energy density
	\begin{align}W(\mathcal{E}_{m,s}, \mathcal{K}_{e,s})=W_{\mathrm{memb}}\big(  \mathcal{E}_{m,s} \big)+W_{\mathrm{memb,bend}}\big(  \mathcal{E}_{m,s} ,\,  \mathcal{K}_{e,s} \big)+W_{\mathrm{bend,curv}}\big(  \mathcal{K}_{e,s}    \big)
	\end{align}
	is coercive in the sense that  there exists a constant   $a_1^+>0$  such that
	$	W(\mathcal{E}_{m,s}, \mathcal{K}_{e,s})\,\geq\, a_1^+\, \big( \lVert \mathcal{E}_{m,s}\rVert ^2 + \lVert \mathcal{K}_{e,s}\rVert ^2\,\big)$,
	where
	$a_1^+$ depends on the constitutive coefficients.
\end{proposition}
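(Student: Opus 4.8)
\textbf{Proof plan for Proposition~\ref{propcoerh5} (coercivity for the $O(h^5)$ model).}
The plan is to exploit the lower bound already obtained in Lemma~\ref{propcoerh5}, together with the positive-definiteness estimates \eqref{pozitivdef}, and to reduce everything to controlling $\lVert\mathcal{K}_{e,s}\rVert^2$ and $\lVert\mathcal{E}_{m,s}\rVert^2$ by the three quantities appearing on the right-hand side of \eqref{26bis}. First I would recall that, under $\mu>0$, $\mu_{\rm c}>0$, $2\lambda+\mu>0$, the quadratic form $W_{\mathrm{shell}}$ dominates $c_1^+\lVert\sym X\rVert^2+\mu_{\rm c}\lVert\skw X\rVert^2\geq c\,\lVert X\rVert^2$ with $c=\min\{c_1^+,\mu_{\rm c}\}>0$, and likewise $W_{\mathrm{curv}}(X)\geq c_2^+\lVert X\rVert^2$. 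Applying Lemma~\ref{propcoerh5} we therefore get, for $h$ satisfying \eqref{rcondh5},
\begin{align}
W(\mathcal{E}_{m,s},\mathcal{K}_{e,s})\;\geq\;& h\,\tfrac{7}{48}\,c\,\lVert\mathcal{E}_{m,s}\rVert^2+\tfrac{h^3}{12}\tfrac{37}{80}\,c\,\lVert\mathcal{E}_{m,s}{\rm B}_{y_0}+{\rm C}_{y_0}\mathcal{K}_{e,s}\rVert^2\notag\\
&+\tfrac{h^5}{80}\tfrac{1}{6}\,c\,\lVert(\mathcal{E}_{m,s}{\rm B}_{y_0}+{\rm C}_{y_0}\mathcal{K}_{e,s}){\rm B}_{y_0}\rVert^2+h\,\tfrac{47}{48}\,c_2^+\,\lVert\mathcal{K}_{e,s}\rVert^2.
\end{align}
Since the last term already controls $\lVert\mathcal{K}_{e,s}\rVert^2$ and the first controls $\lVert\mathcal{E}_{m,s}\rVert^2$, the desired inequality $W\geq a_1^+(\lVert\mathcal{E}_{m,s}\rVert^2+\lVert\mathcal{K}_{e,s}\rVert^2)$ follows immediately with $a_1^+=\min\{h\tfrac{7}{48}c,\;h\tfrac{47}{48}c_2^+\}$, which depends only on the constitutive coefficients and on $h$, as claimed.

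The only genuine work, then, is establishing Lemma~\ref{propcoerh5} itself, i.e.~the algebraic estimate \eqref{26bis}; this is the step I expect to be the main obstacle, and the excerpt defers it to the proof of Theorem~4.1 in \cite{GhibaNeffPartII} and to Appendix~\ref{coerh5Appendix}. I would proceed there as follows. Expand $W$ according to \eqref{e90}, and write every argument of $W_{\mathrm{shell}}$, $\mathcal{W}_{\mathrm{shell}}$ and $W_{\mathrm{mp}}$ in terms of the two ``basic'' tensors $X\coloneqq\mathcal{E}_{m,s}$ and $Y\coloneqq\mathcal{E}_{m,s}{\rm B}_{y_0}+{\rm C}_{y_0}\mathcal{K}_{e,s}$, using the identities of Remark~\ref{propAB} (in particular ${\rm B}_{y_0}^2=2{\rm H}{\rm B}_{y_0}-{\rm K}{\rm A}_{y_0}$, ${\rm A}_{y_0}^2={\rm A}_{y_0}$, $X{\rm A}_{y_0}=X$) to re-express $\mathcal{E}_{m,s}{\rm B}_{y_0}^2$, $Y{\rm B}_{y_0}$, etc.~as linear combinations of $X$, $Y$, $Y{\rm B}_{y_0}$ with coefficients that are polynomials in ${\rm H},{\rm K},x_3$. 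The mixed terms $-\tfrac{h^3}{3}{\rm H}\,\mathcal{W}_{\mathrm{shell}}(X,Y)$ and $\tfrac{h^3}{6}\mathcal{W}_{\mathrm{shell}}(X,Y{\rm B}_{y_0})$ are the dangerous indefinite contributions: I would bound them below using Cauchy--Schwarz and Young's inequality in the form $|\mathcal{W}_{\mathrm{shell}}(X,Y)|\leq \varepsilon\,W_{\mathrm{shell}}(X)+\tfrac{1}{4\varepsilon}W_{\mathrm{shell}}(Y)$ (valid since $\mathcal{W}_{\mathrm{shell}}$ is the polarization of the positive form $W_{\mathrm{shell}}$), and similarly $|\mathcal{W}_{\mathrm{shell}}(X,Y{\rm B}_{y_0})|\leq\varepsilon\,W_{\mathrm{shell}}(X)+\tfrac{1}{4\varepsilon}W_{\mathrm{shell}}(Y{\rm B}_{y_0})$; here one absorbs the operator norm $\lVert{\rm B}_{y_0}\rVert$ into the constants, noting that \eqref{rcondh5} forces $\lVert{\rm B}_{y_0}\rVert$-type bounds through $2{\rm H}$, ${\rm K}$ being bounded by $|\kappa_1|+|\kappa_2|$ and $|\kappa_1\kappa_2|$.

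After these substitutions the integrand becomes a quadratic form in $(X,Y,Y{\rm B}_{y_0})$ (equivalently, after integrating out $x_3$, a quadratic form in $\mathcal{E}_{m,s}$, $\mathcal{E}_{m,s}{\rm B}_{y_0}+{\rm C}_{y_0}\mathcal{K}_{e,s}$ and $(\mathcal{E}_{m,s}{\rm B}_{y_0}+{\rm C}_{y_0}\mathcal{K}_{e,s}){\rm B}_{y_0}$) whose coefficient matrix has entries that are explicit polynomials in $h$, $\max\{|\kappa_1|,|\kappa_2|\}$. The crucial quantitative step is to show that this $3\times3$ (block) coefficient matrix stays positive definite precisely when $h\max\{|\kappa_1|,|\kappa_2|\}<\alpha$ with $\alpha<\sqrt{\tfrac{2}{3}(29-\sqrt{761})}$: the numerical threshold $\sqrt{\tfrac{2}{3}(29-\sqrt{761})}\simeq0.97083$ is exactly the root of the determinant/Sylvester-criterion polynomial arising from the factors $(h\pm{\rm K}\tfrac{h^2}{\cdots})$ and the numerical constants $\tfrac{7}{48},\tfrac{37}{80},\tfrac16,\tfrac{47}{48}$ appearing in \eqref{26bis}. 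I would verify the Sylvester minors one by one, reading off the explicit surviving fractions; this bookkeeping is routine but lengthy and is the part I would relegate to the appendix. Once positive definiteness of that matrix is in hand, \eqref{26bis} follows with the stated constants, and the proposition is proved by the argument in the first paragraph. The fact that all constants are independent of $\mu_{\rm c}$ (used crucially for the $\mu_{\rm c}\to\infty$ limit in Section~\ref{4.1}) comes from keeping $c_1^+$, $C_1^+$ — the eigenvalues of $W_{\mathrm{shell}}^{\infty}$ — rather than $c$ in the final estimate whenever the symmetric part suffices, exactly as flagged after \eqref{pozitivdef}.
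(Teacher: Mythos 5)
Your first paragraph is sound and in fact coincides with the closing steps of the paper's own proof: once an estimate of the form \eqref{26bis} is in hand, one only needs the first and the last term (discarding the nonnegative middle terms), inserts the lower bounds from \eqref{pozitivdef}, and reads off $a_1^+$. The issue lies entirely in your account of how \eqref{26bis} (and, with it, the threshold on $\alpha$) is to be established.

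You describe a hybrid that would not work as stated. You first say you would absorb the two mixed terms $-\tfrac{h^3}{3}\mathrm{H}\,\mathcal{W}_{\mathrm{shell}}(X,Y)$ and $\tfrac{h^3}{6}\mathcal{W}_{\mathrm{shell}}(X,Y{\rm B}_{y_0})$ by Young's inequality — which is indeed what the paper does, starting from the $\varepsilon,\delta$-parametrized bound of Proposition~3.1 in \cite{GhibaNeffPartII}. But after Young there are \emph{no cross terms left}: the remaining expression is a linear combination, with scalar coefficients depending on $(h,\mathrm{H},\mathrm{K},\varepsilon,\delta)$, of the four quadratic forms $W_{\mathrm{shell}}(\mathcal{E}_{m,s})$, $W_{\mathrm{shell}}(\mathcal{E}_{m,s}{\rm B}_{y_0}+{\rm C}_{y_0}\mathcal{K}_{e,s})$, $W_{\mathrm{shell}}((\mathcal{E}_{m,s}{\rm B}_{y_0}+{\rm C}_{y_0}\mathcal{K}_{e,s}){\rm B}_{y_0})$ and $W_{\mathrm{curv}}(\mathcal{K}_{e,s})$. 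There is therefore no $3\times3$ block coefficient matrix and no Sylvester-minor computation to perform; the condition is simply that all four \emph{scalar} coefficients be positive simultaneously. The third and fourth coefficients are positive once $\delta>\tfrac{20}{3}$ and $\alpha<\sqrt{\tfrac{20}{3}}$; the second forces $\varepsilon$ to be bounded below in terms of $\alpha$; and the first then requires $\delta+2\varepsilon\alpha<12-\alpha^2$. Minimizing $\delta+2\varepsilon\alpha$ over the admissible $(\delta,\varepsilon)$ and comparing with $12-\alpha^2$ yields a quadratic inequality in $\alpha^2$, namely $9\alpha^4-348\alpha^2+320>0$, whose admissible root is exactly $\alpha^2<\tfrac{2}{3}(29-\sqrt{761})$. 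In particular, the threshold has nothing to do with the explicit numbers $\tfrac{7}{48},\tfrac{37}{80},\tfrac16,\tfrac{47}{48}$ appearing in \eqref{26bis}: those are \emph{outputs} obtained once a specific admissible $(\delta,\varepsilon)$ has been fixed, not inputs to any ``determinant polynomial.'' If you followed your Sylvester plan literally you would not recover the stated numerical bound. A smaller point: the $\mu_{\rm c}$-independence you invoke at the end is not relevant to this proposition (your own formula $a_1^+=\min\{h\tfrac{7}{48}c,\,h\tfrac{47}{48}c_2^+\}$ with $c=\min\{c_1^+,\mu_{\rm c}\}$ does depend on $\mu_{\rm c}$, and that is fine here). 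Independence of $\mu_{\rm c}$ is only needed in Section \ref{4.1}, where it comes from the imposed symmetry of the arguments making the $\mu_{\rm c}\lVert\skw\cdot\rVert^2$ terms vanish, not from the choice $c_1^+$ versus $c$.
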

\begin{proof}
	From the assumptions
	$
	h\, |\kappa_1|<\alpha,$ $ \  h\, |\kappa_2|<\alpha,
	$
	it follows that
	\begin{align}\label{condition}
	h^2|K|=h^2\, |\kappa_1|\,|\kappa_2|<\alpha^2\qquad \text{and}\qquad 2\,h\, |H|=h\, |\kappa_1+\kappa_2|<2\, \alpha.
	\end{align}
	Therefore, for $\alpha<\sqrt{\frac{20}{3}}$ it follows
	$
	h-{\rm K}\,\frac{h^3}{12}>	h-\,\frac{h}{12}\alpha^2>0 \ \ \textrm{and}  \   \ \frac{h^3}{12}\,-\frac{h^3}{80}\alpha^2>0.
	$ On the other hand, from \cite[Proposition 3.1.]{GhibaNeffPartII} we have
	\begin{align}
	W(\mathcal{E}_{m,s}, \mathcal{K}_{e,s})
	\geq\,&
	\Big(h-\dfrac{h}{12}\delta+{\rm K}\,\dfrac{h^3}{12}-
	\dfrac{h^2}{6}\varepsilon\,|\mathrm{H}|\Big)\,{W}_{\mathrm{shell}}  \big(  \mathcal{E}_{m,s}\big)+\Big(\dfrac{h^3}{12}\,-{\rm K}\,\dfrac{h^5}{80}-\dfrac{h^4}{6\, \varepsilon}\,\,|\mathrm{H}|\Big)\, {W}_{\mathrm{shell}}  \big(
	\mathcal{E}_{m,s}{\rm B}_{y_0}+{\rm C}_{y_0}\, \mathcal{K}_{e,s}  \big)\\&+ \Big(\,\dfrac{h^5}{80}-
	\dfrac{h^5}{12\, \delta}\Big)\,\,
	W_{\mathrm{shell}} \big((  \mathcal{E}_{m,s} \, {\rm B}_{y_0} +  {\rm C}_{y_0} \mathcal{K}_{e,s} )   {\rm B}_{y_0} \,\big) +\Big(h-{\rm K}\,\dfrac{h^3}{12}\Big)\,
	W_{\mathrm{curv}}\big(  \mathcal{K}_{e,s} \big)\qquad \  \forall\, \varepsilon>0\  \text{ and } \ \  \delta>0\notag.
	\end{align}
Using the inequalities \eqref{condition}, we deduce 
		\begin{align}\label{B5}
	W(\mathcal{E}_{m,s}, \mathcal{K}_{e,s})
	\geq\,&
	\dfrac{h}{12}\Big(12-\delta-\alpha^2-
	2\,\varepsilon\,\alpha\Big)\,{W}_{\mathrm{shell}}  \big(  \mathcal{E}_{m,s}\big)+\dfrac{h^3}{12}\,\Big(1-\dfrac{3}{20}\,\alpha^2-\dfrac{1}{3\, \varepsilon}\,\alpha\Big)\, {W}_{\mathrm{shell}}  \big(
	\mathcal{E}_{m,s}{\rm B}_{y_0}+{\rm C}_{y_0}\, \mathcal{K}_{e,s}  \big)\\&+ \dfrac{h^5}{80}\,\Big(1-
	\dfrac{20 }{3\, \delta}\Big)\,\,
	W_{\mathrm{shell}} \big((  \mathcal{E}_{m,s} \, {\rm B}_{y_0} +  {\rm C}_{y_0} \mathcal{K}_{e,s} )   {\rm B}_{y_0} \,\big)+\Big(h-{\rm K}\,\dfrac{h^3}{12}\Big)\,
	W_{\mathrm{curv}}\big(  \mathcal{K}_{e,s} \big)\qquad \  \forall\, \varepsilon>0\  \text{ and } \ \  \delta>0\notag.
	\end{align}
Let us remark for $0<\alpha<\sqrt{\frac{20}{3}}$, if there exist $\delta>\frac{20}{3}$ and	 $\varepsilon>\frac{40\, \alpha}{20-3\, \alpha^2}$ 
such that
\begin{align}\label{conditionaed}
12-\delta-\alpha^2-
2\,\varepsilon\,\alpha>0 \ \ \Leftrightarrow\ \ 12-\alpha^2>\delta+
2\,\varepsilon\,\alpha>\frac{20}{3}+
2\,\frac{40\,\alpha^2}{20-3\, \alpha^2}\,
\end{align}
then  $\alpha<\sqrt{\frac{2}{3}(29-\sqrt{761})}\simeq 0.97083$. Since the map $(\delta, \varepsilon)\mapsto \delta+
2\,\varepsilon\,\alpha$ is linear, it increases in gradient direction and hence, the condition $\alpha<\sqrt{\frac{2}{3}(29-\sqrt{761})}\simeq 0.97083$ is also sufficient for the existence of $\delta>\frac{20}{3}$ and	 $\varepsilon>\frac{40\, \alpha}{20-3\, \alpha^2}$  such that \eqref{conditionaed} is satisfied.
 
	\begin{align}
	W(\mathcal{E}_{m,s}, \mathcal{K}_{e,s})
	\geq\,&h\,\Big[\dfrac{1}{3}-{\rm K}\,\dfrac{h^2}{12}-
	\dfrac{h}{3}\,|\mathrm{H}|\Big]\,  {W}_{\mathrm{shell}}  \big(  \mathcal{E}_{m,s}\big)+\dfrac{h^3}{12}\Big(1\,-|{\rm K}|\,\dfrac{12\,h^2}{80}-h\,|\mathrm{H}|\Big)\, {W}_{\mathrm{shell}}  \big(
	\mathcal{E}_{m,s}{\rm B}_{y_0}+{\rm C}_{y_0}\, \mathcal{K}_{e,s}  \big)\notag\\&+\Big(h-|{\rm K}|\,\dfrac{h^3}{12}\Big)\,
	W_{\mathrm{curv}}\big(  \mathcal{K}_{e,s} \big).
	\end{align}
	In view of \eqref{B5} and \eqref{pozitivdef}, we see that there exist positive constants $b_1^+, b_2^+,b_3^+>0$ such that
	\begin{align}
	W(\mathcal{E}_{m,s}, \mathcal{K}_{e,s})
	\geq\,&\dfrac{h}{12}\,b_1^+\,  {W}_{\mathrm{shell}}  \big(  \mathcal{E}_{m,s}\big)+\dfrac{h^3}{12}\,b_2^+\, {W}_{\mathrm{shell}}  \big(
	\mathcal{E}_{m,s}{\rm B}_{y_0}+{\rm C}_{y_0}\, \mathcal{K}_{e,s}  \big)+h\,b_3^+\,
	W_{\mathrm{curv}}\big(  \mathcal{K}_{e,s} \big)\notag\\
	\geq \,&\dfrac{h}{12}\,b_1^+\, \min\{c_1^+,\mu_{\rm c}\}\, \lVert   \mathcal{E}_{m,s}\rVert ^2+\dfrac{h}{12}\,b_2^+\,\min\{c_1^+,\mu_{\rm c}\}\, \lVert
	\mathcal{E}_{m,s}{\rm B}_{y_0}+{\rm C}_{y_0}\, \mathcal{K}_{e,s}  \rVert ^2+h\,b_3^+\,  { c_2^+}
	\lVert   \mathcal{K}_{e,s} \rVert ^2.
	\end{align}
	The desired constant  $a_1^+$ from the conclusion can be chosen as $a_1^+=\min\big\{\dfrac{h}{12}\,b_1^+\, \min\{c_1^+,\mu_{\rm c}\},\dfrac{h}{12}\,b_2^+\,\min\{c_1^+,\mu_{\rm c}\}, h\,b_3^+\,  { c_2^+} \big\}$.
\end{proof}

\subsubsection{Coercivity for the $O(h^3)$ model}\label{coerh3Appendix}

In this subsection we investigate if the  conditions which assure the existence of the solution may be relaxed in the Cosserat shell model up to $O(h^3)$, too. In fact, as in the previous subsection, it is enough to prove some new coercivity results under weakened conditions on the thickness. 
We recall that in the Cosserat shell model up to $O(h^3)$
the shell energy density $W^{(h^3)}(\mathcal{E}_{m,s}, \mathcal{K}_{e,s})$ is given by \eqref{h3energy}.
\begin{proposition}\label{coerh3r}{\rm [The first coercivity result in the theory including terms up to order $O(h^3)$]}  For sufficiently small values of the thickness $h$ such that  	
	\begin{align}\label{fcond}
	h\max\{\sup_{x\in\omega}|\kappa_1|, \sup_{x\in\omega}|\kappa_2|\}<\alpha\qquad \text{and}\qquad 	h^2<\frac{(5-2\sqrt{6})(\alpha^2-12)^2}{4\, \alpha^2}\frac{ {c_2^+}}{\max\{C_1^+,\mu_{\rm c}\}}\qquad \text{with} \quad 0<\alpha<2\sqrt{3}
	\end{align}
	and for constitutive coefficients  satisfying the constitutive coefficients are  such that $\mu>0, \,\mu_{\rm c}>0$, $2\,\lambda+\mu> 0$, $b_1>0$, $b_2>0$ and $b_3>0$ and where $c_2^+$  denotes the smallest eigenvalue  of
	$
	W_{\mathrm{curv}}(  S ),
	$
	and $ C_1^+>0$ denotes  the largest eigenvalues of the quadratic form $W_{\mathrm{shell}}^\infty(  S)$,
	the total energy density $W^{(h^3)}(\mathcal{E}_{m,s}, \mathcal{K}_{e,s})$
	is coercive, in the sense that  there exists   a constant $a_1^+>0$ such that \linebreak
	$	W^{(h^3)}(\mathcal{E}_{m,s}, \mathcal{K}_{e,s})\,\geq\, a_1^+\, \big(  \lVert \mathcal{E}_{m,s}\rVert ^2 + \lVert \mathcal{K}_{e,s}\rVert ^2\,\big) ,
	$ where
	$a_1^+$ depends on the constitutive coefficients.
\end{proposition}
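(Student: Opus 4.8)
The plan is to follow the same structure as the proof of Proposition~\ref{propcoerh5} in Appendix~\ref{coerh5Appendix}, isolating the three energy contributions (membrane, membrane--bending, bending--curvature) and finding ranges of the auxiliary parameters under which all coefficients in the pointwise lower bound remain strictly positive. First I would recall the starting pointwise estimate analogous to the one used in \cite{GhibaNeffPartII} for the $O(h^3)$ model: for all $\varepsilon>0$, $\delta>0$,
\begin{align*}
W^{(h^3)}(\mathcal{E}_{m,s}, \mathcal{K}_{e,s})
\geq\,&
\Big(h-\dfrac{h}{12}\delta+{\rm K}\,\dfrac{h^3}{12}-
\dfrac{h^2}{6}\varepsilon\,|\mathrm{H}|\Big)\,{W}_{\mathrm{shell}}  \big(  \mathcal{E}_{m,s}\big)\\&+\Big(\dfrac{h^3}{12}-\dfrac{h^4}{6\,\varepsilon}\,|\mathrm{H}|-\dfrac{h^5}{12\,\delta}\, \|{\rm B}_{y_0}\|^2_\infty\Big)\, {W}_{\mathrm{shell}}  \big(\mathcal{E}_{m,s}{\rm B}_{y_0}+{\rm C}_{y_0}\, \mathcal{K}_{e,s}  \big)\\&+\Big(h-{\rm K}\,\dfrac{h^3}{12}\Big)\,W_{\mathrm{curv}}\big(  \mathcal{K}_{e,s} \big),
\end{align*}
where the $O(h^3)$ model lacks the $W_{\mathrm{shell}}\big((\mathcal{E}_{m,s}{\rm B}_{y_0}+{\rm C}_{y_0}\mathcal{K}_{e,s}){\rm B}_{y_0}\big)$ term present at order $O(h^5)$, so the Cauchy--Schwarz splittings only have to redistribute the $\frac{h^3}{6}\mathcal{W}_{\mathrm{shell}}$ and $-\frac{h^3}{3}{\rm H}\,\mathcal{W}_{\mathrm{shell}}$ mixed terms. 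I would then substitute the curvature bounds $h^2|{\rm K}|<\alpha^2$, $2h|{\rm H}|<2\alpha$ coming from \eqref{fcond}$_1$, as in \eqref{condition}, to reduce everything to numerical inequalities in $\alpha,\varepsilon,\delta$.

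The key new feature compared to the $O(h^5)$ case is that the coefficient multiplying ${W}_{\mathrm{shell}}(\mathcal{E}_{m,s}{\rm B}_{y_0}+{\rm C}_{y_0}\mathcal{K}_{e,s})$ cannot be made positive purely by choice of $\varepsilon,\delta$ once the $\frac{h^5}{12\delta}$ penalty is accounted for; instead I would \emph{borrow} positivity from the curvature energy term $\big(h-{\rm K}\frac{h^3}{12}\big)W_{\mathrm{curv}}(\mathcal{K}_{e,s})$. Concretely, using $\|\mathcal{E}_{m,s}{\rm B}_{y_0}+{\rm C}_{y_0}\mathcal{K}_{e,s}\|^2\le 2\|\mathcal{E}_{m,s}\|^2\|{\rm B}_{y_0}\|^2_\infty+2\|{\rm C}_{y_0}\|^2_\infty\|\mathcal{K}_{e,s}\|^2$ and the two-sided bounds \eqref{pozitivdef}, a negative multiple of $W_{\mathrm{shell}}(\mathcal{E}_{m,s}{\rm B}_{y_0}+{\rm C}_{y_0}\mathcal{K}_{e,s})$ is absorbed into the membrane and curvature terms provided the product $h^2\,\frac{\max\{C_1^+,\mu_{\rm c}\}}{c_2^+}$ is small enough — this is exactly what the second condition in \eqref{fcond} encodes, with the constant $(5-2\sqrt{6})(\alpha^2-12)^2/(4\alpha^2)$ arising from optimising the splitting parameter (the factor $5-2\sqrt6=(\sqrt3-\sqrt2)^2\cdot\text{const}$ signals a quadratic-in-$\delta$ discriminant condition). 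After this absorption one is left with strictly positive multiples of $W_{\mathrm{shell}}(\mathcal{E}_{m,s})$ and $W_{\mathrm{curv}}(\mathcal{K}_{e,s})$, and applying \eqref{pozitivdef} once more and $\|\mathcal{E}_{m,s}\|\ge\|\sym\mathcal{E}_{m,s}\|$ (the modification noted in the proof of Proposition~\ref{coerh3}) yields the claimed coercivity with $a_1^+$ the minimum of the resulting constants.

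I expect the main obstacle to be the bookkeeping of the optimal choice of $\varepsilon$ and $\delta$: one must verify that the admissible region $\{12-\alpha^2 > \delta + 2\varepsilon\alpha\}$ for making the membrane coefficient positive intersects the region forcing the membrane--bending coefficient positive after the curvature-borrowing step, and that this intersection is nonempty precisely under \eqref{fcond}. Since the functions involved are linear in $\delta$ and reciprocal in $\delta,\varepsilon$, the feasibility question reduces to checking that a certain one-variable quadratic has real roots, which is where the explicit bound $h^2<\frac{(5-2\sqrt{6})(\alpha^2-12)^2}{4\alpha^2}\frac{c_2^+}{\max\{C_1^+,\mu_{\rm c}\}}$ comes from; carrying the constants through cleanly, rather than any conceptual difficulty, is the delicate part. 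I would also double-check that $\|{\rm B}_{y_0}\|_\infty$ and $\|{\rm C}_{y_0}\|_\infty$ (equal to $\sqrt{2}$ by Remark~\ref{propAB}) are uniformly controlled by $\alpha$ via $\|{\rm B}_{y_0}\|\le 2\max\{\sup|\kappa_1|,\sup|\kappa_2|\}$, so that all hidden constants are genuinely independent of $\mu_{\rm c}$, as required for the subsequent passage to the limit $\mu_{\rm c}\to\infty$ in Proposition~\ref{coerh3}.
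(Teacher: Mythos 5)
The overall strategy you sketch — Cauchy--Schwarz on the mixed terms, absorbing the residual negativity into the curvature energy, and then optimising $(\varepsilon,\delta)$ — does match the spirit of the paper's proof. But the specific decomposition you write down is not the one the paper uses, and I believe this difference matters for the constant.

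Your preliminary lower bound places the $\varepsilon$- and $\delta$-penalties on $W_{\mathrm{shell}}\big(\mathcal{E}_{m,s}{\rm B}_{y_0}+{\rm C}_{y_0}\mathcal{K}_{e,s}\big)$, so the coefficient of that (mixed) energy can become negative and has then to be split further into an $\lVert\mathcal{E}_{m,s}\rVert^2$ piece and an $\lVert\mathcal{K}_{e,s}\rVert^2$ piece via $\lVert\mathcal{E}_{m,s}{\rm B}_{y_0}+{\rm C}_{y_0}\mathcal{K}_{e,s}\rVert^2\le 2\lVert{\rm B}_{y_0}\rVert^2\lVert\mathcal{E}_{m,s}\rVert^2+2\lVert{\rm C}_{y_0}\rVert^2\lVert\mathcal{K}_{e,s}\rVert^2$. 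Once you do that, the $\lVert\mathcal{E}_{m,s}\rVert^2$ piece carries a factor $\max\{C_1^+,\mu_{\rm c}\}$ (from bounding $W_{\mathrm{shell}}$ above) and must be absorbed into the membrane term, which only gives $\min\{c_1^+,\mu_{\rm c}\}\lVert\mathcal{E}_{m,s}\rVert^2$ from below — so the ratio $\max\{C_1^+,\mu_{\rm c}\}/\min\{c_1^+,\mu_{\rm c}\}$ unavoidably enters your thickness condition. That ratio is exactly what Proposition~\ref{coerh3r} does \emph{not} contain (it only involves $c_2^+/\max\{C_1^+,\mu_{\rm c}\}$); it is the hallmark of the alternative estimate in Proposition~\ref{coerh3r2} and of alternative ii) in Theorem~\ref{th11}. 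In contrast, the paper's preliminary estimate keeps the positive $\frac{h^3}{12}W_{\mathrm{shell}}\big(\mathcal{E}_{m,s}{\rm B}_{y_0}+{\rm C}_{y_0}\mathcal{K}_{e,s}\big)$ intact and routes the Cauchy--Schwarz penalties onto $W_{\mathrm{shell}}\big({\rm C}_{y_0}\mathcal{K}_{e,s}\big)$ and $W_{\mathrm{shell}}\big({\rm C}_{y_0}\mathcal{K}_{e,s}{\rm B}_{y_0}\big)$ only; these involve $\mathcal{K}_{e,s}$ alone and are absorbed entirely by $W_{\mathrm{curv}}(\mathcal{K}_{e,s})$, so the membrane coefficient is not touched by the absorption and the ratio $C_1^+/c_1^+$ never appears. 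This choice of where to put the penalty is the decisive step of the proof, and it is what makes the stated constant attainable. The subsequent one-variable optimisation over $\gamma=\delta/\varepsilon$ (maximising $\frac{(12-\alpha^2)^2\gamma}{4(1+2\alpha\gamma)(\alpha+3\alpha^2\gamma)}$ at $\gamma=1/\sqrt6$) gives the $5-2\sqrt6$ factor, which your discriminant heuristic correctly anticipated but by a different route.

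Two smaller remarks. First, your estimate $W_{\mathrm{shell}}\big((\mathcal{E}_{m,s}{\rm B}_{y_0}+{\rm C}_{y_0}\mathcal{K}_{e,s}){\rm B}_{y_0}\big)\le\lVert{\rm B}_{y_0}\rVert^2\,W_{\mathrm{shell}}\big(\mathcal{E}_{m,s}{\rm B}_{y_0}+{\rm C}_{y_0}\mathcal{K}_{e,s}\big)$, implicit in your $-\frac{h^5}{12\delta}\lVert{\rm B}_{y_0}\rVert^2_\infty$ coefficient, is not true as written: it requires passing through the Frobenius norm twice and hence a factor $\max\{C_1^+,\mu_{\rm c}\}/\min\{c_1^+,\mu_{\rm c}\}$, again spoiling the constant. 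Second, the remark ``$\lVert\mathcal{E}_{m,s}\rVert\ge\lVert\sym\mathcal{E}_{m,s}\rVert$'' belongs to the constrained Proposition~\ref{coerh3}, not to Proposition~\ref{coerh3r}: here $W_{\mathrm{shell}}$ still carries the $\mu_{\rm c}\lVert\skw\cdot\rVert^2$ term, and the energy is compared against $\lVert\mathcal{E}_{m,s}\rVert^2$ directly, so no $\sym$-reduction is needed or appropriate.
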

\begin{proof}Similarly as in the proof presented in \cite[Proposition 4.1]{GhibaNeffPartII}, we obtain the estimate
	\begin{align}
	W^{(h^3)}(\mathcal{E}_{m,s}, \mathcal{K}_{e,s})	\geq \, &\Big(h-{\rm K}\,\dfrac{h^3}{12}-
	\dfrac{1}{12}\, \varepsilon \, h-
	\dfrac{1}{6}\,\delta \,|{\rm H}|\,  h^2\Big)\,{W}_{\mathrm{shell}}  \big(  \mathcal{E}_{m,s}\big)-\dfrac{1}{12\,\varepsilon}\,  h^5\,{W}_{\mathrm{shell}}  \big(
	{\rm C}_{y_0}\, \mathcal{K}_{e,s} {\rm B}_{y_0} \big)\notag\\&-\dfrac{1}{6\,\delta} \,|{\rm H}|\,  h^4\,{W}_{\mathrm{shell}}  \big(
	{\rm C}_{y_0}\, \mathcal{K}_{e,s}  \big)+\dfrac{h^3}{12}\,
	W_{\mathrm{shell}}  \big(   \mathcal{E}_{m,s} \, {\rm B}_{y_0} +   {\rm C}_{y_0} \mathcal{K}_{e,s} \big)
	\notag\\&+\Big(h-{\rm K}\,\dfrac{h^3}{12}\Big)\,
	W_{\mathrm{curv}}\big(  \mathcal{K}_{e,s} \big) +  \dfrac{h^3}{12}\,
	W_{\mathrm{curv}}\big(  \mathcal{K}_{e,s}   {\rm B}_{y_0} \,  \big)
	\quad \forall\, \varepsilon>0\  \text{and} \ \delta>0,
	\end{align}
which after imposing the conditions $-h^2 |K|>-\alpha^2$ and $-h\, |H|>-\alpha$,	using  \eqref{condition}, \eqref{pozitivdef} and 	since the Frobenius norm is sub-multiplicative and 
$\lVert C_{y_0}\rVert ^2=2
$, leads to
	\begin{align}
	W^{(h^3)}(\mathcal{E}_{m,s}, \mathcal{K}_{e,s})\geq \, &
	\dfrac{h}{12}	\Big(12-\alpha^2-\varepsilon-2\,\alpha\, \delta
\Big)\, {W}_{\mathrm{shell}}  \big(  \mathcal{E}_{m,s}\big)+\dfrac{h^3}{12}\, {W}_{\mathrm{shell}}  \big(
	\mathcal{E}_{m,s}{\rm B}_{y_0}+{\rm C}_{y_0}\, \mathcal{K}_{e,s}  \big)\notag\\&-\dfrac{1}{6\,\delta} \, \alpha\,h^3\,{W}_{\mathrm{shell}}  \big(
	{\rm C}_{y_0}\, \mathcal{K}_{e,s}  \big)-\dfrac{1}{12\,\varepsilon}\,  h^5\,{W}_{\mathrm{shell}}  \big(
	{\rm C}_{y_0}\, \mathcal{K}_{e,s} {\rm B}_{y_0} \big)
	+\dfrac{h}{12}(12-\alpha^2)\,
	W_{\mathrm{curv}}\big(  \mathcal{K}_{e,s} \big) +  \dfrac{h^3}{12}\,
	W_{\mathrm{curv}}\big(  \mathcal{K}_{e,s}   {\rm B}_{y_0} \,  \big)\notag\\
	\geq&\,
	\dfrac{h}{12}	\Big(12-\alpha^2-\varepsilon-2\,\alpha\, \delta
	\Big)\,  \min\{c_1^+,\mu_{\rm c}\} \lVert \mathcal{E}_{m,s}\rVert ^2+\dfrac{h^3}{12}\, \min\{c_1^+,\mu_{\rm c}\}  \lVert
	\mathcal{E}_{m,s}{\rm B}_{y_0}+{\rm C}_{y_0}\mathcal{K}_{e,s}  \rVert ^2\\&-\dfrac{1}{3\delta} \alpha h^3\max\{C_1^+,\mu_{\rm c}\} \lVert \mathcal{K}_{e,s}\rVert ^2-\dfrac{1}{6\,\varepsilon} h^5\max\{C_1^+,\mu_{\rm c}\}  \lVert \mathcal{K}_{e,s}{\rm B}_{y_0} \rVert ^2+\dfrac{h}{12}(12-\alpha^2)c_2^+\lVert  \mathcal{K}_{e,s}\rVert ^2 +  \dfrac{h^3}{12}
c_2^+\lVert    \mathcal{K}_{e,s}   {\rm B}_{y_0}\rVert ^2\notag
	\end{align}
for all $\alpha, \delta, \varepsilon>0$ such that ~$12-\alpha^2-\varepsilon-2\,\alpha\, \delta>0$ and $12-\alpha^2>0$.~ Since $\lVert B_{y_0}\rVert ^2=4\, {\rm H}^2-2\,{\rm K}$, it follows
 \begin{align}
 W^{(h^3)}(\mathcal{E}_{m,s}, \mathcal{K}_{e,s})\geq \, &\,
 \dfrac{h}{12}	\Big(12-\alpha^2-\varepsilon-2\,\alpha\, \delta
 \Big)\,  \min\{c_1^+,\mu_{\rm c}\}  \lVert \mathcal{E}_{m,s}\rVert ^2+\dfrac{h^3}{12}\, \min\{c_1^+,\mu_{\rm c}\}  \lVert
 \mathcal{E}_{m,s}{\rm B}_{y_0}+{\rm C}_{y_0}\, \mathcal{K}_{e,s}  \rVert ^2\notag\\&+\dfrac{h}{12}\Big[(12-\alpha^2)\,\,c_2^+-\dfrac{4}{\delta} \,\alpha\, \max\{C_1^+,\mu_{\rm c}\} \,h^2-\dfrac{2}{\varepsilon}\,  h^4\,\,\max\{C_1^+,\mu_{\rm c}\} \, (4\, {\rm H}^2-2\,{\rm K})\Big]\lVert \mathcal{K}_{e,s} \rVert ^2 +  \dfrac{h^3}{12}\,
 c_2^+\lVert    \mathcal{K}_{e,s}   {\rm B}_{y_0}\rVert ^2\notag
 \end{align}
Using again that $h$ is small,
	we obtain 
$
	-h^2\,(4\, {\rm H}^2-2\,{\rm K})\geq -h^2\,(4\, {\rm H}^2+2\,|{\rm K}|)\geq -6\,\alpha^2
$
	and
 \begin{align}
W^{(h^3)}(\mathcal{E}_{m,s}, \mathcal{K}_{e,s})\geq \, &\,
\dfrac{h}{12}	\Big(12-\alpha^2-\varepsilon-2\,\alpha\, \delta
\Big)\,  \min\{c_1^+,\mu_{\rm c}\}  \lVert \mathcal{E}_{m,s}\rVert ^2\\&+\dfrac{h}{12}\Big[(12-\alpha^2)\,\,c_2^+-\dfrac{4}{\delta} \,\alpha\, \max\{C_1^+,\mu_{\rm c}\} \,h^2-\dfrac{2}{\varepsilon}\,  h^2\,\,\max\{C_1^+,\mu_{\rm c}\} \, 6\,\alpha^2\Big]\lVert \mathcal{K}_{e,s} \rVert ^2.\notag
\end{align}
	We consider $\delta=\gamma \, \varepsilon$ and we choose $\epsilon>0$ and $\gamma>0$ such that
	\begin{align}
	\frac{12-\alpha^2}{1+2\,\alpha\gamma}>\varepsilon >4\frac{\alpha+3\,   \alpha^2\gamma}{\gamma(12-\alpha^2)}\,\frac{\max\{C_1^+,\mu_{\rm c}\} }{c_2^+}\,h^2.
	\end{align}
	This choice of the variable $\varepsilon>0$ is possible  if and
	only if
	$
\frac{(12-\alpha^2)^2\gamma}{4\,(a+2\,\alpha\gamma)\,(\alpha+3\,   \alpha^2\gamma)}>\frac{h^2\,\max\{C_1^+,\mu_{\rm c}\} }{c_2^+}.
	$
	At this point we use that
	$$
	\max_{\gamma>0}\frac{(12-\alpha^2)^2\gamma}{4\,(a+2\,\alpha\gamma)\,(\alpha+3\,   \alpha^2\gamma)}=\frac{(5-2\sqrt{6})(\alpha^2-12)^2}{4\, \alpha^2},
	$$
	and this maximum value is attained for $\gamma=\frac{1}{\sqrt{6}\,a}$. Hence, we arrive at the following condition on the thickness $h$:
	\begin{align}
	h^2<\frac{(5-2\sqrt{6})(\alpha^2-12)^2}{4\, \alpha^2}\frac{ {c_2^+}}{\max\{C_1^+,\mu_{\rm c}\} },
	\end{align}
	which proves the coercivity if the condition from the hypothesis  is satisfied.
\end{proof}	
The condition \eqref{fcond} on the thickness does not represent a relaxation of the condition imposed in \cite[Proposition 4.1]{GhibaNeffPartII}, i.e., not in the sense of the relaxed condition \eqref{rcondh5} which is found for the $O(h^5)$ Cosserat shell model. Indeed, since the map $\alpha\mapsto \frac{(5-2\sqrt{6})(\alpha^2-12)^2}{4\, \alpha^2}\frac{ {c_2^+}}{\max\{C_1^+,\mu_{\rm c}\} }$ is monotone decreasing on $[0,2]$ (the interval of the values of the parameter $\alpha$ for which the construction of the model has sense), a large value for $\alpha$ will relax the first condition \eqref{fcond}$_1$ while the other condition \eqref{fcond}$_2$ on the thickness will become more restrictive. Since the second condition \eqref{fcond}$_2$ is expressed in terms of all  constitutive parameters, through $c_2^+$ and $\max\{C_1^+,\mu_{\rm c}\} $, while the first condition \eqref{fcond}$_1$ depends on the curvatures of the referential configuration, the largest value of the parameter $\alpha$ in the coercivity result would be chosen as the best compromise between the conditions \eqref{fcond}$_1$  and \eqref{fcond}$_2$. In conclusion, in comparison to the conditions imposed in \cite[Proposition 4.1]{GhibaNeffPartII}, for a specific material and a specific referential configuration the new condition \eqref{fcond} would offer a largest interval of values for the upper bound of the thickness.

We also note that in \cite[Proposition 4.1]{GhibaNeffPartII}, because the condition of the form \eqref{fcond}$_2$ from the hypothesis of Proposition  \ref{coerh3r} depends on the length scale $L_c$, we have proved another coercivity result which avoids this aspect:
	\begin{proposition}\label{coerh3r2}{\rm [The second coercivity result in the theory including terms up to order $O(h^3)$]}  For sufficiently small values of the thickness $h$ such that  	
		\begin{align}
		h\max\{\sup_{x\in\omega}|\kappa_1|, \sup_{x\in\omega}|\kappa_2|\}<\frac{1}{a}\qquad \qquad \text{with} \quad a>\max\Big\{1 + \frac{\sqrt{2}}{2},\frac{1+\sqrt{1+3\frac{\max\{C_1^+,\mu_{\rm c}\} }{\min\{c_1^+,\mu_{\rm c}\} }}}{2}\Big\},
		\end{align}
		and for constitutive coefficients  satisfying the constitutive coefficients are  such that $\mu>0, \,\mu_{\rm c}>0$, $2\,\lambda+\mu> 0$, $b_1>0$, $b_2>0$ and $b_3>0$ and let  $c_1^+$ and $ \max\{C_1^+,\mu_{\rm c}\} >0$ denote the smallest and the largest eigenvalues of the quadratic form $W_{\mathrm{shell}}^\infty(  S)$,
		the total energy density $W^{(h^3)}(\mathcal{E}_{m,s}, \mathcal{K}_{e,s})$
		is coercive, in the sense that  there exists   a constant $a_1^+>0$ such that \linebreak
		$W^{(h^3)}(\mathcal{E}_{m,s}, \mathcal{K}_{e,s})\,\geq\, a_1^+\, \big(  \lVert \mathcal{E}_{m,s}\rVert ^2 + \lVert \mathcal{K}_{e,s}\rVert ^2\,\big)$, where
		$a_1^+$ depends on the constitutive coefficients.
	\end{proposition}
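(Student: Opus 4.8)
\textbf{Proof plan for Proposition \ref{coerh3r2}.}

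The plan is to adapt the coercivity argument of Proposition \ref{coerh3r}, but to avoid splitting off the mixed terms with an $\varepsilon$-Young inequality that forces the critical thickness bound to scale with $L_c$ (through $c_2^+$). Instead, I would absorb the troublesome higher-order mixed terms directly into the two genuinely positive lowest-order contributions, namely $W_{\mathrm{shell}}(\mathcal{E}_{m,s})$ and $W_{\mathrm{shell}}(\mathcal{E}_{m,s}{\rm B}_{y_0}+{\rm C}_{y_0}\mathcal{K}_{e,s})$, while keeping the curvature terms $W_{\mathrm{curv}}$ only as a safety net (their coefficient $h - {\rm K}\tfrac{h^3}{12}$ stays positive under $h\max\{|\kappa_i|\}<\tfrac1a$ with $a>1$, but we will not need their size). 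The key structural fact is $\|{\rm B}_{y_0}\|^2 = 4{\rm H}^2-2{\rm K}$, $\|{\rm A}_{y_0}\|^2 = 2$, $\|{\rm C}_{y_0}\|^2=2$, together with the Cayley–Hamilton relation ${\rm B}_{y_0}^2 = 2{\rm H}\,{\rm B}_{y_0}-{\rm K}\,{\rm A}_{y_0}$ and the submultiplicativity of the Frobenius norm, which under $h\max\{|\kappa_i|\}<\tfrac1a$ give bounds like $h\|{\rm B}_{y_0}\|, h^2\|{\rm B}_{y_0}\|^2 \lesssim 1/a^2$ etc. The decisive difference is that everything will be controlled by the \emph{ratio} $\max\{C_1^+,\mu_{\rm c}\}/\min\{c_1^+,\mu_{\rm c}\}$, not by the absolute size of $c_2^+$.

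First I would start from the same preliminary decomposition as in \cite[Prop.~4.1]{GhibaNeffPartII} / Proposition \ref{coerh3r}, writing
\begin{align*}
W^{(h^3)}(\mathcal{E}_{m,s},\mathcal{K}_{e,s}) &\geq \Big(h - {\rm K}\tfrac{h^3}{12} - \tfrac{h}{12}\varepsilon - \tfrac{h^2}{6}\delta|{\rm H}|\Big)W_{\mathrm{shell}}(\mathcal{E}_{m,s}) + \tfrac{h^3}{12}W_{\mathrm{shell}}(\mathcal{E}_{m,s}{\rm B}_{y_0}+{\rm C}_{y_0}\mathcal{K}_{e,s})\\
&\quad - \tfrac{1}{6\delta}|{\rm H}|\,h^4 W_{\mathrm{shell}}({\rm C}_{y_0}\mathcal{K}_{e,s}) - \tfrac{1}{12\varepsilon}h^5 W_{\mathrm{shell}}({\rm C}_{y_0}\mathcal{K}_{e,s}{\rm B}_{y_0}) + (\text{positive curvature terms}).
\end{align*}
Now, instead of discarding $W_{\mathrm{curv}}(\mathcal{K}_{e,s})$ to dominate the negative terms, I would use the second genuinely positive term: decompose $\mathcal{K}_{e,s} = {\rm A}_{y_0}\mathcal{K}_{e,s} + (0|0|n_0)(0|0|n_0)^T\mathcal{K}_{e,s}$ and write ${\rm C}_{y_0}\mathcal{K}_{e,s} = {\rm C}_{y_0}(\mathcal{E}_{m,s}{\rm B}_{y_0}+{\rm C}_{y_0}\mathcal{K}_{e,s}) - {\rm C}_{y_0}\mathcal{E}_{m,s}{\rm B}_{y_0}$, using ${\rm C}_{y_0}^2 = -{\rm A}_{y_0}$, so that the $\|{\rm C}_{y_0}\mathcal{K}_{e,s}\|^2$ and $\|{\rm C}_{y_0}\mathcal{K}_{e,s}{\rm B}_{y_0}\|^2$ terms are bounded (via Frobenius submultiplicativity and $\|{\rm B}_{y_0}\|^2 = 4{\rm H}^2-2{\rm K}\lesssim 1/a^2$, $\|{\rm C}_{y_0}\|^2=2$) by a combination of $\|\mathcal{E}_{m,s}\|^2$ and $\|\mathcal{E}_{m,s}{\rm B}_{y_0}+{\rm C}_{y_0}\mathcal{K}_{e,s}\|^2$ with coefficients $O(1/a^2)$ and $O(1/a^4)$. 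Then I would apply the lower bounds $W_{\mathrm{shell}}(X)\geq \min\{c_1^+,\mu_{\rm c}\}\|\mathrm{sym}\,X\|^2 + \mu_{\rm c}\|\mathrm{skew}\,X\|^2$ and the upper bound $W_{\mathrm{shell}}(X)\leq \max\{C_1^+,\mu_{\rm c}\}\|X\|^2$ from \eqref{pozitivdef}, and optimize over $\delta,\varepsilon>0$. The condition $a>1+\tfrac{\sqrt2}{2}$ will come from keeping the coefficient of $W_{\mathrm{shell}}(\mathcal{E}_{m,s})$ positive after absorbing the $\delta,\varepsilon$ terms, while the condition $a>\tfrac12\big(1+\sqrt{1+3\,\max\{C_1^+,\mu_{\rm c}\}/\min\{c_1^+,\mu_{\rm c}\}}\big)$ is exactly the threshold at which the net coefficient of $\|\mathcal{K}_{e,s}\|^2$ (positive contribution from the $\tfrac{h^3}{12}W_{\mathrm{shell}}(\mathcal{E}_{m,s}{\rm B}_{y_0}+{\rm C}_{y_0}\mathcal{K}_{e,s})$ term, minus the $1/a^2$-scaled negatives) turns positive; this is a quadratic inequality in $a$ whose larger root is precisely the stated bound.

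\textbf{Main obstacle.} The delicate point is the bookkeeping that shows the $\|\mathcal{K}_{e,s}\|^2$ coefficient becomes positive: one must carefully use that $\mathcal{E}_{m,s}{\rm B}_{y_0} = (\mathcal{E}_{m,s}{\rm B}_{y_0}+{\rm C}_{y_0}\mathcal{K}_{e,s}) - {\rm C}_{y_0}\mathcal{K}_{e,s}$ and ${\rm C}_{y_0}{\rm C}_{y_0}\mathcal{K}_{e,s} = -{\rm A}_{y_0}\mathcal{K}_{e,s}$, combined with ${\rm A}_{y_0}\mathcal{K}_{e,s} = \mathcal{K}_{e,s} - (0|0|n_0)(0|0|\mathcal{K}_{e,s}^Tn_0)^T$, to re-express $\mathcal{K}_{e,s}$ itself (not just ${\rm C}_{y_0}\mathcal{K}_{e,s}$) in terms of the two "good" quantities plus a piece lying in the $n_0\otimes n_0$ direction — the latter being harmless because it is also bounded by $\|\mathcal{E}_{m,s}{\rm B}_{y_0}+{\rm C}_{y_0}\mathcal{K}_{e,s}\|$. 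Getting the constants in this chain sharp enough to produce exactly the clean threshold $\tfrac12(1+\sqrt{1+3\,\max\{C_1^+,\mu_{\rm c}\}/\min\{c_1^+,\mu_{\rm c}\}})$, rather than a messier sufficient condition, is where the real work lies; but since this mirrors \cite[Prop.~4.1]{GhibaNeffPartII} step by step (only replacing the constitutive eigenvalues systematically), I expect it to go through, and the coercivity constant $a_1^+$ is then read off as the minimum of the resulting positive coefficients times the relevant $\min\{c_1^+,\mu_{\rm c}\}$ or curvature eigenvalue, manifestly independent of $\mu_{\rm c}$ in the limit $\mu_{\rm c}\to\infty$ (where $\min\{c_1^+,\mu_{\rm c}\}=c_1^+$ and $\max\{C_1^+,\mu_{\rm c}\}$ drops out of the constrained model once one passes to $W^{\infty}_{\mathrm{shell}}$, as in Proposition \ref{coerh3}).
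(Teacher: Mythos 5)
The paper does not actually carry out a proof of Proposition~\ref{coerh3r2} in this document: the result is stated and referred back to \cite[Prop.~4.1]{GhibaNeffPartII}. So your plan can only be checked for internal coherence, not against the authors' written-out argument. The overall direction — start from the same $\varepsilon,\delta$ Young-inequality split as Proposition~\ref{coerh3r}, but then absorb the negative $W_{\mathrm{shell}}({\rm C}_{y_0}\mathcal{K}_{e,s})$ and $W_{\mathrm{shell}}({\rm C}_{y_0}\mathcal{K}_{e,s}{\rm B}_{y_0})$ terms into the two positive membrane/membrane-bending terms rather than into $W_{\mathrm{curv}}$, so that the thickness threshold depends only on the ratio $\max\{C_1^+,\mu_{\rm c}\}/\min\{c_1^+,\mu_{\rm c}\}$ and not on $c_2^+\sim L_c^2$ — is the right idea and explains cleanly why the hypothesis here is free of $L_c$ while the hypothesis of Proposition~\ref{coerh3r} is not.

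There is, however, a genuine error in your ``main obstacle'' paragraph. You decompose $\mathcal{K}_{e,s} = {\rm A}_{y_0}\mathcal{K}_{e,s} + (0|0|n_0)(0|0|\mathcal{K}_{e,s}^T n_0)^T$ and claim the second (drilling-bending) piece is ``harmless because it is also bounded by $\lVert\mathcal{E}_{m,s}{\rm B}_{y_0}+{\rm C}_{y_0}\mathcal{K}_{e,s}\rVert$.'' This is false: ${\rm C}_{y_0}n_0 = 0$, so the map $X\mapsto{\rm C}_{y_0}X$ annihilates exactly that $n_0\otimes(\cdot)$ block, and the drilling-bending piece therefore appears nowhere in $\mathcal{E}_{m,s}{\rm B}_{y_0}+{\rm C}_{y_0}\mathcal{K}_{e,s}$ nor in $\mathcal{E}_{m,s}$. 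It can only be controlled by $W_{\mathrm{curv}}(\mathcal{K}_{e,s})$. Consequently, your earlier description of the second condition on $a$ as ``the threshold at which the net coefficient of $\lVert\mathcal{K}_{e,s}\rVert^2$ turns positive, with a positive contribution from $\tfrac{h^3}{12}W_{\mathrm{shell}}(\mathcal{E}_{m,s}{\rm B}_{y_0}+{\rm C}_{y_0}\mathcal{K}_{e,s})$'' is also not right: that term provides no lower bound on $\lVert\mathcal{K}_{e,s}\rVert^2$ by itself, only on $\lVert {\rm C}_{y_0}\mathcal{K}_{e,s}\rVert^2$ up to an $\mathcal{E}_{m,s}$-contribution. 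The correct reading is that the condition on $a$ ensures the two membrane/membrane-bending positivities survive the absorption of the higher-order mixed terms, while the full prefactor $(h-{\rm K}\tfrac{h^3}{12})W_{\mathrm{curv}}(\mathcal{K}_{e,s})\geq h(1-\tfrac{1}{12a^2})c_2^+\lVert\mathcal{K}_{e,s}\rVert^2$ is kept intact and alone furnishes the $\lVert\mathcal{K}_{e,s}\rVert^2$-coercivity. Your own final sentence (noting $a_1^+$ is read off partly from a ``curvature eigenvalue'') is consistent with this, so the writeup is internally inconsistent rather than incurably broken — but as stated the bookkeeping step would fail. A small related typo: the displayed identity should read ${\rm C}_{y_0}\mathcal{K}_{e,s}=(\mathcal{E}_{m,s}{\rm B}_{y_0}+{\rm C}_{y_0}\mathcal{K}_{e,s})-\mathcal{E}_{m,s}{\rm B}_{y_0}$ without the extra ${\rm C}_{y_0}$ prefactors; as written it gives $-{\rm A}_{y_0}\mathcal{K}_{e,s}$, not ${\rm C}_{y_0}\mathcal{K}_{e,s}$.
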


\subsection{Acharya's bending tensor in Cartesian matrix notation}\label{AppAcharya}

For comparison purpose, let us mark all the  vectors and tensors   defined by Acharya \cite[Page 5520]{acharya2000nonlinear} with the subscript  $A$ \linebreak (e.g., $ \boldsymbol f_A $, $ \boldsymbol b^*_A $, $ (\boldsymbol E_\alpha)_A $). We  express them using our notations as follows:
\begin{align}
\boldsymbol X_A =   y_0\,,\qquad \boldsymbol N_A = n_0\,,\qquad \boldsymbol x_A = m\,,\qquad \boldsymbol n_A= n\,,\qquad (\boldsymbol E_\alpha)_A = a_\alpha=\partial_{x_\alpha} y,\qquad (\boldsymbol E^\alpha)_A = a^\alpha\;\;\mbox{etc.}
\end{align}
Let us denote by $ [ \boldsymbol T_A] $ the $ 3\times 3 $ matrix of the components in the basis $ \{ e_i\otimes e_j\} $ for any tensor $ \boldsymbol T $ defined by Acharya \cite[Page 5519]{acharya2000nonlinear}.
Then, we have
\begin{equation}\label{eq1}
	[ \boldsymbol f_A ] = \Big[\Big( \frac{\partial \boldsymbol x}{\partial \boldsymbol X} \Big)_A\, \Big] 
	= [ m_{,\alpha} \otimes a^\alpha] = [ (m_{,\alpha} \otimes e_\alpha)(  e_i \otimes a^i) ] = (\nabla m | 0)\, [ \nabla_x \Theta ]^{-1}
\end{equation}
and
\begin{equation}\label{eq2}
	[ \boldsymbol B_A ]  = \Big[\Big( \frac{\partial \boldsymbol N}{\partial \boldsymbol X} \Big)_A\, \Big]  = [ n_{0,\alpha} \otimes a^\alpha] = [ (n_{0,\alpha} \otimes e_\alpha)(  e_i \otimes a^i) ] = (\nabla n_0 | 0)\, [ \nabla_x \Theta ]^{-1},
\end{equation}
so that
\begin{equation}\label{eq3}
	[ \boldsymbol B_A ]  = - {\rm B}_{y_0} =  
	-  [ \nabla_x \Theta ]^{-T}\; {\rm II}_{y_0}^\flat \; [ \nabla_x \Theta ]^{-1}.
\end{equation}
Furthermore, we have
\[ 
[ \boldsymbol b^*_A ]  = \Big[\boldsymbol f_A^T\; \Big( \frac{\partial \boldsymbol n}{\partial \boldsymbol x} \Big)_A\; \boldsymbol f_A\, \Big]  =
\Big[\boldsymbol f_A^T\; \Big( \frac{\partial \boldsymbol n}{\partial \boldsymbol X} \Big)_A \, \Big]  = 
[\boldsymbol f_A ]^T\;\Big[ \Big( \frac{\partial \boldsymbol n}{\partial \boldsymbol X} \Big)_A \, \Big]  
\]
and using \eqref{eq1} we get
\begin{equation}\label{eq4}
	[ \boldsymbol b^*_A ]  =   
	[ \nabla_x \Theta ]^{-T}\; (\nabla m | 0)^T\, (\nabla n | 0)\,  [ \nabla_x \Theta ]^{-1}
	=  - [ \nabla_x \Theta ]^{-T}\; {\rm II}_{m}^\flat \; [ \nabla_x \Theta ]^{-1}.
\end{equation}
By virtue of \eqref{eq3} and \eqref{eq4}, the classical bending strain measure can be written as \cite[Eq. 3]{acharya2000nonlinear} 
\begin{equation}\label{eq5}
	[\boldsymbol K_A] = [ \boldsymbol b^*_A ] - [ \boldsymbol B_A ] 
	=  - [ \nabla_x \Theta ]^{-T}\; ({\rm II}_{m} - {\rm II}_{y_0} )^\flat \; [ \nabla_x \Theta ]^{-1}.
\end{equation}
For the tensor $ \boldsymbol U_A $ we obtain using equation \eqref{eq3} the expression
\begin{equation}\label{eq6}
	\begin{array}{rl}
		[\boldsymbol U_A] & =  \sqrt{  [ \boldsymbol f_A^T\, \boldsymbol f_A\, ]} =
		\sqrt{  [ \boldsymbol f_A]^T\, [\boldsymbol f_A\, ]} =
		\sqrt{[ \nabla_x \Theta ]^{-T}\; (\nabla m | 0)^T\, (\nabla m | 0)\,  [ \nabla_x \Theta ]^{-1} }
		\vspace{6pt}
		= 
		\sqrt{[ \nabla_x \Theta ]^{-T}\; {\rm I}_{m}^\flat \;   [ \nabla_x \Theta ]^{-1} }\,.
	\end{array}
\end{equation}
Thus, the bending tensor defined by Acharya \cite[Eq. 8]{acharya2000nonlinear}  is given by
\[ 
[\widetilde{\boldsymbol K}_A] = [ \boldsymbol b^*_A ] - \Big[\boldsymbol U_A\;\Big( \frac{\partial \boldsymbol N}{\partial \boldsymbol X} \Big)_A\, \Big] = 
[ \boldsymbol b^*_A ] - [\boldsymbol U_A]\,[\boldsymbol B_A]\,.
\]
Inserting here the relations \eqref{eq3}, \eqref{eq4} and \eqref{eq6}, we obtain that
the bending tensor $ \widetilde{\boldsymbol K}_A $  defined in the paper by Acharya \cite[Eq. 7 and 8]{acharya2000nonlinear}   can be written in our matrix notation as follows:
\begin{equation}\label{eq7}
\widetilde{\mathcal{R}}_{\rm Acharya}:=	\tilde{\boldsymbol K}_A = - [ \nabla_x \Theta ]^{-T}\; {\rm II}_{m}^\flat \; [ \nabla_x \Theta ]^{-1} +
	\sqrt{[ \nabla_x \Theta ]^{-T}\; {\rm I}_{m}^\flat \; [ \nabla_x \Theta ]^{-1}}\;
	[ \nabla_x \Theta ]^{-T}\; {\rm II}_{y_0}^\flat \; [ \nabla_x \Theta ]^{-1}.
\end{equation}

Since $\id_2^\flat  {\rm II}_{m}^\flat={\rm II}_{m}^\flat$ and $\id_2^\flat [\nabla\Theta \,]^{T}\widehat{\rm I}_{m}=  {\rm I}_{m}^\flat$, the following relation between the nonlinear bending tensor $\widetilde{\mathcal{R}}_{\rm Acharya}$ introduced by Acharya and our nonlinear bending tensor $\mathcal{R}_{\infty}^\flat$ holds
\begin{align}\label{reAc}
\widetilde{\mathcal{R}}_{\rm Acharya}
\! =&\!- \!\!\left([\nabla\Theta \,]^{-T} \id_2^\flat [\nabla\Theta \,]^{T} [\nabla\Theta \,]^{-T} {\rm II}_{m}^\flat  [\nabla\Theta \,]^{-1}\!\! -\!\!
\sqrt{[\nabla\Theta \,]^{-T} \id_2^\flat [\nabla\Theta \,]^{T} [\nabla\Theta \,]^{-T}\widehat{\rm I}_{m} \; [\nabla\Theta \,]^{-1}}
[\nabla\Theta \,]^{-T} {\rm II}_{y_0}^\flat \, [\nabla\Theta \,]^{-1}\!\!\right) \notag\\
=&- \!\![\nabla\Theta \,]^{-T} \id_2^\flat [\nabla\Theta \,]^{T}\left( [\nabla\Theta \,]^{-T} {\rm II}_{m}^\flat  [\nabla\Theta \,]^{-1}\!\! -\!\!
\sqrt{ [\nabla\Theta \,]^{-T}\widehat{\rm I}_{m} \; [\nabla\Theta \,]^{-1}}
[\nabla\Theta \,]^{-T} {\rm II}_{y_0}^\flat \, [\nabla\Theta \,]^{-1}\!\!\right)
\\
=&- \!\![\nabla\Theta \,]^{-T} \id_2^\flat [\nabla\Theta \,]^{T}\sqrt{[\nabla\Theta ]^{-T}\,\widehat{\rm I}_m[\nabla\Theta ]^{-1}}[\nabla\Theta \,]^{-T}\mathcal{R}_{\infty}^\flat[\nabla\Theta \,]^{-1} \notag\\
=&- \sqrt{[\nabla\Theta ]^{-T}\,\id_2^\flat\widehat{\rm I}_m[\nabla\Theta ]^{-1}}[\nabla\Theta \,]^{-T}\mathcal{R}_{\infty}^\flat[\nabla\Theta \,]^{-1}
=- \sqrt{[\nabla\Theta ]^{-T}\,{\rm I}_m^\flat[\nabla\Theta ]^{-1}}[\nabla\Theta \,]^{-T}\mathcal{R}_{\infty}^\flat[\nabla\Theta \,]^{-1},\notag
\end{align}
where we have used $\sqrt{[\nabla\Theta \,]^{-T} \id_2^\flat [\nabla\Theta \,]^{T}}=[\nabla\Theta \,]^{-T} \id_2^\flat [\nabla\Theta \,]^{T}\Leftrightarrow ([\nabla\Theta \,]^{-T} \id_2^\flat [\nabla\Theta]^{T})^2=[\nabla\Theta \,]^{-T} \id_2^\flat [\nabla\Theta]^{T}[\nabla\Theta \,]^{-T} \id_2^\flat [\nabla\Theta]^{T}=[\nabla\Theta \,]^{-T} (\id_2^\flat)^2   [\nabla\Theta]^{T}=[\nabla\Theta \,]^{-T} \id_2^\flat [\nabla\Theta]^{T}$ and \eqref{eq:unserTensor}.

\subsection{Alternative representation of energy in terms of the new strain tensors}

We present in the following an alternative formulation of the unconstrained Cosserat-shell model from \eqref{minvarmc}.
In this section, for a matrix of the form $X\,=\,(X^S\, |\, 0)\; [\nabla\Theta \,]^{-1}\in \mathbb{R}^{3\times 3}$, we consider the matrix 
$
X^C \,=\,  [\nabla\Theta \,]^{T} X^S\in \mathbb{R}^{3\times 2}$, i.e., \begin{align}    X\,= \, [\nabla\Theta \,]^{-T} (X^C \,|\, 0)\; [\nabla\Theta \,]^{-1}.\end{align}

For the bilinear form $ W_{\mathrm{shell}}(  X,  Y) $ given in \eqref{quadraticforms} we have then the transformations
\begin{equation}\label{eq1}
W_{\mathrm{shell}}(  X,  Y)  =  W^S_{\mathrm{shell}}(  X^S,  Y^S)  =  W^C_{\mathrm{shell}}(  X^C,  Y^C) ,
\end{equation}
where, e.g.,
\begin{align} 
W^C_{\mathrm{shell}}(  X^C,  Y^C) \,=\,&  \mu\,\langle\, \mathrm{sym}\,  \big( [\nabla\Theta \,]^{-T} (X^C \,| \,0)\; [\nabla\Theta \,]^{-1}\big),\,\mathrm{sym}\,  \big([\nabla\Theta \,]^{-T} (Y^C \,| \,0)\; [\nabla\Theta \,]^{-1}\big)\,\rangle \vspace{6pt}\notag\\
& +    \mu_c\langle \,\mathrm{skew}\,  \big( [\nabla\Theta \,]^{-T} (X^C \,| \,0)\; [\nabla\Theta \,]^{-1}\big),\,\mathrm{skew}\,   \big([\nabla\Theta \,]^{-T} (Y^C \,| \,0)\; [\nabla\Theta \,]^{-1}\big)\,\rangle \vspace{6pt}\notag\\
& +  \,\dfrac{\lambda\,\mu}{\lambda+2\mu}\,\mathrm{tr}  ([\nabla\Theta \,]^{-T} (X^C \,| \,0)\; [\nabla\Theta \,]^{-1})\,\mathrm{tr}  ([\nabla\Theta \,]^{-T} (Y^C \,| \,0)\; [\nabla\Theta \,]^{-1}).
\end{align}
We note that the last relation can be written using the first fundamental form $ {\rm I}_{y_0} $ and its square root in the following alternative ways:
\begin{align} 
W^C_{\mathrm{shell}}(  X^C,  Y^C) \,=\,& \, \mu\,\langle\, \mathrm{sym}\,    (X^C \,| \,0),\,\mathrm{sym}\,  \big(\hat{\rm I}_{y_0}^{-1} (Y^C \,| \,0)\, \hat{\rm I}_{y_0}^{-1}\big)\,\rangle +    \mu_c\langle \,\mathrm{skew}\,  (X^C \,| \,0),\,\mathrm{skew}\,  \big(\hat{\rm I}_{y_0}^{-1} (Y^C \,| \,0)\, \hat{\rm I}_{y_0}^{-1}\big)\,\rangle \vspace{6pt}\notag\\
&   + \,\dfrac{\lambda\,\mu}{\lambda+2\mu}\,\mathrm{tr}  \big( (X^C \,| \,0)\hat{\rm I}_{y_0}^{-1}\big)\,\mathrm{tr}  \big( (Y^C \,| \,0)\, \hat{\rm I}_{y_0}^{-1}\big)
\vspace{10pt}\notag\\
\,=\,& \, \mu\,\langle\, \mathrm{sym}\,  \big( \hat{\rm I}_{y_0}^{-1/2} (X^C \,| \,0)\hat{\rm I}_{y_0}^{-1/2}\big) ,\,\mathrm{sym}\,  \big(\hat{\rm I}_{y_0}^{-1/2} (Y^C \,| \,0)\, \hat{\rm I}_{y_0}^{-1/2}\big)\,\rangle 
\vspace{6pt}\\
&   +     \mu_c\;\langle \,\mathrm{skew}\, \big(\hat{\rm I}_{y_0}^{-1/2} (X^C \,| \,0)\hat{\rm I}_{y_0}^{-1/2}\big) ,\,\mathrm{skew}\,  \big(\hat{\rm I}_{y_0}^{-1/2} (Y^C \,| \,0)\, \hat{\rm I}_{y_0}^{-1/2}\big)\,\rangle \vspace{6pt}\notag\\
&   + \,\dfrac{\lambda\,\mu}{\lambda+2\mu}\,\mathrm{tr}  \big( \hat{\rm I}_{y_0}^{-1/2} (X^C \,| \,0)\hat{\rm I}_{y_0}^{-1/2}\big)\,\mathrm{tr}  \big( \hat{\rm I}_{y_0}^{-1/2} (YX^C \,| \,0)\, \hat{\rm I}_{y_0}^{-1/2}\big).\notag
\end{align}

Moreover, if we decompose the matrix $ X^C \in \mathbb{R}^{3\times 2}$ in two block matrices (the  matrix $ \id_{2\times 3}\,X^C \in \mathbb{R}^{2\times 2}$ and the matrix $ e_3^T\, X^C \in \mathbb{R}^{1\times 2}$ ), then we can write the bilinear form as
\begin{equation}
W^C_{\mathrm{shell}}(  X^C,  Y^C)  = W_{\mathrm{inplane}}(  \id_{2\times 3}X^C,  \id_{2\times 3}Y^C) + \dfrac{\mu+\mu_c}{2}\; \bigl\langle e_3^T\, X^C\,{\rm I}_{y_0}^{-1} , e_3^T\, Y^C\bigr\rangle,
\end{equation}
where we define for any  $ X, Y\in \mathbb{R}^{2\times 2} $ the bilinear form
\begin{align}\label{e1,5}
W_{\mathrm{inplane}}( X, Y) \,=\,&  \mu\,\langle\, \mathrm{sym}\,    X,\,\mathrm{sym}\,  \big({\rm I}_{y_0}^{-1}\, Y\, {\rm I}_{y_0}^{-1}\big)\,\rangle +    \mu_c\langle \,\mathrm{skew}\,  X,\,\mathrm{skew}\,  \big({\rm I}_{y_0}^{-1}\, Y\, {\rm I}_{y_0}^{-1}\big)\,\rangle  + \,\dfrac{\lambda\,\mu}{\lambda+2\mu}\,\mathrm{tr}  \big( X\,{\rm I}_{y_0}^{-1}\big)\,\mathrm{tr}  \big( Y\, {\rm I}_{y_0}^{-1}\big)
\vspace{10pt}\notag\\=\,&  \mu\,\langle\, \mathrm{sym}\,  \big( {\rm I}_{y_0}^{-1/2}\, X\, {\rm I}_{y_0}^{-1/2}\big) ,\,\mathrm{sym}\,  \big({\rm I}_{y_0}^{-1/2}\, Y\, {\rm I}_{y_0}^{-1/2}\big)\,\rangle 
+    \mu_c\;\langle \,\mathrm{skew}\, \big( {\rm I}_{y_0}^{-1/2}\, X\, {\rm I}_{y_0}^{-1/2}\big) ,\,\mathrm{skew}\,  \big( {\rm I}_{y_0}^{-1/2}\, Y\, {\rm I}_{y_0}^{-1/2}\big)\,\rangle \vspace{6pt}\notag\\
&   + \,\dfrac{\lambda\,\mu}{\lambda+2\mu}\,\mathrm{tr}  \big(  {\rm I}_{y_0}^{-1/2}\, X\, {\rm I}_{y_0}^{-1/2}\big)\,\mathrm{tr}  \big(  {\rm I}_{y_0}^{-1/2}\, Y\, {\rm I}_{y_0}^{-1/2}\big).
\end{align}

Analogous results hold for the quadratic form $ W_{\mathrm{curv}}(  X,  X) $.

We can decompose the $ 3\times 2 $ matrices in two block matrices ($ 2\times 2 $ and $ 1\times 2 $) and express the strain energy densities $ W_{\mathrm{memb}} $ and $ W_{\mathrm{memb,bend}}$ as  functions of the matrices $ \mathcal{G}_\infty $ and $ \mathcal{R}_\infty $ (we have used that  $ \mathcal{T}_\infty$ vanishes) in the following form
\begin{align}\label{e4}
&  W_{\mathrm{memb}}\big(  \mathcal{E}_{m,s} \big)+  
W_{\mathrm{memb,bend}}\big(  \mathcal{E}_{m,s} ,\,  \mathcal{K}_{e,s} \big)=
\widetilde W_{\mathrm{memb}}\big( \mathcal{G}_\infty  \big)+   
\widetilde W_{\mathrm{memb,bend}}\big(  \mathcal{G}_\infty , \mathcal{R}_\infty \big)\notag
\vspace{6pt}\\
&=  \underbrace{\Big(h+{\rm K}\,\dfrac{h^3}{12}\Big)\,
	W_{\mathrm{inplane}}\big(  2\, \mathcal{G}_\infty\big)}_{\textrm{in-plane deformation}}\vspace{6pt}\\
&\ \ \  + \underbrace{\Big(\dfrac{h^3}{12}\,-{\rm K}\,\dfrac{h^5}{80}\Big)\, W_{\mathrm{inplane}}\big(  2 \,\mathcal{G}_\infty\, {\rm L}_{y_0} - \mathcal{R}_\infty\big) 
	- \dfrac{h^3}{12}\,\,2\,  W_{\mathrm{inplane}}\big( 2\, \mathcal{G}_\infty , (2 \,\mathcal{G}_\infty\, {\rm L}_{y_0} - \mathcal{R}_\infty)\,{\rm L}_{y_0}^*\big) +
	\,\dfrac{h^5}{80}\;W_{\mathrm{inp\lambda}}\big( 2 \,\mathcal{G}_\infty\,, ({\rm L}_{y_0} - \mathcal{R}_\infty)\,{\rm L}_{y_0}\big)}_{\textrm{in-plane deformation-bendings coupling terms}},\notag
\end{align}
where $ \; {\rm L}_{y_0}^* = \mbox{Cof}\,{\rm L}_{y_0} = 2\,\mbox{H}\,\id_{2} - {\rm L}_{y_0}$, $ W_{\mathrm{inplane}} $ is given by \eqref{e1,5} and we have denoted
\begin{align} 
W_{\mathrm{inp\lambda}}( X, Y) :=&  \,\mu\,\bigl\langle  \mathrm{sym}\,    X,\,\mathrm{sym}\,  \big({\rm I}_{y_0}^{-1}\, Y\, {\rm I}_{y_0}^{-1}\big)\bigr\rangle +    \mu_c\bigl\langle \mathrm{skew}\,  X,\,\mathrm{skew}\,  \big({\rm I}_{y_0}^{-1}\, Y \,{\rm I}_{y_0}^{-1}\big)\bigr\rangle   + \,\dfrac{\lambda}{2}\,\mathrm{tr}  \big( X\,{\rm I}_{y_0}^{-1}\big)\,\mathrm{tr}  \big( Y\, {\rm I}_{y_0}^{-1}\big)
\notag\\=\,& \, \mu\,\langle\, \mathrm{sym}\,  \big( {\rm I}_{y_0}^{-1/2}\, X\, {\rm I}_{y_0}^{-1/2}\big) ,\,\mathrm{sym}\,  \big({\rm I}_{y_0}^{-1/2}\, Y\, {\rm I}_{y_0}^{-1/2}\big)\,\rangle 
+    \mu_c\;\langle \,\mathrm{skew}\, \big( {\rm I}_{y_0}^{-1/2}\, X\, {\rm I}_{y_0}^{-1/2}\big) ,\,\mathrm{skew}\,  \big( {\rm I}_{y_0}^{-1/2}\, Y\, {\rm I}_{y_0}^{-1/2}\big)\,\rangle \vspace{6pt}\\
&   + \,\dfrac{\lambda}{2}\,\mathrm{tr}  \big(  {\rm I}_{y_0}^{-1/2}\, X\, {\rm I}_{y_0}^{-1/2}\big)\,\mathrm{tr}  \big(  {\rm I}_{y_0}^{-1/2}\, Y\, {\rm I}_{y_0}^{-1/2}\big).\notag
\end{align}
We can see in the expression \eqref{e4} the different parts of the energy corresponding to in-plane deformation, transverse shear, or coupling terms with bending.
\bigskip

Finally, the bending-curvature energy density $ W_{\mathrm{bend,curv}} $  can be written as function of $ \mathcal{R}_\infty $ and $  \mathcal{N}_\infty $ as follows
\begin{align}\label{e6}
W_{\mathrm{bend,curv}}&\big(  \mathcal{K}_{e,s} \big)=\,
\widetilde W_{\mathrm{bend,curv}}\big(  \mathcal{R}_\infty ,  \mathcal{N}_\infty \big)
\vspace{6pt}\notag\\
=\, & \underbrace{\Big(h-{\rm K}\,\dfrac{h^3}{12}\Big)\,
	W_{\mathrm{curvpls}}\big( \mathcal{R}_\infty \big)    +  \Big(\dfrac{h^3}{12}\,-{\rm K}\,\dfrac{h^5}{80}\Big)\,
	W_{\mathrm{curvpls}}\big(  \mathcal{R}_\infty {\rm L}_{y_0}  \big)  + \,\dfrac{h^5}{80}\,\,
	W_{\mathrm{curvpls}}\big(  \mathcal{R}_\infty {\rm L}_{y_0} ^2  \big)}_{\textrm{bending strain tensor}}
\vspace{6pt}\\
&+ \underbrace{\mu\, L_c^2\,\dfrac{b_1+b_2}{2}\,\Big[ 
	\Big(h-{\rm K}\,\dfrac{h^3}{12}\Big) \bigl\langle  \mathcal{N}_\infty\,  {\rm I}_{y_0}^{-1},  \mathcal{N}_\infty\bigr\rangle + \Big(\dfrac{h^3}{12}\,-{\rm K}\,\dfrac{h^5}{80}\Big)  \bigl\langle( \mathcal{N}_\infty\, {\rm L}_{y_0} ) {\rm I}_{y_0}^{-1} , ( \mathcal{N}_\infty\, {\rm L}_{y_0} )\bigr\rangle + \dfrac{h^5}{80}\, \bigl\langle( \mathcal{N}_\infty\, {\rm L}_{y_0} ^2) {\rm I}_{y_0}^{-1},( \mathcal{N}_\infty\, {\rm L}_{y_0} ^2)\bigr\rangle
	\Big]}_{\textrm{drilling bendings}},\notag
\end{align}
where we have denoted for any  $ 2\times 2  $ matrix $ X $ the quadratic form (positive definite)
\begin{align}\label{e7}
W_{\mathrm{curvpls}}( X) :=&\,  \mu\, L_c^2\Big[\,b_1\langle\, \mathrm{sym}\,    X,\,\mathrm{sym}\,  \big({\rm I}_{y_0}^{-1}\, X\, {\rm I}_{y_0}^{-1}\big)\,\rangle +    \Big(8\,b_3+\dfrac{b_1}{3}\Big)\langle \,\mathrm{skew}\,  X,\,\mathrm{skew}\,  \big({\rm I}_{y_0}^{-1}\, X\, {\rm I}_{y_0}^{-1}\big)\,\rangle+ \,\dfrac{b_2-b_1}{2}\,\big[\mathrm{tr}  \big( X\,{\rm I}_{y_0}^{-1}\big)\big]^2\,\Big]
\vspace{10pt}\\
=& \, \mu \,L_c^2\Big[ b_1\,\lVert \mathrm{sym}\,  \big( {\rm I}_{y_0}^{-1/2}\, X\, {\rm I}_{y_0}^{-1/2}\big) \rVert^2 
+     \Big(8\,b_3+\dfrac{b_1}{3}\Big)\;\lVert \,\mathrm{skew}\, \big( {\rm I}_{y_0}^{-1/2} \,X \,{\rm I}_{y_0}^{-1/2}\big) \rVert^2 + \,\dfrac{b_2-b_1}{2}\,\big[\mathrm{tr}  \big(  {\rm I}_{y_0}^{-1/2}\, X\, {\rm I}_{y_0}^{-1/2}\big)\,\big]^2\,\Big].\notag
\end{align}

Thus, the model can be expressed entirely in terms of the change of metric tensor $ \mathcal{G}_\infty $,  the bending strain tensor $ \mathcal{R}_\infty $ and the vector of drilling bendings $  \mathcal{N}_\infty $, through the relations \eqref{e4} and \eqref{e6}. 
\end{footnotesize}
\end{document}